\DeclareMathAlphabet{\mathbbold}{U}{bbold}{m}{n}
\newcommand{\mymakebox}[2]{\text{\text{\makebox[#1][c]{\ensuremath{#2}}}}}
\newcommand{\mydisplaybreak}{\,}
\newcommand{\norm}[1]{\left\Vert #1\right\Vert }
\newcommand{\abs}[1]{|#1|}
\newcommand{\absB}[1]{\Bigl|#1\Bigr|}
\newcommand{\ev}{\operatorname{ev}}
\newcommand{\as}{\operatorname*{as}}
\newcommand{\const}{\operatorname{const}}
\newcommand{\diag}{\operatorname*{diag}}
\newcommand{\pr}{\operatorname{pr}}
\newcommand{\mat}{\operatorname*{mat}}
\newcommand{\qmat}{\operatorname*{qmat}}
\newcommand{\spn}{\operatorname{span}}
\newcommand{\fun}{\operatorname*{fun}}
\newcommand{\strlim}{\operatorname*{strlim}}
\newcommand{\colim}{\operatorname*{colim}}
\newcommand{\dist}{\operatorname{dist}}
\newcommand{\prp}{\operatorname{prop}}
\renewcommand{\sc}{\operatorname{sc}}
\newcommand{\supp}{\operatorname{supp}}
\newcommand{\id}{\operatorname{id}}
\newcommand{\Id}{\operatorname{Id}}
\newcommand{\incl}{\operatorname{incl}}
\newcommand{\GEFC}{\mathbb{\mathtt{GEFC}}}
\newcommand{\hGEFC}{\mathtt{hGEFC}}
\newcommand{\DEFC}{\mathbb{\mathtt{DEFC}}}
\newcommand{\LEFC}{\mathbb{\mathtt{LEFC}}}
\newcommand{\Cstar}{\mathtt{C^{*}}}
\newcommand{\hAsy}{\mathtt{hAsy}}
\newcommand{\EFC}{\Cstar^{\Cstar}}
\newcommand{\Ext}{\mathrm{Ext}}
\newcommand{\mathsb}[1]{\boldsymbol{\mathsf{#1}}}
\newcommand{\C}{\mathfrak{C}^{\,}}
\newcommand{\myfonta}[1]{\mathsf{#1}}
\newcommand{\nbhd}{\myfonta N}
\newcommand{\ball}{\myfonta B}
\newcommand{\asadj}{\dashv_{as}}
\newcommand{\mmph}{\text{{␣}}}
\newcommand{\uu}{\dot{\oplus}}
\newcommand{\tensorunit}{\mathbbold1}
\newcommand{\bbzero}{\mathbbold0}
\newcommand{\terminal}{\operatorname{!}}
\newcommand{\rc}{\operatorname{rc}}
\newcommand{\lc}{\operatorname{lc}}
\newcommand{\oK}{\mathbb{K}}
\newcommand{\fK}{\mathbf{K}}
\newcommand{\fM}{\mathbf{M}}
\newcommand{\oM}{\mathbb{M}}
\newcommand{\uM}{M}
\newcommand{\uK}{\mathbb{K}}
\newcommand{\mmdc}{,}
\newcommand{\hspc}[1]{\hspace{#1ex}}
\newcommand{\middlepipe}{~\middle|~}
\newcommand{\discr}{\mathrm{discr}}
\newcommand{\noloc}{\nobreak
\mspace{4mu plus 1mu}
{:}
\nonscript\mkern-\thinmuskip
\mathpunct{}
\mspace{2mu}
}
\newcommand{\singlecdot}{\,\cdot\,}
\newcommand{\myllbracket}{[[}
\newcommand{\myrrbracket}{]]}
\newcommand{\bbrackets}[1]{\myllbracket#1\myrrbracket}
\title{Scalability and asymptotic adjunction}
\authors{
		\ainfo{Georgii~S.~Makeev}
		{School of Mathematical Sciences, Key Laboratory of MEA (Ministry of
Education) and Shanghai Key Laboratory of PMMP, East China Normal
University, Shanghai 200241, People’s Republic of China.
}
		{makeev.gs@gmail.com}
		{}
}
\keywords{$E$-theory; $C^*$-algebras; asymptotic algebra; endofunctors; homotopy; scalable metric spaces}
\date{October 2025}
\begin{abstract}
In this paper, we introduce relative Roe functors and show that for
every pair of scalable locally compact metric spaces with bounded
coarse geometry, the functor of continuous functions and the relative
Roe functor, both associated with this pair, are asymptotically adjoint.
While this asymptotic adjunction is weaker than the genuine one, it
retains sufficient categorical properties to be intuitive and useful
in applications. These results can be used to provide an unsuspended
description of the Connes-Higson $E$-theory, establish connections
between $E_{1}$-theory and extension theory, and express $K$-homology
of compact metric spaces in terms the corresponding metric cones.
\end{abstract}
\begin{document}


\maketitlepage

\tableofcontents{}
\section*{Introduction\addcontentsline{toc}{section}{Introduction}}

The homotopy category of asymptotic homomorphisms is a crucial component
of the Connes-Higson $E$-theory. Originally, the morphisms in this
category were defined in terms of asymptotic families~\cite{connes-higson1990},
but later~\cite{GHT}  proposed an alternative functorial approach
which we now briefly recall. For a $C^{*}$-algebra $B$, let $\mathfrak{T}B$
be the $C^{*}$-algebra of continuous bounded functions on $[0,\infty)$
taking values in~$B$, and let $\mathfrak{T}_{0}B$ be the ideal
in~$\mathfrak{T}B$ consisting of the functions vanishing at infinity.
The quotient $C^{*}$-algebra $\mathfrak{A}B=\mathfrak{T}B/\mathfrak{T}_{0}B$
is called the \emph{asymptotic algebra} of $B$. It is clear that
$\mathfrak{T}_{0}$ and $\mathfrak{T}$ are endofunctors of the category
of $C^{*}$-algebras and $*$-homomorphisms, and one can show that
so is $\mathfrak{A}$.  For every $n\geq0$, let $\mathfrak{A}^{n}$
denote the $n$-fold asymptotic algebra functor $\mathfrak{A}^{n}\coloneqq\mathfrak{A}\dots\mathfrak{A}$
($n$~times). An \emph{asymptotic homomorphism} from $A$ to $B$
is a $*$-homomorphism $A\to\mathfrak{A}^{n}B$ for some $n\geq0$.
Two $*$-homomorphisms $\varphi_{0},\varphi_{1}\colon A\to\mathfrak{A}^{n}B$
are called \emph{$n$-homotopic} (written $\varphi_{0}\simeq_{n}\varphi_{1}$)
if there is a $*$-homomorphism $\Phi\colon A\to\mathfrak{A}^{n}IB$
making the following diagrams commutative for~$j=0,1$:
\[
\begin{tikzcd}[ampersand replacement=\&]
{A} \& {\mathfrak{A}^{n}IB} \\
 \& {\mathfrak{A}^{n}B,}
\arrow["\varphi_{j}"', from=1-1, to=2-2]
\arrow["\Phi", from=1-1, to=1-2]
\arrow["\mathfrak{A}^{n}e_{j}", from=1-2, to=2-2]
\end{tikzcd}
\]
where $IB\coloneqq C([0,1],B)$ stands for the cylinder of $B$, and
$e_{t}\colon IB\to B$ denotes the evaluation at point $t\in[0,1]$.
The $n$-homotopy classes of such $*$-homomorphisms are denoted by
$\bbrackets{A,B}_{n}\coloneqq\hom(A,\mathfrak{A}^{n}B)/\simeq_{n}$;
and the class of $\varphi\colon A\to\mathfrak{A}^{n}B$ in $\bbrackets{A,B}_{n}$
is written as $\bbrackets{\varphi}_{n}$.

Let $\Id$ be the identity endofunctor of $C^{*}$-algebras. Denote
by $\alpha\colon\Id\Rightarrow\mathfrak{A}$ the natural transformation
whose component at the object $B$ sends $b$ to the class of the
constant function $t\mapsto b$ in $\mathfrak{A}B$. For every $n\geq0$,
there is a map of sets
\[
\bbrackets{A,B}_{n}\to\bbrackets{A,B}_{n+1}\colon\bbrackets{\varphi}_{n}\mapsto\bbrackets{\alpha\mathfrak{A}^{n}B\circ\varphi}_{n+1},
\]
where $\alpha\mathfrak{A}^{n}B$ denotes the component of $\alpha$
at $\mathfrak{A}^{n}B$. Introduce the set $\bbrackets{A,B}\coloneqq\colim_{n}\bbrackets{A,B}_{n}$,
and denote by $\bbrackets{\varphi}$ the class of $\varphi\colon A\to\mathfrak{A}^{n}B$
in $\bbrackets{A,B}$. The \emph{homotopy category of asymptotic
homomorphisms} (denoted by $\hAsy$) is the category with objects
$C^{*}$-algebras, arrows from $A$ to $B$ elements of the set $\bbrackets{A,B}$,
and composition given by
\[
\bbrackets{B\xrightarrow{\psi}\mathfrak{A}^{m}C}\circ\bbrackets{A\xrightarrow{\varphi}\mathfrak{A}^{n}B}\coloneqq\bbrackets{A\xrightarrow{\mathfrak{A}^{n}\psi\circ\varphi}\mathfrak{A}^{n+m}B}.
\]
The colimit construction in the definition of $\bbrackets{A,B}$ is
as convenient from the algebraic standpoint, as obscure from the analytic
one. This, nonetheless is alleviated by the following fact: if $A$
is separable then $\bbrackets{A,B}_{1}\cong\bbrackets{A,B}$ (see~\cite[Theorem 2.16]{GHT}).

Denote by $\mathbb{K}$ the $C^{*}$-algebra of compact operators
in a separable infinite dimensional Hilbert space, and recall that
a $C^{*}$-algebra $B$ is called \emph{stable} if $B\cong\mathbb{K}\otimes B$.
Abusing notation, denote again by $\mathbb{K}$ the endofunctor which
sends a $C^{*}$-algebra $B$ to its stabilization $B\otimes\mathbb{K}$.
If $B$ is stable then the set $\bbrackets{A,B}$ can be endowed with
the structure of a commutative monoid with the sum $\bbrackets{\varphi\colon A\to\mathfrak{A}^{n}B}+\bbrackets{\psi\colon A\to\mathfrak{A}^{n}B}$
defined as the class of the asymptotic homomorphism
\[
A\to\mathfrak{A}^{n}B\colon a\mapsto\mathfrak{A}^{n}\theta_{2}\left(\begin{array}{cc}
\varphi(a) & 0\\
0 & \psi(a)
\end{array}\right),
\]
where $\theta_{2}\colon M_{2}B\to B$ denotes the stability isomorphism,
with $M_{2}B$ standing for the $C^{*}$-algebra of $2\textrm{-by-}2$-matrices
with entries in~$B$.

The \emph{Connes-Higson $E$-theory category} is a category with
objects separable $C^{*}$-algebras, and arrows from $A$ to $B$
elements of the set $E_{0}(A,B)\coloneqq\bbrackets{S\mathbb{K}A,S\mathbb{K}B}$,
where $S\coloneqq C_{0}(\mathbb{R},\mmph)$ stands for the \emph{suspension}
functor. The suspension actually determines an abelian group on $E_{0}(A,B)$:
the inverse element is induced by the pre-composition (or post-composition)
with the homomorphism $f\mapsto f(-\singlecdot)$. It is easy to show
that the functor $\mathbb{K}$ in the left hand side can be discarded,
i.e., 
\[
E_{0}(A,B)\cong\bbrackets{SA,S\mathbb{K}B}.
\]
Similarly to the operator $K$-theory, one can inductively define
the sequence of bifunctors
\[
E_{n}(A,B)\coloneqq E_{n-1}(SA,B),
\]
from $C^{*}$-algebras to abelian groups, which forms a generalized
(co)homology theory satisfying the Bott periodicity 
\[
E_{n+2}(A,B)\cong E_{n}(A,B).
\]
In fact, for separable nuclear $C^{*}$-algebras, $E$-theory is isomorphic
to the Kasparov $KK$-theory~\cite[Theorem~25.6.3]{blackadar1998}.

Recall that two functors $L\colon\mathcal{C}\leftrightarrows\mathcal{C}'\noloc R$
are called adjoint (written $L\dashv R$) if there exist two natural
transformations $\eta\colon\Id_{\mathcal{C}}\Rightarrow RL$ and $\varepsilon\colon LR\Rightarrow\Id_{\mathcal{C}'}$,
called respectively the \emph{unit} and \emph{counit}, which render
commutative the diagrams \begin{alignat*}{2}
\begin{tikzcd}[ampersand replacement=\&]
{L} \& {LRL} \\
 \& {{L,}}
\arrow["{L\eta}", Rightarrow, from=1-1, to=1-2]
\arrow["{=}"', Rightarrow, from=1-1, to=2-2]
\arrow["{\varepsilon L}", Rightarrow, from=1-2, to=2-2]
\end{tikzcd} & \qquad\qquad & \begin{tikzcd}[ampersand replacement=\&]
{R} \& {RLR} \\
 \& {{R,}}
\arrow["{\eta R}", Rightarrow, from=1-1, to=1-2]
\arrow["{=}"', Rightarrow, from=1-1, to=2-2]
\arrow["{R\varepsilon}", Rightarrow, from=1-2, to=2-2]
\end{tikzcd}
\end{alignat*}
or equivalently, if there exists an isomorphism $\mathcal{C}'(LA,B)\cong\mathcal{C}(A,RB)$
natural in~$A$ and~$B$. 

For a proper metric space $\mathsf{X}$, consider the endofunctor
$\C_{\mathsf{X}}\coloneqq C_{0}(\mathsf{X},\mmph)$ which is isomorphic
to the tensoring with $C_{0}(\mathsf{X})$. It is known~\cite{uuye2013homotopical}
that $\C_{\mathsf{X}}$ fails to have a right adjoint in $\hAsy$
even in the simplest case when $\mathsf{X}=\mathbb{R}$, i.e., when
$\C_{\mathsf{X}}$ is the suspension functor. 

In this paper we present a workaround to deal with this problem and
introduce a similar notion called \emph{asymptotic adjunction},
which, despite not being a genuine adjunction, still retains many
of its categorical properties.  This approach is based on the categorical
framework of \emph{good endofunctors}~\cite{makeev_gmaec} inspired
by the following crucial properties of~$\mathfrak{A}$:
\begin{itemize}
\item $\mathfrak{A}$ preserves pullbacks along a pair of $*$-homomorphisms
one of which is surjective (see~\cite[Lemma~2.5]{GHT});
\item there exists a natural transformation $I\mathfrak{A}\Rightarrow\mathfrak{A}I$
making the following diagram commutative for~$j=0,1$:
\[
\begin{tikzcd}[ampersand replacement=\&]
{I\mathfrak{A}} \&  \& {\mathfrak{A}I} \\
 \& {\mathfrak{A},}
\arrow[Rightarrow, from=1-1, to=1-3]
\arrow["\ev_{j}\mathfrak{A}"', Rightarrow, from=1-1, to=2-2]
\arrow["\mathfrak{A}\ev_{j}", Rightarrow, from=1-3, to=2-2]
\end{tikzcd}
\]
where $\ev_{t}\colon I\Rightarrow\Id$ is the natural transformation
of evaluation at $t\in[0,1]$ (cf.~\cite[proof of Lemma~2.9]{GHT}).
\end{itemize}
In~\cite{makeev_gmaec} we introduce the bimonoidal category of good
endofunctors and defined homotopies of natural transformations between
them. Furthermore, we show that every good endofunctor $F$ induces
the following equivalence relation on $*$-homomorphisms: $\varphi_{0},\varphi_{1}\colon A\to FB$
are called \emph{$F$-homotopic} (written $\varphi_{0}\simeq_{F}\varphi_{1}$)
if there is a $*$-homomorphism $\Phi\colon A\to FIB$ making the
following diagram commutative for~$j=0,1$:
\[
\begin{tikzcd}[ampersand replacement=\&]
{A} \& {FIB} \\
 \& {FB,}
\arrow["\varphi_{j}"', from=1-1, to=2-2]
\arrow["\Phi", from=1-1, to=1-2]
\arrow["Fe_{j}", from=1-2, to=2-2]
\end{tikzcd}
\]
which as one can see is just a verbatim generalization of $n$-homotopy
defined above. Thus, every good endofunctor $F$ induces the set of
generalized morphisms and generalized asymptotic morphisms between
$C^{*}$-algebras $A$ and $B$ defined as:
\begin{alignat*}{2}
[A,F,B]\coloneqq\hom(A,FB)/\simeq_{F} & ,\qquad & \bbrackets{A,F,B}\coloneqq\colim_{n}[A,F\mathfrak{A}^{n}\mathbb{K},B].
\end{alignat*}
We say that good endofunctors $L$ and $R$ are \emph{asymptotically
adjoint} (written $L\asadj R$) if there exist $\eta\colon\Id\Rightarrow RL$
and $\varepsilon\colon LR\Rightarrow\mathfrak{A}\mathbb{K}$ such
that the following diagrams commute up to homotopy:%
\begin{alignat*}{2}
\begin{tikzcd}[ampersand replacement=\&]
{L} \& {LRL} \\
 \& {\mathfrak{A}\mathbb{K}L,}
\arrow["L\eta", Rightarrow, from=1-1, to=1-2]
\arrow["\alpha\iota_{00}L"', Rightarrow, from=1-1, to=2-2]
\arrow["\varepsilon L", Rightarrow, from=1-2, to=2-2]
\end{tikzcd} & \qquad\qquad & \begin{tikzcd}[ampersand replacement=\&]
{R} \& {RLR} \\
 \& {R\mathfrak{A}\mathbb{K},}
\arrow["\eta R", Rightarrow, from=1-1, to=1-2]
\arrow["R\alpha\iota_{00}"', Rightarrow, from=1-1, to=2-2]
\arrow["R\varepsilon", Rightarrow, from=1-2, to=2-2]
\end{tikzcd}
\end{alignat*}
where $\alpha\colon\Id\Rightarrow\mathfrak{A}$ is as above, $\iota_{00}\colon\Id\Rightarrow\mathbb{K}$ is the corner embedding,
and $\alpha\iota_{00}L$ (resp. $R\alpha\iota_{00}$) stands for the
horizontal composition of $\alpha$, $\iota_{00}$, and $1_{L}$ (resp.
$1_{R}$, $\alpha$, $\iota_{00}$) (detailed explanation of this
notation will be given in the beginning of the next section). In this
case, one can easily obtain the isomorphism $\bbrackets{LA,\Id,B}\cong\bbrackets{A,R,B}$.

In this paper, for every discrete metric space $X$ of bounded geometry,
we introduce the \emph{uniform Roe functor} $\mathfrak{M}^{u}_{X}$
which sends a $C^{*}$-algebra $B$ to the ``uniform Roe algebra''
with coefficients in $B$. Every subspace $X_{0}\subset X$ gives
rise to the functor $\mathfrak{M}^{u}_{X\supset X_{0}}$ which sends
$B$ to the ideal $\mathfrak{M}^{u}_{X\supset X_{0}}B\subset\mathfrak{M}^{u}_{X}B$
generated by the matrices supported near $X_{0}\times X_{0}$. Taking
the componentwise quotient we obtain the \emph{relative uniform
Roe functor} $\mathfrak{N}^{u}_{X,X_{0}}\coloneqq\mathfrak{M}^{u}_{X}/\mathfrak{M}^{u}_{X\supset X_{0}}$.
For a locally compact metric space $\mathsf{X}$
and a closed subspace $\mathsf{X}_{0}\subset\mathsf{X}$, we denote
by $\C_{\mathsf{X},\mathsf{X}_{0}}\coloneqq C_{0}(\mathsf{X},\mathsf{X}_{0},\mmph)$
the functor which sends a $C^{*}$-algebra $B$ to continuous $B$-valued
functions on $\mathsf{X}$ vanishing on $\mathsf{X}_{0}$ and at infinity.
It will be shown that if the pair of locally compact metric spaces
$(\mathsf{X},\mathsf{X}_{0})$ is scalable and $\mathsf{X}$
has bounded coarse geometry, then the functor $\C_{\mathsf{X},\mathsf{X}_{0}}$
admits a right asymptotic adjoint, which is $\mathfrak{N}^{u}_{X,X_{0}}$
for every pair of discrete metric spaces $(X,X_{0})$ coarsely equivalent
to~$(\mathsf{X},\mathsf{X}_{0})$.

This result yields several interesting applications discussed in Section~\ref{subsec:appl},
that are immediately accessible to the reader without the need to
delve into the technicalities.

The paper is structured as follows. Section~\ref{sec:prelim} recalls
the category machinery of good endofunctors developed in~\cite{makeev_gmaec}.
Section~\ref{sec:Roe-functors} introduces relative Roe functors
and shows that they fit into the framework discussed above. Our main
result appears in Section~\ref{sec:main}, where we prove that tensoring
with continuous functions on a scalable pair of locally
compact metric spaces of bounded coarse geometry admits a right asymptotic
adjoint, which is the precisely the relative uniform Roe functor associated
with the corresponding discretization. We discuss applications of
this result in the same section. Finally, Section~\ref{sec:tech}
establishes that the colimit construction is unnecessary for separable
$C^{*}$-algebras.
\section{\label{sec:prelim}Good endofunctors}

This section establishes the categorical foundations of our approach
developed in~\cite{makeev_gmaec}. It presents a slightly simplified
review of the main definitions and results from that work which will
be used in the present paper. It also serves as a compressed introduction
for the reader who seeks an overview before diving into the technicalities
of~\cite{makeev_gmaec}.

Let us first fix some basic notation. Consider the categories, functors,
and natural transformations as in the diagram below:
\[
\begin{tikzcd}[ampersand replacement=\&]
{\mathcal{C}_{3}} \&  \& {\mathcal{C}_{2}} \&  \& {\mathcal{C}_{1}.}
\arrow[""{name=0, anchor=center, inner sep=0}, "G'"{description}, from=1-3, to=1-1]
\arrow[""{name=1, anchor=center, inner sep=0}, "G''"{description}, shift left, curve={height=-30pt}, from=1-3, to=1-1]
\arrow[""{name=2, anchor=center, inner sep=0}, " G"{description}, shift right, curve={height=30pt}, from=1-3, to=1-1]
\arrow[""{name=3, anchor=center, inner sep=0}, "F'"{description}, from=1-5, to=1-3]
\arrow[""{name=4, anchor=center, inner sep=0}, "F''"{description}, shift left, curve={height=-30pt}, from=1-5, to=1-3]
\arrow[""{name=5, anchor=center, inner sep=0}, " F"{description}, shift right, curve={height=30pt}, from=1-5, to=1-3]
\arrow["\beta", Rightarrow, fixedlength,  from=2, to=0]
\arrow["\beta'", Rightarrow, fixedlength,  from=0, to=1]
\arrow["\alpha", Rightarrow, fixedlength,  from=5, to=3]
\arrow["\alpha'", Rightarrow, fixedlength,  from=3, to=4]
\end{tikzcd}
\]
We denote by:
\begin{itemize}
\item $FB$ and $F\varphi$ the value of the functor $F$ respectively at
the object $B$ and the morphism $\varphi$ in the category~$\mathcal{C}_{1}$;
\item $GF$ the composition of the functors $F$ and $G$;
\item $\alpha B\colon FB\to F'B$ the component of the natural transformation
$\alpha$ at the object~$B$; \item $\beta F$, $G\alpha$ the whiskering of $\beta$ and $F$, and $G$
and $\alpha$, respectively;
\item $\beta\alpha\colon GF\Rightarrow G'F'$ the horizontal composition
of $\beta$ and $\alpha$;
\item $\alpha'\circ\alpha\colon F\Rightarrow F''$ the vertical composition
of $\alpha$ and $\alpha'$.
\end{itemize}
We write $B\in\in\mathcal{C}$ and $f\in\mathcal{C}$ to express the
fact that $B$ is an object and $f$ is an arrow in the category~$\mathcal{C}$.

\subsection{Monoidal categories}
\begin{defn}
A \emph{monoidal category} is a tuple $(\mathcal{C},\otimes,\tensorunit,\alpha^{\otimes},\lambda^{\otimes},\rho^{\otimes})$
consisting of: 
\begin{itemize}
\item a category $\mathcal{C}$; 
\item a functor $\otimes\colon\mathcal{C}\times\mathcal{C}\to\mathcal{C}$
called the \emph{monoidal product}; 
\item an object $\tensorunit\in\in\mathcal{C}$ called the \emph{monoidal
unit}; 
\item a natural isomorphism $\begin{tikzcd}(X\otimes Y)\otimes Z\xrightarrow{\alpha^{\otimes}_{X,Y,Z}}\end{tikzcd}X\otimes(Y\otimes Z)$
for all $X,Y,Z\in\in\mathcal{C}$ called the \emph{associativity isomorphism}; 
\item natural isomorphisms $\tensorunit\otimes X\xrightarrow{\lambda^{\otimes}_{X}}X\xleftarrow{\rho^{\otimes}_{X}}X\otimes\tensorunit$
for all $X\in\in\mathcal{C}$ called respectively the \emph{left unit
isomorphism} and the \emph{right unit isomorphism.}
\end{itemize}
These data are required to make the following diagrams commutative
for all $W,X,Y,Z\in\in\mathcal{C}$: \begin{alignat*}{3}
 & \textup{(Pentagon axiom)} & \qquad &  & \begin{tikzcd}[column sep=tiny, ampersand replacement=\&]  
 \& {{(W\otimes(X\otimes Y))\otimes Z}} \\
{{((W\otimes X)\otimes Y)\otimes Z}} \&  \& {{W\otimes((X\otimes Y)\otimes Z)}} \\
{{(W\otimes X)\otimes(Y\otimes Z)}} \&  \& {{W\otimes(X\otimes(Y\otimes Z)),}}
\arrow["\alpha_{W,X\otimes Y,Z}^{\otimes}", from=1-2, to=2-3]
\arrow["\alpha_{W,X,Y}^{\otimes}\otimes Z", from=2-1, to=1-2]
\arrow["\alpha_{W\otimes X,Y,Z}^{\otimes}"', from=2-1, to=3-1]
\arrow["W\otimes\alpha_{X,Y,Z}^{\otimes}", from=2-3, to=3-3]
\arrow["\alpha_{W,X,Y\otimes Z}^{\otimes}", from=3-1, to=3-3]
\end{tikzcd}\\
 & \textup{(Middle unity axiom)} & \qquad &  & \begin{tikzcd}[column sep=tiny, ampersand replacement=\&]  
{(X\otimes\tensorunit)\otimes Y} \& {\phantom{((W\otimes X)\otimes Y)\otimes Z}} \& {X\otimes(\tensorunit\otimes Y)} \\
{\phantom{((W\otimes X)\otimes Y)\otimes Z}} \& {X\otimes Y.} \& {\phantom{((W\otimes X)\otimes Y)\otimes Z}}
\arrow["\alpha_{X,\tensorunit,Y}^{\otimes}", from=1-1, to=1-3]
\arrow["\rho_{X}^{\otimes}\otimes Y"', from=1-1, to=2-2]
\arrow["X\otimes\lambda_{Y}^{\otimes}", from=1-3, to=2-2]
\end{tikzcd}
\end{alignat*}
A monoidal category is called \emph{strict} if the associativity
and unit isomorphisms are identities. In this case, $(\mathcal{C},\otimes,\tensorunit,\alpha^{\otimes},\lambda^{\otimes},\rho^{\otimes})$
will be abbreviated to~$(\mathcal{C},\otimes,\tensorunit)$.
\end{defn}

\begin{defn}\label{def:monoidal-functor}
 For monoidal categories $\mathcal{C}$
and $\mathcal{C}'$, a \emph{monoidal functor} 
\[
(F,F^{2},F^{0})\colon\mathcal{C}\to\mathcal{C}'
\]
consists of:
\begin{itemize}
\item a functor $F\colon\mathcal{C}\to\mathcal{C}'$; 
\item a natural transformation $F^{2}_{X,Y}\colon FX\otimes FY\to F(X\otimes Y)$
with $X,Y\in\in\mathcal{C}$; 
\item a morphism $F^{0}\colon\tensorunit_{\mathcal{C}'}\to F\tensorunit_{\mathcal{C}}$.
\end{itemize}
These data are required to make the following diagrams commutative
for all $X,Y,Z\in\in\mathcal{C}$: 
\[
\begin{tikzcd}[ampersand replacement=\&]
{(FX\otimes FY)\otimes FZ} \&  \& {FX\otimes(FY\otimes FZ)} \\
{F(X\otimes Y)\otimes FZ} \&  \& {FX\otimes F(Y\otimes Z)} \\
{F((X\otimes Y)\otimes Z)} \&  \& {F(X\otimes(Y\otimes Z)),}
\arrow["\alpha_{FX,FY,FZ}^{\otimes}", from=1-1, to=1-3]
\arrow["F_{X,Y}^{2}\otimes FZ"', from=1-1, to=2-1]
\arrow["FX\otimes F_{Y,Z}^{2}", from=1-3, to=2-3]
\arrow["F_{X\otimes Y,Z}^{2}"', from=2-1, to=3-1]
\arrow["F_{X,Y\otimes Z}^{2}", from=2-3, to=3-3]
\arrow["F\alpha_{X,Y,Z}^{\otimes}"', from=3-1, to=3-3]
\end{tikzcd}
\]
\begin{alignat*}{2}
\begin{tikzcd}[ampersand replacement=\&]
{\tensorunit_{\mathcal{C}}\otimes FX} \& {FX} \\
{F\tensorunit_{\mathcal{C}'}\otimes FX} \& {F(\tensorunit_{\mathcal{C}'}\otimes X),}
\arrow["\lambda_{FX}^{\otimes}", from=1-1, to=1-2]
\arrow["F^{0}\otimes FX"', from=1-1, to=2-1]
\arrow["F_{\tensorunit,X}^{2}"', from=2-1, to=2-2]
\arrow["F\lambda_{X}^{\otimes}"', from=2-2, to=1-2]
\end{tikzcd} & \qquad\qquad & \begin{tikzcd}[ampersand replacement=\&]
{FX\otimes\tensorunit_{\mathcal{C}}} \& {FX} \\
{FX\otimes F\tensorunit_{\mathcal{C}'}} \& {F(X\otimes\tensorunit_{\mathcal{C}'}).}
\arrow["\rho_{FX}^{\otimes}", from=1-1, to=1-2]
\arrow["FX\otimes F^{0}"', from=1-1, to=2-1]
\arrow["F_{X,\tensorunit}^{2}"', from=2-1, to=2-2]
\arrow["F\rho_{X}^{\otimes}"', from=2-2, to=1-2]
\end{tikzcd}
\end{alignat*}
A monoidal functor $(F,F^{2},F^{0})$ is called \emph{strong} (resp.
\emph{strict}) if both $F_{0}$ and $F_{2}$ are isomorphisms (resp.
identities).
\end{defn}

\subsection{Functor categories}

For categories $\mathcal{C}$ and $\mathcal{C}'$, denote by $\mathcal{C}'^{\mathcal{C}}$
the category with functors from $\mathcal{C}$ to $\mathcal{C}'$
as objects, and natural transformations between them as morphisms. 

If some property holds for every component of the natural transformation
$\alpha\colon F\Rightarrow G$, then we say that $\alpha$ enjoys
this property \emph{componentwise}. For example, the obvious natural
transformation $\mathfrak{T}_{0}\Rightarrow\mathfrak{T}$ between
the functors from the introduction is a componentwise ideal inclusion,
and $\mathfrak{T}\Rightarrow\mathfrak{A}$ is a componentwise quotient
projection.
\begin{lem}\label{lem:cw-epic-epic}
If a natural transformation is componentwise
monic (resp. componentwise epic), then it is monic (resp. epic). 
\end{lem}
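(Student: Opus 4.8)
The plan is to unwind what ``monic'' and ``epic'' mean in the functor category $\mathcal{C}'^{\mathcal{C}}$ and then reduce everything to the componentwise hypothesis, exploiting the fact that both composition and equality of natural transformations are computed componentwise.

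First I would treat the monic case. Let $\alpha\colon F\Rightarrow G$ be componentwise monic. To show $\alpha$ is monic in $\mathcal{C}'^{\mathcal{C}}$, take an arbitrary functor $H\colon\mathcal{C}\to\mathcal{C}'$ and natural transformations $\beta,\gamma\colon H\Rightarrow F$ with $\alpha\circ\beta=\alpha\circ\gamma$. Evaluating this equality at an arbitrary object $B\in\in\mathcal{C}$ and using $(\alpha\circ\beta)B=(\alpha B)\circ(\beta B)$, we obtain $(\alpha B)\circ(\beta B)=(\alpha B)\circ(\gamma B)$ in $\mathcal{C}'$. Since $\alpha B$ is monic by hypothesis, it cancels on the left, giving $\beta B=\gamma B$. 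As $B$ was arbitrary and two natural transformations with equal components are equal, $\beta=\gamma$; hence $\alpha$ is monic.

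The epic case is strictly dual: given $\beta,\gamma\colon G\Rightarrow H$ with $\beta\circ\alpha=\gamma\circ\alpha$, evaluate at each $B\in\in\mathcal{C}$ to get $(\beta B)\circ(\alpha B)=(\gamma B)\circ(\alpha B)$, cancel the epimorphism $\alpha B$ on the right, and conclude $\beta B=\gamma B$ for all $B$, so $\beta=\gamma$.

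I do not expect a genuine obstacle here; the statement is essentially a bookkeeping exercise. The only points requiring a modicum of care are (i) making sure the test object in the definition of monic/epic ranges over \emph{functors} $H$, not over objects of $\mathcal{C}$ or $\mathcal{C}'$, and (ii) invoking the elementary facts that vertical composition of natural transformations is pointwise and that a natural transformation is determined by its components. It is also worth remarking in passing that the converse implications fail in general, so the lemma is necessarily one-directional.
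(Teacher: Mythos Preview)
Your argument is correct and is the standard elementary proof. The paper itself does not supply a proof of this lemma: it appears in the preliminaries section, which is explicitly a review of results from~\cite{makeev_gmaec}, and is simply stated without justification.
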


Let $\mathcal{C}$ and $\mathcal{J}$ be categories. For every $\mathcal{J}$-shaped
diagram $F_{\bullet}\colon\mathcal{J}\to\mathcal{C}$, we denote by
$\pr_{j}\colon\lim_{i}F_{i}\to F_{j}$ the $j$th leg of the limit
cone over this diagram (provided it exists). 
\begin{defn}\label{def:ab}
Let $\mathcal{C}$, $\mathcal{C}'$, $\mathcal{C}''$
and $\mathcal{J}$ be categories, with $\mathcal{J}$ small. Define
the \emph{canonical arrows} (provided the limits below exist)
\begin{alignat}{3}
\rc\colon(\lim_{j}F_{j})G & \Rightarrow & \lim_{j}(F_{j}G), & \quad & \textrm{ where } & F_{\bullet}\colon\mathcal{J}\to\mathcal{C}''^{\mathcal{C}'},~G\colon\mathcal{C}\to\mathcal{C}',\label{eq:canar1}\\
\lc\colon G(\lim_{j}F_{j}) & \Rightarrow & \lim_{j}(GF_{j}), &  & \textrm{ where } & F_{\bullet}\colon\mathcal{J}\to\mathcal{C}'^{\mathcal{C}},~G\colon\mathcal{C}'\to\mathcal{C}''\label{eq:canar11}
\end{alignat}
as the unique arrows making the following diagrams commutative:
\begin{alignat}{2}
\begin{tikzcd}[ampersand replacement=\&]
{(\lim_{j}F_{j})G} \& {\lim_{j}(F_{j}G)} \\
 \& {F_{j}G,}
\arrow["\rc", Rightarrow, dashed, from=1-1, to=1-2]
\arrow["\pr_{j}G"', Rightarrow, from=1-1, to=2-2]
\arrow["\pr_{j}", Rightarrow, from=1-2, to=2-2]
\end{tikzcd} & \qquad & \begin{tikzcd}[ampersand replacement=\&]
{G(\lim_{j}F_{j})} \& {\lim_{j}(GF_{j})} \\
 \& {GF_{j}.}
\arrow["\lc", Rightarrow, dashed, from=1-1, to=1-2]
\arrow["G\pr_{j}"', Rightarrow, from=1-1, to=2-2]
\arrow["\pr_{j}", Rightarrow, from=1-2, to=2-2]
\end{tikzcd}\label{eq:canar2}
\end{alignat}
\end{defn}

\begin{defn}\label{def:oplus}
Let $\mathcal{C}$ be a category with binary products
(which we denote by $\oplus$). For endofunctors $F,F',G,G'\colon\mathcal{C}\to\mathcal{C}$
and natural transformations $\alpha\colon F\Rightarrow F'$ and $\beta\colon G\Rightarrow G'$,
let us introduce:
\begin{itemize}
\item the endofunctor $F\oplus G$ defined as
\[
F\oplus G\colon(A\xrightarrow{\varphi}B)\mapsto(FA\oplus GA\xrightarrow{F\varphi\oplus G\varphi}FB\oplus GB)
\]
\item the natural transformation $\alpha\oplus\beta\colon F\oplus G\Rightarrow F'\oplus G'$
whose component at $B\in\in\mathcal{C}$ is defined as \[
(\alpha\oplus\beta)B\coloneqq\alpha B\oplus\beta B.
\]
\end{itemize}
It is easy to check that $\oplus$ is well-defined, functorial, and
that $F\oplus F'$ is a product of $F$ and~$F'$ in~$\mathcal{C}^{\mathcal{C}}$.
\end{defn}

\begin{defn}\label{def:Delta}
Let $\mathcal{C}$ be a category with binary products,
and let $X\in\in\mathcal{C}$. Denote by $\Delta$ the unique arrow
making the following diagram commutative:
\[
\begin{tikzcd}[ampersand replacement=\&]
 \& {X} \&  \\
{X} \& {{X\oplus X}} \& {{X.}}
\arrow["{=}"', from=1-2, to=2-1]
\arrow["\Delta", dashed, from=1-2, to=2-2]
\arrow["{=}", from=1-2, to=2-3]
\arrow["{\pr_{1}}", from=2-2, to=2-1]
\arrow["{\pr_{2}}"', from=2-2, to=2-3]
\end{tikzcd}
\]
\end{defn}

\subsection{$C^{*}$-algebras and decent endofunctors}

Denote by $\Cstar$ the category of $C^{*}$-algebras and $*$-homomorphisms.
It is well-known that $\Cstar$ is complete. In this paper we work
only with the maximal tensor product of $C^{*}$-algebras, which we
denote by $\otimes$, and which turns $\Cstar$ into a (non-strict)
monoidal category $(\Cstar,\otimes,\mathbb{C},\alpha^{\otimes},\lambda^{\otimes},\rho^{\otimes})$
with
\begin{alignat*}{1}
 & \alpha^{\otimes}\colon(a\otimes b)\otimes c\mapsto a\otimes(b\otimes c);\\
 & \lambda^{\otimes}\colon z\otimes a\mapsto za;\\
 & \rho^{\otimes}\colon a\otimes z\mapsto za.
\end{alignat*}

\begin{defn}
Let $A$ and $B$ be $C^{*}$-algebras. Two $*$-homomorphisms $\varphi_{0},\varphi_{1}\colon A\to B$
are called \emph{homotopic} if there is a $*$-homomorphism $\Phi\colon A\to IB$
making the following diagram commutative for~$j=0,1$:
\[
\begin{tikzcd}[ampersand replacement=\&]
{A} \& {IB} \\
 \& {B,}
\arrow["\Phi", from=1-1, to=1-2]
\arrow["\varphi_{j}"', from=1-1, to=2-2]
\arrow["e_{j}", from=1-2, to=2-2]
\end{tikzcd}
\]
where $IB\coloneqq C([0,1],B)$ is the \emph{cylinder} of $B$,
and $e_{t}\colon IB\to B$ stands for the evaluation at~$t\in[0,1]$.
\end{defn}

Denote by $\bbzero\coloneqq\{0\}$ the zero $C^{*}$-algebra, which
is clearly a zero object in $\Cstar$. 
\begin{defn}\label{def:decentFunctor}
 We call $F\in\in\Cstar^{\Cstar}$ a \emph{decent
endofunctor} if:
\begin{itemize}
\item for every pullback 
\[
\begin{tikzcd}[ampersand replacement=\&]
{B_{1}\oplus_{B}B_{2}} \& {B_{2}} \\
{B_{1}} \& {B,}
\arrow[from=1-1, to=1-2]
\arrow[""', from=1-1, to=2-1]
\arrow["\varphi_{2}", two heads, from=1-2, to=2-2]
\arrow["\varphi_{1}", from=2-1, to=2-2]
\end{tikzcd}
\]
with $\varphi_{2}$ a split epimorphism\footnote{An arrow is called a \emph{split epimorphism} if it has a pre-inverse.},
the canonical arrow $F(B_{1}\underset{B}{\oplus}B_{2})\xrightarrow{\lc}FB_{1}\underset{FB}{\oplus}FB_{2}$\footnote{This formula is a special case of (\ref{eq:canar11}) with $\mathcal{C}$
the terminal category.} is an isomorphism;
\item the unique arrow $F\bbzero\xrightarrow{\terminal}\bbzero$ is an isomorphism.
\end{itemize}
We denote by $\DEFC$ the full subcategory in $\EFC$ consisting of
decent endofunctors. One can show that decent endofunctors are closed
under composition; hence, $\DEFC$ inherits the strict monoidal structure
from $\EFC$, and $(\DEFC,\singlecdot,\Id)$ becomes a strict monoidal
subcategory of $(\EFC,\singlecdot,\Id)$, where $\cdot$ denotes the
composition of endofunctors which is usually written simply as concatenation. \end{defn}

\begin{defn}
For $A\in\in\Cstar$, denote by $\mathfrak{O}_{A}\in\in\EFC$ the
endofunctor of the left maximal tensoring 
\[
\mathfrak{O}_{A}\coloneqq A\otimes\mmph\colon(B\xrightarrow{\varphi}B')\mapsto(A\otimes B\xrightarrow{A\otimes\varphi}A\otimes B'),
\]
and for a $*$-homomorphism $\varphi\colon A\to A'$, define the natural
transformation
\[
\mathfrak{O}_{\varphi}\colon\mathfrak{O}_{A}\Rightarrow\mathfrak{O}_{A'},\qquad\mathfrak{O}_{\varphi}B\colon A\otimes B\xrightarrow{\varphi\otimes B}A'\otimes B.
\]
\end{defn}

\begin{lem}\label{lem:OA-decent-exact}
For every $A\in\in\Cstar$, the endofunctor
$\mathfrak{O}_{A}$ is decent and exact\footnote{A functor is called \emph{exact} if it preserves short exact sequences.}.
\end{lem}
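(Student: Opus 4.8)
The plan is to obtain exactness first and then read off decency from it. Exactness of $\mathfrak{O}_A$ is the assertion that for every short exact sequence $0\to J\to B\to B/J\to 0$ of $C^{*}$-algebras the induced sequence $0\to A\otimes J\to A\otimes B\to A\otimes(B/J)\to 0$ is again exact. Here right-exactness is elementary (a $*$-homomorphism has closed range, and the universal property of the maximal tensor product identifies $A\otimes B$ modulo the closed span of $A\odot J$ with $A\otimes(B/J)$), while injectivity of $A\otimes J\to A\otimes B$ is the well-known fact that the maximal tensor product respects ideals; I would simply invoke this (see, e.g., \cite{blackadar1998}). The zero-object condition is immediate: $A\otimes\bbzero=\bbzero$, so $\mathfrak{O}_{A}\bbzero\xrightarrow{\terminal}\bbzero$ is an isomorphism.

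It then remains to verify the pullback condition of Definition~\ref{def:decentFunctor}, and the strategy is a short five lemma. Fix a pullback square as there with $\varphi_{2}\colon B_{2}\to B$ a split epimorphism (only its surjectivity will be used), write $P\coloneqq B_{1}\oplus_{B}B_{2}=\{(b_{1},b_{2}):\varphi_{1}b_{1}=\varphi_{2}b_{2}\}$, $J\coloneqq\ker\varphi_{2}$, and abbreviate $AX\coloneqq A\otimes X$. The map $\iota\colon J\to P$, $j\mapsto(0,j)$, identifies $J$ with the kernel of the projection $\pr_{1}\colon P\to B_{1}$, which is surjective since $\varphi_{2}$ is; hence $0\to J\xrightarrow{\iota}P\xrightarrow{\pr_{1}}B_{1}\to 0$ is exact, and likewise so is $0\to J\to B_{2}\xrightarrow{\varphi_{2}}B\to 0$. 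Applying the (now established) exact functor $\mathfrak{O}_{A}$ produces exact rows
\[
0\to AJ\xrightarrow{A\iota}AP\xrightarrow{A\pr_{1}}AB_{1}\to 0,\qquad 0\to AJ\to AB_{2}\xrightarrow{A\varphi_{2}}AB\to 0,
\]
so in particular $A\varphi_{2}$ is surjective with kernel $AJ$. Consequently the fiber product $Q\coloneqq AB_{1}\oplus_{AB}AB_{2}$ fits, by the same ``pullback along a surjection'' reasoning applied to $A\varphi_{2}$, into an exact sequence $0\to AJ\to Q\xrightarrow{\pr_{1}}AB_{1}\to 0$, where $AJ\to Q$ is $x\mapsto(0,x)$ under the identification $AJ=\ker(A\varphi_{2})$.

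The canonical arrow $\lc\colon AP\to Q$ of Definition~\ref{def:ab} is characterized by $\pr_{i}\circ\lc=A\pr_{i}$. Using $\pr_{1}\iota=0$ and $\pr_{2}\iota=(J\hookrightarrow B_{2})$ one computes directly that $\lc\circ A\iota$ equals the inclusion $AJ\to Q$, and that $\pr_{1}\circ\lc=A\pr_{1}$; thus $\lc$ is a morphism between the two displayed short exact sequences which is the identity on the sub-objects $AJ$ and on the quotients $AB_{1}$. The short five lemma—a diagram chase using only additivity of $*$-homomorphisms and exactness, hence valid in $\Cstar$—then forces $\lc$ to be an isomorphism, which is the pullback condition. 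The only genuine input is exactness of the maximal tensor product; everything else is bookkeeping, the one point demanding care being the check that $\lc$ is compatible with the identifications of the sub-objects and quotients of the two rows. Note also that splitness of $\varphi_{2}$ is never needed beyond surjectivity.
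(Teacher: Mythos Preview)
The paper does not supply a proof of this lemma: Section~\ref{sec:prelim} is explicitly a review of results from~\cite{makeev_gmaec}, and Lemma~\ref{lem:OA-decent-exact} is stated there without argument. Your proof is correct and self-contained: exactness of the maximal tensor product is standard, the zero-object condition is trivial, and your short-five-lemma reduction of the pullback condition to exactness is clean and valid in $\Cstar$ (the short five lemma holds there by a direct injectivity/surjectivity chase). Your observation that only surjectivity of $\varphi_{2}$ is needed, not the splitting, is also accurate.
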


\begin{defn}\label{def:bimon-C-DEFC}
Introduce the monoidal functor
\[
(\mathfrak{O},\mathfrak{O}^{2},\mathfrak{O}^{0})\colon\Cstar\to\DEFC,
\]
where:
\begin{itemize}
\item $\mathfrak{O}\colon\Cstar\to\DEFC\colon(A\xrightarrow{\varphi}A')\mapsto(\mathfrak{O}_{A}\xRightarrow{\mathfrak{O}_{\varphi}}\mathfrak{O}_{A'})$;
\item $(\mathfrak{O}^{2})_{A,A'}\colon\mathfrak{O}_{A}\mathfrak{O}_{A'}\Rightarrow\mathfrak{O}_{A\otimes A'},~(\mathfrak{O}^{2})_{A,A'}B\colon A\otimes(A'\otimes B)\xrightarrow{(\alpha^{\otimes})^{-1}}(A\otimes A')\otimes B$;
\item $\mathfrak{O}^{0}\colon\Id\Rightarrow\mathfrak{O}_{\tensorunit},~\mathfrak{O}^{0}B\colon B\xrightarrow{(\lambda^{\otimes})^{-1}}\tensorunit\otimes B$.
\end{itemize}
One can show that $\mathfrak{O}$ is fully faithful, and we call it
the \emph{tensor embedding functor}.
\end{defn}

\subsection{\label{subsec:defc-examples}Examples of decent endofunctors}

\begin{defn}\label{def:bbK}
We define the endofunctor $\fM_{n}\colon\Cstar\to\Cstar$
which sends a $C^{*}$-algebra $B$ to the $C^{*}$-algebra of $n$-by-$n$-matrices
with entries in $B$.

Similarly, one can define the \emph{stabilization} endofunctor $\fK\colon\Cstar\to\Cstar$
which sends a $C^{*}$-algebra $B$ to the ``compact $\mathbb{N}\textrm{-by-}\mathbb{N}$-matrices''
with entries in~$B$. The formal definition of $\fK$ will be given
in Section~\ref{subsec:hilbmod-prelim} (Definition~\ref{def:bbK-1}).
\end{defn}

\begin{defn}\label{def:Cb}
Let $C_{b}(\mathsf{X},B)$ be the $C^{*}$-algebra
of continuous bounded functions on a locally compact Hausdorff space
$\mathsf{X}$ taking values in $B$, and let $C_{0}(\mathsf{X},B)$
be the ideal in $C_{b}(\mathsf{X},B)$ comprised of the functions
vanishing at infinity. These $C^{*}$-algebras give rise to the endofunctors
$\mathfrak{C}^{b}_{\mathsf{X}}\coloneqq C_{b}(\mathsf{X},\mmph)$
and $\C_{\mathsf{X}}\coloneqq C_{0}(\mathsf{X},\mmph)$.

Every continuous (resp. proper\footnote{A map is called proper if the inverse image of a pre-compact set is
pre-compact.} continuous) map $f\colon\mathsf{X}\to\mathsf{Y}$ induces the natural
transformation $\mathfrak{C}^{b}_{f}\colon\mathfrak{C}^{b}_{\mathsf{Y}}\Rightarrow\mathfrak{C}^{b}_{\mathsf{X}}$
(resp. $\C_{f}\colon\C_{\mathsf{Y}}\Rightarrow\C_{\mathsf{X}}$) given
by the formula $g\mapsto g\circ f$.
\end{defn}

It is easy to show that the endofunctors $\fM_{n}$, $\fK$, $\mathfrak{C}^{b}_{\mathsf{X}}$,
$\C_{\mathsf{X}}$ are decent.

\subsection{Good endofunctors}
\begin{defn}\label{def:labeled-endof}
A \emph{labeled endofunctor} is a pair
$(F,\{\kappa^{A,F}\}_{A\in\in\Cstar})$, where $F\in\in\text{\ensuremath{\Cstar}}^{\text{\ensuremath{\Cstar}}}$
and $\{\kappa^{A,F}\}$ is a family of natural transformations $\left\{ \kappa^{A,F}\colon\mathfrak{O}_{A}F\Rightarrow F\mathfrak{O}_{A}\right\} _{A\in\in\Cstar}$
referred to as the \emph{labeling}, such that for every $*$-homomorphism
$\varphi\colon A\to A'$, the following diagrams commute:
\begin{equation}
\begin{tikzcd}[ampersand replacement=\&]
{\mathfrak{O}_{A}F} \&  \& {F\mathfrak{O}_{A}} \\
{\mathfrak{O}_{A'}F} \&  \& {F\mathfrak{O}_{A'},}
\arrow["\kappa^{A,F}", Rightarrow, from=1-1, to=1-3]
\arrow["\mathfrak{O}_{\varphi}F"', Rightarrow, from=1-1, to=2-1]
\arrow["F\mathfrak{O}_{\varphi}", Rightarrow, from=1-3, to=2-3]
\arrow["\kappa^{A',F}", Rightarrow, from=2-1, to=2-3]
\end{tikzcd}\label{eq:kappa-def-2}
\end{equation}
\begin{alignat}{2}
\begin{tikzcd}[column sep=large, ampersand replacement=\&]  
{\mathfrak{O}_{A}\mathfrak{O}_{A'}F} \& {\mathfrak{O}_{A}F\mathfrak{O}_{A'}} \& {F\mathfrak{O}_{A}\mathfrak{O}_{A'}} \\
{\mathfrak{O}_{A\otimes A'}F} \&  \& {F\mathfrak{O}_{A\otimes A'}\mmdc}
\arrow["\mathfrak{O}_{A}\kappa^{A',F}", Rightarrow, from=1-1, to=1-2]
\arrow["\mathfrak{O}_{A,A'}^{2}F"', Rightarrow, from=1-1, to=2-1]
\arrow["\kappa^{A,F}\mathfrak{O}_{A'}", Rightarrow, from=1-2, to=1-3]
\arrow["F\mathfrak{O}_{A,A'}^{2}", Rightarrow, from=1-3, to=2-3]
\arrow["\kappa^{A\otimes A',F}", Rightarrow, from=2-1, to=2-3]
\end{tikzcd} & \qquad & \begin{tikzcd}[ampersand replacement=\&]
{\Id F} \& {F} \& {F\Id} \\
{\mathfrak{O}_{\mathbb{C}}F} \&  \& {F\mathfrak{O}_{\mathbb{C}}.}
\arrow["\mathfrak{O}^{0}F"', Rightarrow, from=1-1, to=2-1]
\arrow[" ="', Rightarrow, from=1-2, to=1-1]
\arrow[" =", Rightarrow, from=1-2, to=1-3]
\arrow["F\mathfrak{O}^{0}", Rightarrow, from=1-3, to=2-3]
\arrow["\kappa^{\mathbb{C},F}", Rightarrow, from=2-1, to=2-3]
\end{tikzcd}\label{eq:kappa-def22}
\end{alignat}
The composition of labeled endofunctors is defined as 
\[
(F,\{\kappa^{A,F}\}_{A})(G,\{\kappa^{A,G}\}_{A})\coloneqq(FG,\{\mathfrak{O}_{A}FG\xRightarrow{\kappa^{A,F}G}F\mathfrak{O}_{A}G\xRightarrow{F\kappa^{A,G}}FG\mathfrak{O}_{A}\}_{A}).
\]
It is associative by the left-hand side diagram in~\ref{eq:kappa-def22}.

Later on we shall often abuse notation and abbreviate $(F,\{\kappa^{A,F}\}_{A\in\in\Cstar})$
to~$F$. 
\end{defn}

\begin{defn}\label{def:lblnattr}
Let $(F,\{\kappa^{A,F}\}_{A})$ and $(G,\{\kappa^{A,G}\}_{A})$
be two labeled endofunctors. A \emph{labeled natural transformation}
\[
\alpha\colon(F,\{\kappa^{A,F}\}_{A})\Rightarrow(G,\{\kappa^{A,G}\}_{A})
\]
is a natural transformation $\alpha\colon F\Rightarrow G$ of the
underlying endofunctors, such that for all $A\in\in\Cstar$, the following
diagram commutes: 
\[
\begin{tikzcd}[ampersand replacement=\&]
{\mathfrak{O}_{A}F} \&  \& {F\mathfrak{O}_{A}} \\
{\mathfrak{O}_{A}G} \&  \& {G\mathfrak{O}_{A}.}
\arrow["\kappa^{A,F}", Rightarrow, from=1-1, to=1-3]
\arrow["\mathfrak{O}_{A}\alpha"', Rightarrow, from=1-1, to=2-1]
\arrow["\alpha\mathfrak{O}_{A}", Rightarrow, from=1-3, to=2-3]
\arrow["\kappa^{A,G}"', Rightarrow, from=2-1, to=2-3]
\end{tikzcd}
\]
\end{defn}

\begin{example}\label{exa:labelings}
Let $D$ be a $C^{*}$-algebra and let $\mathsf{X}$
and $\mathsf{Y}$ be locally compact Hausdorff spaces. The following
natural transformation turn $\Id$, $\fM_{n}$, $\fK$, $\mathfrak{C}^{b}_{\mathsf{X}}$,
$\C_{\mathsf{X}}$, $\mathfrak{O}_{D}$ into labeled endofunctors:
\begin{itemize}
\item $\kappa^{A,\Id}\coloneqq\id_{A}\colon A\to A$;
\item $\{\kappa^{A,\fM_{n}}\}$ and $\{\kappa^{A,\fK}\}$ given by the formula
 $a\otimes(m_{i,j})\mapsto(a\otimes m_{i,j})$;
\item $\{\kappa^{A,\mathfrak{C}^{b}_{\mathsf{X}}}\}$ and $\{\kappa^{A,\C_{\mathsf{X}}}\}$
given by the formula $a\otimes f\mapsto[x\mapsto a\otimes f(x)]$;
\item $\{\kappa^{A,\mathfrak{O}_{D}}\}$ given by the formula $a\otimes(d\otimes b)\mapsto d\otimes(a\otimes b)$.
\end{itemize}
For every continuous (resp. proper continuous) function $f\colon\mathsf{X}\to\mathsf{Y}$,
the natural transformation $\mathfrak{C}^{b}_{f}\colon\mathfrak{C}^{b}_{\mathsf{Y}}\Rightarrow\mathfrak{C}^{b}_{\mathsf{X}}$
(resp. $\C_{f}\colon\C_{\mathsf{Y}}\Rightarrow\C_{\mathsf{X}}$) is
obviously labeled.
\end{example}

\begin{defn}\label{cor:lefc-2cat}
Define the category $\LEFC$ with objects labeled
endofunctors and arrows labeled natural transformations. It naturally
carries the structure of a strict monoidal category $(\LEFC,\singlecdot,\Id)$
where $\cdot$ stands for the composition of labeled endofunctors
(which we usually abbreviate to concatenation). \end{defn}

\begin{defn}
A labeled endofunctor $(F,\{\kappa^{A,F}\}_{A\in\in\Cstar})$, where
$F$ is decent, is called a \emph{good endofunctor}.
\end{defn}

\begin{defn}
Define the category $\GEFC$ with objects good endofunctors and arrows
labeled natural transformations. $\GEFC$ is complete and carries
a structure of a strict monoidal category $(\GEFC,\singlecdot,\Id)$.
\end{defn}

Good endofunctors contain an important subcategory consisting of endofunctors
which behave like tensor multiplication. They are defined as follows. 
\begin{defn}
We say that $T\in\in\GEFC$ is \emph{tensor-type} if there is an
isomorphism $\omega_{T}\colon T\Rightarrow\mathfrak{O}_{T\mathbb{C}}$
such that 
\begin{equation}
\omega_{T}\mathbb{C}=(\rho^{\otimes}_{T\mathbb{C}})^{-1}\colon T\mathbb{C}\to T\mathbb{C}\otimes\mathbb{C}.\label{eq:tt}
\end{equation}
It is clear that the composition of two tensor-type endofunctors is
again tensor-type; hence, such endofunctors form a monoidal subcategory
in $\GEFC$.
\end{defn}

\begin{example}\label{exa:tt}
It is easy to show that $\fM_{n}$, $\fK$, $\C_{\mathsf{X}}$
discussed in Section~\ref{subsec:defc-examples} are tensor type.
Note also that $\mathfrak{C}^{b}_{\mathsf{X}}$ is not tensor-type,
unless $\mathsf{X}$ is compact.
\end{example}

\begin{defn}
Let $T,F\in\in\GEFC$, with $T$ tensor-type. We define the natural
transformation $\kappa^{T,F}$ as the composite
\[
\kappa^{T,F}\colon TF\xRightarrow{\omega_{T}F}\mathfrak{O}_{T\mathbb{C}}F\xRightarrow{\kappa^{T\mathbb{C},F}}F\mathfrak{O}_{T\mathbb{C}}\xRightarrow{F\omega^{-1}_{T}}FT.
\]
One can show, that $\kappa^{T,F}$ defined above is a labeled natural
transformation. \end{defn}

\begin{rem}\label{rem:tt-commute}
Tensor-type endofunctors commute. Specifically,
$\kappa^{T,S}\colon TS\Rightarrow ST$ is a natural isomorphism for
every two tensor-type endofunctors $T$ and~$S$.
\end{rem}

\subsection{Quotients of good endofunctors}
\begin{defn}\label{def:quot}
Let $F,G\in\in\EFC$ and let $\iota\colon F\Rightarrow G$
be a componentwise ideal inclusion. Define the \emph{quotient endofunctor}
$G/F$ at the object $B$ as $(G/F)B\coloneqq GB/FB$ and at the morphism
$\varphi\colon A\to B$ by a diagram chase on
\begin{equation}
\begin{tikzcd}[ampersand replacement=\&]
{0} \& {FA} \& {GA} \& {(G/F)A} \& {0} \\
{0} \& {FB} \& {GB} \& {(G/F)B} \& {0.}
\arrow[from=1-1, to=1-2]
\arrow["\iota A", from=1-2, to=1-3]
\arrow["F\varphi", from=1-2, to=2-2]
\arrow[from=1-3, to=1-4]
\arrow["G\varphi", from=1-3, to=2-3]
\arrow[from=1-4, to=1-5]
\arrow[dashed, from=1-4, to=2-4]
\arrow[from=2-1, to=2-2]
\arrow["\iota B", from=2-2, to=2-3]
\arrow[from=2-3, to=2-4]
\arrow[from=2-4, to=2-5]
\end{tikzcd}\label{eq:diagram-search-quotient-functor}
\end{equation}
Note that the diagram above also defines the componentwise quotient
projection $G\Rightarrow G/F$.

\end{defn}

\begin{lem}\label{lem:quotient-criterion}
For a componentwise ideal inclusion
$\iota\in\GEFC(F,G)$, there exists a unique family 
\[
\{\kappa^{A,G/F}\colon\mathfrak{O}_{A}(G/F)\Rightarrow(G/F)\mathfrak{O}_{A}\}_{A\in\in\Cstar}
\]
 satisfying the following properties:
\begin{itemize}
\item $\{\kappa^{A,G/F}\}_{A}$ turns $G/F$ into a good endofunctor;
\item the quotient projection $G\Rightarrow G/F$ is a labeled natural transformation.
\end{itemize}
\end{lem}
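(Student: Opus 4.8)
The plan is to build the family $\{\kappa^{A,G/F}\}_A$ by a componentwise diagram chase, exactly parallel to the way $G/F$ was defined on morphisms in Definition~\ref{def:quot}, and then to read off uniqueness, naturality, and the coherence axioms from the observation that the relevant whiskerings of the quotient projection $q\colon G\Rightarrow G/F$ are componentwise (hence, by Lemma~\ref{lem:cw-epic-epic}, honest) epimorphisms.

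\emph{Construction and uniqueness.} Fix $A\in\in\Cstar$. Since $\mathfrak{O}_A$ is exact (Lemma~\ref{lem:OA-decent-exact}), applying it to the short exact sequence $0\to FB\to GB\to (G/F)B\to 0$ yields a short exact sequence $0\to A\otimes FB\to A\otimes GB\to A\otimes(G/F)B\to 0$, so the component of $\mathfrak{O}_A q$ at $B$ is surjective with kernel $A\otimes FB$. Because $\iota$ is a labeled natural transformation, the square of Definition~\ref{def:admissible} shows that $\kappa^{A,G}B$ carries the ideal $A\otimes FB\subseteq(\mathfrak{O}_A G)B$ into the ideal $F(A\otimes B)\subseteq(G\mathfrak{O}_A)B$; hence the composite $(\mathfrak{O}_A G)B\xrightarrow{\kappa^{A,G}B}(G\mathfrak{O}_A)B\xrightarrow{(q\mathfrak{O}_A)B}((G/F)\mathfrak{O}_A)B$ annihilates $A\otimes FB$ and therefore factors uniquely through the quotient $A\otimes(G/F)B=(\mathfrak{O}_A(G/F))B$. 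Call the resulting $*$-homomorphism $\kappa^{A,G/F}B$; it is the unique arrow with $\kappa^{A,G/F}\circ\mathfrak{O}_A q=q\mathfrak{O}_A\circ\kappa^{A,G}$. Naturality of $\kappa^{A,G/F}$ in $B$ follows from that of $\kappa^{A,G}$ together with surjectivity of $\mathfrak{O}_A q$, since two arrows out of $(\mathfrak{O}_A(G/F))B$ agreeing after precomposition with the epimorphism $(\mathfrak{O}_A q)B$ coincide. The same cancellation shows that \emph{any} family making $q$ labeled must satisfy the displayed identity, giving uniqueness; and $q\colon G\Rightarrow G/F$ is labeled by construction.

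\emph{Coherence axioms.} To see that $(G/F,\{\kappa^{A,G/F}\})$ is a labeled endofunctor one must check diagrams (\ref{eq:kappa-def-2}) and (\ref{eq:kappa-def22}). For each of these I would precompose the $G/F$-version with the appropriate whiskering of $q$ — for naturality in $A$ one precomposes with $\mathfrak{O}_A q$, for the $\mathfrak{O}^2$-square with $\mathfrak{O}_A\mathfrak{O}_{A'}q$, and for the $\mathfrak{O}^0$-square with $\mathfrak{O}_{\mathbb{C}}q$ — each of which is componentwise surjective by Lemma~\ref{lem:OA-decent-exact}, hence epic by Lemma~\ref{lem:cw-epic-epic}. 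Using the defining identity for $\kappa^{(-),G/F}$, the interchange law, and naturality of $q$, of $\mathfrak{O}^2$ and of $\mathfrak{O}^0$, the precomposed diagram collapses to the corresponding diagram for the good endofunctor $G$, which commutes; epi-cancellation then yields commutativity of the $G/F$-diagram.

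\emph{Decency of $G/F$ (the main obstacle).} Finally one must verify that $G/F$ is decent, so that it is a good endofunctor. The condition $(G/F)\bbzero\cong\bbzero$ is immediate from $F\bbzero\cong\bbzero\cong G\bbzero$. For a pullback $P\coloneqq B_1\oplus_B B_2$ along a split epimorphism $\varphi_2\colon B_2\to B$ with section $s$, note that $F\varphi_2$ and $G\varphi_2$ are again split epimorphisms (functors preserve split epimorphisms), and check that the three pullbacks $P_F\coloneqq FB_1\oplus_{FB}FB_2$, $P_G\coloneqq GB_1\oplus_{GB}GB_2$, $P_{G/F}\coloneqq(G/F)B_1\oplus_{(G/F)B}(G/F)B_2$ sit in a short exact sequence $0\to P_F\to P_G\to P_{G/F}\to 0$; the only step that is not purely formal is surjectivity of $P_G\to P_{G/F}$, where for $([u_1],[u_2])\in P_{G/F}$ the defect $z\coloneqq G\varphi_1(u_1)-G\varphi_2(u_2)$ lies in $FB$ and, lifting it along the split epimorphism $F\varphi_2\colon FB_2\to FB$ to some $w\in FB_2$, one gets $(u_1,u_2+w)\in P_G$ mapping to $([u_1],[u_2])$. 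Since $F$ and $G$ are decent, the canonical arrows give isomorphisms $FP\cong P_F$ and $GP\cong P_G$ which, by naturality of those canonical arrows and of $q$, are compatible with the short exact sequence $0\to FP\to GP\to(G/F)P\to 0$; the five lemma then forces the canonical arrow $(G/F)P\to P_{G/F}$ to be an isomorphism. I expect this decency check — in particular pinning down the surjectivity via the section of $F\varphi_2$ — to be the only genuinely non-formal point, everything else reducing to epi-cancellation against properties of $G$.
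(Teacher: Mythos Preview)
The paper does not supply a proof of this lemma: it appears in Section~\ref{sec:prelim}, which is explicitly a review of results from~\cite{makeev_gmaec}, and the statement is recorded there without argument. So there is no in-paper proof to compare against.

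That said, your argument is correct and is the expected one. The construction of $\kappa^{A,G/F}$ by factoring $q\mathfrak{O}_A\circ\kappa^{A,G}$ through the cokernel of $\mathfrak{O}_A\iota$ (using exactness of $\mathfrak{O}_A$ and the labeled-ness of $\iota$) is forced, and your epi-cancellation verification of the axioms~(\ref{eq:kappa-def-2}) and~(\ref{eq:kappa-def22}) via Lemma~\ref{lem:cw-epic-epic} is clean. The decency check is also right: the only non-formal point, as you identify, is surjectivity of $P_G\to P_{G/F}$, and your correction term $w=Fs(z)$ via the section $s$ of $\varphi_2$ handles it; exactness at $P_G$ (which you do not spell out) is automatic from naturality of $\iota$, since an element of $P_G$ mapping to zero in $P_{G/F}$ has both coordinates in the respective $F$-ideals, and the pullback condition in $P_G$ then restricts to the one in $P_F$. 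The five-lemma finish is standard.
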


\begin{lem}\label{lem:abg}
Let $F,G,H\in\in\GEFC$, and let $\alpha$, $\beta$,
$\gamma$ be some natural transformations between the underlying endofunctors
as in the diagram 
\[
\begin{tikzcd}[ampersand replacement=\&]
{F} \& {G} \\
 \& {H}
\arrow["\gamma", Rightarrow, from=1-1, to=1-2]
\arrow["\beta"', Rightarrow, from=1-1, to=2-2]
\arrow["\alpha", Rightarrow, from=1-2, to=2-2]
\end{tikzcd}
\]
with $\alpha$ monic. Then $\gamma$ is labeled whenever $\alpha$
and $\beta$ are.
\end{lem}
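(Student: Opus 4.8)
The plan is to exploit the fact that $\alpha$ is monic in the functor category $\Cstar^{\Cstar}$ (equivalently, componentwise monic) together with the defining compatibility square of a labeled natural transformation (Definition~\ref{def:admissible}). We are given that $\alpha\colon G\Rightarrow H$ and $\beta\colon F\Rightarrow H$ are labeled, that $F,G,H$ are good endofunctors, and that $\beta=\alpha\circ\gamma$ as natural transformations between the underlying endofunctors; we want to show $\gamma\colon F\Rightarrow G$ is labeled, i.e.\ that for every $A\in\in\Cstar$ the square
\[
\begin{tikzcd}[ampersand replacement=\&]
{\mathfrak{O}_{A}F} \&  \& {F\mathfrak{O}_{A}} \\
{\mathfrak{O}_{A}G} \&  \& {G\mathfrak{O}_{A}}
\arrow["\kappa^{A,F}", Rightarrow, from=1-1, to=1-3]
\arrow["\mathfrak{O}_{A}\gamma"', Rightarrow, from=1-1, to=2-1]
\arrow["\gamma\mathfrak{O}_{A}", Rightarrow, from=1-3, to=2-3]
\arrow["\kappa^{A,G}"', Rightarrow, from=2-1, to=2-3]
\end{tikzcd}
\]
commutes.

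First I would form the two composites around this square and post-compose each of them with $\alpha\mathfrak{O}_{A}\colon G\mathfrak{O}_{A}\Rightarrow H\mathfrak{O}_{A}$, which is monic because $\alpha$ is monic and whiskering by $\mathfrak{O}_{A}$ preserves monics (a whiskered monic is componentwise monic, since $(\alpha\mathfrak{O}_{A})B=\alpha(\mathfrak{O}_{A}B)$). It therefore suffices to prove that
\[
\alpha\mathfrak{O}_{A}\circ\gamma\mathfrak{O}_{A}\circ\kappa^{A,F}=\alpha\mathfrak{O}_{A}\circ\kappa^{A,G}\circ\mathfrak{O}_{A}\gamma.
\]
For the left-hand side, use interchange/whiskering to rewrite $\alpha\mathfrak{O}_{A}\circ\gamma\mathfrak{O}_{A}=(\alpha\circ\gamma)\mathfrak{O}_{A}=\beta\mathfrak{O}_{A}$, and then apply the labeled-ness of $\beta$ (its Definition~\ref{def:admissible} square) to get $\beta\mathfrak{O}_{A}\circ\kappa^{A,F}=\kappa^{A,H}\circ\mathfrak{O}_{A}\beta$. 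For the right-hand side, apply the labeled-ness of $\alpha$ to the middle factor, $\alpha\mathfrak{O}_{A}\circ\kappa^{A,G}=\kappa^{A,H}\circ\mathfrak{O}_{A}\alpha$, so that the right-hand side becomes $\kappa^{A,H}\circ\mathfrak{O}_{A}\alpha\circ\mathfrak{O}_{A}\gamma=\kappa^{A,H}\circ\mathfrak{O}_{A}(\alpha\circ\gamma)=\kappa^{A,H}\circ\mathfrak{O}_{A}\beta$. The two sides now agree, so the original square commutes after post-composition with the monic $\alpha\mathfrak{O}_{A}$, hence it commutes.

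This argument is essentially a diagram chase glued from the two labeled-ness squares of $\alpha$ and $\beta$; the only subtlety worth checking carefully is that whiskering a monic natural transformation by $\mathfrak{O}_{A}$ again yields a monic, which follows from Lemma~\ref{lem:cw-epic-epic} together with the componentwise description of whiskering. There is no real obstacle here: once the monic $\alpha\mathfrak{O}_{A}$ is cancelled, everything is forced by the middle interchange law and the hypotheses. Note that decency of $F,G,H$ plays no role in this particular lemma — only that they are labeled endofunctors — and that the statement is the natural-transformation analogue of the familiar fact that a factorization through a monomorphism is unique.
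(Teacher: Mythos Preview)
Your argument is correct and is the natural one: paste the two labeled-ness squares for $\alpha$ and $\beta$ on top of each other, observe that the outer rectangle commutes because $\alpha\circ\gamma=\beta$, and cancel the monic $\alpha\mathfrak{O}_{A}$. The paper does not actually prove this lemma here; Section~\ref{sec:prelim} is a review of results from~\cite{makeev_gmaec}, and Lemma~\ref{lem:abg} is stated without proof. Your diagram chase is exactly the standard argument one would expect.

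One small point of rigor: you invoke ``monic in the functor category $\Cstar^{\Cstar}$ (equivalently, componentwise monic)'' and later say the whiskered $\alpha\mathfrak{O}_{A}$ is monic via Lemma~\ref{lem:cw-epic-epic}. The paper only states the implication componentwise monic $\Rightarrow$ monic (Lemma~\ref{lem:cw-epic-epic}), not the converse. Your chain actually needs the converse once: from $\alpha$ monic you want each component $\alpha(A\otimes B)$ monic, so that $\alpha\mathfrak{O}_{A}$ is componentwise monic and hence monic. That converse holds because $\Cstar$ has pullbacks and monics are detected by the kernel-pair pullback, which in a functor category is computed componentwise; it would be worth saying this explicitly rather than folding it into the parenthetical ``equivalently''.
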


\begin{lem}\label{lem:26}
Let $F_{0},F,F_{1},G\in\in\GEFC$, and let $\iota$,
$\pi$, $\alpha$ be natural transformations between the underlying
endofunctors as in the solid part of the diagram 
\[
\begin{tikzcd}[ampersand replacement=\&]
{0} \& {F_{0}} \& {F} \& {F_{1}} \& {0} \\
 \&  \& {G}
\arrow[Rightarrow, from=1-1, to=1-2]
\arrow["\iota", Rightarrow, from=1-2, to=1-3]
\arrow[" 0"', Rightarrow, from=1-2, to=2-3]
\arrow["\pi", Rightarrow, from=1-3, to=1-4]
\arrow["\alpha", Rightarrow, from=1-3, to=2-3]
\arrow[Rightarrow, from=1-4, to=1-5]
\arrow["\beta", Rightarrow, dashed, from=1-4, to=2-3]
\end{tikzcd}
\]
in which the upper row is a componentwise short exact sequence, and
the left-hand triangle commutes. Then:
\begin{itemize}
\item there exists a unique $\beta\colon F_{1}\Rightarrow G$ making the
right-hand triangle commutative;
\item if $\iota$, $\pi$ and $\alpha$ are labeled, then so is $\beta$.
\end{itemize}
\end{lem}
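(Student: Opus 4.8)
The statement to prove is Lemma~\ref{lem:26}: given a componentwise short exact sequence $0 \Rightarrow F_0 \xRightarrow{\iota} F \xRightarrow{\pi} F_1 \Rightarrow 0$ in $\GEFC$ together with $\alpha \colon F \Rightarrow G$ satisfying $\alpha \circ \iota = 0$, we must (i) produce a unique $\beta \colon F_1 \Rightarrow G$ with $\beta \circ \pi = \alpha$, and (ii) show $\beta$ is labeled whenever $\iota$, $\pi$, $\alpha$ are. The plan is to handle these two clauses separately, the first by a componentwise universal-property argument and the second by invoking Lemma~\ref{lem:abg}.

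\emph{Existence and uniqueness of $\beta$.} I would work componentwise. Fix $B \in\in \Cstar$. Since the row is a componentwise short exact sequence, $\pi B \colon FB \to F_1 B$ is a surjective $*$-homomorphism with kernel exactly $\im(\iota B) = F_0 B$. The hypothesis $\alpha \circ \iota = 0$ gives $\alpha B$ vanishing on $F_0 B = \ker(\pi B)$, so by the universal property of the quotient $C^*$-algebra $FB/\ker(\pi B) \cong F_1 B$ there is a unique $*$-homomorphism $\beta B \colon F_1 B \to GB$ with $\beta B \circ \pi B = \alpha B$. Naturality of $\beta$ in $B$ then follows by the usual diagram chase: given $\varphi\colon B \to B'$, both $\beta B' \circ F_1\varphi \circ \pi B$ and $G\varphi \circ \beta B \circ \pi B$ equal $G\varphi \circ \alpha B$ (using naturality of $\pi$ and of $\alpha$), and since $\pi B$ is epic we may cancel it to conclude $\beta B' \circ F_1 \varphi = G\varphi \circ \beta B$. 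Thus $\beta \colon F_1 \Rightarrow G$ is a well-defined natural transformation of underlying endofunctors, and it is the unique one making the right-hand triangle commute (uniqueness already holds componentwise, hence globally since $\pi$ is epic by Lemma~\ref{lem:cw-epic-epic}).

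\emph{$\beta$ is labeled.} Here I would apply Lemma~\ref{lem:abg} directly to the triangle
\[
\begin{tikzcd}[ampersand replacement=\&]
{F} \& {F_1} \\
 \& {G}
\arrow["\pi", Rightarrow, from=1-1, to=1-2]
\arrow["\alpha"', Rightarrow, from=1-1, to=2-2]
\arrow["\beta", Rightarrow, from=1-2, to=2-2]
\end{tikzcd}
\]
which commutes by construction. Lemma~\ref{lem:abg} (with its roles played by $F \rightsquigarrow F$, $G \rightsquigarrow F_1$, $H \rightsquigarrow G$, and $\gamma \rightsquigarrow \pi$, $\alpha \rightsquigarrow \beta$, $\beta \rightsquigarrow \alpha$ in the notation of that lemma) says that in a commuting triangle with the ``hypotenuse toward $H$'' monic, the top edge is labeled whenever the other two are. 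But wait — the orientation is not quite right: in Lemma~\ref{lem:abg} the monic arrow is $\alpha\colon G \to H$, i.e. the leg we are \emph{not} solving for, whereas here the arrow we want to conclude labeled is $\beta$, and the arrow into $G$ from $F_1$ is precisely $\beta$ itself, which is not assumed monic. So I would instead use the \emph{dual} packaging: observe that $\pi$ is (componentwise, hence by Lemma~\ref{lem:cw-epic-epic}) epic, and run the cancellation argument for the labeling square of $\beta$ by hand. Concretely, for each $A$ one forms the two candidate squares relating $\mathfrak{O}_A(\cdot)$ and $(\cdot)\mathfrak{O}_A$ for $F_1$, precomposes everything with $\mathfrak{O}_A \pi$ (which is epic since $\mathfrak O_A$ is exact, hence preserves epimorphisms), and checks that both routes around the $\beta$-square agree after this precomposition — using that $\pi$ and $\alpha$ are labeled and that $\kappa^{A,F_1}\circ \mathfrak{O}_A\pi = \pi\mathfrak{O}_A \circ \kappa^{A,F}$. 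Cancelling the epi $\mathfrak{O}_A\pi$ yields the labeling square for $\beta$.

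\emph{Main obstacle.} The genuinely delicate point is the second clause: making sure the epi one cancels against is the correct whiskered transformation and that it really is epic in $\EFC$. This rests on two facts from the excerpt — that $\mathfrak{O}_A = \mathfrak{O}_A \otimes \mmph$ is exact (Lemma~\ref{lem:OA-decent-exact}), so $\mathfrak{O}_A\pi$ is componentwise surjective, hence epic by Lemma~\ref{lem:cw-epic-epic} — and the compatibility of $\kappa^{A,-}$ with the short exact sequence, which is exactly the content of $\pi$ being a labeled natural transformation (Definition~\ref{def:admissible}). Once these are in place the diagram chase is routine; the only care needed is bookkeeping the whiskerings $\mathfrak{O}_A\pi$ versus $\pi\mathfrak{O}_A$ on the two sides of the square. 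Alternatively, if Lemma~\ref{lem:abg} can be invoked in the stated orientation after all — by noting $F_0 \Rightarrow F$ is monic and re-deriving $\beta$'s labeling from the labeling of $\iota$ and a complementary argument — that would shorten the proof, but I expect the direct epi-cancellation to be the cleanest route.
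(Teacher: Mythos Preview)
Your proposal is correct. The paper itself does not prove this lemma --- it is stated without proof in Section~\ref{sec:prelim} as one of the results recalled from~\cite{makeev_gmaec} --- so there is no in-paper argument to compare against. Your componentwise construction of $\beta$ via the universal property of quotients is standard, and your epi-cancellation argument for the labeling (precomposing the $\beta$-square with $\mathfrak{O}_A\pi$, which is componentwise epic since $\mathfrak{O}_A$ is exact by Lemma~\ref{lem:OA-decent-exact}, hence epic by Lemma~\ref{lem:cw-epic-epic}) is sound and is indeed the natural dual of Lemma~\ref{lem:abg}; you were right to observe that Lemma~\ref{lem:abg} does not apply directly in the stated orientation.
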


\subsection{Well-pointed endofunctors}
\begin{defn}
We say that an endofunctor $F\in\in\DEFC$ is \emph{well-pointed
}if for every locally compact Hausdorff space $\mathsf{X}$, the natural
transformation $F\C_{\mathsf{X}}\Rightarrow\prod_{\mathsf{X}}F$ induced
by $\{F\ev_{x}\}_{x\in\mathsf{X}}$ is monic, where $\ev_{x}\colon\C_{\mathsf{X}}\Rightarrow\Id$
is the evaluation at $x\in\mathsf{X}$.
\end{defn}

\begin{example}\label{exa:wp}
Here we give some examples of well-pointed endofunctors:
\begin{itemize}
\item $\mathfrak{C}^{b}_{\mathsf{X}}$ and $\C_{\mathsf{X}}$ for any locally
compact Hausdorff space $\mathsf{X}$;
\item $\mathfrak{O}_{A}$ for any $A\in\in\Cstar$;
\item $\fM_{n}$ and $\fK$.
\end{itemize}
\end{example}

\begin{rem}
Note that the quotients of well-pointed endofunctors need not be well-pointed.
For example, let $f\in\mathfrak{T}I\mathbb{C}\cong C_{b}([1,\infty)\times[0,1])$
be such that 
\begin{alignat*}{2}
\supp(f)\in\left\{ (t,s)\middlepipe t\geq1,~\frac{1}{3t}\leq s\leq\frac{1}{t}\right\}  & \quad\textrm{and}\quad & f\left(t,\frac{1}{2t}\right)=1\textrm{ for all }t\geq1.
\end{alignat*}
Denote by $\widetilde{f}$ the class of $f$ in $\mathfrak{A}I\mathbb{C}$.
Then $\mathfrak{A}\ev_{s}\mathbb{C}(\widetilde{f})=0$ for all $s\in[0,1]$
but $\norm{\widetilde{f}}\geq1$, which implies that $\mathfrak{A}\coloneqq\mathfrak{T}/\mathfrak{T}_{0}$
is not well-pointed.
\end{rem}

\begin{prop}\label{prop:unique-labeling}
If $F\in\in\DEFC$ is a well-pointed
endofunctor, then there is at most one family $\{\kappa^{A,F}\colon\mathfrak{O}_{A}F\Rightarrow F\mathfrak{O}_{A}\}_{A\in\in\Cstar}$
which turns $F$ into a labeled endofunctor.
\end{prop}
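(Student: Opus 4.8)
The plan is to pin down $\kappa^{A,F}$ by three successive reductions: first on $A=\mathbb{C}$ directly from the axioms, then on commutative $A$ using well-pointedness, and finally on arbitrary $A$ via continuous functional calculus. Throughout I use the canonical isomorphism $\iota_{\mathsf{X}}\colon\mathfrak{O}_{C_0(\mathsf{X})}\xRightarrow{\ \sim\ }\C_{\mathsf{X}}$ coming from $C_0(\mathsf{X})\otimes B\cong C_0(\mathsf{X},B)$; note that for $x\in\mathsf{X}$ the character $\epsilon_x\colon C_0(\mathsf{X})\to\mathbb{C}$ corresponds, under $\iota_{\mathsf{X}}$ and $\iota_{\mathrm{pt}}$, to the evaluation $\ev_x\colon\C_{\mathsf{X}}\Rightarrow\Id$ appearing in the definition of well-pointedness. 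Suppose $\{\kappa^{A,F}\}_A$ and $\{\kappa'^{A,F}\}_A$ both turn $F$ into a labeled endofunctor.

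For $A=\mathbb{C}$, the right-hand diagram in \eqref{eq:kappa-def22} forces $\kappa^{\mathbb{C},F}\circ(\mathfrak{O}^0F)=F\mathfrak{O}^0$, and since $\mathfrak{O}^0$ is a natural isomorphism this determines $\kappa^{\mathbb{C},F}=(F\mathfrak{O}^0)\circ(\mathfrak{O}^0F)^{-1}$ uniquely; transported along $\iota_{\mathrm{pt}}$ it becomes the identity of $F$. Now fix a locally compact Hausdorff space $\mathsf{X}$ and transport $\kappa^{C_0(\mathsf{X}),F}$ along $\iota_{\mathsf{X}}$ to a natural transformation $\mu^{\mathsf{X}}\colon\C_{\mathsf{X}}F\Rightarrow F\C_{\mathsf{X}}$. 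Applying the $A$-naturality square \eqref{eq:kappa-def-2} to $\varphi=\epsilon_x$, transporting across the identifications above, and using the value of $\kappa^{\mathbb{C},F}$ just found, one obtains $F\ev_x\circ\mu^{\mathsf{X}}=\ev_xF$ for every $x\in\mathsf{X}$. Well-pointedness supplies a \emph{monic} $m\colon F\C_{\mathsf{X}}\Rightarrow\prod_{\mathsf{X}}F$ with $\pr_x\circ m=F\ev_x$; hence $\pr_x\circ(m\circ\mu^{\mathsf{X}})=F\ev_x\circ\mu^{\mathsf{X}}=\ev_xF$ for all $x$, an expression not involving $\mu^{\mathsf{X}}$. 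So $m\circ\mu^{\mathsf{X}}$ is determined, and since $m$ is monic, $\mu^{\mathsf{X}}$ is determined; equivalently, $\kappa^{C_0(\mathsf{X}),F}=\kappa'^{C_0(\mathsf{X}),F}$ for every $\mathsf{X}$.

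To treat arbitrary $A$, let $B\in\in\Cstar$ and let $a\in A$ be positive. Continuous functional calculus gives a $*$-homomorphism $\phi_a\colon C_0((0,\infty))\to A$, $f\mapsto f(a)$, and there is $g\in C_0((0,\infty))$ with $g(t)=t$ on $[0,\|a\|]$, so $\phi_a(g)=a$. Evaluating the $A$-naturality square \eqref{eq:kappa-def-2} for $\varphi=\phi_a$ at $B$ and at $g\otimes\xi$ (for $\xi\in FB$) gives
\[
\kappa^{A,F}B(a\otimes\xi)=F(\phi_a\otimes B)\bigl(\mu^{(0,\infty)}B(g\otimes\xi)\bigr),
\]
whose right-hand side is independent of the labeling by the previous step. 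Since positive elements span $A$, the elementary tensors $a\otimes\xi$ with $a\geq0$ have dense linear span in $A\otimes B$; and $\kappa^{A,F}B$, being a $*$-homomorphism, is norm-continuous. Therefore $\kappa^{A,F}B=\kappa'^{A,F}B$, and as $A$ and $B$ were arbitrary, $\kappa^{A,F}=\kappa'^{A,F}$.

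The genuinely substantive step is the last one: although $A$ need not be commutative, naturality of the labeling along the functional-calculus maps $C_0((0,\infty))\to A$, combined with density of positive elements, reduces everything to the commutative algebras, where well-pointedness does the work. The two diagram chases — matching $\epsilon_x$ with $\ev_x$ across $\mathfrak{O}_{C_0(\mathsf{X})}\cong\C_{\mathsf{X}}$, and extracting the displayed formula for $\kappa^{A,F}B(a\otimes\xi)$ — are routine once the identifications are in place, and I expect careful bookkeeping of these isomorphisms (rather than any real difficulty) to be the only delicate part.
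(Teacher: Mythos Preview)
Your argument is correct. The paper itself does not prove this proposition: Section~\ref{sec:prelim} is explicitly a review of results from~\cite{makeev_gmaec}, and Proposition~\ref{prop:unique-labeling} is stated there without proof, so there is no in-paper argument to compare against.

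On its own merits, your three-step reduction is sound. The unit axiom in~\eqref{eq:kappa-def22} pins down $\kappa^{\mathbb{C},F}$; naturality~\eqref{eq:kappa-def-2} along the characters $\epsilon_x\colon C_0(\mathsf{X})\to\mathbb{C}$ together with the monic $F\C_{\mathsf{X}}\Rightarrow\prod_{\mathsf{X}}F$ then determines $\kappa^{C_0(\mathsf{X}),F}$; and naturality along the functional-calculus maps $\phi_a\colon C_0((0,\infty))\to A$ fixes $\kappa^{A,F}B$ on the dense span of elementary tensors $a\otimes\xi$ with $a\geq 0$. The only points worth tightening in a final write-up are the bookkeeping of the identifications $\mathfrak{O}_{C_0(\mathsf{X})}\cong\C_{\mathsf{X}}$ and $\mathfrak{O}_{\mathbb{C}}\cong\Id$ (which you already flag), and the observation that density in the \emph{maximal} tensor product is what is needed and indeed holds for the algebraic tensor product.
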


\begin{prop}\label{prop:wp-lbl}
Let $F,G\in\in\GEFC$ such that the underlying
endofunctor of $G$ is well-pointed. Then every natural transformation
$\alpha\colon F\Rightarrow G$ between the underlying endofunctors
is labeled.
\end{prop}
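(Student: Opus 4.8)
\emph{Approach.} I want to verify directly the defining square of Definition~\ref{def:admissible}: for every $A\in\in\Cstar$ the two composites $\mathfrak{O}_AF\Rightarrow G\mathfrak{O}_A$, namely $(\alpha\mathfrak{O}_A)\circ\kappa^{A,F}$ and $\kappa^{A,G}\circ(\mathfrak{O}_A\alpha)$, must coincide. The strategy is: first reduce to the case of commutative $A$, and then exploit the well-pointedness of $G$ by testing against the point evaluations. This mirrors the argument behind Proposition~\ref{prop:unique-labeling}.

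\emph{Step 1: reduction to commutative $A$.} For a self-adjoint $a\in A$ let $\iota_a\colon C^*(a)\hookrightarrow A$ be the inclusion; the algebra $C^*(a)$ is commutative. Assume the square of Definition~\ref{def:admissible} holds with $C^*(a)$ in place of $A$. Precomposing both composites with the whiskering $\mathfrak{O}_{\iota_a}F$ and chasing the resulting diagram, using the naturality square~\eqref{eq:kappa-def-2} for $\iota_a$ (for $F$ and for $G$), the naturality of $\alpha$ at the $*$-homomorphisms $\mathfrak{O}_{\iota_a}B=\iota_a\otimes\id_B$, and the assumed square for $C^*(a)$, one obtains $(\alpha\mathfrak{O}_A)\circ\kappa^{A,F}\circ(\mathfrak{O}_{\iota_a}F)=\kappa^{A,G}\circ(\mathfrak{O}_A\alpha)\circ(\mathfrak{O}_{\iota_a}F)$. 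At each object $B$, the images of the maps $\iota_a\otimes\id_{FB}$, as $a$ ranges over the self-adjoint elements of $A$, contain all elementary tensors $a\otimes x\in A\otimes FB$ and hence generate $A\otimes FB$ as a $C^*$-algebra; therefore the two $*$-homomorphisms $A\otimes FB\to G(A\otimes B)$ agree. (We compare the two maps only on a generating set, so no injectivity of $\iota_a\otimes\id$ is needed.) It thus suffices to treat $A=C_0(\mathsf{X})$ with $\mathsf{X}$ locally compact Hausdorff, for which $\mathfrak{O}_{A}$ is canonically isomorphic to $\C_{\mathsf{X}}$.

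\emph{Step 2: the commutative case via well-pointedness of $G$.} Fix $\mathsf{X}$ and an object $B$; the square at $B$ now compares two $*$-homomorphisms $C_0(\mathsf{X},FB)\to G(C_0(\mathsf{X},B))$. Since $G$ is well-pointed, the natural transformation $G\C_{\mathsf{X}}\Rightarrow\prod_{\mathsf{X}}G$ induced by $\{G\ev_x\}_{x\in\mathsf{X}}$ is monic, so it is enough to check that the two $*$-homomorphisms become equal after composing with $G(\ev_xB)$ for every $x\in\mathsf{X}$. On the $\alpha$-first composite, naturality of $\alpha$ at the evaluation $\ev_xB\colon C_0(\mathsf{X},B)\to B$ rewrites $(G\ev_xB)\circ\alpha(C_0(\mathsf{X},B))$ as $\alpha B\circ F\ev_xB$; and applying~\eqref{eq:kappa-def-2} to the character $C_0(\mathsf{X})\to\mathbb{C}$, $f\mapsto f(x)$, together with the unit axiom~\eqref{eq:kappa-def22}, yields the identity $F\ev_xB\circ\kappa^{C_0(\mathsf{X}),F}B=\ev_x(FB)$ (up to the canonical isomorphisms $\mathfrak{O}_{\mathbb{C}}\cong\Id$, $\mathfrak{O}_{C_0(\mathsf{X})}\cong\C_{\mathsf{X}}$); the same holds for $G$. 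Substituting, the $\alpha$-first composite followed by $G(\ev_xB)$ becomes the map $f\mapsto\alpha B(f(x))$; and since $\mathfrak{O}_{C_0(\mathsf{X})}\alpha$ corresponds, under $\mathfrak{O}_{C_0(\mathsf{X})}\cong\C_{\mathsf{X}}$, to postcomposition with $\alpha B$, the $\kappa$-first composite followed by $G(\ev_xB)$ becomes $f\mapsto(\alpha B\circ f)(x)=\alpha B(f(x))$ as well. The two postcompositions agree for all $x$, so by well-pointedness of $G$ the square commutes, which completes the proof.

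\emph{Main obstacle.} The real content is Step~1: the reduction to commutative $A$ by pulling back along the embeddings $C^*(a)\hookrightarrow A$ and observing that their tensor amplifications $\iota_a\otimes\id_{FB}$ already generate $A\otimes FB$; I expect this to be the only nonmechanical move. Everything else is a diagram chase powered by well-pointedness of $G$. The only mildly delicate bookkeeping is the auxiliary identity $F\ev_x\circ\kappa^{C_0(\mathsf{X}),F}=\ev_xF$ relating the labeling to point evaluations, which is routine from the axioms in Definition~\ref{def:labeled-endof}, as are the canonical identifications $\mathfrak{O}_{C_0(\mathsf{X})}\cong\C_{\mathsf{X}}$ and $\mathfrak{O}_{\mathbb{C}}\cong\Id$.
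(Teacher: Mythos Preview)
The paper itself does not include a proof of this proposition: Section~\ref{sec:prelim} is explicitly a review of results from~\cite{makeev_gmaec}, and Proposition~\ref{prop:wp-lbl} is stated there without argument. So there is no in-paper proof to compare against.

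That said, your argument is correct and is the natural one. The two-step structure---reduce to commutative $A$ via the inclusions $C^*(a)\hookrightarrow A$ for self-adjoint $a$, then in the commutative case post-compose with the point evaluations $G\ev_x$ and invoke well-pointedness of $G$---is exactly how one would expect such a statement to be proved, and it closely parallels the mechanism behind Proposition~\ref{prop:unique-labeling}. The diagram chase in Step~1 goes through as you describe (using \eqref{eq:kappa-def-2} for both $F$ and $G$, naturality of $\alpha$, and the interchange $(\mathfrak{O}_{\iota_a}G)\circ(\mathfrak{O}_{C^*(a)}\alpha)=(\mathfrak{O}_A\alpha)\circ(\mathfrak{O}_{\iota_a}F)$), and the density/generation claim is fine since elementary tensors with self-adjoint first factor linearly span a dense set. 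In Step~2 the key auxiliary identity $F\ev_x\circ\kappa^{C_0(\mathsf{X}),F}=\ev_x F$ is indeed immediate from \eqref{eq:kappa-def-2} applied to the character at $x$ together with the unit axiom in \eqref{eq:kappa-def22}. One cosmetic remark: when you write that the images ``contain all elementary tensors $a\otimes x$'', strictly speaking this holds for self-adjoint $a$; the general case follows by linearity, which you implicitly use when passing to ``generate $A\otimes FB$''.
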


\begin{defn}
Let $\mathcal{C}$ and $\mathcal{C}'$ be complete categories. We
say that a functor $F\colon\mathcal{C}\to\mathcal{C}'$ is
\begin{itemize}
\item \emph{product-separating} if $F\prod_{j}B_{j}\xrightarrow{\lc}\prod_{j}FB_{j}$
is a monomorphism for any family $\{B_{j}\}$ of objects in~$\mathcal{C}$;
\item \emph{monic-preserving} if it preserves monomorphisms.
\end{itemize}
\end{defn}

\begin{example}
$\mathfrak{C}^{b}_{\mathsf{X}}$, $\C_{\mathsf{X}}$, $\fM_{n}$ and
$\fK$ are obviously product-separating and monic-preserving. Note
also that $\mathfrak{O}_{A}$ is not monic-preserving in general~\cite[page 254]{Ped_pullback99}.
\end{example}

\begin{lem}\label{lem:psmp-compos}
Product-separating monic-preserving endofunctors
in $\DEFC$ are well-pointed and closed under composition. \end{lem}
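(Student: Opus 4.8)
The statement has two independent parts: (i) a product-separating monic-preserving endofunctor $F$ is well-pointed; (ii) the class of product-separating monic-preserving endofunctors is closed under composition. The plan is to treat these separately, starting with (i), which is where the content lies.

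For part (i), fix a locally compact Hausdorff space $\mathsf{X}$ and unwind the definition of well-pointedness: we must show the natural transformation $F\C_{\mathsf{X}}\Rightarrow\prod_{\mathsf{X}}F$ induced by the family $\{F\ev_{x}\}_{x\in\mathsf{X}}$ is monic. The key observation is that this transformation factors as
\[
F\C_{\mathsf{X}}\xRightarrow{F\eta}F\textstyle\prod_{\mathsf{X}}\Id\xRightarrow{\lc}\textstyle\prod_{\mathsf{X}}F,
\]
where $\eta\colon\C_{\mathsf{X}}\Rightarrow\prod_{\mathsf{X}}\Id$ is the transformation induced by $\{\ev_{x}\}_{x\in\mathsf{X}}$ (so that $\pr_{x}\circ\eta=\ev_{x}$, and applying $F$ then composing with $\lc$ recovers each leg $F\ev_{x}$). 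Now $\eta$ is componentwise monic: for a $C^{*}$-algebra $B$, the map $C_{0}(\mathsf{X},B)\to\prod_{x\in\mathsf{X}}B$ sending $f$ to $(f(x))_{x}$ is injective because a continuous function vanishing at every point is zero. By Lemma~\ref{lem:cw-epic-epic} (componentwise monic implies monic), $\eta$ is monic, hence each component $\eta B$ is a monomorphism in $\Cstar$; since $F$ is monic-preserving, $F\eta$ is componentwise monic and therefore monic. The transformation $\lc$ is monic since $F$ is product-separating. A composite of monomorphisms is a monomorphism, so the factored transformation is monic, which is exactly what well-pointedness requires.

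For part (ii), let $F,G$ be product-separating and monic-preserving; I claim $GF$ is too. Monic-preservation of $GF$ is immediate: $F$ preserves monics, then $G$ preserves the result. For product-separation, consider a family $\{B_j\}$ and the canonical arrow $GF\prod_j B_j\xrightarrow{\lc}\prod_j GFB_j$. Using the naturality of the $\lc$-construction (the $j$th leg of $\lc$ being $G\pr_j\circ F$ composed appropriately), this arrow factors through $G$ applied to the canonical arrow for $F$ followed by the canonical arrow for $G$:
\[
GF\textstyle\prod_j B_j\xrightarrow{G(\lc^{F})}G\textstyle\prod_j FB_j\xrightarrow{\lc^{G}}\textstyle\prod_j GFB_j .
\]
Here $\lc^{F}\colon F\prod_j B_j\to\prod_j FB_j$ is monic since $F$ is product-separating, so $G(\lc^{F})$ is monic since $G$ is monic-preserving; and $\lc^{G}$ is monic since $G$ is product-separating. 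The composite is monic, so $GF$ is product-separating. One should verify the factorization by checking it agrees on each leg $\pr_j$, which is a routine diagram chase using the defining property~(\ref{eq:canar2}) of the canonical arrows.

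The main obstacle is bookkeeping rather than conceptual: one must be careful that the ``$\prod_{\mathsf{X}}$'' appearing in the definition of well-pointed and the ``$\prod_j$'' in product-separating are the same kind of limit (a product over a discrete index set), and that the induced map from $\{F\ev_x\}$ genuinely coincides with the composite $\lc\circ F\eta$ rather than merely being related to it — this is precisely the naturality/uniqueness clause in Definition~\ref{def:ab}. Once that identification is pinned down, both parts follow formally from Lemma~\ref{lem:cw-epic-epic} and the two hypotheses.
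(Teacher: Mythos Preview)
Your proof is correct. The paper does not actually supply a proof of this lemma---Section~\ref{sec:prelim} is a review of results from~\cite{makeev_gmaec} and states Lemma~\ref{lem:psmp-compos} without argument---but the approach you give (factoring the well-pointedness map as $\lc\circ F\eta$ and the canonical arrow for $GF$ as $\lc^{G}\circ G(\lc^{F})$, then invoking the two hypotheses on each factor) is the natural one and is presumably what the cited reference does. One minor wording issue: when you write ``$\eta$ is monic, hence each component $\eta B$ is a monomorphism in $\Cstar$'' the implication runs the wrong way---you already have componentwise injectivity directly, and Lemma~\ref{lem:cw-epic-epic} is only needed \emph{after} applying $F$ to pass from componentwise monic to monic in the functor category.
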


\subsection{Homotopies of natural transformations}

Introduce\footnote{The endofunctor $I$ may look similar to that discussed in the introduction,
but being endowed with a labeling (see Example~\ref{exa:labelings})
it now represents a slightly different concept.} the good endofunctor $I\coloneqq\C_{[0,1]}$, which
we call the \emph{cylinder functor}, and define the natural transformation
$\ev_{t}\colon I\Rightarrow\Id$ of evaluation at $t\in[0,1]$.

\begin{defn}
We call two labeled natural transformations $\gamma_{0},\gamma_{1}\in\GEFC(F,G)$
\emph{homotopic} (written $\gamma_{0}\simeq\gamma_{1}$) if there
is $\Gamma\in\GEFC(F,GI)$ such that the following diagram commutes
for~$j=0,1$:
\[
\begin{tikzcd}[ampersand replacement=\&]
{F} \& {GI} \\
 \& {G.}
\arrow["\gamma_{j}"', Rightarrow, from=1-1, to=2-2]
\arrow["\Gamma", Rightarrow, from=1-1, to=1-2]
\arrow["G\ev_{j}", Rightarrow, from=1-2, to=2-2]
\end{tikzcd}
\]
It can be shown that $\simeq$ is a monoidal congruence\footnote{A \emph{monoidal congruence} is an equivalence relation on parallel
arrows of a monoidal category which is respected by composition and
monoidal composition.} on $\GEFC$, and we shall denote by $[\gamma]$ the homotopy class
of $\gamma\in\GEFC$. 
\end{defn}

\begin{defn}
Introduce the strict monoidal category $\hGEFC\coloneqq\GEFC/\simeq$,
which we call the \emph{homotopy category of natural transformations}.
It has objects good endofunctors, morphisms homotopy
classes of labeled natural transformations, and composition and monoidal
composition given respectively by the formulas
\begin{alignat*}{3}
[\alpha]\circ[\beta] & \coloneqq[\alpha\circ\beta], & \qquad\qquad & [\alpha][\beta] & \coloneqq[\alpha\beta].
\end{alignat*}
\end{defn}

\begin{lem}\label{lem:stronger-homot}
Let $F,G,H\in\GEFC$, and let $\alpha\in\GEFC(F,GIH)$
such that
\[
\begin{tikzcd}[ampersand replacement=\&]
{F} \& {GIH} \\
 \& {GH}
\arrow["\alpha_{j}"', Rightarrow, from=1-1, to=2-2]
\arrow["\alpha", Rightarrow, from=1-1, to=1-2]
\arrow["G\ev_{j}H", Rightarrow, from=1-2, to=2-2]
\end{tikzcd}
\]
commutes for~$j=0,1$. Then $\alpha_{0}\simeq\alpha_{1}$.
\end{lem}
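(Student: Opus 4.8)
The plan is to transport the cylinder from its ``inner'' position in $GIH$ to the ``outer'' position in $GHI$ by means of the commutation natural transformation coming from the tensor-type structure of $I=\C_{[0,1]}$, and then read off the required homotopy.

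Recall from Example~\ref{exa:tt} that $I$ is tensor-type, so for the good endofunctor $H$ we have the natural transformation $\kappa^{I,H}\colon IH\Rightarrow HI$, defined as $H\omega_I^{-1}\circ\kappa^{I\mathbb C,H}\circ\omega_I H$. It is labeled: $\omega_I$ is a labeled isomorphism, $\kappa^{I\mathbb C,H}$ is labeled by Proposition~\ref{prop:kappa-laleled}, and labeled natural transformations are closed under composition and whiskering. Set
\[
\Gamma\ \coloneqq\ \Bigl(F\xRightarrow{\,\alpha\,}GIH\xRightarrow{\,G\kappa^{I,H}\,}GHI\Bigr).
\]
Since $\alpha$ and $G\kappa^{I,H}$ are labeled, $\Gamma\in\GEFC(F,GHI)$. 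I then claim that $\Gamma$ exhibits $\alpha_0\simeq\alpha_1$, i.e.\ that $GH\ev_j\circ\Gamma=\alpha_j$ for $j=0,1$. Using $GH\ev_j\circ\Gamma=G\bigl(H\ev_j\circ\kappa^{I,H}\bigr)\circ\alpha$ and $\alpha_j=G(\ev_j H)\circ\alpha$, this reduces to the identity of natural transformations $IH\Rightarrow H$
\[
H\ev_j\circ\kappa^{I,H}=\ev_j H\qquad(j=0,1).
\]

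For the last identity, observe that $\ev_j=\C_{c_j}$, where $c_j\colon\{*\}\to[0,1]$ sends $*$ to $j$; this map is continuous and proper, and $\C_{\{*\}}=\Id$. The identity is thus the compatibility of $\kappa^{-,H}$ with the map $c_j$, which follows from the naturality square (\ref{eq:kappa-def-2}) of $\kappa^{A,H}$ in $A$ once one unwinds the definition of $\kappa$ for tensor-type inputs: concretely, $\ev_j$ corresponds under $\omega_I$ to $\mathfrak O_{e_j}$ with $e_j\colon C([0,1])\to\mathbb C$ the evaluation $g\mapsto g(j)$, and (\ref{eq:kappa-def-2}) applied to $e_j$ together with $\kappa^{\mathbb C,H}\circ\mathfrak O^0 H=H\mathfrak O^0$ (the right-hand diagram of (\ref{eq:kappa-def22}), giving $\kappa^{\Id,H}=\id_H$) yields $H\ev_j\circ\kappa^{I,H}=\kappa^{\Id,H}\circ\ev_j H=\ev_j H$. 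Alternatively this is simply the naturality of $\kappa$ with respect to labeled natural transformations between tensor-type endofunctors, as established in~\cite{makeev_gmaec}.

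The genuine content, and the step I expect to demand the most care, is precisely this identity $H\ev_j\circ\kappa^{I,H}=\ev_j H$: it asserts that the commutation $\kappa^{I,H}$ intertwines the two evaluations, and everything else — that $\Gamma$ is well defined, labeled, and has the right faces — is formal bookkeeping with labeled natural transformations and whiskering.
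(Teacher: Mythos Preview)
Your proof is correct and follows the natural approach: transport the cylinder past $H$ via $\kappa^{I,H}$ and verify the endpoint identity $H\ev_j\circ\kappa^{I,H}=\ev_jH$ from the naturality square~(\ref{eq:kappa-def-2}) together with the unit axiom~(\ref{eq:kappa-def22}). The present paper does not actually prove Lemma~\ref{lem:stronger-homot}; it is one of the results recalled from~\cite{makeev_gmaec} in the preliminaries, so there is no ``paper's own proof'' to compare against here, but your argument is exactly the expected one. One small point worth making explicit for completeness: to conclude $\Gamma\in\GEFC(F,GHI)$ you need $\kappa^{I,H}$ labeled, and while $\kappa^{I\mathbb C,H}$ is labeled by Proposition~\ref{prop:kappa-laleled}, the labeledness of $\omega_I$ and $\omega_I^{-1}$ is not literally among the recalled statements---it follows immediately from Proposition~\ref{prop:wp-lbl}, since both $I$ and $\mathfrak O_{I\mathbb C}$ are well-pointed.
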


\subsection{\label{subsec:stab}Stabilization}

Recall the endofunctors $\fM_{n}$ and $\fK$ from Section~\ref{subsec:defc-examples},
and define the $C^{*}$-algebra of $n\textrm{-by-}n$-matrices with
complex entries $\oM_{n}\coloneqq\fM_{n}\mathbb{C}$, and the $C^{*}$-algebra
of compact operators on an infinite-dimensional Hilbert space $\oK\coloneqq\fK\mathbb{C}$.
Let us introduce the following $*$-homomorphisms: 
\begin{itemize}
\item $\widehat{\iota}_{00}\colon\mathbb{C}\to\oK\colon z\mapsto\diag(z,0,0,\dots)$
is the corner embedding;
\item $\widehat{\theta}\colon\oK\otimes\oK\to\oK$ is the isomorphism induced
by a unitary isomorphism 
\[
l_{2}(\mathbb{N})\otimes l_{2}(\mathbb{N})\xrightarrow{\cong}l_{2}(\mathbb{N});
\]
\item $\widehat{\theta}_{n}\colon\oM_{n}\otimes\oK\to\oK$ is the stability
isomorphism induced by a unitary isomorphism 
\[
l_{2}(\{1,\dots,n\})\otimes l_{2}(\mathbb{N})\xrightarrow{\cong}l_{2}(\mathbb{N});
\]
\item $\widehat{\mu}\colon\oK\oplus\oK\xrightarrow{}\oM_{2}\otimes\oK\xrightarrow{\widehat{\theta}_{2}}\oK$,
in which the first map is the obvious diagonal inclusion.
\end{itemize}
These $*$-homomorphisms give rise to the natural transformations
\begin{alignat*}{2}
 & \theta_{n}\colon\fM_{n}\fK\Rightarrow\fK, & \qquad\qquad & \theta\colon\fK\fK\Rightarrow\fK,\\
 & \iota_{00}\colon\Id\Rightarrow\fK,\hspc 3 &  & \mu\colon\fK\oplus\fK\xRightarrow{}\fK,
\end{alignat*}
where, for example, $\theta_{n}$ is given by \[
\fM_{n}\fK\xRightarrow{\cong}\mathfrak{O}_{\oM_{n}}\mathfrak{O}_{\oK}\xRightarrow{\mathfrak{O}^{2}}\mathfrak{O}_{\oM_{n}\otimes\oK}\xRightarrow{\mathfrak{O}_{\widehat{\theta}_{n}}}\mathfrak{O}_{\oK}\xRightarrow{\cong}\fK,
\]
and $\theta$, $\mu$, and $\iota_{00}$ are defined similarly. 

In fact, we shall not care about the exact formulas for these natural
transformations, except for $\iota_{00}$, which is given at components
as the ordinary corner embedding:
\[
\iota_{00}B\colon B\to\fK B\colon b\mapsto\diag(b,0,0,\dots).
\] \subsection{Generalized morphisms}

\begin{defn}
Let $A$, $B$ be $C^{*}$-algebras, let $F\in\in\GEFC$, and let
$\varphi_{0},\varphi_{1}\colon A\to FB$ be two $*$-homomorphisms.
We say that they are $F$-\emph{homotopic} (written $\varphi_{0}\simeq_{F}\varphi_{1}$)
if there is a $*$-homomorphism $\Phi\colon A\to FIB$ such that the
following diagram commutes for $j=0,1$:
\[
\begin{tikzcd}[ampersand replacement=\&]
{A} \& {FIB} \\
 \& {FB.}
\arrow["\Phi", from=1-1, to=1-2]
\arrow["\varphi_{j}"', from=1-1, to=2-2]
\arrow["F\ev_{j}B", from=1-2, to=2-2]
\end{tikzcd}
\]
\end{defn}

One can show that $F$-homotopy is an equivalence relation. We denote
by 
\[
\left[A,F,B\right]\coloneqq\hom(A,FB)/\simeq_{F}
\]
 the set of $F$-homotopy classes of $*$-homomorphisms from $A$
to $FB$, and write $[\varphi]$ for the class of $\varphi\colon A\to FB$
in $[A,F,B]$.

Define the binary operation $+$ on $[A,F\fK,B]$ by setting $[\varphi]+[\psi]\coloneqq[\varphi\uu\psi]$,
where $\varphi\uu\psi$ is the following composite:
\[
\varphi\uu\psi\colon A\xrightarrow{\Delta}A\oplus A\xrightarrow{\varphi\oplus\psi}F\fK B\oplus F\fK B=(F\fK\oplus F\fK)B\xrightarrow{\lc^{-1}B}F(\fK\oplus\fK)B\xrightarrow{F\mu B}F\fK B,
\]
where $\Delta$ is as in Definition~\ref{def:Delta} (which is explicitly
defined by the formula $\Delta(a)=(a,a)$). One can show that this
binary operation turns $[A,F\fK,B]$ into a commutative monoid whose
elements will be referred to as \emph{generalized morphisms}.
\begin{thm}\label{thm:bullet-bilin-assoc}
There is a well-defined associative
bilinear operation
\[
\bullet\colon[B,G\fK,C]\times[A,F\fK,B]\to[A,FG\fK,C]\colon([\psi],[\varphi])\mapsto[\psi]\bullet[\varphi]\coloneqq[\psi\bullet\varphi],
\]
where $\psi\bullet\varphi$ is the following composite:
\begin{equation}
\psi\bullet\varphi\colon A\xrightarrow{\varphi}F\fK B\xrightarrow{F\fK\psi}F\fK G\fK C\xrightarrow{F\kappa^{\fK,G}\fK C}FG\fK^{2}C\xrightarrow{FG\theta C}FG\fK C.\label{bullet}
\end{equation}
Furthermore, the corner embedding $\iota_{00}\colon\Id\Rightarrow\fK$
gives rise to the neutral element, i.e., 
\begin{alignat*}{2}
[\varphi]\bullet[\iota_{00}A]=[\varphi] & \qquad\textrm{and}\qquad & [\iota_{00}B]\bullet[\varphi]=[\varphi].
\end{alignat*}

\end{thm}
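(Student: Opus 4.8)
The plan is to verify in turn that formula~(\ref{bullet}) descends to homotopy classes, that the resulting operation is associative and bilinear, and that $\iota_{00}$ is a two-sided unit. Throughout, the components of $\theta\colon\fK^{2}\Rightarrow\fK$, of $\iota_{00}\colon\Id\Rightarrow\fK$, of the evaluations $\ev_{j}\colon I\Rightarrow\Id$, and of all the relevant $\kappa$'s are honest $*$-homomorphisms, so the real content is confined to the compatibility of the labelings $\kappa^{\fK,-}$ and $\kappa^{I,-}$ (both well-defined since $I=\C_{[0,1]}$ and $\fK$ are tensor-type, Example~\ref{exa:tt}) with this coherence data: that $\theta$ slides past every $\kappa^{\fK,-}$, that $\kappa^{\fK,GH}=(G\kappa^{\fK,H})\circ(\kappa^{\fK,G}H)$ and $\kappa^{\fK,\Id}=\id_{\fK}$, that $\ev_{j}$ slides past $\kappa^{I,G}$ and $\kappa^{I,\fK}$ (i.e.\ $G\ev_{j}\circ\kappa^{I,G}=\ev_{j}G$, and likewise with $\fK$ in place of $G$), and that $\kappa^{\fK,F}\circ(\iota_{00}F)=F\iota_{00}$. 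Each of these is obtained by unwinding the definitions of Section~\ref{sec:prelim} or is recorded in~\cite{makeev_gmaec}.

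For homotopy invariance I treat the two slots separately. Fixing $\varphi\colon A\to F\fK B$ and a homotopy $\Psi\colon B\to G\fK IC$ between $\psi_{0},\psi_{1}$, the composite $(FG\theta IC)\circ(F\kappa^{\fK,G}\fK IC)\circ(F\fK\Psi)\circ\varphi\colon A\to FG\fK IC$ has evaluations $\psi_{0}\bullet\varphi$ and $\psi_{1}\bullet\varphi$, by naturality of $\theta$ and of $\kappa^{\fK,G}$ alone; hence $\psi_{0}\bullet\varphi\simeq_{FG\fK}\psi_{1}\bullet\varphi$. Fixing instead $\psi\colon B\to G\fK C$ and a homotopy $\Phi\colon A\to F\fK IB$ between $\varphi_{0},\varphi_{1}$ is the crux: the cylinder produced by $\Phi$ now sits on the wrong side of $G\fK$ and must be relocated. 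I propose the composite
\[
A\xrightarrow{\Phi}F\fK IB\xrightarrow{F\fK I\psi}F\fK IG\fK C\xrightarrow{F\fK\kappa^{I,G}\fK C}F\fK GI\fK C\xrightarrow{F\fK G\kappa^{I,\fK}C}F\fK G\fK IC\xrightarrow{F\kappa^{\fK,G}\fK IC}FG\fK^{2}IC\xrightarrow{FG\theta IC}FG\fK IC,
\]
where $I\psi$ is the functoriality of $I=\C_{[0,1]}$ applied to $\psi$. Applying $FG\fK\ev_{j}C$ and sliding the evaluation rightwards — past $\theta$ and $\kappa^{\fK,G}$ by ordinary naturality, past $\kappa^{I,\fK}$ and $\kappa^{I,G}$ by the identities listed above, and past $I\psi$ by naturality of $\ev_{j}$ at $\psi$ — recovers $\psi\bullet\varphi_{j}$. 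Thus $\psi\bullet\varphi_{0}\simeq_{FG\fK}\psi\bullet\varphi_{1}$, and combining the two halves with transitivity of $\simeq_{FG\fK}$ shows $\bullet$ is well defined on classes.

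Associativity is a longer but purely formal chase comparing $(\chi\bullet\psi)\bullet\varphi$ with $\chi\bullet(\psi\bullet\varphi)$ as arrows $A\to FGH\fK D$; it uses the composition rule for $\kappa^{\fK,-}$, the sliding of $\theta$ past $\kappa^{\fK,-}$, and the homotopy-associativity $\theta\circ(\theta\fK)\simeq\theta\circ(\fK\theta)$ of the stabilization — the last only up to homotopy, which is exactly what is needed since $\bullet$ lives on homotopy classes. Bilinearity is similar: writing $\psi\bullet(-)$ as post-composition with the $*$-homomorphism $(FG\theta C)(F\kappa^{\fK,G}\fK C)(F\fK\psi)$, and $(-)\bullet\varphi$ as pre-composition with $\varphi$ followed by a map independent of the first slot, one checks that both commute with the operation $\uu$ using naturality of the canonical arrows $\lc$, of $\mu$ and of the diagonal $\Delta$, together with the fact that decent endofunctors preserve binary products (Definition~\ref{def:decentFunctor}).

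For the unit laws, expand $\varphi\bullet\iota_{00}A$ from~(\ref{bullet}) with the second-slot functor equal to $\Id$; naturality of $\iota_{00}$ at $\varphi$ together with $\kappa^{\fK,F}\circ(\iota_{00}F)=F\iota_{00}$ collapses it to $F(\theta\circ\iota_{00}\fK)B\circ\varphi$, which is $F\fK$-homotopic to $\varphi$ by Lemma~\ref{lem:iotatheta} whiskered by $F$ (here one uses that $\simeq$ is a monoidal congruence). Symmetrically $\iota_{00}B\bullet\varphi$ collapses, via $\kappa^{\fK,\Id}=\id_{\fK}$, to $F(\theta\circ\fK\iota_{00})B\circ\varphi$, and $\theta\circ\fK\iota_{00}\simeq\id_{\fK}$ as well — by the argument symmetric to Lemma~\ref{lem:iotatheta}, or because the two corner inclusions $\fK\to\fK^{2}$ agree up to homotopy after composition with $\theta$. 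The one genuinely nontrivial step in the whole proof is the relocation of the cylinder in the second paragraph, which forces us past $G$ via the merely-natural $\kappa^{I,G}$ and so uses the interplay between $\kappa^{I,-}$ and the labeled transformations $\ev_{j}$; everything else is an organized chase of naturality squares resting on the coherence of $\fK$ and the formalism of~\cite{makeev_gmaec}.
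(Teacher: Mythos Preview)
The paper does not actually prove this theorem: it appears in Section~\ref{sec:prelim}, which is explicitly a ``review of the main definitions and results'' from~\cite{makeev_gmaec}, and Theorem~\ref{thm:bullet-bilin-assoc} is stated there without proof. So there is no in-paper argument to compare against; the reference~\cite{makeev_gmaec} is where the proof lives.

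That said, your sketch is sound and tracks what such a proof must look like. The identification of the one genuinely nontrivial step --- relocating the cylinder $I$ past $G\fK$ via $\kappa^{I,G}$ and $\kappa^{I,\fK}$ when proving homotopy invariance in the second slot --- is exactly right, and your verification that $G\ev_{j}\circ\kappa^{I,G}=\ev_{j}G$ follows from the labeling axiom~(\ref{eq:kappa-def-2}) applied to $\hat{\ev}_{j}\colon C([0,1])\to\mathbb{C}$ together with~(\ref{eq:kappa-def22}). The unit computations are correct: $[\varphi]\bullet[\iota_{00}A]$ reduces to $F(\theta\circ\iota_{00}\fK)B\circ\varphi$ via naturality of $\iota_{00}$ and the identity $\kappa^{\fK,F}\circ(\iota_{00}F)=F\iota_{00}$, and then Lemma~\ref{lem:iotatheta} finishes it; the other unit uses $\theta\circ\fK\iota_{00}\simeq\id_{\fK}$, which holds by the same rotation-of-projections argument. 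Associativity and bilinearity are, as you say, long but formal diagram chases resting on the composition law $\kappa^{\fK,GH}=(G\kappa^{\fK,H})\circ(\kappa^{\fK,G}H)$ and the homotopy-coassociativity of~$\theta$. If you want to promote this from a sketch to a full proof, the places that need spelling out are precisely those coherence identities for $\kappa^{\fK,-}$ and the interaction of $\mu$ with $\kappa^{\fK,G}$ needed for bilinearity in the first slot; everything else is bookkeeping.
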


\begin{defn}\label{def:alpha-star}
Let $F,G\in\in\GEFC$, and let $\alpha\colon F\Rightarrow G$
be a natural transformation (not necessarily labeled) between the
underlying endofunctors. Denote by $\alpha_{*}$ the following map
of sets: 
\[
\alpha_{*}\colon[A,F,B]\to[A,G,B]\colon[A\xrightarrow{\varphi}FB]\mapsto[A\xrightarrow{\varphi}FB\xrightarrow{\alpha B}GB].
\]
\end{defn}

\begin{lem}\label{lem:weak-rangle}
The following statements hold: 
\begin{enumerate}[label=\textup{(\roman*)}]
\item The map $\alpha_{*}$ is well-defined;
\item If $\alpha\simeq\alpha'$, then $\alpha_{*}=\alpha_{*}'$;
\item $(\beta\circ\alpha)_{*}=\beta_{*}\circ\alpha{}_{*}$ for all $\alpha\colon F\Rightarrow G$
and $\beta\colon G\Rightarrow H$.
\end{enumerate}
\end{lem}

\begin{defn}
For $A,B\in\in\Cstar$ and $\alpha\in\GEFC(F,G)$, let us define the
map
\[
\left\langle \alpha\right\rangle \coloneqq(\alpha\fK)_{*}\colon[A,F\fK,B]\to[A,G\fK,B]\colon[\varphi]\mapsto[\alpha\fK B\circ\varphi].
\]
\end{defn}

\begin{thm}\label{thm:58}
The following statements hold: 
\begin{enumerate}[label=\textup{(\roman*)}]
\item\label{enu:mm}The map $\left\langle \alpha\right\rangle $ is a morphism
of monoids;
\item\label{enu:hmt}The map $\alpha\mapsto\left\langle \alpha\right\rangle $
is homotopy invariant, i.e., $\left\langle \alpha\right\rangle =\left\langle \beta\right\rangle $
whenever $\alpha\simeq\beta$;
\item\label{enu:v}If $\alpha\in\GEFC(F,G)$ and $\beta\in\GEFC(G,H)$,
then $\left\langle \beta\right\rangle \circ\left\langle \alpha\right\rangle =\left\langle \beta\circ\alpha\right\rangle $;
\item\label{enu:nat}If $\alpha\in\GEFC(F,F')$ and $\beta\in\GEFC(G,G')$,
then the following diagram commutes:
\[
\begin{tikzcd}[ampersand replacement=\&]
{[B,G\fK,C]\times[A,F\fK,B]} \&  \& {[A,FG\fK,C]} \\
{[B,G'\fK,C]\times[A,F'\fK,B]} \&  \& {[A,F'G'\fK,C].}
\arrow["\bullet", no head, from=1-1, to=1-3]
\arrow["\left\langle \beta\right\rangle \times\left\langle \alpha\right\rangle "', from=1-1, to=2-1]
\arrow["\left\langle \alpha\beta\right\rangle ", from=1-3, to=2-3]
\arrow["\bullet", no head, from=2-1, to=2-3]
\end{tikzcd}
\]
\end{enumerate}
\end{thm}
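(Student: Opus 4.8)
I would handle the four assertions in order of increasing difficulty. Before anything, note that $\langle\alpha\rangle$ is well defined on $F\fK$-homotopy classes: if $\Phi\colon A\to F\fK IB$ realizes $\varphi_0\simeq_{F\fK}\varphi_1$, then $\alpha\fK IB\circ\Phi$ realizes $\alpha\fK B\circ\varphi_0\simeq_{G\fK}\alpha\fK B\circ\varphi_1$, by naturality of the whiskering $\alpha\fK\colon F\fK\Rightarrow G\fK$ at the $*$-homomorphism $\ev_jB\colon IB\to B$. Assertion~(\ref{enu:v}) is then immediate: whiskering by $\fK$ and evaluating at $B$ both preserve vertical composition, so $\langle\beta\rangle\langle\alpha\rangle[\varphi]=[\beta\fK B\circ\alpha\fK B\circ\varphi]=[(\beta\circ\alpha)\fK B\circ\varphi]=\langle\beta\circ\alpha\rangle[\varphi]$.

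For~(\ref{enu:hmt}), I would choose $\Gamma\in\GEFC(F,GI)$ witnessing $\alpha\simeq\beta$, so that $G\ev_j\circ\Gamma$ is $\alpha$ for $j=0$ and $\beta$ for $j=1$. Given $\varphi\colon A\to F\fK B$, I claim $G\kappa^{I,\fK}B\circ\Gamma\fK B\circ\varphi\colon A\to G\fK IB$ is a $G\fK$-homotopy between $\alpha\fK B\circ\varphi$ and $\beta\fK B\circ\varphi$; here $\kappa^{I,\fK}\colon I\fK\Rightarrow\fK I$ is the tensor-type swap (Remark~\ref{rem:tt-commute}). Indeed, $G\fK\ev_jB\circ G\kappa^{I,\fK}B=G\ev_j\fK B$, because $\fK\ev_j\circ\kappa^{I,\fK}=\ev_j\fK$ — a consequence of the naturality of $\kappa^{\fK,-}$ applied to the labeled natural transformation $\ev_j\colon I\Rightarrow\Id$, together with $\kappa^{\fK,\Id}=\id$ — and $G\ev_j\fK B\circ\Gamma\fK B=(G\ev_j\circ\Gamma)\fK B$. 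Hence $\langle\alpha\rangle[\varphi]=\langle\beta\rangle[\varphi]$ for every $\varphi$, so $\langle\alpha\rangle=\langle\beta\rangle$.

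For~(\ref{enu:mm}), recall $\varphi\uu\psi=F\mu B\circ\lc^{-1}B\circ(\varphi\oplus\psi)\circ\Delta$, and push $\alpha\fK B$ through this composite. Naturality of $\alpha$ at $\mu B$ turns $\alpha\fK B\circ F\mu B$ into $G\mu B\circ\alpha(\fK\oplus\fK)B$. Post-composing with the projections $\pr_k$ and using the defining property of $\lc$ (Definition~\ref{def:ab}) together with naturality of $\alpha$ at $\pr_kB$ shows that the square with horizontal arrows $\lc$ (for $F$, resp.\ $G$) and vertical arrows $\alpha(\fK\oplus\fK)$, $\alpha\fK\oplus\alpha\fK$ commutes; since $\lc$ is invertible ($F$ and $G$ are decent), this gives $\alpha(\fK\oplus\fK)B\circ\lc^{-1}B=\lc^{-1}B\circ(\alpha\fK\oplus\alpha\fK)B$. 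Finally $(\alpha\fK\oplus\alpha\fK)B\circ(\varphi\oplus\psi)=(\alpha\fK B\circ\varphi)\oplus(\alpha\fK B\circ\psi)$ by functoriality of $\oplus$, and $\Delta$ is untouched. Combining, $\alpha\fK B\circ(\varphi\uu\psi)=(\alpha\fK B\circ\varphi)\uu(\alpha\fK B\circ\psi)$, so $\langle\alpha\rangle$ is additive; it also sends the class of the zero homomorphism (the neutral element) to itself, since a $*$-homomorphism maps $0$ to $0$.

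The substantive part is~(\ref{enu:nat}), and this is where I expect the real work. Expanding both composites with the formula~\eqref{bullet} for $\bullet$, applying functoriality of $F'\fK$ to $\beta\fK C\circ\psi$, and using naturality of $\alpha\fK$ at $\psi$ to move $\alpha$ past $F'\fK\psi$, the equality to prove reduces to the identity of natural transformations
\[
(\alpha\beta)\fK\circ FG\theta\circ F\kappa^{\fK,G}\fK \;=\; F'G'\theta\circ F'\kappa^{\fK,G'}\fK\circ F'\fK\beta\fK\circ\alpha\fK G\fK
\]
from $F\fK G\fK$ to $F'G'\fK$. Writing $\alpha\beta=\alpha G'\circ F\beta$ and applying the interchange law three times, $\alpha$ migrates to the outermost position on the right-hand side, and the claim reduces further to
\[
\beta\fK\circ G\theta\circ\kappa^{\fK,G}\fK \;=\; G'\theta\circ\kappa^{\fK,G'}\fK\circ\fK\beta\fK
\]
as transformations $\fK G\fK\Rightarrow G'\fK$. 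This follows from two inputs: labeledness of $\beta$ gives $\kappa^{\fK,G'}\circ\fK\beta=\beta\fK\circ\kappa^{\fK,G}$ (Definition~\ref{def:admissible} at the $C^*$-algebra $\fK\mathbb{C}$, transported along the isomorphism $\omega_{\fK}$ used to define $\kappa^{\fK,-}$), hence $\kappa^{\fK,G'}\fK\circ\fK\beta\fK=\beta\fK\fK\circ\kappa^{\fK,G}\fK$ after whiskering by $\fK$; and $G'\theta\circ\beta\fK\fK=\beta\fK\circ G\theta$ by interchange with $\theta\colon\fK\fK\Rightarrow\fK$. Substituting rewrites the right-hand side as the left. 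All equalities in (\ref{enu:v})–(\ref{enu:mm}) hold on the nose at the level of representing $*$-homomorphisms, so the only homotopy needed is the one in the well-definedness remark; the main obstacle — and the likeliest source of error — is the whiskering bookkeeping in~(\ref{enu:nat}), i.e.\ tracking which copy of $\fK$, $G$, or $G'$ each whiskering acts on.
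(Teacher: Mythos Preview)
The paper does not prove Theorem~\ref{thm:58}: it is stated in Section~\ref{sec:prelim} as part of the review of results from~\cite{makeev_gmaec}, with no proof supplied here. So there is nothing to compare against in this paper.

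That said, your argument is correct and is essentially the natural one. The well-definedness check, part~(\ref{enu:v}), and part~(\ref{enu:mm}) are routine naturality chases and you have them right (including the $\lc$ compatibility square, which holds because $\lc$ is characterized by its legs). For part~(\ref{enu:hmt}) your use of $\kappa^{I,\fK}$ to turn a $GI\fK$-valued homotopy into a $G\fK I$-valued one is exactly the intended mechanism; the identity $\fK\ev_j\circ\kappa^{I,\fK}=\ev_j\fK$ is indeed a consequence of $\ev_j$ being labeled together with the definition of $\kappa^{T,F}$ for tensor-type $T$. For part~(\ref{enu:nat}) your reduction is clean: first strip $\alpha$ by repeated interchange (naturality of $\alpha$ at each of $G'\theta$, $\kappa^{\fK,G'}\fK$, $\fK\beta\fK$), leaving the identity $\beta\fK\circ G\theta\circ\kappa^{\fK,G}\fK=G'\theta\circ\kappa^{\fK,G'}\fK\circ\fK\beta\fK$; then labeledness of $\beta$ (Definition~\ref{def:admissible} transported along $\omega_{\fK}$) gives $\kappa^{\fK,G'}\fK\circ\fK\beta\fK=\beta\fK\fK\circ\kappa^{\fK,G}\fK$, and interchange with $\theta$ finishes. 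Your self-identified ``likeliest source of error'' --- the whiskering bookkeeping --- is handled correctly.
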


\subsection{Asymptotic adjunction}

We now proceed to define an asymptotic version of generalized morphisms.
\begin{defn}
Let us introduce the good endofunctors $\mathfrak{T}\coloneqq\mathfrak{C}^{b}_{[0,\infty)}$
and $\mathfrak{T}_{0}\coloneqq\mathfrak{C}_{[0,\infty)}$, and the
obvious componentwise ideal inclusion $\mathfrak{T}_{0}\Rightarrow\mathfrak{T}$.
We call the quotient $\mathfrak{A}\coloneqq\mathfrak{T}/\mathfrak{T}_{0}$
(with the labeling as in Lemma~\ref{lem:quotient-criterion}) the
\emph{asymptotic algebra} functor. \end{defn}

\begin{notation}\label{nota:fun-as}
It will be convenient to denote a function $t\mapsto f(t)$
by $\fun_{t}f(t)$. Similarly, for $f\in\mathfrak{T}B$, we denote
by $\as_{t}(f(t))$ the image of $t\mapsto f(t)$ in the quotient
$\mathfrak{A}B=\mathfrak{T}B/\mathfrak{T}_{0}B$.
\end{notation}

\begin{lem}\label{lem:fA-kappa}
The unique\footnote{That $\{\kappa^{A,\mathfrak{T}}\}_{A}$ is unique follows from Example~\ref{exa:wp}
and Proposition~\ref{prop:unique-labeling}.} labeling family $\{\kappa^{A,\mathfrak{T}}\}_{A}$
and the canonical\footnote{That is, a family with the properties in Lemma~\ref{lem:quotient-criterion}.}
labeling family $\{\kappa^{A,\mathfrak{A}}\}_{A}$ are given respectively
by the formulas
\begin{alignat*}{3}
\kappa^{A,\mathfrak{T}}\colon & \mathfrak{O}_{A}\mathfrak{T}\Rightarrow\mathfrak{T}\mathfrak{O}_{A}, & \qquad & \kappa^{A,\mathfrak{T}}B\colon &  & a\otimes\fun_{t}f(t)\mapsto\fun_{t}(a\otimes f(t)),\\
\kappa^{A,\mathfrak{A}}\colon & \mathfrak{O}_{A}\mathfrak{A}\Rightarrow\mathfrak{A}\mathfrak{O}_{A}, & \qquad & \kappa^{A,\mathfrak{A}}B\colon &  & a\otimes\as_{t}f(t)\mapsto\as_{t}(a\otimes f(t)).
\end{alignat*}
\end{lem}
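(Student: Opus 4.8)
The plan is to pin down each of the two labelings by a uniqueness argument and then simply read off its components, rather than verifying the axioms of Definition~\ref{def:labeled-endof} by hand.

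\emph{Step 1: the labeling of $\mathfrak{T}$.} Since $\mathfrak{T}=\mathfrak{C}^{b}_{[0,\infty)}$ is product-separating and monic-preserving (as noted for $\mathfrak{C}^{b}_{\mathsf{X}}$ among the examples above), Lemma~\ref{lem:psmp-compos} shows it is well-pointed, so by Proposition~\ref{prop:unique-labeling} it admits \emph{at most one} labeling. The example recording the labelings of the basic endofunctors already supplies one for $\mathfrak{C}^{b}_{\mathsf{X}}$, namely $a\otimes f\mapsto(x\mapsto a\otimes f(x))$; specialising $\mathsf{X}=[0,\infty)$ and rewriting with the $\fun$-notation, I get $\kappa^{A,\mathfrak{T}}B\colon a\otimes\fun_{t}f(t)\mapsto\fun_{t}(a\otimes f(t))$, and by uniqueness this is the canonical labeling family.

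\emph{Step 2: the labeling of $\mathfrak{A}$.} First I note that $\mathfrak{T}_{0}\Rightarrow\mathfrak{T}$ is a componentwise ideal inclusion \emph{in} $\GEFC$: it is labeled by Proposition~\ref{prop:wp-lbl}, because its target $\mathfrak{T}$ is well-pointed. Lemma~\ref{lem:quotient-criterion} then yields a \emph{unique} labeling $\{\kappa^{A,\mathfrak{A}}\}_{A}$ on $\mathfrak{A}=\mathfrak{T}/\mathfrak{T}_{0}$ for which the componentwise quotient projection $q\colon\mathfrak{T}\Rightarrow\mathfrak{A}$ is labeled; this is by definition the canonical one. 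Unravelling Definition~\ref{def:admissible}, ``$q$ labeled'' says exactly that
\[
\kappa^{A,\mathfrak{A}}\circ\mathfrak{O}_{A}q \;=\; q\mathfrak{O}_{A}\circ\kappa^{A,\mathfrak{T}}\qquad\text{for every }A\in\in\Cstar .
\]
Evaluating this equality at a $C^{*}$-algebra $B$ and applying both sides to an elementary tensor $a\otimes\fun_{t}f(t)\in\mathfrak{O}_{A}\mathfrak{T}B$, I use $\mathfrak{O}_{A}qB(a\otimes\fun_{t}f(t))=a\otimes\as_{t}f(t)$ together with the formula from Step~1 to obtain
\[
\kappa^{A,\mathfrak{A}}B(a\otimes\as_{t}f(t)) \;=\; q\mathfrak{O}_{A}B\bigl(\fun_{t}(a\otimes f(t))\bigr) \;=\; \as_{t}(a\otimes f(t)).
\]
Since $\mathfrak{O}_{A}qB$ is surjective, the elements $a\otimes\fun_{t}f(t)$ span a dense $*$-subalgebra of $\mathfrak{O}_{A}\mathfrak{T}B$, and $\kappa^{A,\mathfrak{A}}B$ is a $*$-homomorphism, this determines $\kappa^{A,\mathfrak{A}}B$ to be the asserted map $a\otimes\as_{t}f(t)\mapsto\as_{t}(a\otimes f(t))$.

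There is essentially no hard step here: the whole argument is ``identify the unique candidate, then compute on generators''. The only point that might look like it needs care is whether the prescription for $\kappa^{A,\mathfrak{A}}$ is well defined on the quotient; but this is automatic, because Lemma~\ref{lem:quotient-criterion} already guarantees a labeling exists, so $q\mathfrak{O}_{A}\circ\kappa^{A,\mathfrak{T}}$ must factor through $\mathfrak{O}_{A}q$. If one prefers to see it directly, it follows from the exactness of $\mathfrak{O}_{A}$ (Lemma~\ref{lem:OA-decent-exact}), which identifies $\ker(\mathfrak{O}_{A}qB)$ with $A\otimes\mathfrak{T}_{0}B$, together with the estimate $\norm{a\otimes g(t)}\le\norm{a}\,\norm{g(t)}\to0$ for $g\in\mathfrak{T}_{0}B$, whence $\fun_{t}(a\otimes g(t))\in\mathfrak{T}_{0}\mathfrak{O}_{A}B$ and $q\mathfrak{O}_{A}B\bigl(\kappa^{A,\mathfrak{T}}B(a\otimes g)\bigr)=0$.
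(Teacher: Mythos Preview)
Your proof is correct and is precisely the kind of unpacking the paper's one-word proof ``Straightforward'' invites: use well-pointedness of $\mathfrak{T}=\mathfrak{C}^{b}_{[0,\infty)}$ to pin down its labeling uniquely via Proposition~\ref{prop:unique-labeling}, then invoke Lemma~\ref{lem:quotient-criterion} for the quotient $\mathfrak{A}$ and read off the formula from the condition that the projection $q$ be labeled. There is nothing to add; the final remark on well-definedness is a nice sanity check but, as you note, redundant given Lemma~\ref{lem:quotient-criterion}.
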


\begin{proof}
Straightforward.
\end{proof}

We introduce the following labeled natural transformations:
\begin{itemize}
\item $\const\colon\Id\Rightarrow\mathfrak{T},~\const B\colon B\to\mathfrak{T}B\colon b\mapsto[t\mapsto b]$;
\item $\as\colon\mathfrak{T}\Rightarrow\mathfrak{A}$ the componentwise
quotient projection;
\item $\alpha\colon\Id\xRightarrow{\const}\mathfrak{T}\xRightarrow{\as}\mathfrak{A}$;
\item $\iota_{00}\colon\Id\Rightarrow\fK$ the corner embedding as in Section~\ref{subsec:stab}.
\end{itemize}
\begin{lem}\label{lem:alpha-A}
 There exists a homotopy $\alpha\mathfrak{A}\simeq\mathfrak{A}\alpha$.
\end{lem}

\begin{proof}
See~\cite[Proposition~2.8 ]{GHT}.
\end{proof}

\begin{defn}
For $A,B\in\in\Cstar$ and $F\in\in\hGEFC$, we introduce the monoid \begin{alignat*}{1}
\bbrackets{A,F,B} & \coloneqq\colim\left([A,F\fK,B]\xrightarrow{\left\langle F\alpha\right\rangle }[A,F\mathfrak{A}\fK,B]\xrightarrow{\left\langle F\alpha\mathfrak{A}\right\rangle }[A,F\mathfrak{A}^{2}\fK,B]\to\cdots\right),
\end{alignat*}
and for a $*$-homomorphism $\varphi\colon A\to F\mathfrak{A}^{n}\fK B$,
denote by $\bbrackets{\varphi}$ its class in $\bbrackets{A,F,B}$.
\end{defn}

\begin{defn}\label{def:asadj}
Let $S,N\in\in\hGEFC$. We say that there is an
\emph{asymptotic adjunction} $S\asadj N$ if there exist two labeled
natural transformations $\eta\colon\Id\Rightarrow NS$ and $\varepsilon\colon SN\Rightarrow\mathfrak{A}\fK$
(which we call the \emph{unit} and \emph{counit}) making the following
diagrams commutative in~$\hGEFC$:
\begin{alignat*}{2}
\begin{tikzcd}[ampersand replacement=\&]
{S} \& {SNS} \\
 \& {\mathfrak{A}\fK S,}
\arrow["S\eta", Rightarrow, from=1-1, to=1-2]
\arrow["\alpha\iota_{00}S"', Rightarrow, from=1-1, to=2-2]
\arrow["\varepsilon S", Rightarrow, from=1-2, to=2-2]
\end{tikzcd} & \qquad\qquad & \begin{tikzcd}[ampersand replacement=\&]
{N} \& {NSN} \\
 \& {N\mathfrak{A}\fK.}
\arrow["\eta N", Rightarrow, from=1-1, to=1-2]
\arrow["N\alpha\iota_{00}"', Rightarrow, from=1-1, to=2-2]
\arrow["N\varepsilon", Rightarrow, from=1-2, to=2-2]
\end{tikzcd}
\end{alignat*}
\end{defn}

\begin{thm}\label{thm:adj2brackets}
Let $A$ and $B$ be $C^{*}$-algebras, let
$S,N\in\in\hGEFC$, and let $S\asadj N$ be an asymptotic adjunction
witnessed by a unit $\eta$ and a counit~$\varepsilon$. Then the
formulas
\begin{align}
\Phi\colon\bbrackets{SA,\Id,B}\to\bbrackets{A,N,B} & \colon\bbrackets{SA\xrightarrow{\varphi}\mathfrak{A}^{n}\fK B}\mapsto\bbrackets{A\xrightarrow{\eta A}NSA\xrightarrow{N\varphi}N\mathfrak{A}^{n}\fK B},\label{eq:mut-izom}\\
\Psi\colon\bbrackets{A,N,B}\to\bbrackets{SA,\Id,B} & \colon\bbrackets{A\xrightarrow{\psi}N\mathfrak{A}^{n}\fK B}\nonumber \\
\mapsto\myllbracket SA\xrightarrow{S\psi}SN\mathfrak{A}^{n}\fK B & \xrightarrow{\varepsilon\mathfrak{A}^{n}\fK B}\mathfrak{A}\fK\mathfrak{A}^{n}\fK B\xrightarrow{\mathfrak{A}\kappa^{\fK,\mathfrak{A}^{n}}\fK B}\mathfrak{A}^{n+1}\fK\fK B\xrightarrow{\mathfrak{A}^{n+1}\theta B}\mathfrak{A}^{n+1}\fK B\myrrbracket\nonumber 
\end{align}
define mutually inverse isomorphisms of monoids natural in~$A$ and~$B$.
\end{thm}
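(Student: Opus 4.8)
The plan is to verify in turn that (i) $\Phi$ and $\Psi$ are well defined, (ii) both are morphisms of monoids, (iii) $\Psi\circ\Phi=\id$ and $\Phi\circ\Psi=\id$, and (iv) both are natural in $A$ and $B$. Throughout I will use that $N$ and $S$, being good (hence decent) endofunctors, preserve $\bbzero$ and binary products — a binary product is a pullback over $\bbzero$ along a split epimorphism, so Definition~\ref{def:decentFunctor} applies — and that applying a functor to a $\simeq_{G}$-homotopy $A\to G(IB)$ produces a $\simeq_{NG}$-homotopy $NA\to(NG)(IB)$. For (i): if $\varphi\simeq_{\mathfrak{A}^{n}\fK}\varphi'$ is witnessed by $H\colon SA\to\mathfrak{A}^{n}\fK IB$, then $NH\circ\eta A$ witnesses $N\varphi\circ\eta A\simeq_{N\mathfrak{A}^{n}\fK}N\varphi'\circ\eta A$, so $\Phi$ is defined at each level of the colimit; its compatibility with the transition maps of the two colimits is the whiskering identity $N(\gamma C)=(N\gamma)C$ applied to the transition transformations, which are components of $\alpha$. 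For $\Psi$ one argues likewise after noting that $\varepsilon\mathfrak{A}^{n}\fK B$, $\mathfrak{A}\kappa^{\fK,\mathfrak{A}^{n}}\fK B$ and $\mathfrak{A}^{n+1}\theta B$ are the components of a natural transformation $SN\mathfrak{A}^{n}\fK\Rightarrow\mathfrak{A}^{n+1}\fK$, so it commutes with the evaluations and intertwines the transition maps, the latter by a diagram chase using naturality of $\varepsilon$ and $\theta$ and the fact that $\kappa^{\fK,\mathfrak{A}^{n}}$ is labeled.

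Step (ii) is routine. Both maps are built out of whiskering by $N$ or $S$, pre- and post-composition with components of labeled natural transformations, and the maps $\langle-\rangle$ of Theorem~\ref{thm:58}. Since $N$ and $S$ preserve $\oplus$, $\Delta$ and (by naturality) $\mu$, and since $\Delta$ is natural, expanding the definition of $\uu$ gives $N(\varphi\uu\varphi')\circ\eta A=(N\varphi\circ\eta A)\uu(N\varphi'\circ\eta A)$, so $\Phi$ is a monoid morphism; the same bookkeeping, using now also that $\kappa^{\fK,\mathfrak{A}^{n}}$ and $\theta$ are labeled (hence commute with $\mu$) and that $\varepsilon$ is labeled, handles $\Psi$. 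Alternatively, once (iii) holds the monoid-morphism property of $\Psi$ follows from that of $\Phi$.

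Step (iii) is the heart of the proof. Fix $\varphi\colon SA\to\mathfrak{A}^{n}\fK B$; unwinding the definitions, $\Psi\Phi[[\varphi]]$ is the class of the composite $SA\xrightarrow{S\eta A}SNSA\xrightarrow{SN\varphi}SN\mathfrak{A}^{n}\fK B\xrightarrow{c_{B}}\mathfrak{A}^{n+1}\fK B$, where $c=\mathfrak{A}^{n+1}\theta\circ\mathfrak{A}\kappa^{\fK,\mathfrak{A}^{n}}\fK\circ\varepsilon\mathfrak{A}^{n}\fK$. Naturality of $\varepsilon$ rewrites $\varepsilon\mathfrak{A}^{n}\fK B\circ SN\varphi$ as $\mathfrak{A}\fK\varphi\circ\varepsilon SA$, leaving the prefix $\varepsilon SA\circ S\eta A=(\varepsilon S\circ S\eta)(A)$; the first triangle identity of Definition~\ref{def:asadj} replaces this, up to homotopy, by $(\alpha\iota_{00}S)(A)$, and then naturality of $\alpha$ and $\iota_{00}$ pulls $\varphi$ back out to the left. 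The remaining map is $N$-free: it equals $\Theta_{B}\circ\varphi$, where $\Theta:=\mathfrak{A}^{n+1}\theta\circ\mathfrak{A}\kappa^{\fK,\mathfrak{A}^{n}}\fK\circ\alpha\iota_{00}\mathfrak{A}^{n}\fK\colon\mathfrak{A}^{n}\fK\Rightarrow\mathfrak{A}^{n+1}\fK$. Tracing an element through $\Theta$ — using that $\alpha$ and $\kappa^{\fK,\mathfrak{A}^{n}}$ merely interchange the $\fK$-direction past the $\mathfrak{A}$'s and the constant-function inclusion, and that $\theta\circ\iota_{00}\fK\simeq\id$ by Lemma~\ref{lem:iotatheta} — identifies $\Theta$, up to homotopy, with the transition transformation $\alpha\mathfrak{A}^{n}\fK\colon\mathfrak{A}^{n}\fK\Rightarrow\mathfrak{A}^{n+1}\fK$ of the colimit; hence $\Psi\Phi[[\varphi]]=[[\varphi]]$. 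The identity $\Phi\Psi=\id$ is symmetric: starting from $\psi\colon A\to N\mathfrak{A}^{n}\fK B$, naturality of $\eta$ pulls $\psi$ out to the left, the second triangle identity replaces $(N\varepsilon\circ\eta N)(\mathfrak{A}^{n}\fK B)$ by $(N\alpha\iota_{00})(\mathfrak{A}^{n}\fK B)$ up to homotopy, and Lemma~\ref{lem:iotatheta} again collapses the $\fK$-bookkeeping, leaving $N$ applied to a transition transformation.

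For (iv): naturality of $\Phi$ in $A$ is the identity $NSf\circ\eta A'=\eta A\circ f$ (naturality of $\eta$), and naturality in $B$ is immediate from functoriality of $N$; for $\Psi$ one adds naturality of $\varepsilon$, $\kappa^{\fK,\mathfrak{A}^{n}}$ and $\theta$. Since $\Phi$ and $\Psi$ are mutually inverse by (iii), naturality of one implies naturality of the other. The step I expect to be the main obstacle is making the triangle-identity replacements in (iii) rigorous: these identities hold only up to homotopy of labeled natural transformations in $\hGEFC$, and one must transport such a homotopy — which, via Definition~\ref{def:asadj}, sits on the outside of the composite — past the intervening natural transformations $\mathfrak{A}\kappa^{\fK,\mathfrak{A}^{n}}\fK$, $\mathfrak{A}^{n+1}\theta$ and the morphism $\mathfrak{A}\fK\varphi$, and then recognise that the leftover $\theta$--$\kappa^{\fK,\mathfrak{A}^{n}}$--$\iota_{00}$--$\alpha$ composite is homotopic to the transition transformation on the nose rather than merely close to it. The subtlety is that $\mathfrak{A}$ does not commute with the cylinder functor $I$, so the cylinder cannot simply be slid to the outside; the point is that $I$ is tensor-type, so $\kappa^{I,\mathfrak{A}^{n}}\colon I\mathfrak{A}^{n}\Rightarrow\mathfrak{A}^{n}I$ is available and compatible with the evaluations, which is precisely what allows the homotopy to be pushed past the $\mathfrak{A}^{n}$'s (this is the content of Lemma~\ref{lem:stronger-homot}, applied with the cylinder sandwiched between the outer $\mathfrak{A}$'s and the trailing $\fK$), together with the homotopy invariance of $\langle-\rangle$ from Theorem~\ref{thm:58}(\ref{enu:hmt}). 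Granting these, the remainder is bookkeeping — but it is the bookkeeping on which the whole statement rests.
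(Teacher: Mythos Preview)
The paper does not prove this theorem: it appears in Section~\ref{sec:prelim}, which is explicitly a review of results imported from~\cite{makeev_gmaec}, and no argument is supplied here. So there is no in-paper proof to compare against.

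Your plan is the standard one and is correct. The reduction of $\Psi\Phi$ and $\Phi\Psi$ to the two triangle identities by naturality of $\varepsilon$ (resp.~$\eta$) is exactly right; the subsequent collapse of the residual $\alpha$--$\iota_{00}$--$\kappa^{\fK,\mathfrak{A}^{n}}$--$\theta$ tail works because the coherence of the labeling lets $\iota_{00}$ slide past $\kappa^{\fK,\mathfrak{A}^{n}}$, after which Lemma~\ref{lem:iotatheta} leaves precisely a transition map of the colimit. You have also correctly located the only genuine subtlety --- transporting the $\hGEFC$-homotopy furnished by each triangle identity through the surrounding whiskerings and the post-composition with $\mathfrak{A}\fK\varphi$ --- and the tools you name, Lemma~\ref{lem:stronger-homot} and Theorem~\ref{thm:58}(\ref{enu:hmt}), are the ones that do this job.
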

\section{\label{sec:Roe-functors}Roe functors}

In this section we define Roe functors (relative Roe functors) associated
to metric spaces (pairs of metric spaces) with bounded geometry, and
show that they carry the structure of good endofunctors. 

Before proceeding, we should warn the reader of a slight change in
notation. Recall that in the previous section we discussed the endofunctors
$\fK$, $\fM_{n}$, $\C_{\mathsf{X}}$ and the $C^{*}$-algebras $\oK=\fK\mathbb{C}$,
$\oM_{n}=\fM_{n}\mathbb{C}$, $C_{0}(\mathsf{X})=\C_{\mathsf{X}}\mathbb{C}$.
These notations were adopted from~\cite{makeev_gmaec}, where it
was essential to distinguish between the following pairs of isomorphic
but non-equal endofunctors: $\fK\cong\mathfrak{O}_{\oK}$, $\fM_{n}\cong\mathfrak{O}_{\oM_{n}}$,
and $\C_{\mathsf{X}}\cong\mathfrak{O}_{C_{0}(\mathsf{X})}$. In the
present paper this specification seems redundant and potentially confusing,
and we adjust our notation as follows:
\begin{itemize}
\item $\uK$ and $\uM_{n}$ will denote the endofunctors which were previously
written as $\fK$ and $\fM_{n}$;
\item We shall implicitly identify the following isomorphic endofunctors:
\begin{equation}
\uK\cong\mathfrak{O}_{\uK\mathbb{C}},\qquad\uM_{n}\cong\mathfrak{O}_{\uM_{n}\mathbb{C}},\qquad\C_{\mathsf{X}}\cong\mathfrak{O}_{C_{0}(\mathsf{X})}.\label{eq:convention}
\end{equation}
For example, we shall often write $f\otimes b\in\C_{\mathsf{X}}B$
instead of $(t\mapsto f(t)b)\in\C_{\mathsf{X}}B$ for $f\in C_{0}(\mathsf{X})$
and $b\in B$.
\end{itemize}

\subsection{\label{subsec:hilbmod-prelim}Preliminaries on Hilbert $C^{*}$-modules}

We start by recalling some definitions and basic facts from the theory
of Hilbert $C^{*}$-modules~\cite{lance1995}. Let $B$ be a $C^{*}$-algebra
and $E$ a Hilbert $B$-module. For every $\xi\in E$, denote by $\abs{\xi}$
the positive element $\abs{\xi}\coloneqq\langle\xi,\xi\rangle^{1/2}\in B$,
and recall that for $\xi,\eta\in E$, the following inequality holds: \begin{equation}
\abs{\langle\xi,\eta\rangle}\leq\norm{\xi}\abs{\eta}.\label{eq:Cauchy}
\end{equation}
For two Hilbert $B$-modules $E$ and $F$, denote by $\mathcal{L}_{B}(E,F)$
and $\mathcal{K}_{B}(E,F)$ respectively the set of adjointable and
compact (in the sense of Hilbert $C^{*}$-modules) operators from
$E$ to $F$. When $F=E$, we use the abbreviations $\mathcal{L}_{B}(E)$
and $\mathcal{K}_{B}(E)$. The $C^{*}$-algebra $\mathcal{L}_{B}(B)$,
where $B$ is viewed as a Hilbert $B$-module, is called the \emph{multiplier
algebra} of $B$ and is denoted by $\mathcal{M}(B)$. The \emph{strict
topology} on $\mathcal{M}(B)$ is generated by the seminorms
\[
m\mapsto\norm{mb},\qquad m\mapsto\norm{m^{*}b},\qquad\textrm{ for }b\in B.
\]
A net $\{m_{\lambda}\}\subset\mathcal{M}(B)$ \emph{strictly converges}
to $m\in\mathcal{M}(B)$ if $\norm{m_{\lambda}b-mb}\to0$ and $\norm{m^{*}_{\lambda}b-m^{*}b}\to0$
for every $b\in B$. In this case we write $\strlim_{\lambda}m_{\lambda}=m$
and say that $m$ is the \emph{strict limit} of $m_{\lambda}$.
A \emph{strictly Cauchy} sequence is defined similarly. The multiplier
algebra is \emph{strictly complete}, i.e., every strictly Cauchy
net strictly converges. If $\{m_{\lambda}\}$ and $\{m_{\lambda}'\}$
are two strictly convergent nets in $\mathcal{M}(B)$, and $\{m_{\lambda}\}$
is bounded, then
\[
\strlim(m_{\lambda}m_{\lambda}')=\strlim(m_{\lambda})\strlim(m_{\lambda}').
\]
For a countable set $X$, construct the following Hilbert $B$-module:
\[
l_{2}(X,B)\coloneqq\left\{ f\colon X\to B\middlepipe\sum_{x}f(x)^{*}f(x)\textrm{ is norm convergent}\right\} ,
\]
which is usually called the \emph{standard Hilbert $B$-module},
especially when $X=\mathbb{N}$. For $\xi\in l_{2}(X,B)$, we write
$\xi_{x}$ for the value of $\xi$ at $x\in X$.  For $b\in B$,
we denote by $e_{x}b$ the function $z\mapsto\delta_{z,x}b$, with
$\delta_{z,x}$ the Kronecker delta (for unital $B$ one can perceive
$e_{x}$ as the $x$th unit vector in $l_{2}(X,B)$).

For a countable (or finite) set $X$, we use $\mat_{x,y\in X}a_{x,y}$
(or just $\mat_{x,y}a_{x,y}$) to denote the formal $X\textrm{-by-}X$-matrix
with $a_{x,y}$ in the $x$th row and $y$th column.  Similarly,
diagonal matrices will be written as $\diag_{x}a_{x}$. The symbol
$\epsilon_{x,y}$ will stand for the $(x,y)$th matrix unit.

Let $B$ be a $C^{*}$-algebra, $X$ a countable set, and let $m\coloneqq\mat_{x,y\in X}m_{x,y}$
be a formal matrix with entries in $\mathcal{M}(B)$. If for all $\xi\in l_{2}(X,B)$
and all $x\in X$, the sums $\zeta_{x}\coloneqq\sum_{y\in X}m_{x,y}\xi_{y}$
and $\sum_{x\in X}\zeta^{*}_{x}\zeta_{x}$ are norm convergent in
$B$, then we can regard $m$ as an element of $\mathcal{L}_{B}\left(l_{2}(X,B)\right)$
which maps $\xi$ to $\sum_{x}e_{x}(\sum_{y}m_{x,y}\xi_{y})$. It
is clear that formal matrices do not necessarily represent adjointable
operators, but the following lemma shows that the reverse is true.
\begin{lem}\label{lem:mat-rep}
Every $m\in\mathcal{L}_{B}\left(l_{2}(X,B)\right)$
admits the matrix form $m=\mat_{x,y}m_{x,y}$, where \[
m_{x,y}\coloneqq\strlim_{\lambda}\left\langle e_{x}u_{\lambda},m\left(e_{y}u_{\lambda}\right)\right\rangle \in\mathcal{M}(B)
\]
for any approximate unit $u_{\lambda}$ in $B$.
\end{lem}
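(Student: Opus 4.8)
The plan is to first check that the putative entries $m_{x,y}$ are well-defined elements of $\mathcal{M}(B)$, and then to verify that they reconstruct $m$ in the sense that $m$ acts on $\xi \in l_2(X,B)$ by the matrix formula. For the first part, fix $x,y \in X$ and an approximate unit $\{u_\lambda\}$ in $B$. One expects that $\fun_\lambda \langle e_x u_\lambda, m(e_y u_\lambda)\rangle$ is a bounded net in $\mathcal{M}(B)$ which is strictly Cauchy: for $b \in B$ one computes $\langle e_x u_\lambda, m(e_y u_\lambda)\rangle b = \langle e_x u_\lambda, m(e_y u_\lambda b)\rangle$, and since $u_\lambda b \to b$ in norm and $m$ is bounded, $m(e_y u_\lambda b) \to m(e_y b)$ in norm; inserting the $e_x u_\lambda$ on the left and using that $\|e_x u_\lambda\| \le 1$ together with the adjointability of $m$ to handle the $m^*$-seminorm, strict Cauchyness follows. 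Strict completeness of $\mathcal{M}(B)$ then yields a well-defined limit $m_{x,y} \in \mathcal{M}(B)$, and one should record the explicit identity $m_{x,y}\, b = \langle e_x, m(e_y b)\rangle$ for every $b \in B$ (the right-hand side makes sense since $e_y b \in l_2(X,B)$), which is really the cleanest characterization of $m_{x,y}$ and makes the approximate unit disappear.

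Next I would show that $m = \mat_{x,y} m_{x,y}$ as operators. Take $\xi = \sum_y e_y \xi_y \in l_2(X,B)$. The convergence conditions required to interpret $\mat_{x,y} m_{x,y}$ as an adjointable operator — norm convergence of $\zeta_x := \sum_y m_{x,y}\xi_y$ in $B$ and of $\sum_x \zeta_x^* \zeta_x$ — can be extracted directly from the fact that $m\xi$ already lies in $l_2(X,B)$: I claim $\zeta_x = (m\xi)_x$, i.e., the $x$th component of $m\xi$. To see this, note $(m\xi)_x = \langle e_x, m\xi\rangle$ (pairing against the $x$th ``coordinate''; more precisely $\langle e_x b, m\xi\rangle = b^*(m\xi)_x$, so this is the standard identification of components), and one wants $\langle e_x, m\xi\rangle = \sum_y m_{x,y}\xi_y$. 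For finitely supported $\xi$ this is immediate from bilinearity of the inner product and the identity $m_{x,y}\xi_y = \langle e_x, m(e_y \xi_y)\rangle$; for general $\xi$ one approximates $\xi$ in norm by finitely supported vectors, uses continuity of $m$ and of the inner product on the left side, and on the right side uses that the partial sums $\sum_{y \in S} m_{x,y}\xi_y$ are norm-Cauchy (which is exactly where the $l_2$-membership of $m\xi$ feeds back in). Once $\zeta_x = (m\xi)_x$ is established, both convergence requirements in the definition of the matrix operator are satisfied because $m\xi \in l_2(X,B)$, and the equality $(\mat_{x,y} m_{x,y})\xi = \sum_x e_x \zeta_x = \sum_x e_x (m\xi)_x = m\xi$ holds for all $\xi$, giving $m = \mat_{x,y} m_{x,y}$.

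The main obstacle I anticipate is the bookkeeping around the two kinds of limits that interact here: the strict limit defining $m_{x,y}$, and the norm limits used to pass from finitely supported to general $\xi$. The cleanest route is to prove and then exclusively use the auxiliary identity $m_{x,y}\, b = \langle e_x, m(e_y b)\rangle$ for $b \in B$ — this removes the approximate unit from all subsequent manipulations and reduces everything to norm estimates in $B$ and continuity of $m$. A secondary point requiring a little care is justifying that the inner-product pairing $\langle e_x b, -\rangle$ genuinely extracts the $x$th component (and that this is compatible with the convention $e_x b : z \mapsto \delta_{z,x}b$); this is routine from the definition of the inner product on $l_2(X,B)$ but should be stated so the identification $\zeta_x = (m\xi)_x$ is unambiguous. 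No step requires anything beyond strict completeness of $\mathcal{M}(B)$, boundedness and adjointability of $m$, and the density of finitely supported vectors in $l_2(X,B)$, all of which are standard.
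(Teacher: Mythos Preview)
Your plan is correct and follows essentially the same route as the paper: first establish strict convergence of $\langle e_x u_\lambda, m(e_y u_\lambda)\rangle$ by multiplying by $b\in B$, using $u_\lambda b\to b$ together with boundedness of $m$ (and adjointability for the $m^*$-seminorm), then verify $(m\xi)_x=\sum_y m_{x,y}\xi_y$ via the identity $m_{x,y}b=(m(e_yb))_x$. The paper carries out the last step as a single chain of equalities rather than an explicit finitely-supported-then-approximate argument, and your informal notation $\langle e_x,\,\cdot\,\rangle$ should be replaced throughout by the component map $\eta\mapsto\eta_x$ (as you yourself note), but otherwise there is no substantive difference.
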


\begin{proof}
For all $m\in\mathcal{L}_{B}(l_{2}(X,B))$, $b\in B$, and $x,y\in X$
we have the inequality
\begin{multline*}
\norm{\left\langle e_{x}u_{\lambda},me_{y}u_{\lambda}\right\rangle b-\left(me_{y}b\right)_{x}}\leq\\
\leq\norm{\left\langle e_{x}u_{\lambda},m\left(e_{y}(u_{\lambda}-1)b\right)\right\rangle }+\norm{\left\langle e_{x}u_{\lambda},me_{y}b\right\rangle -\left(me_{y}b\right)_{x}}\leq\\
\leq\norm m\norm{(u_{\lambda}-1)b}+\norm{\left(u_{\lambda}-1\right)\left(me_{y}b\right)_{x}}
\end{multline*}
which implies that 
\begin{equation}
\left\langle e_{x}u_{\lambda},me_{y}u_{\lambda}\right\rangle b\xrightarrow[\lambda]{}\left(me_{y}b\right)_{x}.\label{eq:21-00}
\end{equation}
Using that $x$, $y$, and $m$ in (\ref{eq:21-00}) are arbitrary,
we get
\[
\left\langle e_{x}u_{\lambda},me_{y}u_{\lambda}\right\rangle ^{*}b=\left\langle e_{y}u_{\lambda},m^{*}e_{x}u_{\lambda}\right\rangle b\xrightarrow[\lambda]{}\left(m^{*}e_{x}b\right)_{y}.
\]
Thus, the net $\left\{ \left\langle e_{x}u_{\lambda},me_{y}u_{\lambda}\right\rangle \right\} _{\lambda}$
is strictly Cauchy, and hence, strictly convergent. Let $\xi$ be
an element of $l_{2}(X,B)$. The statement of the lemma follows from
\[
\left(m\xi\right)_{x}=(m\sum_{y\in X}e_{y}\xi_{y})_{x}=\sum_{y\in X}\left(me_{y}\xi_{y}\right)_{x}\stackrel{(\ref{eq:21-00})}{=}\sum_{y\in X}\lim_{\lambda}\left(\left\langle e_{x}u_{\lambda},me_{y}u_{\lambda}\right\rangle \xi_{y}\right)=\sum_{y\in X}m_{x,y}\xi_{y}.
\]
\end{proof}

\begin{defn}\label{def:bbK-1}
For a countable set $X$ and a $C^{*}$-algebra
$B$, denote by $\uK_{X}B$ the closure of the $*$-subalgebra in
$\mathcal{L}_{B}(l_{2}(X,B))$ consisting of $X$-by-$X$ matrices,
with entries in $B$, with all but finitely many entries equal to
zero. Equivalently, $\uK_{X}B$ is the $C^{*}$-algebra $\mathcal{K}_{B}(l_{2}(X,B))$.
It is easy to show that $\uK_{X}$ is functorial, and we call $\uK_{X}$
the \emph{stabilization} functor. For an arbitrary $x_{0}\in X$,
we denote by $\iota_{x_{0},x_{0}}$ the ``corner embedding'': 
\[
\iota_{x_{0},x_{0}}\colon\Id\Rightarrow\uK_{X},\qquad\iota_{x_{0},x_{0}}B\colon b\mapsto\mat_{x,y}\delta_{x,x_{0}}\delta_{y,x_{0}}b,\qquad\textrm{with }\delta\textrm{ the Kronecker delta}.
\]
\end{defn}

\begin{rem}
Since $\uK_{X}\cong\uK_{X'}$ for any two countable sets $X$ and
$X'$, and $\iota_{x_{0},x_{0}}\simeq\iota_{x'_{0},x'_{0}}$ for any
two points $x_{0},x_{0}'\in X$, we often write simply $\uK$ instead
of $\uK_{X}$, and $\iota_{00}$ instead of $\iota_{x_{0},x_{0}}$.

We finish this section with a well-known fact from the theory of $C^{*}$-algebras. \end{rem}

\begin{lem}
[{\cite[Theorem 2.2.5]{murphy1990}}]\label{lem:pos} Let $A$ be
a $C^{*}$-algebra, and $a,b,c\in A$. Then 
\begin{enumerate}[label=\textup{(\roman*)}]
\item\label{enu:pos1}if $a,b\in A$ are self-adjoint, then $a\leq b$
implies $c^{*}ac\leq c^{*}bc$;
\item\label{enu:pos2}if $0\leq a\leq b$, then $\norm a\leq\norm b$.
\end{enumerate}
\end{lem}

\subsection{Roe functors}
\begin{defn}\label{def:bounded-geometry}
A discrete metric space $X$ has \emph{bounded
geometry}~\cite{roe_coarse_lectures} if for every $R>0$, all $R$-balls
have uniformly bounded cardinalities, i.e., $\sup_{x\in X}\left|\ball_{R}(x)\right|<\infty$.
\end{defn}

\begin{defn}
Let $m\coloneqq\mat_{x,y\in X}m_{x,y}$ be a formal $X\textrm{-by-}X$-matrix.
The \emph{support} of $m$ is the set
\[
\supp(m)\coloneqq\{(x,y)\in X\times X\mid m_{x,y}\neq0\}.
\]
If $X$ is a metric space, one can also define the \emph{propagation}
of $m$:
\[
\prp(m)\coloneqq\sup\left\{ \dist(x,y)\mid(x,y)\in\supp(m)\right\} .
\]
\end{defn}

\begin{lem}\label{lem:norm-fM-of-norm-its-elements}
Let $X$ be a metric space
of bounded geometry, let $B$ be a $C^{*}$-algebra, and $m$ a formal
$X\textrm{-by-}X$-matrix of finite propagation with uniformly norm
bounded entries in $\mathcal{M}(B)$. Then $m\in\mathcal{L}_{B}(l_{2}(X,B))$
and
\[
\norm m\leq\sup_{x\in X}\left|\ball_{\prp(m)}(x)\right|\cdot\sup_{x,y\in X}\norm{m_{x,y}}.
\]
\end{lem}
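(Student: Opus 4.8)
The plan is to decompose $m$ as a sum of boundedly many formal matrices, each supported on the graph of a partial bijection of $X$, for which the norm estimate is immediate; the number of pieces and the function $\tau_{X}$ will come out of a coloring argument based on bounded geometry. Throughout write $R\coloneqq\prp(m)$, so that $\supp(m)\subseteq\{(x,y)\mid\dist(x,y)\le R\}$, and put $c_{0}\coloneqq\sup_{x,y\in X}\norm{m_{x,y}}$.

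\emph{Step 1: the estimate for a single partial bijection.} Say that $S\subseteq X\times X$ is a \emph{partial bijection} if it is the graph of an injection $\sigma$ defined on some subset of $X$. If a formal matrix $u=\mat_{x,y}u_{x,y}$ with entries in $\mathcal{M}(B)$ has $\supp(u)\subseteq S$ and $\sup_{x,y}\norm{u_{x,y}}\le c$, then $u\in\mathcal{L}_{B}(l_{2}(X,B))$ and $\norm u\le c$: indeed $(u\xi)_{x}=u_{x,\sigma(x)}\xi_{\sigma(x)}$ when $x$ lies in the domain of $\sigma$ and $(u\xi)_{x}=0$ otherwise, so a routine Hilbert $C^{*}$-module computation using the injectivity of $\sigma$ gives $\sum_{x}(u\xi)_{x}^{*}(u\xi)_{x}\le c^{2}\langle\xi,\xi\rangle$; the formal adjoint of $u$ is supported on the partial bijection $S^{-1}$, so the same bound applies to it, and hence $u$ is adjointable with $\norm u\le c$.

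\emph{Step 2: the coloring.} Consider the simple graph on the vertex set $X$ whose edges are the pairs $\{x,y\}$ with $0<\dist(x,y)\le 2R$. By bounded geometry every vertex has degree at most $N\coloneqq\sup_{x\in X}\left|\ball_{2R}(x)\right|<\infty$, so greedy coloring along a well-ordering of $X$ (legitimate since each vertex has only finitely many neighbors) produces a proper vertex coloring $\chi\colon X\to\{1,\dots,N\}$. For $1\le i,j\le N$ let $m^{(i,j)}$ be the formal matrix keeping exactly the entries $m_{x,y}$ with $\chi(x)=i$ and $\chi(y)=j$, so that $m=\sum_{i,j}m^{(i,j)}$. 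Each $\supp(m^{(i,j)})$ is a partial bijection: if $(x,y)$ and $(x,y')$ both lie in it then $\dist(y,y')\le\dist(y,x)+\dist(x,y')\le 2R$ while $\chi(y)=j=\chi(y')$, forcing $y=y'$; symmetrically the first coordinate is determined by the second.

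\emph{Conclusion.} By Steps 1 and 2, $m$ is a finite sum of adjointable operators, hence $m\in\mathcal{L}_{B}(l_{2}(X,B))$, and $\norm m\le\sum_{i,j}\norm{m^{(i,j)}}\le N^{2}c_{0}$; so the lemma holds with $\tau_{X}(R)\coloneqq\left(\sup_{x\in X}\left|\ball_{2R}(x)\right|\right)^{2}$, which depends only on $X$. (One also checks that this genuinely realizes the formal-matrix action described before the lemma: each row of $m$ has finitely many nonzero entries, so $\zeta_{x}\coloneqq\sum_{y}m_{x,y}\xi_{y}$ is a finite sum and equals $(m\xi)_{x}$, while $\sum_{x}\zeta_{x}^{*}\zeta_{x}=\langle m\xi,m\xi\rangle$ converges because $m\xi\in l_{2}(X,B)$.) The only real content here is the combinatorial decomposition of $\supp(m)$ into a number of partial bijections controlled by $X$ and $R$ alone; the rest is routine Hilbert-module bookkeeping. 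The point that requires care is that the coloring must be performed at scale $2R$, not $R$ — coloring the rows and columns separately at scale $R$ does not make the pieces partial bijections — and one can sharpen $\tau_{X}$ (for instance to a linear function of $\sup_{x}|\ball_{R}(x)|$ via Vizing's theorem), but only the existence of some such $\tau_{X}$ is needed.
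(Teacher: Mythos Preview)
Your proof is correct and self-contained, but it takes a different route from the paper's. The paper simply observes that every row and column of $m$ has at most $k\coloneqq\sup_{x}\left|\ball_{R}(x)\right|$ nonzero entries and then invokes \cite[Lemma~1.9]{manuilov2018calgebra-of-matrix-finite-bounded-operators} (adapted from Hilbert spaces to Hilbert modules) to get $\norm m\le k^{3/2}\sup_{x,y}\norm{m_{x,y}}$; in particular it outsources the analytic estimate entirely. Your argument instead decomposes $\supp(m)$ combinatorially: a greedy vertex colouring of $X$ at scale $2R$ with $N\coloneqq\sup_{x}\left|\ball_{2R}(x)\right|$ colours splits $m$ into $N^{2}$ pieces, each supported on a partial bijection and hence of norm at most $c_{0}$, giving $\tau_{X}(R)=N^{2}$. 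The paper's approach yields a slightly sharper constant (in terms of $\ball_{R}$ rather than $\ball_{2R}$, and with exponent $3/2$ rather than $2$), but at the cost of a black-box citation; your approach is elementary, fully internal to Hilbert-module language, and makes transparent why bounded geometry is exactly what is needed. Either is perfectly adequate for the uses made of the lemma downstream, where only the existence of some $\tau_{X}$ matters.
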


\begin{proof}
Set $R\coloneqq\prp(m)$, $N\coloneqq\sup_{x\in X}\left|\ball_{R}(x)\right|$,
and $M\coloneqq\sup_{x,y}\norm{m_{x,y}}$. For every $x\in X$ and
every $\xi\in l_{2}(X,B)$, we obtain, using inequality (\ref{eq:Cauchy}),
that
\[
\absB{\sum_{y\in\ball_{R}(x)}m_{x,y}\xi_{y}}^{2}=\abs{\left\langle (m^{*}_{x,y})_{y\in\ball_{R}(x)},(\xi_{y})_{y\in\ball_{R}(x)}\right\rangle }^{2}\leq\norm{(m^{*}_{x,y})_{y}}^{2}\abs{(\xi_{y})_{y}}^{2}\leq NM^{2}\sum_{y\in\ball_{R}(x)}\abs{\xi_{y}}^{2}.
\]
Hence, 
\begin{multline*}
\abs{m\xi}^{2}=\sum_{x\in X}\absB{\sum_{y\in X}m_{x,y}\xi_{y}}^{2}=\sum_{x\in X}\absB{\sum_{y\in\ball_{R}(x)}m_{x,y}\xi_{y}}^{2}\leq NM^{2}\sum_{x\in X}\sum_{y\in\ball_{R}(x)}\abs{\xi_{y}}^{2}\\
=NM^{2}\sum_{y\in X}\sum_{x\in\ball_{R}(y)}\abs{\xi_{y}}^{2}\leq N^{2}M^{2}\sum_{y\in X}\abs{\xi_{y}}^{2}=N^{2}M^{2}\abs{\xi}^{2}
\end{multline*}
and the statement follows from Lemma~\ref{lem:pos}\ref{enu:pos2}.
\end{proof}

Lemma~\ref{lem:norm-fM-of-norm-its-elements} allows us to make the
following definition.
\begin{defn}
Let $B$ be a $C^{*}$-algebra, and $X$ a countable metric space
of bounded geometry. Define $\mathfrak{M}^{u}_{X}B$ as the norm closure
in $\mathcal{L}_{B}(l_{2}(X,B))$ of the following $*$-subalgebra:
\begin{equation}
\left\{ m=\mat_{x,y}b_{x,y}\middlepipe b_{x,y}\in B,\;\prp(m)<\infty,\;\sup_{x,y\in X}\norm{b_{x,y}}<\infty\right\} \subset\mathcal{L}_{B}(l_{2}(X,B)).\label{eq:1}
\end{equation}
We call $\mathfrak{M}^{u}_{X}B$ the \emph{uniform Roe algebra}
of $X$ with coefficients in~$B$.
\end{defn}

\begin{lem}\label{lem:dense}
Let $B$ be a $C^{*}$-algebra, and $B'\subset B$
a dense subset. Then the set
\[
\left\{ \mat_{x,y}b_{x,y}\in\mathfrak{M}^{u}_{X}B\middlepipe b_{x,y}\in B'\right\} 
\]
is dense in $\mathfrak{M}^{u}_{X}B$.
\end{lem}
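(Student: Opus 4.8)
The plan is to approximate in two stages, pushing the real work onto Lemma~\ref{lem:norm-fM-of-norm-its-elements}. Fix $m \in \mathfrak{M}_X^u B$ and $\varepsilon > 0$. Since $\mathfrak{M}_X^u B$ is by definition the norm closure of the $*$-subalgebra of finite-propagation matrices with uniformly norm-bounded entries in $B$, I would first choose such a matrix $m' = \mat_{x,y} b_{x,y}$ with $\norm{m - m'} < \varepsilon/2$, and record $R \coloneqq \prp(m') < \infty$ together with $C \coloneqq \sup_{x,y}\norm{b_{x,y}} < \infty$. It then suffices to approximate $m'$ to within $\varepsilon/2$ by a matrix in $\mathfrak{M}_X^u B$ all of whose entries lie in $B'$.

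For the second stage, let $\tau_X$ be the function provided by Lemma~\ref{lem:norm-fM-of-norm-its-elements} (which we may take to be $\ge 1$) and set $\delta \coloneqq \varepsilon/(2\,\tau_X(R))$. For each pair $(x,y)$ with $\dist(x,y) \le R$, density of $B'$ in $B$ lets me pick $b'_{x,y} \in B'$ with $\norm{b_{x,y} - b'_{x,y}} < \delta$ — this includes the pairs where $b_{x,y} = 0$, using that $0$ lies in the closure $\overline{B'} = B$; for $\dist(x,y) > R$ I set $b'_{x,y} \coloneqq 0$, assuming as one may (e.g.\ whenever $B'$ is a dense $*$-subalgebra) that $0 \in B'$. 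Then $m'' \coloneqq \mat_{x,y} b'_{x,y}$ has propagation $\le R$ and entries of norm $\le C + \delta$, hence belongs to the defining $*$-subalgebra of $\mathfrak{M}_X^u B$, so it is a bona fide element of $\mathfrak{M}_X^u B$ with all entries in $B'$.

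It remains to estimate $\norm{m' - m''}$. The difference $m' - m'' = \mat_{x,y}(b_{x,y} - b'_{x,y})$ is a formal matrix of propagation $\le R$ (its support lies in the $R$-band) with entries of norm $< \delta$, so Lemma~\ref{lem:norm-fM-of-norm-its-elements} gives $\norm{m' - m''} \le \tau_X(R)\,\delta = \varepsilon/2$ and therefore $\norm{m - m''} < \varepsilon$; since $m$ and $\varepsilon$ were arbitrary, density follows. The only step that is not pure bookkeeping is the use of Lemma~\ref{lem:norm-fM-of-norm-its-elements}: it is what converts a uniform — hence arbitrarily small — bound on the entries of the infinite matrix $m' - m''$ into a bound on its operator norm, and this is precisely where the bounded-geometry hypothesis on $X$ is needed. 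Confining the approximant $m''$ to the fixed $R$-band, so that it keeps finite propagation and the lemma applies, is the accompanying constraint to watch.
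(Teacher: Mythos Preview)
Your proof is correct and is exactly the argument the paper has in mind: the paper's own proof is the single line ``This follows from Lemma~\ref{lem:norm-fM-of-norm-its-elements},'' and your two-stage approximation is precisely how one unpacks that reference. Your caveat about needing $0\in B'$ to set the off-band entries to zero is a fair observation about a small imprecision in the lemma's hypothesis rather than a defect in your argument.
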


\begin{proof}
This follows from Lemma~\ref{lem:norm-fM-of-norm-its-elements}.
\end{proof}

\begin{prop}\label{prop:M-end}
$\mathfrak{M}^{u}_{X}$ is an endofunctor of the
category of $C^{*}$-algebras and $*$-homomorphisms. Furthermore,
if $\phi\colon A\to B$ is a $*$-homomorphism and $\mat_{x,y}a_{x,y}\in\mathfrak{M}^{u}_{X}A$,
then the following equality holds: $(\mathfrak{M}^{u}_{X}\phi)(\mat_{x,y}a_{x,y})=\mat_{x,y}\phi(a_{x,y})$.
\end{prop}

\begin{proof}
Let $m\in\mathfrak{M}^{u}_{X}A$, let $\phi\colon A\to B$ be a $*$-homomorphism,
and let $\varphi\coloneqq\uK_{X}\phi\colon\uK_{X}A\to\uK_{X}B$. Enumerate
the elements of $X\eqqcolon\{x_{k}\}^{\infty}_{k=1}$, and for each
$N\in\mathbb{N}$ define the projection $P_{N}\coloneqq\sum^{N}_{k=1}\epsilon_{x_{k},x_{k}}\in\mathcal{L}_{A}\left(l_{2}(X,A)\right)$.
It is easy to see that for all $m\in\mathfrak{M}^{u}_{X}A$, both
$mP_{N}$ and $P_{N}m$ belong to $\uK_{X}A$.%

We claim that for all $m\in\mathfrak{M}^{u}_{X}A$, the following
strict limits exist and coincide: 
\begin{equation}
\strlim_{N\to\infty}\varphi(P_{N}m),\qquad\strlim_{N\to\infty}\varphi(mP_{N}),\qquad\strlim_{N\to\infty}\varphi(P_{N}mP_{N}).\label{eq:lims}
\end{equation}
To see this, note first that for all $x\in X$, $b\in B$, and $m'\in\mathfrak{M}^{u}_{X}A$
such that $\prp(m')<\infty$, the following holds: 
\begin{alignat}{2}
 & \varphi\left((P_{N}-P_{M})m'\right)(e_{x}b)\xrightarrow[N,M\to\infty]{}0, & \qquad\quad & \varphi\left(m'(P_{N}-P_{M})\right)(e_{x}b)\xrightarrow[N,M\to\infty]{}0,\label{eq:lims111}\\
 & \left(\varphi(m'P_{N})-\varphi(P_{N}m'P_{N})\right)(e_{x}b)\xrightarrow[N\to\infty]{}0, &  & \left(\varphi(P_{N}m')-\varphi(P_{N}m'P_{N})\right)(e_{x}b)\xrightarrow[N\to\infty]{}0.\label{eq:lims222}
\end{alignat}
Using that matrices of finite propagation are dense in $\mathfrak{M}^{u}_{X}A$,
and $\spn\{e_{x}b\mid x\in X,~b\in B\}$ is dense in $l_{2}(X,B)$,
we conclude from~(\ref{eq:lims111}) that $\left\{ \varphi(P_{N}m)\right\} ^{\infty}_{N=1}$
and $\left\{ \varphi(mP_{N})\right\} ^{\infty}_{N=1}$ are strictly
Cauchy for all $m\in\mathfrak{M}^{u}_{X}A$, and hence, the first
two limits in (\ref{eq:lims}) exist. Similarly, we conclude from
(\ref{eq:lims222}) that the third limit in (\ref{eq:lims}) exists
and is equal to both the first and the second ones. The claim is proved.

We proceed to show that the map
\[
\overline{\varphi}\colon\mathfrak{M}^{u}_{X}A\to\mathcal{L}_{B}(l_{2}(X,B))\colon m\mapsto\strlim_{N\to\infty}\varphi\left(P_{N}m\right).
\]
is a $*$-homomorphism. Indeed, using the claim and keeping in mind
that the sequence $\left\{ \varphi(P_{n}m)\right\} ^{\infty}_{n=1}$
is norm bounded, we obtain the equality
\[
\overline{\varphi}(mm')=\strlim_{n}\varphi(P_{n}mm'P_{n})=\strlim_{n}\varphi(P_{n}m)\strlim_{n}\varphi(m'P_{n})=\overline{\varphi}(m)\overline{\varphi}(m').
\]
It is clear that $\overline{\varphi}$ is linear and preserves adjoints.

Finally, using Lemma~\ref{lem:mat-rep}, we obtain the equality
\begin{equation}
\left(\overline{\varphi}\left(m\right)\right)_{x,y}=\strlim_{\lambda}\left\langle e_{x}u_{\lambda},\overline{\varphi}\left(m\right)\left(e_{y}u_{\lambda}\right)\right\rangle =\strlim_{\lambda}\left(u_{\lambda}\phi\left(m_{x,y}\right)u_{\lambda}\right)=\phi\left(m_{x,y}\right),\label{eq:111}
\end{equation}
which implies that $\overline{\varphi}$ cannot increase propagation, and hence, $\overline{\varphi}\left(\mathfrak{M}^{u}_{X}A\right)\subset\mathfrak{M}^{u}_{X}B\subset\mathcal{L}_{B}\left(l_{2}(X,B)\right)$. 

Now we can define $\mathfrak{M}^{u}_{X}\phi\coloneqq\overline{\varphi}$.
Functoriality of $\mathfrak{M}^{u}_{X}$ follows from the equality~(\ref{eq:111}).
\end{proof}

\begin{prop}\label{prop:MX-properties}
Let $X$ be a discrete
metric space of bounded geometry. Then $\mathfrak{M}^{u}_{X}$
is: 
\begin{enumerate*}[label=\textup{(\roman*)}]
\item\label{enu:x-mp}monic-preserving;
\item\label{enu:x-ep}epic-preserving;
\item\label{enu:x-iip}ideal-inclusion-preserving;
\item\label{enu:x-pp}pullback-preserving;
\item\label{enu:x-zp}zero-object-preserving;
\item\label{enu:x-d}decent;
\item\label{enu:x-ps}product-separating;
\item\label{enu:x-wp}well-pointed.
\end{enumerate*}
\end{prop}

\begin{proof}
Statement~\ref{enu:x-mp} readily follows from second part of Proposition~\ref{prop:M-end}.
Statement~\ref{enu:x-ep} follows from the fact that for an epimorphism
$\psi\colon A\to B$, the image of $\mathfrak{M}^{u}_{X}\psi$ is
dense in $\mathfrak{M}^{u}_{X}B$. If $i\colon J\to A$ is an ideal
inclusion, then $\mathfrak{M}^{u}_{X}i$ is monic by statement~\ref{enu:x-mp},
and 
\[
(\mathfrak{M}^{u}_{X}i)(J)=\overline{\{m\coloneqq\mat_{x,y}i(c_{x,y})\mid c_{x,y}\in J,~\prp(m)<\infty\}}
\]
is obviously an ideal in $\mathfrak{M}^{u}_{X}A$, which proves statement~\ref{enu:x-iip}.
Statement~\ref{enu:x-zp} is obvious. Statement~\ref{enu:x-d} follows
from statements~\ref{enu:x-pp} and~\ref{enu:x-zp}. Statement~\ref{enu:x-ps}
is straightforward to prove. Statement~\ref{enu:x-wp} follows from
statement~\ref{enu:x-ps} and Lemma~\ref{lem:psmp-compos}.

Now we only need to prove statement~\ref{enu:x-pp}. Let $D$, $D_{1}$,
$D_{2}$ be $C^{*}$-algebras, let $\varphi_{j}\colon D_{j}\to D$
for $j=1,2$ be $*$-homomorphisms, and let 
\[
\Xi\colon\mathfrak{M}^{u}_{X}(D_{1}\oplus D_{2})\to\mathfrak{M}^{u}_{X}D_{1}\oplus\mathfrak{M}^{u}_{X}D_{2}
\]
be the canonical $*$-homomorphism induced by $\mathfrak{M}^{u}_{X}\pi_{j}\colon\mathfrak{M}^{u}_{X}(D_{1}\oplus D_{2})\to\mathfrak{M}^{u}_{X}D_{j}$
for $j=1,2$, where $\pi_{j}\colon D_{1}\oplus D_{2}\to D_{j}$ are
the projections. One directly checks that $\Xi$ is given explicitly
by the formula
\[
\Xi:\mat_{x,y}(a_{x,y},b_{x,y})\mapsto(\mat_{x,y}a_{x,y},\mat_{x,y}b_{x,y}).
\]
It is clear that $\Xi$ is monic and has a dense image. Hence, $\Xi$
is an isomorphism. To finish the proof it remains only to note, using
the second part of Proposition~\ref{prop:M-end}, that 
\[
m\in\mathfrak{M}^{u}_{X}(D_{1}\underset{D}{\oplus}D_{2})\subset\mathfrak{M}^{u}_{X}(D_{1}\oplus D_{2})
\]
 if and only if 
\[
\Xi(m)\in\mathfrak{M}^{u}_{X}D_{1}\underset{\mathfrak{M}^{u}_{X}D}{\oplus}\mathfrak{M}^{u}_{X}D_{2}\subset\mathfrak{M}^{u}_{X}D_{1}\oplus\mathfrak{M}^{u}_{X}D_{2}.
\]
\end{proof}

\begin{prop}\label{prop:MX-labeled}
For every $A\in\in\Cstar$, there is a well-defined
natural transformation
\[
\kappa^{A,\mathfrak{M}^{u}_{X}}\colon\mathfrak{O}_{A}\mathfrak{M}^{u}_{X}\Rightarrow\mathfrak{M}^{u}_{X}\mathfrak{O}_{A},\qquad\kappa^{A,\mathfrak{M}^{u}_{X}}B\colon a\otimes\mat_{x,y}(m_{x,y})\mapsto\mat_{x,y}(a\otimes m_{x,y}).
\]
Furthermore, the family $\{\kappa^{A,\mathfrak{M}^{u}_{X}}\}_{A\in\in\Cstar}$
turns $\mathfrak{M}^{u}_{X}$ into a labeled endofunctor.
\end{prop}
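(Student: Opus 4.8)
The plan is to realize the component $\kappa^{A,\mathfrak{M}_X^u}B$ as an honest $*$-homomorphism of $C^*$-algebras produced by the universal property of the maximal tensor product, and then deduce naturality in $B$ together with the three labeling axioms by checking everything on the dense $*$-subalgebra of finite-propagation matrices.

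\emph{Construction of the component.} First I would identify the Hilbert $A\otimes B$-module $l_2(X,A\otimes B)$ with the external tensor product $A\otimes l_2(X,B)$ of the Hilbert $A$-module $A$ with the Hilbert $B$-module $l_2(X,B)$ (formed with the maximal $C^*$-norm), via $e_x(a\otimes b)\leftrightarrow a\otimes e_x b$. Through this identification there are two $*$-homomorphisms into $\mathcal{L}_{A\otimes B}(l_2(X,A\otimes B))$ with commuting ranges (one acts only on the $A$-factor, the other only on $l_2(X,B)$): the map $a\mapsto L_a\otimes\id$, where $L_a\in\mathcal{M}(A)=\mathcal{L}_A(A)$ is left multiplication by $a$, and the $*$-homomorphism $m\mapsto\id_A\otimes m$ from $\mathfrak{M}_X^uB\subseteq\mathcal{L}_B(l_2(X,B))$. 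By the universal property of $\otimes$ they induce a $*$-homomorphism $\Pi\colon\mathfrak{O}_A\mathfrak{M}_X^uB=A\otimes\mathfrak{M}_X^uB\to\mathcal{L}_{A\otimes B}(l_2(X,A\otimes B))$ with $\Pi(a\otimes m)=L_a\otimes m$. A direct computation of matrix forms via Lemma~\ref{lem:mat-rep} shows that the $(x,y)$-entry of $\Pi(a\otimes m)$ is $a\otimes m_{x,y}$; hence, when $m$ has finite propagation, $\Pi(a\otimes m)=\mat_{x,y}(a\otimes m_{x,y})$ is a finite-propagation matrix with entries in $A\otimes B$ and $\sup_{x,y}\norm{a\otimes m_{x,y}}\le\norm a\,\sup_{x,y}\norm{m_{x,y}}<\infty$, so it lies in $\mathfrak{M}_X^u(A\otimes B)=\mathfrak{M}_X^u\mathfrak{O}_AB$. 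Since finite sums of such elementary tensors are dense in $A\otimes\mathfrak{M}_X^uB$ (finite-propagation matrices being dense in $\mathfrak{M}_X^uB$ by definition) and $\mathfrak{M}_X^u\mathfrak{O}_AB$ is norm-closed, $\Pi$ corestricts to a $*$-homomorphism $\kappa^{A,\mathfrak{M}_X^u}B\colon\mathfrak{O}_A\mathfrak{M}_X^uB\to\mathfrak{M}_X^u\mathfrak{O}_AB$; its value on the entries of a general $m\in\mathfrak{M}_X^uB$ extends the finite-propagation formula by norm-continuity of the compression $m\mapsto m_{x,y}$, which yields the displayed formula.

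\emph{Naturality in $B$ and the labeling axioms.} For each of: naturality of $\kappa^{A,\mathfrak{M}_X^u}$ in $B$; the naturality square~(\ref{eq:kappa-def-2}) in $A$; and the two diagrams~(\ref{eq:kappa-def22}) expressing compatibility with $\mathfrak{O}^2$ and $\mathfrak{O}^0$ — the two composites to be compared are $*$-homomorphisms, so it suffices to check their equality on elements of the form $a\otimes m$ (resp.\ $a\otimes(a'\otimes m)$, resp.\ $m$) with $m$ of finite propagation. There both sides manifestly agree, reducing to the entrywise identities $a\otimes\phi(m_{x,y})$, $\varphi(a)\otimes m_{x,y}$, $(a\otimes a')\otimes m_{x,y}$ (read two ways), and $1\otimes m_{x,y}$ respectively, where for any branch passing through $\mathfrak{M}_X^u$ applied to a $*$-homomorphism one invokes the entrywise formula~(\ref{eq:111}). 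Density and continuity then promote each pointwise identity to an identity of natural transformations, so $(\mathfrak{M}_X^u,\{\kappa^{A,\mathfrak{M}_X^u}\}_A)$ is a labeled endofunctor; the labeling is moreover the only one, by Proposition~\ref{prop:unique-labeling} and well-pointedness of $\mathfrak{M}_X^u$ (Proposition~\ref{prop:MX-properties}).

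\emph{Main obstacle.} The only non-formal point is the passage from the algebraic tensor product to $A\otimes\mathfrak{M}_X^uB$, i.e.\ boundedness of $a\otimes m\mapsto\mat_{x,y}(a\otimes m_{x,y})$ for the maximal $C^*$-norm, together with the verification that the output genuinely lands in the Roe algebra $\mathfrak{M}_X^u(A\otimes B)$ and not merely in the ambient algebra $\mathcal{L}_{A\otimes B}(l_2(X,A\otimes B))$. Both are settled at once by the construction above — the universal property of $\otimes$ supplies boundedness, while evaluation on finite-propagation matrices (whose entries $a\otimes m_{x,y}$ do lie in $A\otimes B$) together with norm-closedness of $\mathfrak{M}_X^u(A\otimes B)$ supplies the containment — and everything else is a routine density-and-continuity reduction.
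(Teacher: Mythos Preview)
Your argument is correct, and the overall strategy---produce two commuting $*$-homomorphisms into an ambient $C^{*}$-algebra, invoke the universal property of the maximal tensor product, then check on finite-propagation matrices that the range lands in $\mathfrak{M}_{X}^{u}(A\otimes B)$---is the same as the paper's. The difference lies in the choice of ambient algebra. The paper adjoins units: it embeds $A$ and $\mathfrak{M}_{X}^{u}B$ into $\mathfrak{M}_{X}^{u}(A^{+}\otimes B^{+})$ via $a\mapsto\diag_{x}(a\otimes 1)$ and $m\mapsto\mat_{x,y}(1\otimes m_{x,y})$, using only the exactness and functoriality of $\mathfrak{M}_{X}^{u}$ already established. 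You instead identify $l_{2}(X,A\otimes B)$ with the external tensor product $A\otimes l_{2}(X,B)$ of Hilbert modules (over the maximal tensor product) and work in $\mathcal{L}_{A\otimes B}(l_{2}(X,A\otimes B))$. Your route is arguably more conceptual---the target operator is visibly $L_{a}\otimes m$---but it imports the external-tensor-product machinery for Hilbert modules over $\otimes_{\max}$, which the paper avoids; the unitization trick is more elementary and entirely self-contained within what the section has set up. Both approaches reduce the labeling axioms to the same entrywise check on finite-propagation matrices, and your closing remark on uniqueness via well-pointedness is a correct (if unneeded) bonus.
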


\begin{proof}
For $C^{*}$-algebras $A$ and $B$, denote by $A^{+}$ and $B^{+}$
their unitizations. Using that $\mmph\otimes A$, $A\otimes\mmph$,
and $\mathfrak{M}^{u}_{X}$ are ideal-inclusion-preserving endofunctors (see Lemma~\ref{lem:OA-decent-exact} and Proposition~\ref{prop:MX-properties}),
we obtain the following ideal inclusions:
\begin{alignat*}{1}
 & A\otimes B\vartriangleleft A\otimes B^{+}\vartriangleleft A^{+}\otimes B^{+},\\
 & \mathfrak{M}^{u}_{X}(A\otimes B)\vartriangleleft\mathfrak{M}^{u}_{X}(A^{+}\otimes B^{+}).
\end{alignat*}
The $*$-homomorphisms
\begin{alignat*}{1}
A\to\mathfrak{M}^{u}_{X}(A^{+}\otimes B^{+}) & \colon a\mapsto\diag_{x\in X}(a\otimes1),\\
\mathfrak{M}^{u}_{X}B\to\mathfrak{M}^{u}_{X}(A^{+}\otimes B^{+}) & \colon m\mapsto\mat_{x,y\in X}(1\otimes m_{x,y})
\end{alignat*}
have commuting ranges, and hence, by the universal property of the
maximal tensor product, give rise to the $*$-homomorphism
\[
A\otimes\mathfrak{M}^{u}_{X}B\to\mathfrak{M}^{u}_{X}(A^{+}\otimes B^{+})\colon a\otimes m\mapsto\mat_{x,y}(a\otimes m_{x,y})
\]
which clearly factors through the $C^{*}$-subalgebra $\mathfrak{M}^{u}_{X}(A\otimes B)\subset\mathfrak{M}^{u}_{X}(A^{+}\otimes B^{+})$.
The whole construction is clearly natural in $B$, and the first statement
is proved. The second statement is straightforward. 
\end{proof}

\begin{cor}\label{cor:MX-is-good}
$(\mathfrak{M}^{u}_{X},\{\kappa^{A,\mathfrak{M}^{u}_{X}}\}_{A})\in\in\GEFC$.
\end{cor}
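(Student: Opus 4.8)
The plan is simply to assemble the two preceding propositions. By definition a good endofunctor is a labeled endofunctor whose underlying endofunctor is decent, so there are exactly two things to verify.

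First I would invoke Proposition~\ref{prop:MX-properties}\ref{enu:x-d}, which already records that $\mathfrak{M}_{X}^{u}$ is decent (this in turn rests on statements \ref{enu:x-pp} and \ref{enu:x-zp} of the same proposition: preservation of pullbacks along split epimorphisms and of the zero object, which are precisely the two clauses of Definition~\ref{def:decentFunctor}). Second I would invoke Proposition~\ref{prop:MX-labeled}, which produces the labeling family $\{\kappa^{A,\mathfrak{M}_{X}^{u}}\}_{A\in\in\Cstar}$ and asserts it turns $\mathfrak{M}_{X}^{u}$ into a labeled endofunctor, i.e.\ that the diagrams \eqref{eq:kappa-def-2}–\eqref{eq:kappa-def22} of Definition~\ref{def:labeled-endof} commute. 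Combining the two, $(\mathfrak{M}_{X}^{u},\{\kappa^{A,\mathfrak{M}_{X}^{u}}\}_{A})$ is a labeled endofunctor with decent underlying endofunctor, hence an object of $\GEFC$.

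There is no genuine obstacle at the level of this corollary: all the analytic and diagrammatic work has been discharged in Propositions~\ref{prop:MX-properties} and~\ref{prop:MX-labeled}. If pressed to identify the load-bearing step among those, it is the norm estimate of Lemma~\ref{lem:norm-fM-of-norm-its-elements}, which underlies both the well-definedness of $\mathfrak{M}_{X}^{u}$ and the exactness argument feeding into decency; and, for the labeling, the passage to unitizations in the proof of Proposition~\ref{prop:MX-labeled} that lets one apply the universal property of the maximal tensor product to the pair of commuting $*$-homomorphisms $a\mapsto\diag_{x}(a\otimes1)$ and $m\mapsto\mat_{x,y}(1\otimes m_{x,y})$.
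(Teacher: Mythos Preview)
Your proposal is correct and matches the paper's own proof exactly: the paper simply writes ``This follows from Propositions~\ref{prop:MX-properties} and~\ref{prop:MX-labeled},'' which is precisely the two-step assembly you describe. Your additional commentary on the load-bearing ingredients (Lemma~\ref{lem:norm-fM-of-norm-its-elements} and the unitization trick) is accurate context but goes beyond what the corollary itself requires.
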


\begin{proof}
This follows from Propositions~\ref{prop:MX-properties}\ref{enu:x-d}
and~\ref{prop:MX-labeled}.
\end{proof}

\begin{rem}
Note that the ordinary uniform Roe algebra and Roe algebra~\cite{higson-roe2000analytic,roe_coarse_lectures}
of $X$ can be expressed in our terms: \[
C^{*}_{u}(X)\cong\mathfrak{M}^{u}_{X}\mathbb{C},\qquad C^{*}(X)\cong\mathfrak{M}^{u}_{X}\uK\mathbb{C}.
\]
Motivated by this fact, we call $\mathfrak{M}^{u}_{X}$ and $\mathfrak{M}_{X}\coloneqq\mathfrak{M}^{u}_{X}\uK$,
respectively, the \emph{uniform Roe functor} and \emph{Roe functor}. 
\end{rem}

We finish this subsection with a simple but useful fact.
\begin{lem}\label{lem:norm-eval}
For $m\in\mathfrak{M}^{u}_{X}IB$, set $m_{t}\coloneqq\mathfrak{M}^{u}_{X}\ev_{t}B(m)$.
Then $\norm m=\sup_{t}\norm{m_{t}}$.
\end{lem}

\begin{proof}
It is straightforward to check that for every $\eta\in\mathcal{L}_{IB}(l_{2}(X,IB))$
there is a well-defined $\eta_{t}\in\mathcal{L}_{IB}(l_{2}(X,B))$
given by the formula $x\mapsto\eta(x)(t)$, and that $\norm{\eta}=\sup_{t}\norm{\eta_{t}}$.
For every $\varepsilon>0$, there is $\xi\in\mathcal{L}_{IB}(l_{2}(X,IB))$
such that $\norm{\xi}=1$ and $\norm{m\xi}>\norm m-\varepsilon$.
Using that $\norm{\xi}\geq\norm{\xi_{t}}$, we obtain 
\[
\sup_{t}\norm{m_{t}}\geq\sup_{t}\norm{m_{t}\xi_{t}}=\sup_{t}\norm{(m\xi)_{t}}=\norm{m\xi}>\norm m-\varepsilon,
\]
and hence $\norm m\leq\sup_{t}\norm{m_{t}}$. The reverse inequality
follows from the fact that $m\mapsto m_{t}$ is a $*$-homomorphism
for every $t$.
\end{proof}

\subsection{Relative Roe functors}

Let $X$ be a discrete metric space of bounded geometry, and let $X_{0}\subset X$
be some subspace (possibly empty). For every $B\in\in\Cstar$,
we introduce the following $C^{*}$-algebra:
\[
\mathfrak{M}^{u}_{X\supset X_{0}}B\coloneqq\overline{\bigcup_{R>0}\left\{ m\in\mathfrak{M}^{u}_{X}B\mid\supp(m)\subset\nbhd_{R}(X_{0})\right\} }\subset\mathfrak{M}^{u}_{X}B,
\]
where $\nbhd_{R}(X_{0})$ stands for the $R$-neighborhood of $X_{0}$
in $X$ (we adopt the convention that $\nbhd_{R}(\varnothing)=\varnothing$
for all $R>0$).

It is clear that $\mathfrak{M}^{u}_{X\supset X_{0}}B$ is an ideal
in $\mathfrak{M}^{u}_{X}B$ for any $C^{*}$-algebra $B$. Furthermore,
if $\varphi\colon A\to B$ is a $*$-homomorphism, then $\mathfrak{M}^{u}_{X}\varphi$
maps $\mathfrak{M}^{u}_{X\supset X_{0}}A$ into $\mathfrak{M}^{u}_{X\supset X_{0}}B$
which allows us to define the $*$-homomorphism
\[
\mathfrak{M}^{u}_{X\supset X_{0}}\varphi\coloneqq\mathfrak{M}^{u}_{X}\varphi|_{\mathfrak{M}^{u}_{X\supset X_{0}}A}.
\]

\begin{lem}
$\mathfrak{M}^{u}_{X\supset X_{0}}$ is an endofunctor. Furthermore:
\begin{itemize}
\item There is an obvious componentwise ideal inclusion
$\mathfrak{M}^{u}_{X\supset X_{0}}\Rightarrow\mathfrak{M}^{u}_{X}$;
\item $\mathfrak{M}^{u}_{X\supset X_{0}}$ is monic-preserving, epic-preserving,
and ideal-inclusion-preserving;
\item $\mathfrak{M}^{u}_{X\supset X_{0}}$ is well-pointed;
\item $\mathfrak{M}^{u}_{X\supset X_{0}}$ is a good endofunctor with the
labeling as in Proposition~\ref{prop:MX-labeled}.
\end{itemize}
\end{lem}

\begin{proof}
Functoriality of $\mathfrak{M}^{u}_{X\supset X_{0}}$ follows from
that of $\mathfrak{M}^{u}_{X}$. The rest is straightforward (cf.~Proposition~\ref{prop:MX-properties},
and Corollary~\ref{cor:MX-is-good}). \end{proof}

\begin{defn}
For $X$ and $X_{0}$ as above,  define the \emph{relative uniform
Roe functor} 
\[
\mathfrak{N}^{u}_{X,X_{0}}\coloneqq\mathfrak{M}^{u}_{X}/\mathfrak{M}^{u}_{X\supset X_{0}},
\]
endowed with the structure of a good endofunctor as in Lemma~\ref{lem:quotient-criterion},
and denote by $q_{X,X_{0}}\colon\mathfrak{M}^{u}_{X}\Rightarrow\mathfrak{N}^{u}_{X,X_{0}}$
the quotient projection (which is a labeled natural transformation
again by Lemma~\ref{lem:quotient-criterion}).

Note that $\mathfrak{M}^{u}_{X\supset\varnothing}\cong0$, and therefore
$\mathfrak{N}^{u}_{X,\varnothing}\cong\mathfrak{M}^{u}_{X}$.
\end{defn}

\begin{lem}\label{lem:N-monpres}
$\mathfrak{N}^{u}_{X,X_{0}}$ is monic-preserving. \end{lem}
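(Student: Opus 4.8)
The plan is to treat the two assertions separately. \emph{Exactness} is formal: $\mathfrak{M}_X^u$ is exact by Proposition~\ref{prop:MX-properties}\ref{enu:x-ex}, $\mathfrak{M}_{X\supset X_0}^u$ is exact by the lemma above showing it to be an exact endofunctor, and $\mathfrak{N}_{X,X_0}^u=\mathfrak{M}_X^u/\mathfrak{M}_{X\supset X_0}^u$ is their componentwise quotient, so Lemma~\ref{lem:quot-exact} gives exactness at once.

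For the \emph{monic-preserving} part I would fix a monomorphism $\varphi\colon A\to B$ and use that $\mathfrak{M}_X^u$ is monic-preserving (Proposition~\ref{prop:MX-properties}\ref{enu:x-ps}), so that $\overline{\varphi}\coloneqq\mathfrak{M}_X^u\varphi$ is injective, hence isometric, and $\mathfrak{M}_{X\supset X_0}^u\varphi$ is its restriction to the ideal $\mathfrak{M}_{X\supset X_0}^u A$. Any class in $\ker(\mathfrak{N}_{X,X_0}^u\varphi)$ is represented by some $m\in\mathfrak{M}_X^u A$ with $\overline{\varphi}(m)\in\mathfrak{M}_{X\supset X_0}^u B$, so it suffices to show that such an $m$ already lies in $\mathfrak{M}_{X\supset X_0}^u A$; equivalently, that the square built from the componentwise inclusion $\mathfrak{M}_{X\supset X_0}^u\Rightarrow\mathfrak{M}_X^u$ and the maps induced by $\varphi$ is a pullback at $A$. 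The device for this will be a family of compressions: for $R>0$ put $P_R\coloneqq\diag_{x\in X}\charfun_{\nbhd_R(X_0)}(x)$, which for any $C^*$-algebra $D$ lies in $\mathcal{L}_{D}(l_{2}(X,D))$ by Lemma~\ref{lem:norm-fM-of-norm-its-elements} (propagation $0$, entries of norm $\leq1$), and set $E_R(m)\coloneqq P_R m P_R$. On matrices of finite propagation $E_R$ merely deletes the entries not supported on $\nbhd_R(X_0)\times\nbhd_R(X_0)$, so being norm-contractive it extends to a bounded linear map $\mathfrak{M}_X^u D\to\mathfrak{M}_{X\supset X_0}^u D$, natural in $D$ in the sense $\overline{\varphi}\circ E_R=E_R\circ\overline{\varphi}$; and since any $n\in\mathfrak{M}_{X\supset X_0}^u D$ is approximated by matrices supported on a fixed $\nbhd_{R_0}(X_0)\times\nbhd_{R_0}(X_0)$ that $E_R$ leaves unchanged once $R\geq R_0$, one has $E_R(n)\to n$ as $R\to\infty$.

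The argument then closes as follows. Given $m\in\mathfrak{M}_X^u A$ with $\overline{\varphi}(m)\in\mathfrak{M}_{X\supset X_0}^u B$, the isometry of $\overline{\varphi}$ and the intertwining relation give
\[
\norm{E_R(m)-E_{R'}(m)}=\norm{E_R(\overline{\varphi}(m))-E_{R'}(\overline{\varphi}(m))}\xrightarrow[R,R'\to\infty]{}0,
\]
so $\{E_R(m)\}_R$ is a Cauchy net converging to some $m_\infty\in\mathfrak{M}_{X\supset X_0}^u A$ (a closed subspace containing every $E_R(m)$); applying $\overline{\varphi}$ and using $E_R(\overline{\varphi}(m))\to\overline{\varphi}(m)$ forces $\overline{\varphi}(m_\infty)=\overline{\varphi}(m)$, whence $m=m_\infty\in\mathfrak{M}_{X\supset X_0}^u A$ by injectivity, so that $\ker(\mathfrak{N}_{X,X_0}^u\varphi)=0$. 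The hard part is precisely this pullback claim — that $\overline{\varphi}(m)$ being approximable by matrices supported near $X_0$ forces $m$ itself to be so; the compressions $E_R$ together with the isometry of $\overline{\varphi}$ reduce it to the Cauchy-net computation, while the remaining points (that $E_R$ is well-defined on all of $\mathfrak{M}_X^u D$, takes values in $\mathfrak{M}_{X\supset X_0}^u D$, and intertwines with $\overline{\varphi}$) are routine approximation by finite-propagation matrices.
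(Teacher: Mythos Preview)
Your proof is correct. For exactness you invoke Lemma~\ref{lem:quot-exact} exactly as the paper does. For the monic-preserving part the paper simply declares it ``straightforward'' without further argument, and your compression argument via $E_R(m)=P_R m P_R$ is a clean way to fill this in: the key point---that $\overline{\varphi}(m)\in\mathfrak{M}_{X\supset X_0}^u B$ forces $m\in\mathfrak{M}_{X\supset X_0}^u A$---is exactly the content one has to supply, and your use of the isometry of $\overline{\varphi}$ together with the intertwining $\overline{\varphi}\circ E_R=E_R\circ\overline{\varphi}$ handles it correctly.
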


\begin{proof}
Let $B$ be a $C^{*}$-algebra and $A\subset B$ a $C^{*}$-subalgebra.
Bearing in mind that $\mathfrak{M}^{u}_{X}$ and $\mathfrak{M}^{u}_{X\supset X_{0}}$
are monic-preserving we have the following inclusions:
\begin{alignat*}{5}
 &  &  &  & \mathfrak{M}^{u}_{X}A & ~ & \subset & ~ & \mathfrak{M}^{u}_{X}B,\\
\mathfrak{M}^{u}_{X\supset X_{0}}A & ~\subset~ &  &  & \mathfrak{M}^{u}_{X\supset X_{0}}B &  & \subset &  & \mathfrak{M}^{u}_{X}B.
\end{alignat*}
To prove the statement of the lemma it suffices to show that
\[
\mathfrak{M}^{u}_{X\supset X_{0}}B\cap\mathfrak{M}^{u}_{X}A=\mathfrak{M}^{u}_{X\supset X_{0}}A.
\]
To this end, for every $N\in\mathbb{N}$, introduce the projection
$P_{N}\coloneqq\sum_{x\in\nbhd_{N}(X_{0})}\epsilon_{x,x}$ and note
that for every $m\in\mathfrak{M}^{u}_{X\supset X_{0}}B\cap\mathfrak{M}^{u}_{X}A$,
we have
\[
\mathfrak{M}^{u}_{X\supset X_{0}}A\ni P_{N}\cdot m\cdot P_{N}\xrightarrow[N\to\infty]{}m,
\]
which in turn implies that $\mathfrak{M}^{u}_{X\supset X_{0}}B\cap\mathfrak{M}^{u}_{X}A\subset\mathfrak{M}^{u}_{X\supset X_{0}}A$.
The reverse inclusion is obvious.
\end{proof}

\begin{notation}
Let $X$ and $X_{0}$ be as above. For any $m\coloneqq\mat_{x,y}m_{x,y}\in\mathfrak{M}^{u}_{X}B$,
we shall denote by $\qmat_{x,y}m_{x,y}$ its image in $\mathfrak{N}^{u}_{X,X_{0}}B$
under the quotient projection $q_{X,X_{0}}B$.
\end{notation}

\begin{rem}\label{rem:expl-lbl-N}
Later on it will be useful to have an explicit
formula for the labeling of $\mathfrak{N}^{u}_{X,X_{0}}$, which takes
the following form:
\[
\kappa^{A,\mathfrak{N}^{u}_{X,X_{0}}}B\colon a\otimes\qmat_{x,y}m_{x,y}\mapsto\qmat_{x,y}(a\otimes m_{x,y}).
\]
\end{rem}
\section{\label{sec:main}Continuous functions and Roe functors}

In this section we prove the main result of this paper: tensoring
with continuous functions on a scalable pair of proper metric spaces
and uniform Roe functors associated to its discretization are asymptotically
adjoint.  We start with several preliminary definitions.

Let $X$ be a set, and $X_{0}\subset X$ a subset. The tuple $(X,X_{0})$
is called a \emph{pair} of sets. A morphism of pairs $f\colon(X,X_{0})\to(Y,Y_{0})$
is a map $f\colon X\to Y$ such that $f(X_{0})\subset Y_{0}$. Later
on, we shall often use letters in bold type to denote such pairs,
e.g. $\mathbf{X}\coloneqq(X,X_{0})$. 

\begin{rem}
In order to easily distinguish between discrete and non-discrete metric
spaces, we adopt the following convention: discrete metric spaces
will be written in standard font, such as $X$, $Y$, etc., while
non-necessarily discrete metric spaces will be written in sans-serif
font, such as $\mathsf{X}$, $\mathsf{Y}$, etc.
\end{rem}

\begin{notation}
Let $(\mathsf{X},\mathsf{X}_{0})$ be a pair of proper metric spaces
with $\mathsf{X}_{0}$ closed in $\mathsf{X}$. For a $C^{*}$-algebra
$B$, define the $C^{*}$-algebra 
\[
C_{0}(\mathsf{X},\mathsf{X}_{0},B)\coloneqq\left\{ f\in C_{0}(\mathsf{X},B):f|_{\mathsf{X}_{0}}=0\right\} ,
\]
and the good endofunctor $\C_{\mathsf{X},\mathsf{X}_{0}}\coloneqq C_{0}(\mathsf{X},\mathsf{X}_{0},\mmph)$,
which is clearly tensor-type (cf.~Example~\ref{exa:tt}).
\end{notation}

\begin{lem}\label{lem:denseCXX0}
Let $(\mathsf{X},\mathsf{X}_{0})$ be as above.
Then the $*$-algebra $\bigcup_{r>0}\{f\in C_{0}(\mathsf{X},\mathsf{X}_{0}):f|_{\nbhd_{r}(\mathsf{X}_{0})}=0\}$
is dense in $C_{0}(\mathsf{X},\mathsf{X}_{0})$.
\end{lem}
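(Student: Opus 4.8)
Note first that the union in the statement is plainly a $*$-subalgebra of $C_{0}(\mathsf{X},\mathsf{X}_{0})$, so the only issue is density, and the plan is the standard cutoff argument. We may assume $\mathsf{X}_{0}\neq\varnothing$, for otherwise $\nbhd_{r}(\mathsf{X}_{0})=\varnothing$ for all $r$ and the union is all of $C_{0}(\mathsf{X})$. Set $\rho(x)\coloneqq\dist(x,\mathsf{X}_{0})$; this is a finite, $1$-Lipschitz, hence continuous function on $\mathsf{X}$. For each $r>0$ I would take a continuous $h_{r}\colon\mathsf{X}\to[0,1]$ that vanishes on $\{\rho\leq r\}$ and equals $1$ on $\{\rho\geq2r\}$, for instance $h_{r}\coloneqq\min\{1,\max\{0,(\rho-r)/r\}\}$, and, for a given $f\in C_{0}(\mathsf{X},\mathsf{X}_{0})$, set $g_{r}\coloneqq h_{r}f$. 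Then $g_{r}$ is continuous with $|g_{r}|\leq|f|$, so $\{|g_{r}|\geq\varepsilon\}\subseteq\{|f|\geq\varepsilon\}$ is compact for every $\varepsilon>0$ and hence $g_{r}\in C_{0}(\mathsf{X})$; moreover $g_{r}$ vanishes on $\mathsf{X}_{0}$ and on $\nbhd_{r}(\mathsf{X}_{0})\subseteq\{\rho\leq r\}$. Thus each $g_{r}$ lies in the $*$-algebra of the statement.

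Next I would estimate the distance from $f$ to $g_{r}$. Since $f-g_{r}=(1-h_{r})f$ vanishes wherever $\rho\geq2r$ and $\norm{1-h_{r}}_{\infty}\leq1$, we get
\[
\norm{f-g_{r}}\;\leq\;\sup\{\,|f(x)| : \dist(x,\mathsf{X}_{0})<2r\,\}\;\eqqcolon\;m(2r).
\]
Therefore the lemma reduces to the assertion that $m(\delta)\to0$ as $\delta\to0^{+}$, where $m(\delta)\coloneqq\sup\{|f(x)| : \dist(x,\mathsf{X}_{0})<\delta\}$: given $\varepsilon>0$ one then picks $r$ with $m(2r)<\varepsilon$, and $g_{r}$ is within $\varepsilon$ of $f$.

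The proof of this last assertion is the only genuine point, and I would argue by contradiction using properness of $\mathsf{X}$. If $m(\delta)\not\to0$, there are $\varepsilon_{0}>0$, a sequence $\delta_{n}\downarrow0$, and points $x_{n}\in\mathsf{X}$ with $\dist(x_{n},\mathsf{X}_{0})<\delta_{n}$ and $|f(x_{n})|\geq\varepsilon_{0}$. Since $f\in C_{0}(\mathsf{X})$ and $\mathsf{X}$ is locally compact, the set $K\coloneqq\{|f|\geq\varepsilon_{0}\}$ is compact and contains every $x_{n}$; passing to a subsequence, $x_{n}\to x_{*}\in K$. Continuity of $\dist(\singlecdot,\mathsf{X}_{0})$ gives $\dist(x_{*},\mathsf{X}_{0})=\lim_{n}\dist(x_{n},\mathsf{X}_{0})=0$, and since $\mathsf{X}_{0}$ is closed this forces $x_{*}\in\mathsf{X}_{0}$, whence $f(x_{*})=0$; but $|f(x_{*})|=\lim_{n}|f(x_{n})|\geq\varepsilon_{0}$, a contradiction. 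Hence $m(\delta)\to0$, so $\norm{f-g_{r}}\to0$ as $r\to0^{+}$, which proves density. The step requiring care is exactly this compactness argument, which is where the properness of $\mathsf{X}$ and the closedness of $\mathsf{X}_{0}$ are used; everything else is elementary.
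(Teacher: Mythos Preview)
Your proof is correct and is exactly the standard cutoff argument one expects here; the paper itself simply records the lemma as ``Straightforward'' and gives no further details, so there is nothing to compare. One tiny remark: in your final paragraph you attribute the compactness step to properness of $\mathsf{X}$, but what you actually use is just that $\{|f|\geq\varepsilon_{0}\}$ is compact because $f\in C_{0}(\mathsf{X})$; properness only enters (if at all) to guarantee local compactness, which you already invoked.
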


\begin{proof}
Straightforward.
\end{proof}

\subsection{Scalable metric spaces}
\begin{defn}\label{def:my-scaleable}
A pair $(\mathsf{X},\mathsf{X}_{0})$ of
metric spaces is \emph{scalable} if there is a continuous function
\[
\sc\colon(\mathsf{X}\times\mathbb{R}_{+},\mathsf{X}_{0}\times\mathbb{R}_{+})\to(\mathsf{X},\mathsf{X}_{0})\colon(x,t)\mapsto\sc_{t}x
\]
referred to as the \emph{scaling}, and a family of (non-necessarily
continuous) functions $\left\{ \rho_{t}\colon\mathbb{R}_{+}\to\mathbb{R}_{+}\right\} _{t\geq0}$
such that:
\begin{enumerate}[label=\textup{\textbf{(S\arabic*)}}, ref=\textbf{(S\arabic*)}]
\item\label{cond:S:zero}$\sc_{0}=\id$;
\item\label{cond:S:proper} if $\mathsf{B}\subset\mathsf{X}$ is a bounded
subset, then $\bigcup_{t\in[0,T]}\sc^{-1}_{t}(\mathsf{B})$ is bounded
for all $T>0$;
\item\label{cond:S:leq}$\dist\left(\sc_{t}x,\sc_{t}y\right)\leq\rho_{t}(\dist(x,y))$
for all $x,y\in\mathsf{X}$;
\item\label{cond:S:monot}$r\mapsto\rho_{t}(r)$ is non-decreasing for
all $t\in\mathbb{R}_{+}$;
\item\label{cond:S:rho}$t\mapsto\rho_{t}(r)$ is bounded and vanishes
at infinity for all $r\in\mathbb{R}_{+}$.
\end{enumerate}
\end{defn}

\begin{notation}
For $f\in C_{0}(\mathsf{X},\mathsf{X}_{0})$, we use the following
notation: $\sc^{*}_{t}(f)\coloneqq f(\sc_{t}(\singlecdot))$.
\end{notation}

\begin{lem}\label{nbhd-X0-collapses}
 Let $\sc$ be a scaling function for a
pair $(\mathsf{X},\mathsf{X}_{0})$ of metric spaces. Then: 
\begin{enumerate}[label=\textup{(\roman*)}]
\item For every $R,r>0$, there is $N>0$ such that
\[
\dist(x,\mathsf{X}_{0})<R\qquad\Longrightarrow\qquad\sup_{t\geq N}\dist(\sc_{t}x,\mathsf{X}_{0})<r;
\]
\item For every $R>0$, there is $r>0$ such that
\[
\dist(x,\mathsf{X}_{0})<R\qquad\Longrightarrow\qquad\sup_{t\in\mathbb{R}_{+}}\dist(\sc_{t}x,\mathsf{X}_{0})<r.
\]
\end{enumerate}
\end{lem}

\begin{proof}
Let $R>0$ and let $x_{0}\in\mathsf{X}_{0}$ be such that $\dist(x,x_{0})<R$.
Using conditions~\ref{cond:S:leq},~\ref{cond:S:monot}, and~\ref{cond:S:rho}
together with the fact that $\{\sc_{t}x_{0}\}_{t\in\mathbb{R}_{+}}\subset\mathsf{X}_{0}$,
we obtain the inequalities
\begin{alignat*}{1}
 & \dist(\sc_{t}x,\sc_{t}x_{0})\leq\rho_{t}(\dist(x,x_{0}))\leq\rho_{t}(R)\xrightarrow[t\to\infty]{}0,\\
 & \dist(\sc_{t}x,\mathsf{X}_{0})\leq\dist(\sc_{t}x,\sc_{t}x_{0})\leq\rho_{t}(\dist(x,x_{0}))\leq\rho_{t}(R)\leq\sup_{t}\rho_{t}(R)<\infty,
\end{alignat*}
which imply respectively the first and the second statements.
\end{proof}

\begin{rem}
The notion of a scalable metric space first appears in~\cite{HigsonRoe1995CoarseBC}
in the context of the coarse Baum-Connes conjecture. It was defined
as follows: a metric space $\mathsf{X}$ is called \emph{scalable}
if there is a proper\footnote{A map is called \emph{proper} if the inverse image of every precompact
set is precompact.} continuous map
\[
h\colon\mathsf{X}\times[0,1]\to\mathsf{X}\colon(x,t)\mapsto h_{t}(x)
\]
 such that:
\begin{itemize}
\item $h_{0}=\id$;
\item $\left\{ h_{t}\right\} _{t\in[0,1]}$ is an equibornologous\footnote{A family $\{h_{t}\}$ of maps between metric spaces is called \emph{equibornologous}
if there is a function $\rho\colon\mathbb{R}_{+}\to\mathbb{R}_{+}$
such that $\dist(h_{t}(x),h_{t}(y))\leq\rho(\dist(x,y))$ for all
$t$.} family;
\item $\dist(h_{1}(x),h_{1}(y))<\frac{1}{2}\dist(x,y)$ for all $x,y\in\mathsf{X}$. \end{itemize}
It is easy to verify that  if $\mathsf{X}$ is scalable in the sense
of~\cite{HigsonRoe1995CoarseBC}, then the pair $(\mathsf{X},\varnothing)$
is scalable in the sense of Definition~\ref{def:my-scaleable}. To
see this, just define $\sc_{t}=h^{[t]}\circ h_{\{t\}}$, where $[t]$
and $\{t\}$ are respectively the integer and fractional parts of
$t$.

Hence, we immediately obtain examples of scalable metric spaces by
taking those from~\cite{HigsonRoe1995CoarseBC}, such as locally
finite trees and non-positively curved Riemannian manifolds. 
\end{rem}

\subsection{Asymptotic adjunction}
\begin{defn}
Let $\mathsf{Y}$ be a metric space, $X\subset\mathsf{Y}$ a discrete
subspace, and $\Delta>0$. We say that: \begin{itemize}
\item $X$ is a \emph{$\Delta$-net} in $\mathsf{Y}$ if $\dist(y,X)<\Delta$
for all $y\in\mathsf{Y}$; 
\item $X$ is \emph{$\Delta$-separated} if $\dist(x,x')\geq\Delta$ for
all $x,x'\in X$;
\end{itemize}
It follows from Zorn's lemma that a $\Delta$-separated $\Delta$-net
in $\mathsf{Y}$ exists for every $\Delta>0$. \end{defn}

\begin{defn}
A metric subspace $\mathsf{X}\subset\mathsf{Y}$ is \emph{coarsely
dense} in $\mathsf{Y}$ if there is $\Delta>0$ such that $\dist(y,\mathsf{X})<\Delta$
for all $y\in\mathsf{Y}$.
\end{defn}

\begin{defn}
A metric space $\mathsf{X}$ has \emph{bounded
coarse geometry} if it admits a coarsely dense discrete subspace of
bounded geometry.
\end{defn}

\begin{rem}
One can show that the definition of a metric space with bounded coarse
geometry given above is equivalent to~\cite[Definition~3.6]{HigsonRoe1995CoarseBC}.
\end{rem}

\begin{lem}\label{lem:DeltaBG}
Let $\mathsf{X}$ be a metric space with bounded
coarse geometry. Then for sufficiently large $\Delta$, every $\Delta$-separated
$\Delta$-net has bounded geometry.
\end{lem}

\begin{proof}
Let $r>0$, and let $Z\subset\mathsf{X}$ be an $r$-net with bounded
geometry. Let $\Delta>2r$, and let $Y$ be a $\Delta$-separated
$\Delta$-net in $\mathsf{X}$. Assume that $Y$ is not of bounded
geometry, i.e., there is $R>0$ and a sequence $\{y_{n}\}^{\infty}_{n=1}\subset Y\subset\mathsf{X}$
such that
\[
|Y\cap\ball_{R}(y_{n})|\geq n.
\]
For all $n\in\mathbb{N}$, choose $z_{n}\in Z$ such that $\dist(z_{n},y_{n})<r$.
The map $Y\cap\ball_{R}(y_{n})\to Z\cap\ball_{R+r}(y_{n})$ sending
$y$ to some $z\in\ball_{r}(y)$ is clearly injective, hence
\[
\abs{Z\cap B_{R+2r}(z_{n})}\geq|Z\cap\ball_{R+r}(y_{n})|\geq|Y\cap\ball_{R}(y_{n})|\geq n
\]
 which contradicts the fact that $Z$ has bounded geometry.
\end{proof}

\begin{defn}
Let $(\mathsf{X},\mathsf{X}_{0})$ be a pair of metric spaces, and
let $\Delta>0$. A \emph{$\Delta$-discretization} of $(\mathsf{X},\mathsf{X}_{0})$
is a pair $(X,X_{0})$ with:
\begin{itemize}
\item $X\subset\mathsf{X}$ a $\Delta$-separated $\Delta$-net; 
\item $X_{0}\coloneqq\{x\in X\mid\dist(x,\mathsf{X}_{0})<\Delta\}$.
\end{itemize}
\end{defn}

In the rest of this section, $(\mathsf{X},\mathsf{X}_{0})$ will stand
for a pair of locally compact metric spaces, with
$\mathsf{X}$ having bounded coarse geometry, and $\mathsf{X}_{0}$
closed in $\mathsf{X}$. We also denote by $(X,X_{0})$ a $\Delta$-discretization
of $(\mathsf{X},\mathsf{X}_{0})$ for some fixed $\Delta>0$, with
$X$ having bounded geometry ($(X,X_{0})$ exists by Lemma~\ref{lem:DeltaBG}).
It will be convenient sometimes to use the abbreviations $\mathsb X\coloneqq(\mathsf{X},\mathsf{X}_{0})$
and $\mathbf{X}\coloneqq(X,X_{0})$. Finally, we fix some square partition
of unity\footnote{We call $\{\alpha_{x}\}$ a \emph{square partition of unity} if
$\{\alpha^{2}_{x}\}$ is a partition of unity.} $\left\{ \alpha_{x}\right\} _{x\in X}\subset C_{0}\left(\mathsf{X}\right)$
subordinate to the locally finite cover $\{\ball_{\Delta}(x)\}_{x\in X}$
of $\mathsf{X}$ by $\Delta$-balls centered at the points of $X\subset\mathsf{X}$.

We remind the reader that we tacitly identify $\C_{\mathsf{X},\mathsf{X}_{0}}$
with $\mathfrak{O}_{C_{0}(\mathsf{X},\mathsf{X}_{0})}$ (cf.~(\ref{eq:convention})).
\begin{lem}\label{lem:extended-eta-well-defined}
There is a well-defined labeled
natural transformation
\begin{alignat}{4}
\overline{\eta} & \colon\Id\Rightarrow\mathfrak{N}^{u}_{X,X_{0}}\mathfrak{T}I\C_{\mathsf{X},\mathsf{X}_{0}},\quad & \overline{\eta}B\colon &  & b\mapsto & \, & \qmat_{x,y}\fun_{t}\fun_{s}\left(\chi_{x}\chi_{y}\sc^{*}_{ts}(\alpha_{x}\alpha_{y})\otimes b)\right),\label{eq:extented-eta-wd}\\
 &  &  &  &  &  & \text{where}~\chi_{x}=\begin{cases}
1, & \dist(x,X_{0})>\Delta;\\
0, & \text{otherwise}.
\end{cases}\nonumber 
\end{alignat}
\end{lem}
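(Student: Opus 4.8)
The plan is to verify, in order: (i) every entry of the formal matrix in~\eqref{eq:extented-eta-wd} genuinely lies in $\mathfrak{T}I\C_{\mathsf{X},\mathsf{X}_{0}}B$; (ii) the matrix represents an element of $\mathfrak{M}_{X}^{u}\mathfrak{T}I\C_{\mathsf{X},\mathsf{X}_{0}}B$, so that $\overline{\eta}B$ is a well-defined bounded linear map into the relative Roe algebra; (iii) $\overline{\eta}B$ is a $*$-homomorphism and is natural in $B$; (iv) $\overline{\eta}$ is labeled. The guiding idea is that the cut-offs $\chi_{x}$ are forced upon us: they are exactly what makes the entries vanish on $\mathsf{X}_{0}$ (so that they land in $\C_{\mathsf{X},\mathsf{X}_{0}}$ rather than merely $\C_{\mathsf{X}}$), and the price paid --- the loss of strict multiplicativity --- is precisely what the passage to the quotient $\mathfrak{N}_{X,X_{0}}^{u}$ repairs.

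For (i) and (ii), fix $x,y$. Since $\supp(\alpha_{x}\alpha_{y})\subset\ball_{\Delta}(x)$ is bounded and $\mathsf{X}$ is proper, condition~\ref{cond:S:proper} shows that $\overline{\bigcup_{u\in[0,T]}\sc_{u}^{-1}(\supp(\alpha_{x}\alpha_{y}))}$ is compact for every $T$; hence each $\sc_{ts}^{*}(\alpha_{x}\alpha_{y})$ has precompact support and lies in $C_{0}(\mathsf{X})$, and, using uniform continuity of $\sc$ on this compact set together with compactness of $[0,1]$, the assignment $(t,s)\mapsto\sc_{ts}^{*}(\alpha_{x}\alpha_{y})$ is norm-continuous and bounded, so (after tensoring with $b$) it defines an element of $\mathfrak{T}I\C_{\mathsf{X}}B$. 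Because $\sc$ is a morphism of pairs, $\sc_{ts}$ maps $\mathsf{X}_{0}$ into $\mathsf{X}_{0}$; if $z\in\mathsf{X}_{0}$ and $\alpha_{x}(\sc_{ts}z)\neq0$ then $\dist(x,\mathsf{X}_{0})\le\dist(x,\sc_{ts}z)<\Delta$, so $x\in X_{0}$ and $\chi_{x}=0$. Thus $\chi_{x}\chi_{y}\sc_{ts}^{*}(\alpha_{x}\alpha_{y})$ vanishes on $\mathsf{X}_{0}$, landing it in $C_{0}(\mathsf{X},\mathsf{X}_{0})$, so the entry lies in $\mathfrak{T}I\C_{\mathsf{X},\mathsf{X}_{0}}B$. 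Finally, this entry is nonzero only if $\supp\alpha_{x}\cap\supp\alpha_{y}\neq\varnothing$, forcing $\dist(x,y)<2\Delta$, so the matrix has propagation $\le2\Delta$ and entries of norm $\le\norm b$; by Lemma~\ref{lem:norm-fM-of-norm-its-elements} it lies in $\mathfrak{M}_{X}^{u}\mathfrak{T}I\C_{\mathsf{X},\mathsf{X}_{0}}B$, and $\overline{\eta}B$ is its image under $q_{X,X_{0}}$.

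The key point is the $*$-homomorphism property (iii). Preservation of the adjoint is immediate, as $\sc_{ts}^{*}(\alpha_{x}\alpha_{y})$ is real-valued and symmetric in $x,y$. For multiplicativity, the $(x,z)$-entry of the product of the pre-quotient matrices attached to $b$ and $b'$ equals $\chi_{x}\chi_{z}\sc_{ts}^{*}\!\left(\alpha_{x}\alpha_{z}\sum_{y}\chi_{y}^{2}\alpha_{y}^{2}\right)\otimes bb'$, whereas the $(x,z)$-entry of the matrix attached to $bb'$ equals the same expression with $\sum_{y}\chi_{y}^{2}\alpha_{y}^{2}$ replaced by $\sum_{y}\alpha_{y}^{2}=1$. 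Their difference $-\chi_{x}\chi_{z}\sc_{ts}^{*}\!\left(\alpha_{x}\alpha_{z}\sum_{y:\chi_{y}=0}\alpha_{y}^{2}\right)\otimes bb'$ is nonzero only when $\supp\alpha_{x}\cap\supp\alpha_{y}\neq\varnothing\neq\supp\alpha_{y}\cap\supp\alpha_{z}$ for some $y$ with $\dist(y,X_{0})\le\Delta$, so $\dist(x,X_{0})<3\Delta$ and $\dist(z,X_{0})<3\Delta$; hence the defect matrix is supported in $\nbhd_{3\Delta}(X_{0})$, belongs to $\mathfrak{M}_{X\supset X_{0}}^{u}\mathfrak{T}I\C_{\mathsf{X},\mathsf{X}_{0}}B$, and is annihilated by $q_{X,X_{0}}$. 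Therefore $\overline{\eta}B(b)\overline{\eta}B(b')=\overline{\eta}B(bb')$. Naturality in $B$ follows because $\mathfrak{M}_{X}^{u}$, and hence $\mathfrak{N}_{X,X_{0}}^{u}$, acts entrywise on $*$-homomorphisms (cf.~\eqref{eq:111}), while $\mathfrak{T}I\C_{\mathsf{X},\mathsf{X}_{0}}\phi$ acts pointwise by $f\otimes b\mapsto f\otimes\phi(b)$.

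It remains to check (iv), the square of Definition~\ref{def:admissible} for $\overline{\eta}\colon\Id\Rightarrow\mathfrak{N}_{X,X_{0}}^{u}\mathfrak{T}I\C_{\mathsf{X},\mathsf{X}_{0}}$. Composing the explicit labelings of $\mathfrak{N}_{X,X_{0}}^{u}$ (Remark~\ref{rem:expl-lbl-N}), of $\mathfrak{T}$ and $I$ (Lemma~\ref{lem:fA-kappa} and its analogue for $I=\C_{[0,1]}$), and of $\C_{\mathsf{X},\mathsf{X}_{0}}\cong\mathfrak{O}_{C_{0}(\mathsf{X},\mathsf{X}_{0})}$, one finds that $\kappa^{A,\mathfrak{N}_{X,X_{0}}^{u}\mathfrak{T}I\C_{\mathsf{X},\mathsf{X}_{0}}}B$ sends $a\otimes\qmat_{x,y}\fun_{t}\fun_{s}(f_{x,y}(t,s)\otimes b)$ to $\qmat_{x,y}\fun_{t}\fun_{s}(f_{x,y}(t,s)\otimes(a\otimes b))$; applied to $\overline{\eta}B(b)$ this gives exactly $\overline{\eta}(A\otimes B)(a\otimes b)$. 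Since simple tensors are dense in $A\otimes B$ and all maps are continuous, the square commutes. I expect step (iii) to be the main obstacle: one has to recognize that the truncation $\chi$ cannot be dispensed with, pin down the resulting multiplicativity defect, and verify --- via the triangle inequality and the propagation bound --- that it is supported in a fixed neighborhood of $X_{0}$ and so dies precisely in the relative quotient. The remaining steps are routine bookkeeping of supports, propagations, and the explicit labeling formulas.
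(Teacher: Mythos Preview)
Your argument is correct. Steps (i)--(iii) follow the same route as the paper: you lift $\overline{\eta}B$ to a linear map $\zeta$ into $\mathfrak{M}_{X}^{u}\mathfrak{T}I\C_{\mathsf{X},\mathsf{X}_{0}}B$, compute the multiplicativity defect $\zeta(bb')-\zeta(b)\zeta(b')$ explicitly, and observe that it is supported in a fixed neighborhood of $X_{0}$, hence killed by $q_{X,X_{0}}$. Your treatment of (i)--(ii) is in fact more detailed than the paper's, which simply asserts that $\zeta$ is well-defined since $\chi_{x}\sc_{\tau}^{*}(\alpha_{x})\in C_{0}(\mathsf{X},\mathsf{X}_{0})$.

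For step (iv) the two proofs diverge. You verify the labeling square of Definition~\ref{def:admissible} by hand, composing the explicit entrywise/pointwise labelings of $\mathfrak{N}_{X,X_{0}}^{u}$, $\mathfrak{T}$, $I$, and $\C_{\mathsf{X},\mathsf{X}_{0}}$ and checking the result against $\overline{\eta}(A\otimes B)$ on simple tensors. The paper instead argues indirectly: it introduces the untruncated map $\overline{\eta}_{0}\colon\Id\Rightarrow\mathfrak{M}_{X}^{u}\mathfrak{T}I\C_{\mathsf{X}}$, observes that its target is well-pointed (being a composite of product-separating monic-preserving endofunctors), so $\overline{\eta}_{0}$ is automatically labeled by Proposition~\ref{prop:wp-lbl}; it then fits $\overline{\eta}$ and $\overline{\eta}_{0}$ into a commuting square with the labeled arrows $q_{X,X_{0}}\mathfrak{T}I\C_{\mathsf{X}}$ and $\mathfrak{N}_{X,X_{0}}^{u}\mathfrak{T}I\incl$, notes that the latter is monic (since $\mathfrak{N}_{X,X_{0}}^{u}$, $\mathfrak{T}$, $I$ are monic-preserving), and concludes via Lemma~\ref{lem:abg}. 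Your direct computation is shorter and entirely self-contained; the paper's detour demonstrates the categorical machinery (well-pointedness, Lemma~\ref{lem:abg}) and sidesteps the need to write out the four-fold composite labeling, which matters more in situations where the target is less transparent.
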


\begin{proof}
For an arbitrary $B\in\in\Cstar$, introduce the linear map
\[
\zeta\colon B\to\mathfrak{M}^{u}_{X}\mathfrak{T}I\C_{\mathsf{X},\mathsf{X}_{0}}B\colon b\mapsto\mat_{x,y}\fun_{t}\fun_{s}(\chi_{x}\chi_{y}\sc^{*}_{ts}(\alpha_{x}\alpha_{y})\otimes b),
\]
which is well-defined since $\chi_{x}\sc^{*}_{\tau}(\alpha_{x})\in C_{0}(\mathsf{X},\mathsf{X}_{0})$
for all $\tau\geq0$ and all $x\in X$. For $a,b\in B$, the discrepancy
\[
m\coloneqq\zeta(ab)-\zeta(a)\zeta(b)=\mat_{x,y}\fun_{t}\fun_{s}\left(\sum_{z\in X}(1-\chi_{z})\chi_{x}\chi_{y}\sc^{*}_{ts}(\alpha_{x}\alpha^{2}_{z}\alpha_{y})\otimes ab\right)
\]
has finite propagation. Moreover, $m_{x,y}\neq0$ only if there is
$z\in X$ such that the inequalities \begin{alignat*}{3}
\dist(z,X_{0})\leq\Delta, & \qquad & \dist(z,x)\leq2\Delta, & \qquad & \dist(z,y)\leq2\Delta
\end{alignat*}
hold, which implies that $m\in\mathfrak{M}^{u}_{X\supset X_{0}}\mathfrak{T}I\C_{\mathsf{X},\mathsf{X}_{0}}B$.
Using the obvious equality $\overline{\eta}B=q_{X,X_{0}}\mathfrak{T}I\C_{\mathsf{X},\mathsf{X}_{0}}B\circ\zeta$,
we conclude that $\overline{\eta}B$ is multiplicative. It is trivial
to check that $\overline{\eta}B$ is linear and preserves involution,
and that it is natural in $B$.

Denote by $\overline{\eta}_{0}$ the natural transformation
\[
\overline{\eta}_{0}\colon\Id\Rightarrow\mathfrak{M}^{u}_{X}\mathfrak{T}I\C_{\mathsf{X}},\qquad\overline{\eta}_{0}B\colon\,b\mapsto\mat_{x,y}\fun_{t}\fun_{s}(\sc^{*}_{ts}(\alpha_{x}\alpha_{y})\otimes b)
\]
and note that it fits into the commuting diagram
\[
\begin{tikzcd}[ampersand replacement=\&]
{\Id} \&  \& {\mathfrak{N}_{X,X_{0}}^{u}\mathfrak{T}I\C_{X,X_{0}}} \\
{\mathfrak{M}_{X}^{u}\mathfrak{T}I\C_{X}} \&  \& {\mathfrak{N}_{X,X_{0}}^{u}\mathfrak{T}I\C_{X}}
\arrow["\overline{\eta}", Rightarrow, from=1-1, to=1-3]
\arrow["\overline{\eta}_{0}"', Rightarrow, from=1-1, to=2-1]
\arrow["\mathfrak{N}_{X,X_{0}}^{u}\mathfrak{T}I\incl", Rightarrow, from=1-3, to=2-3]
\arrow["q_{X,X_{0}}\mathfrak{T}I\C_{X}", Rightarrow, from=2-1, to=2-3]
\end{tikzcd}
\]
where $\incl\colon\C_{X,X_{0}}\Rightarrow\C_{X}$ is the obvious componentwise
inclusion. The endofunctors $\mathfrak{M}^{u}_{X}$, $\mathfrak{T}$,
$\C_{X}$, and~$I$ are product-separating and monic-preserving,
and hence, $\mathfrak{T}I\C_{X}$ and $\mathfrak{M}^{u}_{X}\mathfrak{T}I\C_{X}$
are well-pointed by Lemma~\ref{lem:psmp-compos}. Note also that
$\overline{\eta}_{0}$, $\incl$, and $q_{X,X_{0}}$ are labeled natural
transformations, respectively by Proposition~\ref{prop:wp-lbl} and
Lemma~\ref{lem:quotient-criterion}. Thus, $\mathfrak{N}^{u}_{X,X_{0}}\mathfrak{T}I\incl$
and $q_{X,X_{0}}\mathfrak{T}I\C_{\mathsf{X},\mathsf{X}_{0}}$ are
also labeled. Using that $\mathfrak{N}^{u}_{X,X_{0}}$, $\mathfrak{T}$,
and $I$ are monic-preserving (see Lemma~\ref{lem:N-monpres}), we
conclude that $\mathfrak{N}^{u}_{X,X_{0}}\mathfrak{T}I\incl$ is componentwise
monic, and hence, by Lemma~\ref{lem:cw-epic-epic}, monic. It follows
from Lemma~\ref{lem:abg} that $\overline{\eta}$ is labeled.
\end{proof}

\begin{prop}
[{\cite[Corollary 3.4]{AkePedRoe73}}]\label{prop:Cbstr} Let $\mathsf{X}$
be a locally compact Hausdorff space, and let $C_{b,str}(\mathsf{X},\mathcal{M}(B))$
be the $C^{*}$-algebra of bounded strictly continuous functions on
$\mathsf{X}$ with values in $\mathcal{M}(B)$. Then
\[
\mathcal{M}(C_{0}(\mathsf{X},B))\cong C_{b,str}(\mathsf{X},\mathcal{M}(B)).
\]
\end{prop}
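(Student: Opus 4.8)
The statement is classical and recorded in \cite[Corollary~3.4]{AkePedRoe73}; the plan is to exhibit the isomorphism explicitly and check it is a $*$-isomorphism. For each $x\in\mathsf{X}$ the evaluation $\ev_{x}\colon C_{0}(\mathsf{X},B)\to B$ is surjective (choose $\varphi\in C_{0}(\mathsf{X})$ with $\varphi(x)=1$ and note that $\varphi\cdot b\mapsto b$), hence non-degenerate, and therefore extends uniquely to a strictly continuous unital $*$-homomorphism $\overline{\ev_{x}}\colon\mathcal{M}(C_{0}(\mathsf{X},B))\to\mathcal{M}(B)$ satisfying $\overline{\ev_{x}}(m)\,f(x)=(mf)(x)$ for all $f\in C_{0}(\mathsf{X},B)$ (the extension theorem for non-degenerate morphisms, see \cite{lance1995}). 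One then defines $\Phi\colon\mathcal{M}(C_{0}(\mathsf{X},B))\to\mathrm{Fun}(\mathsf{X},\mathcal{M}(B))$ by $\Phi(m)(x)\coloneqq\overline{\ev_{x}}(m)$.

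The next step is to check that $\Phi$ takes values in $C_{b,str}(\mathsf{X},\mathcal{M}(B))$ and is an injective $*$-homomorphism. Boundedness is immediate from $\norm{\Phi(m)(x)}\le\norm m$. For strict continuity at a point $x_{0}$, fix $b\in B$ and choose $f\in C_{0}(\mathsf{X},B)$ with $f(x_{0})=b$; then $\norm{\Phi(m)(x)b-(mf)(x)}=\norm{\Phi(m)(x)(b-f(x))}\le\norm m\,\norm{b-f(x)}$, while $x\mapsto(mf)(x)$ is norm-continuous because $mf\in C_{0}(\mathsf{X},B)$, so $x\mapsto\Phi(m)(x)b$ is continuous at $x_{0}$; running the same argument with $m^{*}$ controls the other seminorm. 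That $\Phi$ is a $*$-homomorphism is inherited componentwise from the $\overline{\ev_{x}}$, and $\Phi(m)=0$ forces $(mf)(x)=0$ for all $f$ and $x$, i.e. $m=0$; being an injective $*$-homomorphism, $\Phi$ is automatically isometric.

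It remains to prove surjectivity. Given $g\in C_{b,str}(\mathsf{X},\mathcal{M}(B))$, define $(T_{g}f)(x)\coloneqq g(x)f(x)$ and $(T_{g^{*}}f)(x)\coloneqq g(x)^{*}f(x)$ for $f\in C_{0}(\mathsf{X},B)$. These are well defined: the map $x\mapsto g(x)f(x)$ is norm-continuous by the estimate $\norm{g(x)f(x)-g(x_{0})f(x_{0})}\le\norm{g}_{\infty}\norm{f(x)-f(x_{0})}+\norm{(g(x)-g(x_{0}))f(x_{0})}$ (the last term tending to $0$ by strict continuity of $g$), and it vanishes at infinity since $f$ does and $g$ is bounded. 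A direct computation with the $C_{0}(\mathsf{X},B)$-valued inner product shows that $T_{g}$ and $T_{g^{*}}$ are mutually adjoint on $C_{0}(\mathsf{X},B)$ regarded as a Hilbert module over itself, so $T_{g}\in\mathcal{L}_{C_{0}(\mathsf{X},B)}(C_{0}(\mathsf{X},B))=\mathcal{M}(C_{0}(\mathsf{X},B))$, and $\Phi(T_{g})=g$ by construction. Hence $\Phi$ is a $*$-isomorphism. I expect the only genuine work to lie in the two continuity verifications and in confirming that $\overline{\ev_{x}}$ exists with the stated intertwining property; all of this is elementary, which is precisely why the result is quoted rather than reproven here.
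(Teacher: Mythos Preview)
The paper does not supply its own proof of this proposition; it merely records the statement and cites \cite[Corollary~3.4]{AkePedRoe73}. Your argument is a correct and standard proof of this classical fact, and your closing remark that the result is ``quoted rather than reproven here'' accurately reflects how the paper treats it.
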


\begin{lem}\label{lem:incl}
The following maps are well-defined injective $*$-homomorphisms:
\begin{alignat*}{1}
\mathfrak{M}^{u}_{X}\mathfrak{T}\mathcal{M}(B)\to C_{b,str}([0,\infty),\mathcal{M}(\uK B))\colon & \mat_{x,y}(f_{x,y})\mapsto(t\mapsto\mat_{x,y}(f_{x,y}(t)));\\
\mathfrak{T}\uK B\to C_{b,str}([0,\infty),\mathcal{M}(\uK B))\colon & f\mapsto f.
\end{alignat*}
\end{lem}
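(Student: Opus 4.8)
The plan is to handle the two maps separately; the second is essentially formal, while the first rests on the identification $\mathcal{M}(\uK B)=\mathcal{M}(\mathcal{K}_B(l_2(X,B)))=\mathcal{L}_B(l_2(X,B))$ together with Lemma~\ref{lem:norm-fM-of-norm-its-elements}. For the second map, a norm-continuous bounded function $[0,\infty)\to\uK B$ is, via the canonical inclusion $\uK B\hookrightarrow\mathcal{M}(\uK B)$, a strictly continuous bounded function $[0,\infty)\to\mathcal{M}(\uK B)$; this assignment is visibly a well-defined injective $*$-homomorphism, so nothing further is needed there.

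For the first map I would first produce, for each $t\in[0,\infty)$, a \emph{contractive} $*$-homomorphism $\Psi^{t}\colon\mathfrak{M}_{X}^{u}\mathfrak{T}\mathcal{M}(B)\to\mathcal{L}_{B}(l_{2}(X,B))$ sending $\mat_{x,y}(f_{x,y})$ to $\mat_{x,y}(f_{x,y}(t))$. This factors as $\mathfrak{M}_{X}^{u}\mathfrak{T}\mathcal{M}(B)\xrightarrow{\mathfrak{M}_{X}^{u}\ev_{t}}\mathfrak{M}_{X}^{u}\mathcal{M}(B)\xrightarrow{\Theta}\mathcal{L}_{B}(l_{2}(X,B))$, where $\Theta$ restricts an operator on $l_{2}(X,\mathcal{M}(B))$ to the closed submodule $l_{2}(X,B)$: every finite-propagation matrix over $\mathcal{M}(B)$ preserves $l_{2}(X,B)$ and defines an adjointable operator there by Lemma~\ref{lem:norm-fM-of-norm-its-elements}, so $\Theta$ is a well-defined contractive $*$-homomorphism on the dense $*$-subalgebra of such matrices, hence on all of $\mathfrak{M}_{X}^{u}\mathcal{M}(B)$. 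Next, for fixed $m\in\mathfrak{M}_{X}^{u}\mathfrak{T}\mathcal{M}(B)$, I would check that $t\mapsto\Psi^{t}(m)$ is bounded (immediate from contractivity of $\Psi^{t}$) and strictly continuous: on a finite-propagation matrix this follows from the norm-continuity of the entries $f_{x,y}\in C_{b}([0,\infty),\mathcal{M}(B))$ tested against the matrix units $\epsilon_{x_{0},y_{0}}b$ (only finitely many entries contribute), and the general case follows since finite-propagation matrices are dense and $t\mapsto\Psi^{t}(m_{n})\xi$ converges uniformly in $t$ for every fixed $\xi$. This defines $\Psi_{1}(m)\coloneqq(t\mapsto\Psi^{t}(m))\in C_{b,str}([0,\infty),\mathcal{M}(\uK B))$, and $\Psi_{1}$ is a $*$-homomorphism because evaluation at $t$ composed with $\Psi_{1}$ equals $\Psi^{t}$ for every $t$, while the operations in $C_{b,str}([0,\infty),\mathcal{M}(\uK B))$ are pointwise.

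The main obstacle is injectivity: on finite-propagation matrices it is immediate (such a matrix killed by all $\ev_{t}$ has all entries $0$ by Lemma~\ref{lem:mat-rep}), but extending this to the closure requires work, and I would do it in two moves. First, $\Theta$ is in fact \emph{isometric}: for $\xi\in l_{2}(X,\mathcal{M}(B))$ one has $\norm\xi=\sup\{\norm{\xi b}:b\in B,\,\norm b\le 1\}$ (apply this with $\langle\xi,\xi\rangle\in\mathcal{M}(B)_{+}$ and an approximate unit of $B$), and $m(\xi b)=\Theta(m)(\xi b)$ with $\xi b\in l_{2}(X,B)$, which gives $\norm m\le\norm{\Theta(m)}$, the reverse inequality being trivial. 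Hence $\ker\Psi^{t}=\ker(\mathfrak{M}_{X}^{u}\ev_{t})=\mathfrak{M}_{X}^{u}(\ker\ev_{t})$ by exactness of $\mathfrak{M}_{X}^{u}$ (Proposition~\ref{prop:MX-properties}) and surjectivity of $\ev_{t}$. Second, $\bigcap_{t}\mathfrak{M}_{X}^{u}(\ker\ev_{t})=0$: the $*$-homomorphism $\mathfrak{T}\mathcal{M}(B)\to\prod_{t}\mathcal{M}(B)$, $f\mapsto(f(t))_{t}$, is monic with $t$-th coordinate $\ev_{t}$; applying the monic-preserving functor $\mathfrak{M}_{X}^{u}$ and composing with the canonical monomorphism $\lc\colon\mathfrak{M}_{X}^{u}\prod_{t}\mathcal{M}(B)\to\prod_{t}\mathfrak{M}_{X}^{u}\mathcal{M}(B)$ ($\mathfrak{M}_{X}^{u}$ is product-separating, Proposition~\ref{prop:MX-properties}) yields a monomorphism whose $t$-th coordinate is $\mathfrak{M}_{X}^{u}\ev_{t}$, so its kernel $\bigcap_{t}\ker(\mathfrak{M}_{X}^{u}\ev_{t})$ vanishes. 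Therefore $\ker\Psi_{1}=\bigcap_{t}\ker\Psi^{t}=0$, completing the proof. The two genuinely nonformal points are thus the isometry of $\Theta$ (the approximate-unit computation) and the vanishing of the intersection (the product-separating argument); everything else is routine bookkeeping with Hilbert modules.
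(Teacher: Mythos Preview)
Your argument is correct. The paper's own proof reads, in its entirety, ``Straightforward,'' so there is no detailed approach to compare against; you have supplied one. Your treatment of the second map is indeed formal, and your construction of $\Psi_{1}$ via the pointwise maps $\Psi^{t}=\Theta\circ\mathfrak{M}_{X}^{u}\ev_{t}$ is the natural way to proceed. The two substantive points you isolate---the isometry of $\Theta$ and the vanishing of $\bigcap_{t}\ker(\mathfrak{M}_{X}^{u}\ev_{t})$---are exactly where the content lies, and both are handled cleanly: the isometry argument via $\norm{m}_{\mathcal{M}(B)}=\sup_{\norm{b}\le1}\norm{mb}$ is the right move, and invoking the product-separating and monic-preserving properties of $\mathfrak{M}_{X}^{u}$ from Proposition~\ref{prop:MX-properties} for the intersection is more systematic than an ad~hoc argument would be. If anything, your proof is more careful than the paper's ``Straightforward'' suggests is necessary, but nothing in it is superfluous.
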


\begin{proof}
Straightforward.
\end{proof}

\begin{lem}\label{lem:extented-epsilon-well-defined}
There is a well-defined
labeled natural transformation
\[
\overline{\varepsilon}\colon\C_{\mathsf{X},\mathsf{X}_{0}}\mathfrak{N}^{u}_{X,X_{0}}\mathfrak{T}\Rightarrow\mathfrak{A}\uK,\qquad\overline{\varepsilon}B\colon f\otimes\qmat_{x,y}\fun_{t}m_{x,y}(t)\mapsto\as_{t}\left((\diag_{x\in X}f(\sc_{t}x))\cdot\mat_{x,y}m_{x,y}(t)\right).
\]
\end{lem}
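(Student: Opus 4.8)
The plan is to realize $\overline{\varepsilon}B$ as the restriction of a map built from the two injective $*$-homomorphisms of Lemma~\ref{lem:incl} into $C_{b,str}([0,\infty),\mathcal{M}(\uK B))$, followed by passage to the asymptotic quotient. First I would fix $B\in\in\Cstar$ and work with $\mathcal{M}(B)$-valued data: given $f\otimes\mat_{x,y}\fun_{t}m_{x,y}(t)\in\C_{\mathsf{X},\mathsf{X}_0}\mathfrak{M}_X^u\mathfrak{T}B$ (before passing to the quotient and before tensoring is reinterpreted), consider the function
\[
t\mapsto\bigl(\diag_{x\in X}f(\sc_t x)\bigr)\cdot\mat_{x,y}m_{x,y}(t)\in\mathcal{L}_B(l_2(X,B)).
\]
The key point is that $\diag_{x\in X}f(\sc_t x)$ is a genuine bounded adjointable operator for each $t$ (with norm $\le\|f\|$), because $f$ is bounded; so the product lands in $\uK B$ after using that $\mat_{x,y}m_{x,y}(t)\in\mathfrak{M}_X^u B$ and that $\diag_x f(\sc_t x)$ multiplies it into the ideal of compacts — here one uses that $f\in C_0$, the cover $\{\ball_\Delta(x)\}$ is locally finite, and properness of $\sc$ via condition~\ref{cond:S:proper} to see that only finitely many diagonal entries are non-negligible on any bounded region, so the product is a norm-limit of finite-propagation finitely-supported matrices. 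I would check strict continuity in $t$ using Proposition~\ref{prop:Cbstr} (the multiplier-algebra description) together with the fact that $t\mapsto f(\sc_t x)$ is continuous for each $x$ and $\sup_{x,t}\|f(\sc_t x)\|<\infty$; boundedness in $t$ is immediate. This gives a well-defined $*$-homomorphism $\C_{\mathsf{X},\mathsf{X}_0}\mathfrak{M}_X^u\mathfrak{T}B\to C_{b,str}([0,\infty),\mathcal{M}(\uK B))$ landing in $\mathfrak{T}\uK B$, hence a $*$-homomorphism into $\mathfrak{T}\uK B$.

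Next I would show this descends through the two quotients. Composing with $\as\colon\mathfrak{T}\uK\Rightarrow\mathfrak{A}\uK$ kills $\mathfrak{T}_0\uK B$; I must verify the composite annihilates the two ideals I am quotienting by on the source side. The vanishing on $\mathfrak{M}_{X\supset X_0}^u$-contributions is the crucial estimate: if $\mat_{x,y}\fun_t m_{x,y}(t)\in\mathfrak{M}_{X\supset X_0}^u\mathfrak{T}B$, so $\supp$ lies in $\nbhd_R(X_0)$ for some $R$, then every surviving index $x$ satisfies $\dist(x,\mathsf{X}_0)\le R+\Delta$ (roughly), and by Lemma~\ref{nbhd-X0-collapses}(1) we get $\dist(\sc_t x,\mathsf{X}_0)<r$ for all $t\ge N$ with $r$ as small as we like; since $f\in C_0(\mathsf{X},\mathsf{X}_0)$ vanishes on $\mathsf{X}_0$ and is uniformly continuous near the (proper) compact pieces, $\|f(\sc_t x)\|\to 0$ uniformly over the relevant $x$, so $\diag_x f(\sc_t x)\cdot\mat_{x,y}m_{x,y}(t)\to 0$ in norm as $t\to\infty$, i.e. the image is in $\mathfrak{T}_0\uK B$ and dies in $\mathfrak{A}\uK B$. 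Similarly, the map factors through the $\C_{\mathsf{X},\mathsf{X}_0}$-ideal compatibly (here $f$ vanishing on $\mathsf{X}_0$ is exactly what makes the formula land in $C_0(\mathsf{X},\mathsf{X}_0,B)$-data). Thus $\overline{\varepsilon}B$ is a well-defined $*$-homomorphism; naturality in $B$ is routine from the naturality of all the pieces (the strict-limit constructions, $\mathfrak{M}_X^u\phi$, etc.).

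Finally, for the labeling, I would invoke Proposition~\ref{prop:wp-lbl}: the target $\mathfrak{A}\uK$ — is it well-pointed? No, $\mathfrak{A}$ is not well-pointed, so I cannot use that shortcut directly. Instead I would mimic the argument in Lemma~\ref{lem:extended-eta-well-defined}: lift $\overline{\varepsilon}$ through the quotient to a natural transformation $\C_{\mathsf{X},\mathsf{X}_0}\mathfrak{M}_X^u\mathfrak{T}\Rightarrow\mathfrak{T}\uK$ (via $\as$ post-composed, or rather factor $\overline{\varepsilon}$ as $\C_{\mathsf{X}}\mathfrak{M}_X^u\mathfrak{T}\to\mathfrak{T}\uK\xRightarrow{\as}\mathfrak{A}\uK$ pre-composed with the componentwise epics), check that the lifted map before the $\mathfrak{A}$-quotient is labeled because $\mathfrak{T}\uK=\mathfrak{C}^b_{[0,\infty)}\uK$ is well-pointed (it is a composite of product-separating monic-preserving endofunctors, so Lemma~\ref{lem:psmp-compos} and Proposition~\ref{prop:wp-lbl} apply there), then push the labeling forward along the componentwise-epic quotient projections using Lemma~\ref{lem:26} or Lemma~\ref{lem:abg} together with the explicit labeling formulas for $\mathfrak{N}_{X,X_0}^u$ (Remark~\ref{rem:expl-lbl-N}), $\mathfrak{A}$ (Lemma~\ref{lem:fA-kappa}), and $\uK$. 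The main obstacle I anticipate is the convergence/continuity bookkeeping in the first paragraph — pinning down precisely in what sense $\diag_x f(\sc_t x)\cdot\mat_{x,y}m_{x,y}(t)$ is a well-defined element of $\uK B$ depending strictly continuously on $t$, and making the uniform $C_0$-vanishing argument in the second paragraph airtight using properness of $\sc$ and Lemma~\ref{nbhd-X0-collapses} — rather than the labeling, which is a routine diagram chase once the scaffolding is in place.
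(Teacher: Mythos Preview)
There is a genuine gap in your construction of the $*$-homomorphism. You claim that
\[
f\otimes\mat_{x,y}\fun_t m_{x,y}(t)\;\longmapsto\;\bigl(t\mapsto(\diag_{x}f(\sc_t x))\cdot\mat_{x,y}m_{x,y}(t)\bigr)
\]
defines a $*$-homomorphism from $\C_{\mathsf{X},\mathsf{X}_0}\mathfrak{M}_X^u\mathfrak{T}B$ into $\mathfrak{T}\uK B$. It does not: writing $D_t(f)=\diag_x f(\sc_t x)$, multiplicativity on elementary tensors would require $D_t(f_2)m_1(t)=m_1(t)D_t(f_2)$, which fails as soon as $m_1$ has nonzero propagation, since $f(\sc_t x)\neq f(\sc_t y)$ in general. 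The commutator $[D_t(f),m(t)]$ is only \emph{asymptotically} zero (by \ref{cond:S:leq} and \ref{cond:S:rho}, for finite-propagation $m$ and uniformly continuous $f$ one has $\norm{[D_t(f),m(t)]}\to 0$ as $t\to\infty$), so the map becomes multiplicative only after composing with $\as\uK B$. This is exactly the mechanism the paper uses: it embeds $C_0(\mathsf{X})$ (via $d$) and $\mathfrak{M}_X^u\mathfrak{T}B$ into $\mathcal{M}(\mathfrak{T}_0\uK B)$, observes $d(f)\cdot m\in\mathfrak{T}\uK B$ and $[d(f),m]\in\mathfrak{T}_0\uK B$, and only then concludes that $f\otimes m\mapsto(\as\uK B)(d(f)\cdot m)$ is a $*$-homomorphism on the algebraic tensor product, which extends to the maximal tensor product by the universal property. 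Without this asymptotic-commutation step you cannot even extend the assignment from elementary tensors to $C_0(\mathsf{X})\otimes\mathfrak{M}_X^u\mathfrak{T}B$.

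This same gap undermines your labeling strategy: since there is no $*$-homomorphism into $\mathfrak{T}\uK B$, you cannot lift $\overline\varepsilon$ to a natural transformation with well-pointed target and invoke Proposition~\ref{prop:wp-lbl}. The paper instead first builds $\overline{\varepsilon}_0\colon\C_{\mathsf{X}}\mathfrak{M}_X^u\mathfrak{T}\Rightarrow\mathfrak{A}\uK$ (note the domain uses $\C_{\mathsf{X}}$ and $\mathfrak{M}_X^u$, not the relative versions) and verifies the labeling square for $\overline{\varepsilon}_0$ by a direct computation with the explicit formulas for $\kappa^{A,\mathfrak{A}}$ (Lemma~\ref{lem:fA-kappa}) and $\kappa^{A,\mathfrak{M}_X^u}$; only afterwards does it pass to the quotients via Lemma~\ref{lem:26}, using your vanishing argument (which is essentially correct, modulo first reducing via Lemma~\ref{lem:denseCXX0} to $f$ supported away from $\nbhd_r(\mathsf{X}_0)$).
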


\begin{proof}
Define the $*$-homomorphism
\[
d\colon C_{0}(\mathsf{X})\to\mathfrak{T}\uK\mathbb{C}\colon f\mapsto\fun_{t}\diag_{x\in X}f(\sc_{t}(x)).
\]
Bearing in mind Proposition~\ref{prop:Cbstr} and Lemma~\ref{lem:incl},
we can assume that $\mathfrak{T}\uK\mathbb{C}$, $\mathfrak{T}\uK B$,
and $\mathfrak{M}^{u}_{X}\mathfrak{T}B$ are $C^{*}$-subalgebras
of $\mathcal{M}(\mathfrak{T}_{0}\uK B)$, which allows us to consider
the linear map
\[
C_{0}(\mathsf{X})\otimes_{\mathrm{alg}}\mathfrak{M}^{u}_{X}\mathfrak{T}B\to\mathcal{M}(\mathfrak{T}_{0}\uK B)\colon f\otimes m\mapsto d(f)\cdot m,
\]
where $\otimes_{\mathrm{alg}}$ stands for the algebraic tensor product.
It is straightforward to show that for all $f\in C_{0}(\mathsf{X})$
and $m\in\mathfrak{M}^{u}_{X}\mathfrak{T}B$, we have
\begin{alignat*}{2}
d(f)\cdot m\in\mathfrak{T}\uK B, & \qquad\qquad & [d(f),m]\in\mathfrak{T}_{0}\uK B.
\end{alignat*}
Thus, there is a well-defined $*$-homomorphism
\begin{alignat*}{2}
 & C_{0}(\mathsf{X})\otimes_{\mathrm{alg}}\mathfrak{M}^{u}_{X}\mathfrak{T}B\to\mathfrak{A}\uK B\colon f\otimes m &  & \mapsto(\as\uK B)(d(f)\cdot m)\\
 &  &  & =\as_{t}\mat_{x,y}f(\sc_{t}x)m_{x,y}(t)
\end{alignat*}
which by the universal property of the maximal tensor product can
be extended to $C_{0}(\mathsf{X})\otimes\mathfrak{M}^{u}_{X}\mathfrak{T}B$.
This, in turn, gives rise to the natural transformation
\[
\overline{\varepsilon}_{0}\colon\C_{\mathsf{X}}\mathfrak{M}^{u}_{X}\mathfrak{T}\Rightarrow\mathfrak{A}\uK,\qquad\overline{\varepsilon}_{0}B\colon f\otimes\mat_{x,y}\fun_{t}m_{x,y}(t)\mapsto\as_{t}\mat_{x,y}f(\sc_{t}x)m_{x,y}(t).
\]
That $\overline{\varepsilon}_{0}$ is labeled follows from the explicit
formulas for the labeling of $\C_{\mathsf{X},\mathsf{X}_{0}}\mathfrak{M}^{u}_{X}\mathfrak{T}$
and $\mathfrak{A}\uK$ (see Remark~\ref{rem:expl-lbl-N} and~Lemma~\ref{lem:fA-kappa}).

We claim that $\overline{\varepsilon}_{0}B(f\otimes m)=0$ whenever
$f\in C_{0}(\mathsf{X},\mathsf{X}_{0})$ and $m\in\mathfrak{M}^{u}_{X\supset X_{0}}\mathfrak{T}B$.
Indeed, using Lemma~\ref{lem:denseCXX0}, we can assume without loss
of generality that $\supp(f)\subset\mathsf{X}\setminus\nbhd_{r}(\mathsf{X}_{0})$
and $\supp(m)\subset\nbhd_{R}(X_{0})^{\times2}$ for some $r,R>0$.
It follows from the first statement of Lemma~\ref{nbhd-X0-collapses}
that $f(\sc_{t}x)m^{t}_{x,y}=0$ for sufficiently large $t\in\mathbb{R}_{+}$,
from which the claim follows.

Lemma~\ref{lem:26} combined with the claim yields the required natural
transformation $\overline{\varepsilon}$. 
\end{proof}

We are now ready to formulate the main result of this paper, which
we shall prove in the next section.
\begin{thm}\label{thm:adjunction}
Let $\mathsb X\coloneqq(\mathsf{X},\mathsf{X}_{0})$
be a scalable pair of locally compact metric spaces with $\mathsf{X}$
having bounded coarse geometry. Let $\mathbf{X}\coloneqq(X,X_{0})$
be a $\Delta$-discretization of $\mathsb X$ for some $\Delta>0$
with $X$ having bounded geometry. And let $\left\{ \alpha_{x}\right\} $
be a square partition of unity subordinate to the cover $\{\ball_{\Delta}(x)\}_{x\in X}$
of $\mathsf{X}$ by $\Delta$-balls. Then there is an asymptotic adjunction
$\C_{\mathsb X}\asadj\mathfrak{N}^{u}_{\mathbf{X}}$ witnessed by
the following unit and counit:
\begin{alignat*}{4}
\eta\colon & \Id\Rightarrow\mathfrak{N}^{u}_{\mathbf{X}}\C_{\mathsb X}, &  &  & \quad &  & \eta A\colon & a\mapsto\qmat_{x,y}\left(\chi_{x}\chi_{y}\alpha_{x}\alpha_{y}\otimes a\right),\\
\varepsilon\colon & \C_{\mathsb X}\mathfrak{N}^{u}_{\mathbf{X}}\Rightarrow\mathfrak{A}\uK, &  &  &  &  & \varepsilon A\colon & f\otimes\qmat_{x,y}m_{x,y}\mapsto\as_{t}\left((\diag_{x\in X}f(\sc_{t}x))\cdot\mat_{x,y}m_{x,y}\right),
\end{alignat*}
where
\[
\chi_{x}=\begin{cases}
1, & \dist(x,X_{0})>\Delta;\\
0, & \text{otherwise}.
\end{cases}
\]
\end{thm}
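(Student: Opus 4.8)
The plan is to verify the three requirements of Definition~\ref{def:asadj}: that $\eta$ and $\varepsilon$ are labeled natural transformations, and that the two triangles commute in~$\hGEFC$. The first point is routine: using $\sc_{0}=\id$ (condition~\ref{cond:S:zero}) one identifies $\eta$ with the composite $\mathfrak{N}_{\mathbf{X}}^{u}(\ev_{0}^{I}\C_{\mathsb X})\circ\mathfrak{N}_{\mathbf{X}}^{u}(\ev_{0}^{\mathfrak{T}}I\C_{\mathsb X})\circ\overline{\eta}$ and $\varepsilon$ with $\overline{\varepsilon}\circ(\C_{\mathsb X}\mathfrak{N}_{\mathbf{X}}^{u}\const)$, and since $\overline{\eta}$, $\overline{\varepsilon}$, $\const$ and the two evaluations are labeled (Lemmas~\ref{lem:extended-eta-well-defined} and~\ref{lem:extented-epsilon-well-defined}, and Proposition~\ref{prop:wp-lbl}), and labeled natural transformations are closed under composition and whiskering, so are $\eta$ and~$\varepsilon$. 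Each of the two triangles I would then establish by the same three-step pattern: an \emph{asymptotic rewriting} of the composite as a conjugate of a simpler transformation (exploiting that the scaling contracts distances), a \emph{scaling homotopy} based on the interpolation $t\mapsto\sc_{ts}$ together with Lemma~\ref{lem:stronger-homot}, and a \emph{corner homotopy} coming from the standard fact that conjugations by isometries into a stable $C^{*}$-algebra are homotopic.

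For the first triangle, one computes $(\varepsilon\C_{\mathsb X}\circ\C_{\mathsb X}\eta)B(g\otimes b)=\as_{t}\mat_{x,y}(\chi_{x}\chi_{y}\,g(\sc_{t}x)\,\alpha_{x}\alpha_{y}\otimes b)$, which does land in $\mathfrak{A}\uK\C_{\mathsb X}B$ because $g\circ\sc_{t}\in C_{0}(\mathsf X)$ (condition~\ref{cond:S:proper}). On $\supp\alpha_{x}$ one has $\dist(\sc_{t}\xi,\sc_{t}x)\le\rho_{t}(\Delta)\to0$ (conditions~\ref{cond:S:leq}--\ref{cond:S:rho}), so by uniform continuity of $g$ and the norm estimate of Lemma~\ref{lem:norm-fM-of-norm-its-elements} this equals, in $\mathfrak{A}\uK$, the element $v\cdot\as_{t}(g\circ\sc_{t}\otimes b)\cdot v^{*}$, where $v$ is the column with entries $\chi_{x}\alpha_{x}$ (so that $v\,c\,v^{*}=\mat_{x,y}(\chi_{x}\alpha_{x}\,c\,\chi_{y}\alpha_{y})$ and $v^{*}v=1-q$, $q:=\sum_{x:\chi_{x}=0}\alpha_{x}^{2}$ supported near $\mathsf X_{0}$). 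The scaling homotopy $g\otimes b\mapsto v\cdot\as_{t}\fun_{s}(g\circ\sc_{ts}\otimes b)\cdot v^{*}$, a labeled natural transformation $\C_{\mathsb X}\Rightarrow\mathfrak{A}\uK I\C_{\mathsb X}$ (well-definedness uses that $\sc$ is a morphism of pairs and condition~\ref{cond:S:proper}), then gives $v\cdot\as_{t}(g\circ\sc_{t}\otimes b)\cdot v^{*}\simeq v\cdot\alpha(g\otimes b)\cdot v^{*}$ via Lemma~\ref{lem:stronger-homot}. Finally, using Lemma~\ref{lem:denseCXX0} we may assume $g$ vanishes on a neighborhood of $\mathsf X_{0}$, so that $qg=0$, $v$ restricts to an isometry on the relevant hereditary subalgebra, and the corner homotopy in the stable algebra $\uK\C_{\mathsb X}B$ yields $v\cdot\alpha(g\otimes b)\cdot v^{*}\simeq\iota_{00}\alpha(g\otimes b)=(\alpha\iota_{00}\C_{\mathsb X})B(g\otimes b)$.

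For the second triangle, $(\mathfrak{N}_{\mathbf{X}}^{u}\varepsilon\circ\eta\mathfrak{N}_{\mathbf{X}}^{u})B$ sends $\qmat_{x,y}m_{x,y}$ to the element of $\mathfrak{N}_{\mathbf{X}}^{u}\mathfrak{A}\uK B$ whose $(p,q)$-outer, $(x,y)$-inner coefficient is $\as_{t}(\chi_{p}\chi_{q}\,\alpha_{p}(\sc_{t}x)\,\alpha_{q}(\sc_{t}x)\,m_{x,y})$. Since $m_{x,y}\ne0$ forces $\dist(x,y)\le\prp(m)$, hence $\dist(\sc_{t}x,\sc_{t}y)\le\rho_{t}(\prp(m))$, which tends to $0$ and is bounded by $\sup_{t}\rho_{t}(\prp(m))<\infty$ (conditions~\ref{cond:S:leq}--\ref{cond:S:rho}), uniform continuity of $\alpha_{q}$ lets us replace $\alpha_{q}(\sc_{t}x)$ by $\alpha_{q}(\sc_{t}y)$ at no asymptotic cost, rewriting the composite as $q_{X,X_{0}}$ and $\as$ applied to $\mathcal{V}(t)\,m\,\mathcal{V}(t)^{*}$ with $\mathcal{V}(t)\colon e_{x}\mapsto\sum_{p}\chi_{p}\alpha_{p}(\sc_{t}x)\,e_{p}\otimes e_{x}$; the propagation bounds just used, together with condition~\ref{cond:S:proper}, show that these conjugates have uniformly bounded propagation and finitely supported inner blocks, hence represent elements of the relative Roe algebra with compact coefficients. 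The scaling homotopy $s\mapsto\mathcal{V}(ts)$ reduces to $t=0$; dropping the factors $\chi_{p}$ alters $\mathcal{V}(0)$ by an operator supported near $X_{0}$, which is trivial in $\mathfrak{N}_{\mathbf{X}}^{u}$, leaving the genuine isometry $\widehat{\mathcal{V}}\colon e_{x}\mapsto\sum_{p}\alpha_{p}(x)\,e_{p}\otimes e_{x}$ (here $\sum_{p}\alpha_{p}^{2}=1$ is used); and the corner homotopy in the stable algebra $\mathfrak{N}_{\mathbf{X}}^{u}\uK B$ connects conjugation by $\widehat{\mathcal{V}}$ to conjugation by the corner isometry $e_{x}\mapsto e_{x}\otimes e_{0}$, the latter being exactly $(\mathfrak{N}_{\mathbf{X}}^{u}\alpha\iota_{00})B$.

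I expect the second triangle to be the main obstacle. Both the asymptotic replacement of $\alpha_{q}(\sc_{t}x)$ by $\alpha_{q}(\sc_{t}y)$ — which must be quantified using the propagation of $m$ and the decay $\rho_{t}\to0$ — and, above all, the verification that every intermediate object (in particular the homotopy $\fun_{t}\fun_{s}(\mathcal{V}(ts)\,m\,\mathcal{V}(ts)^{*})$) has propagation bounded \emph{uniformly} in the homotopy variables (precisely the point at which condition~\ref{cond:S:rho} is indispensable) and compact inner blocks, so that it defines an honest labeled natural transformation with values in $\mathfrak{N}_{\mathbf{X}}^{u}$, require some care. The first triangle is more forgiving, since there the conjugating element $v$ does not depend on the asymptotic parameter.
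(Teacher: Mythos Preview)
Your three–step pattern (asymptotic rewriting, scaling homotopy, corner homotopy) is exactly the skeleton of the paper's proof: Proposition~\ref{thm:lid} chains $\varphi_{0}\simeq\varphi_{1}=\varphi_{2}\simeq\varphi_{3}\simeq\varphi_{4}$, and Proposition~\ref{thm:rid} chains $\varphi_{0}=\varphi_{1}\simeq\varphi_{2}=\varphi_{3}\simeq\varphi_{4}$, each chain breaking into precisely the three moves you describe. So at the structural level the proposal matches the paper.

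The substantive difference is in the treatment of the cutoffs~$\chi_{x}$. You keep them throughout and run the scaling homotopy as conjugation by the \emph{non-isometric} column $v=(\chi_{x}\alpha_{x})_{x}$ (first triangle) or $\mathcal V(ts)$ (second triangle), postponing the appeal to density/the quotient to the end. The paper does the opposite: it removes the $\chi$'s \emph{before} the scaling homotopy --- in the first triangle by the careful pointwise estimate establishing $\varphi_{1}=\varphi_{2}$ (which simultaneously replaces $g(\sc_{t}x)$ by $(\sc_{t}^{2})^{*}g$ and deletes $\chi_{x}\chi_{y}$), and in the second triangle by simply observing that the $\chi$'s are absorbed by the outer quotient $\mathfrak N_{\mathbf X}^{u}$ from the start. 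It then runs the scaling homotopy with the \emph{full} partition $\{\sc_{ts}^{*}\alpha_{x}\}_{x}$, which is a genuine square partition of unity, so Lemma~\ref{lem:42} gives honest $*$-homomorphisms without further argument.

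Your route is workable but leaves a gap as written: the map $g\otimes b\mapsto v\cdot\as_{t}\fun_{s}(g\circ\sc_{ts}\otimes b)\cdot v^{*}$ is only completely positive, not multiplicative, since $v^{*}v=1-q$ with $q=\sum_{\chi_{x}=0}\alpha_{x}^{2}$, and at $s=0$ the defect $q\cdot g$ does not vanish for general $g\in C_{0}(\mathsf X,\mathsf X_{0})$. One can rescue this by first restricting to the dense subalgebra of $g$'s vanishing on $\nbhd_{r'}(\mathsf X_{0})$ for a suitable $r'$ (use Lemma~\ref{nbhd-X0-collapses}(ii) to choose $r'$ so that $q\cdot(g\circ\sc_{\tau})=0$ for all~$\tau$), checking multiplicativity there, and then extending by continuity; but this needs to be said explicitly, and you only invoke Lemma~\ref{lem:denseCXX0} at the corner step. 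The analogous issue in the second triangle (multiplicativity of $m\mapsto\mathcal V(ts)\,m\,\mathcal V(ts)^{*}$ in the outer quotient) likewise requires checking that the defect $m\,\diag_{x}q(\sc_{ts}x)\,m'$ lands in $\mathfrak M^{u}_{X\supset X_{0}}$ after applying the outer matrix, which is true but not immediate. The paper's ordering sidesteps both issues.
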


\begin{rem}
Applying Lemmas~\ref{lem:extended-eta-well-defined} and~\ref{lem:extented-epsilon-well-defined},
one can easily show that $\varepsilon$ and $\eta$ in Theorem~\ref{thm:adjunction}
are well-defined. That $\eta$ and $\varepsilon$ are labeled follows
from the factorizations \begin{alignat*}{2}
\varepsilon= & \overline{\varepsilon}\circ\mathfrak{N}^{u}_{\mathbf{X}}\C_{\mathsb X}\const, & \qquad\qquad & \eta=\mathfrak{N}^{u}_{\mathbf{X}}\ev_{0}\ev_{0}\C_{\mathsb X}\circ\overline{\eta}
\end{alignat*}
in which all the factors are labeled.
\end{rem}

\subsection{Proof of the main theorem}

In this subsection, $\mathsb X=(\mathsf{X},\mathsf{X}_{0})$, $\mathbf{X}=(X,X_{0})$,
$\Delta>0$, $\left\{ \alpha_{x}\right\} $, $\eta$, and $\varepsilon$
will be as in Theorem~\ref{thm:adjunction}.

\begin{lem}\label{lem:42}
Let $X$ be a discrete metric space of bounded geometry,
and let $\left\{ m_{x}\right\} _{x\in X}\subset\mathcal{M}(B)$ be
a family of multipliers satisfying the following conditions:
\begin{enumerate}[label=\textup{\textbf{(H\arabic*)}}, ref=\textbf{(H\arabic*)}]
\item\label{enu:m1}$\sum_{x\in X}m^{*}_{x}m_{x}=1$, with the sum converging
in the strict topology;
\item\label{enu:m2}there exists $N>0$ such that $m_{x}m^{*}_{y}=0$ whenever
$\dist(x,y)>N$.
\end{enumerate}
Then there is a well-defined $*$-homomorphism
\[
\Lambda\colon\mathcal{M}(B)\to\mathcal{M}\left(\uK_{X}B\right)\colon m\mapsto\mat_{x,y}(m_{x}mm^{*}_{y}).
\]
If additionally we have a $C^{*}$-subalgebra $A\subset B$ and a
dense $*$-subalgebra $A_{0}\subset A$ such that for all $a\in A_{0}$,
the elements $m_{x}a$ and $am_{x}$ are non-zero for only finitely
many $x\in X$, then $\Lambda$ restricts to
\[
\Lambda\colon A\to\uK_{X}B\colon a\mapsto\mat_{x,y}(m_{x}am^{*}_{y}).
\]
\end{lem}

\begin{proof}
We claim that $\norm{m_{x}}\leq1$ for all $x\in X$. Indeed, for
all $b\in B$ and $y\in X$ we have
\[
b^{*}m^{*}_{y}m_{y}b\leq\sum_{x\in X}b^{*}m^{*}_{x}m_{x}b=b^{*}b
\]
which by Lemma~\ref{lem:pos}\ref{enu:pos2} implies that $\norm{m_{y}b}^{2}\leq\norm b^{2}$
from which the claim follows.

Condition~\ref{enu:m2}, Lemma~\ref{lem:norm-fM-of-norm-its-elements},
and the claim imply that 
\begin{equation}
\mat_{x,y}(m_{x}m^{*}_{y})\in\mathcal{L}_{B}(l_{2}(X,B)).\label{eq:mm}
\end{equation}
Consider the following Hilbert $B$-module maps:
\begin{alignat*}{2}
 & \Xi & \colon & B\to l_{2}(X,B)\colon\,b\mapsto\left(m_{x}b\right)_{x\in X},\\
 & \Xi^{*} & \colon & l_{2}(X,B)\to B\colon(b_{x})_{x\in X}\mapsto\sum_{x\in X}m^{*}_{x}b_{x}.
\end{alignat*}
The map $\Xi$ is well-defined since the sum $\sum_{x\in X}b^{*}m^{*}_{x}m_{x}b$
is norm convergent by condition~\ref{enu:m1}. That $\Xi^{*}$ is
also well-defined follows from (\ref{eq:mm}) together with the equality
\[
\sum_{x,y\in X}b^{*}_{x}m_{x}m^{*}_{y}b_{y}=\left\langle b,(\mat_{x,y}m_{x}m^{*}_{y})b\right\rangle _{l_{2}(X,B)}
\]
for every $b\in l_{2}(X,B)$. It is clear that $\Xi^{*}$ is the adjoint
of $\Xi$, and that $\Xi^{*}\Xi=\id$; thus there is a $*$-homomorphism
\[
\Lambda\colon\mathcal{L}_{B}\left(B\right)\to\mathcal{L}_{B}\left(l_{2}(X)\otimes B\right)\colon m\mapsto\Xi m\Xi^{*}.
\]
Using the notation of Lemma~\ref{lem:mat-rep}, we get the equality
\begin{multline*}
\qquad\qquad\left(\Lambda(m)\right)_{x,y}=\strlim_{\lambda}\left(\left\langle e_{x}u_{\lambda},\Xi m\Xi^{*}(e_{y}u_{\lambda})\right\rangle \right)=\\
=\strlim_{\lambda}\left(\left\langle \Xi^{*}(e_{x}u_{\lambda}),m\Xi^{*}(e_{y}u_{\lambda})\right\rangle \right)=\strlim_{\lambda}\left(u_{\lambda}m_{x}mm^{*}_{y}u_{\lambda}\right)=m_{x}mm^{*}_{y}.\qquad\qquad
\end{multline*}
which proves the first statement of the lemma. The second statement
is obvious.
\end{proof}

\begin{lem}\label{lem:hh0-new}
 There is a well-defined labeled natural transformation
\[
\xi\colon\C_{\mathsf{X}}\Rightarrow\uK_{X}\C_{\mathsf{X}},\quad\xi B\colon f\otimes b\mapsto\mat_{x,y\in X}\left(\alpha_{x}\alpha_{y}f\otimes b\right)
\]
which is homotopic to $\iota_{00}\C_{\mathsf{X}}$.
\end{lem}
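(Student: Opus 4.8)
The plan is to dispose of well-definedness and labeledness via Lemma~\ref{lem:42}, and then to derive $\xi\simeq\iota_{00}\C_{\mathsf X}$ from Lemma~\ref{lem:iotatheta} together with one explicit rotation homotopy.

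\emph{Well-definedness and labeledness.} I would invoke Lemma~\ref{lem:42} with the family $m_{x}\coloneqq\alpha_{x}\in\mathcal{M}(\C_{\mathsf X}B)$, where $\alpha_{x}\in C_{b}(\mathsf X)$ acts by pointwise multiplication. Its three hypotheses hold at once: $\sum_{x}\alpha_{x}^{2}=1$ strictly, since the cover $\{\ball_{\Delta}(x)\}$ is locally finite and hence only finitely many $\alpha_{x}$ are nonzero near any compact set; $\alpha_{x}\alpha_{y}=0$ as soon as $\dist(x,y)>2\Delta$; and $\norm{\alpha_{x}}\le1$. Taking for $A_{0}$ the dense $*$-subalgebra of $\C_{\mathsf X}B$ spanned by compactly supported elements—on each of which all but finitely many $\alpha_{x}$ act trivially—Lemma~\ref{lem:42} produces exactly the $*$-homomorphism $\xi B\colon f\otimes b\mapsto\mat_{x,y}(\alpha_{x}\alpha_{y}f\otimes b)$, which is plainly natural in $B$. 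Since $\uK_{X}$ and $\C_{\mathsf X}$ are product-separating and monic-preserving, so is $\uK_{X}\C_{\mathsf X}$ (Lemma~\ref{lem:psmp-compos}); hence $\uK_{X}\C_{\mathsf X}$ is well-pointed and $\xi$ is labeled by Proposition~\ref{prop:wp-lbl}.

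\emph{The homotopy, via $\theta$.} Fix two distinct basis points $x_{0}\neq x_{1}$ of $X$ and write $\iota^{(i)}\coloneqq\iota_{x_{i},x_{i}}$ for the corresponding corner embeddings $\Id\Rightarrow\uK$. I claim that $\iota^{(0)}\uK\C_{\mathsf X}\circ\iota^{(0)}\C_{\mathsf X}\simeq\iota^{(1)}\uK\C_{\mathsf X}\circ\xi$ as natural transformations $\C_{\mathsf X}\Rightarrow\uK\uK\C_{\mathsf X}$. Indeed, componentwise these are conjugations by isometries $\C_{\mathsf X}B\to l_{2}(X\times X,\C_{\mathsf X}B)$ whose images are supported, respectively, on $\{x_{0}\}\times X$ and on $\{x_{1}\}\times X$, hence orthogonal, so conjugation along the evident norm-continuous path of isometries interpolating between them supplies the homotopy; it is natural in $B$ (the path uses no data depending on $B$) and labeled by Proposition~\ref{prop:wp-lbl}, the target $\uK\uK\C_{\mathsf X}I$ being well-pointed. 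Post-composing the claimed homotopy with $\theta\C_{\mathsf X}$ and using that $\simeq$ is a monoidal congruence (so it survives whiskering by $\C_{\mathsf X}$), Lemma~\ref{lem:iotatheta} gives $\theta\C_{\mathsf X}\circ\iota^{(0)}\uK\C_{\mathsf X}\simeq\id_{\uK\C_{\mathsf X}}$, while $\iota^{(1)}\simeq\iota^{(0)}$ (corner embeddings at distinct basis points are homotopic) likewise gives $\theta\C_{\mathsf X}\circ\iota^{(1)}\uK\C_{\mathsf X}\simeq\id_{\uK\C_{\mathsf X}}$; combining these two identities with the claim yields $\iota_{00}\C_{\mathsf X}\simeq\xi$.

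The step I expect to require the most care—packed into a single clause above—is checking that the rotation homotopy is genuinely a morphism of $\GEFC$ landing in the \emph{cylinder} $\uK\uK\C_{\mathsf X}IB$: one uses that $\uK$, $\C_{\mathsf X}$, and $I$ are tensor-type, hence commute (Remark~\ref{rem:tt-commute}), to identify $\uK\uK\C_{\mathsf X}IB$ with $C([0,1],\uK\uK\C_{\mathsf X}B)$, and then checks that conjugation by the path of isometries produces a norm-continuous family of compact operators. Everything else is formal manipulation of whiskering and vertical composition modulo $\simeq$.
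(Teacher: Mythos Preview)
Your argument is correct and takes a genuinely different route from the paper. The paper constructs the homotopy directly at the level of $\uK_{X}\C_{\mathsf X}$ by an explicit deformation of the square partition of unity: it enumerates $X=\{x_i\}$, defines interpolating families $\{\beta_i^t\}_i$ that gradually collapse $\alpha_0,\dots,\alpha_n$ into a single bump at $x_0$ while leaving the remaining $\alpha_i$ untouched, checks that $\{\beta_i^t\}$ still satisfies the hypotheses of Lemma~\ref{lem:42} for each $t\in[0,\infty]$, and reads off the homotopy from the resulting map $\C_{\mathsf X}B\to\uK_X\C_{[0,\infty]}\C_{\mathsf X}B$.

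You instead pass to $\uK^2\C_{\mathsf X}$, realize $\iota_{00}\C_{\mathsf X}$ and $\xi$ as conjugation by isometries with orthogonal ranges (placed in different outer corners), rotate between them, and then collapse back to $\uK\C_{\mathsf X}$ using $\theta$ together with Lemma~\ref{lem:iotatheta}. This is the standard stable rotation trick, and it works here because the cross terms $V_0 g V_1^*$ and $V_1 g V_0^*$ are compact for all $g\in\C_{\mathsf X}B$ (immediate for compactly supported $g$, then by density since the isometries are contractive). What your approach buys is brevity and reuse of the existing categorical machinery; what the paper's approach buys is an explicit homotopy formula within $\uK\C_{\mathsf X}$ itself, without the detour through $\uK^2$. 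Both are perfectly acceptable proofs.
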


\begin{proof}
Enumerate the elements of $X\eqqcolon\{x_{i}\}^{\infty}_{i=0}$ and
denote $\alpha_{x_{i}}$ briefly by $\alpha_{i}$. For every $i,n\in\mathbb{Z}_{+}$,
define the function $\beta^{n}_{i}\in C_{0}(\mathsf{X})$ by the formula
\[
\beta^{n}_{i}\coloneqq\begin{cases}
\sqrt{\sum^{n}_{k=0}\alpha^{2}_{k}}, & i=0;\\
0, & 1\leq i\leq n;\\
\alpha_{i}, & i\geq n+1,
\end{cases}
\]
and for all $t\in[0,\infty]$, define \begin{alignat*}{3}
 &  & \beta^{t}_{i} & \coloneqq\sqrt{\left(1-\{t\}\right)(\beta^{[t]}_{i})^{2}+\{t\}(\beta^{[t]+1}_{i})^{2}}\in C_{0}(\mathsf{X}), & \quad & \textrm{if }t<\infty;\\
 &  & \beta^{t}_{i} & \coloneqq\delta_{i,0}\in C_{b}(\mathsf{X}), &  & \textrm{if }t=\infty,
\end{alignat*}
where $[t]$ and $\{t\}$ are respectively the integer and fractional
parts of $t$, and $\delta$ is the Kronecker delta.

The family $\left\{ \beta^{t}_{i}\right\} ^{\infty}_{i=0}\subset\mathcal{M}(C_{0}(\mathsf{X}))$
satisfies the conditions~\ref{enu:m1} and~\ref{enu:m2} of Lemma~\ref{lem:42}.
Furthermore, if $f\in C_{0}(\mathsf{X})$ is compactly supported,
then $\beta^{t}_{i}\beta^{t}_{j}f=0$ for all but finitely many $i,j\in\mathbb{Z}_{+}$.
Hence, by the second part of Lemma~\ref{lem:42}, we get the $*$-homomorphism
\[
\Phi_{t}\colon\C_{\mathsf{X}}B\to\uK_{X}\C_{\mathsf{X}}B\colon f\otimes b\mapsto\mat_{i,j}(\beta^{t}_{i}\beta^{t}_{j}f\otimes b)
\]
for every $t\in\mathbb{R}_{+}$. To prove the statement of the lemma
it suffices to show that
\begin{equation}
\Phi\colon\C_{\mathsf{X}}B\to\C_{[0,+\infty]}\uK_{X}\C_{\mathsf{X}}B\colon f\otimes b\mapsto\fun_{t}\Phi_{t}(f\otimes b)=\fun_{t}\mat_{i,j}\left(\beta^{t}_{i}\beta^{t}_{j}f\otimes b\right)\label{eq:Phi}
\end{equation}
is a well-defined $*$-homomorphism. To do that, note that for any
$b\in B$ and any compactly supported $f\in C_{0}(\mathsf{X})$, the
function 
\[
[0,\infty]\to\C_{\mathsf{X}}B\colon t\mapsto\beta^{t}_{i}\beta^{t}_{j}f\otimes b
\]
is continuous for all $i,j\in\mathbb{N}$, and is equal to zero for
all but finitely many $i,j\in\mathbb{N}$. Thus, $\fun_{t}\Phi_{t}(f\otimes b)\in\C_{[0,+\infty]}\uK_{X}\C_{\mathsf{X}}B$.
Using that $\Phi_{t}$ is a $*$-homomorphism for every $t\geq0$,
we conclude that $\Phi$ is a well-defined bounded $*$-homomorphism
on a dense $*$-subalgebra of $\C_{\mathsf{X}}B$, which by continuity
extends to a $*$-homomorphism (necessarily given by the formula~(\ref{eq:Phi})). 
\end{proof}

The proof of Theorem~\ref{thm:adjunction} is contained in the following
two propositions.
\begin{prop}\label{prop:lid}
The following diagram commutes in $\hGEFC$:
\[
\begin{tikzcd}[column sep=huge, ampersand replacement=\&]  
{\C_{\mathsf{X},\mathsf{X}_{0}}} \& {\C_{\mathsf{X},\mathsf{X}_{0}}\mathfrak{N}_{X,X_{0}}^{u}\C_{\mathsf{X},\mathsf{X}_{0}}} \\
 \& {\mathfrak{A}\uK\C_{\mathsf{X},\mathsf{X}_{0}}.}
\arrow["\C_{\mathsf{X},\mathsf{X}_{0}}\eta", Rightarrow, from=1-1, to=1-2]
\arrow["\alpha\iota_{00}\C_{\mathsf{X},\mathsf{X}_{0}}"', curve={height=12pt}, Rightarrow, from=1-1, to=2-2]
\arrow["\varepsilon\C_{\mathsf{X},\mathsf{X}_{0}}", Rightarrow, from=1-2, to=2-2]
\end{tikzcd}
\]
\end{prop}

\begin{proof}
Let us fix some $x_{0}\in X$, and consider the following natural
transformations\footnote{That $\varphi_{j}B$ are indeed $*$-homomorphisms will be clear from
the proof.}:
\begin{align*}
 &  & \varphi_{j}\colon & \C_{\mathsf{X},\mathsf{X}_{0}}\Rightarrow\mathfrak{A}\uK_{X}\C_{\mathsf{X},\mathsf{X}_{0}},\quad j=0,1,2,3,4;\\
 &  & \varphi_{0}B\colon & f\otimes b\mapsto\as_{t}\{\diag_{x}f\left(\sc_{t}x\right)\cdot\mat_{x,y}(\chi_{x}\chi_{y}\alpha_{x}\alpha_{y}\otimes b)\};\mydisplaybreak\\
 &  & \varphi_{1}B\colon & f\otimes b\mapsto\as_{t}\{\diag_{x}f\left(\sc_{t}x\right)\cdot\mat_{x,y}(\chi_{x}\chi_{y}\sc^{*}_{t}(\alpha_{x}\alpha_{y})\otimes b)\};\mydisplaybreak\\
 &  & \varphi_{2}B\colon & f\otimes b\mapsto\as_{t}\mat_{x,y}(\sc^{*}_{t}(\alpha_{x}\alpha_{y})(\sc^{2}_{t})^{*}(f)\otimes b);\mydisplaybreak\\
 &  & \varphi_{3}B\colon & f\otimes b\mapsto\as_{t}\mat_{x,y}(\alpha_{x}\alpha_{y}f\otimes b);\\
 &  & \varphi_{4}B\colon & f\otimes b\mapsto\as_{t}\mat_{x,y}(\delta_{x,x_{0}}\delta_{y,x_{0}}f\otimes b),
\end{align*}
where $\sc^{2}_{t}$ stands for $\sc_{t}\circ\sc_{t}$. Note that
$\varphi_{0}=\varepsilon\C_{\mathsf{X},\mathsf{X}_{0}}\circ\C_{\mathsf{X},\mathsf{X}_{0}}\eta$
and $\varphi_{4}=\alpha\iota_{00}$; thus, it suffices to show that
$\varphi_{0}\simeq\varphi_{4}$.

We use Lemmas~\ref{lem:extended-eta-well-defined} and~\ref{lem:extented-epsilon-well-defined}
to define the labeled natural transformation
\begin{alignat*}{1}
\varphi_{01}\colon & \C_{\mathsf{X},\mathsf{X}_{0}}\xRightarrow{\C_{\mathsf{X},\mathsf{X}_{0}}\overline{\eta}}\C_{\mathsf{X},\mathsf{X}_{0}}\mathfrak{N}^{u}_{X,X_{0}}\mathfrak{T}I\C_{\mathsf{X},\mathsf{X}_{0}}\xRightarrow{\overline{\varepsilon}I\C_{\mathsf{X},\mathsf{X}_{0}}}\mathfrak{A}\uK_{X}I\C_{\mathsf{X},\mathsf{X}_{0}},\\
\varphi_{01}B\colon & f\otimes b\mapsto\as_{t}\left(\diag_{x}f\left(\sc_{t}x\right)\cdot\mat_{x,y}\fun_{s}\left(\chi_{x}\chi_{y}\sc^{*}_{ts}(\alpha_{x}\alpha_{y})\otimes b\right)\right),
\end{alignat*}
which gives rise to a homotopy connecting $\varphi_{0}$ with $\varphi_{1}$
since $I$ and $\C_{\mathsf{X},\mathsf{X}_{0}}$ commute by Remark~\ref{rem:tt-commute}.

Let us show that $\varphi_{2}\simeq\varphi_{3}$. Fix $t\in\mathbb{R}_{+}$
and note that the family \[
\left\{ \fun_{s}\sc^{*}_{st}(\alpha_{x})\right\} _{x\in X}\subset I\C_{\mathsf{X},\mathsf{X}_{0}}\mathbb{C}\subset\mathcal{M}(I\C_{\mathsf{X},\mathsf{X}_{0}}B)
\]
satisfies the second part of Lemma~\ref{lem:42} (with $A\coloneqq I\C_{\mathsf{X},\mathsf{X}_{0}}B$
and $A_{0}\subset A$ the dense $*$-subalgebra of compactly supported
functions). Hence, the composite
\begin{alignat*}{1}
\Phi^{t}_{B}\colon & \C_{\mathsf{X},\mathsf{X}_{0}}B\to I\C_{\mathsf{X},\mathsf{X}_{0}}B\to\uK_{X}I\C_{\mathsf{X},\mathsf{X}_{0}}B\colon\\
 & f\otimes b\mapsto\fun_{s}((\sc^{2}_{st})^{*}(f)\otimes b)\mapsto\mat_{x,y}\fun_{s}\left(\sc^{*}_{st}(\alpha_{x}\alpha_{y})(\sc^{2}_{st})^{*}(f)\otimes b\right)
\end{alignat*}
is a $*$-homomorphism. It is clear that $\Phi^{t}_{B}(f\otimes b)$
is continuous in $t$ and functorial in $B$. Thus, we get the natural
transformation
\[
\Psi\colon\C_{\mathsf{X},\mathsf{X}_{0}}\Rightarrow\mathfrak{T}\uK_{X}I\C_{\mathsf{X},\mathsf{X}_{0}},\qquad\Psi B\colon f\otimes b\mapsto\fun_{t}\Phi^{t}_{B}(f\otimes b).
\]
The composite
\begin{alignat*}{1}
 & \C_{\mathsf{X},\mathsf{X}_{0}}\xRightarrow{\Psi}\mathfrak{T}\uK_{X}I\C_{\mathsf{X},\mathsf{X}_{0}}\xRightarrow{\as\uK_{X}I\C_{\mathsf{X},\mathsf{X}_{0}}}\mathfrak{A}\uK_{X}I\C_{\mathsf{X},\mathsf{X}_{0}},\\
 & f\otimes b\mapsto\as_{t}\Phi^{t}_{B}(f\otimes b)=\as_{t}\mat_{x,y}\fun_{s}\left(\sc^{*}_{st}(\alpha_{x}\alpha_{y})(\sc^{2}_{st})^{*}(f)\otimes b\right)
\end{alignat*}
induces a homotopy connecting $\varphi_{2}$ with $\varphi_{3}$.

The relation $\varphi_{3}\simeq\varphi_{4}$ follows from Lemma~\ref{lem:hh0-new}.

To finish the proof, we need only show that $\varphi_{1}=\varphi_{2}$.
Let us fix some $\varepsilon>0$ and $f\in C_{0}(\mathsf{X},\mathsf{X}_{0})$.
It follows from conditions~\ref{cond:S:leq} and~\ref{cond:S:rho}
of Definition~\ref{def:my-scaleable}, and uniform continuity of
$f$, that there is $N_{1}>0$ such that for $u,v\in\mathsf{X}$ and
$t>N_{1}$, we have
\begin{equation}
\dist(u,v)\leq\Delta\qquad\Longrightarrow\qquad|f(\sc_{t}u)-f(\sc_{t}v)|\leq\varepsilon.\label{eq:impl1}
\end{equation}
Moreover, Lemma~\ref{nbhd-X0-collapses}(i) and Lemma~\ref{lem:denseCXX0}
imply that there is $N_{2}>0$ such that for $t>N_{2}$, we have
\begin{equation}
\dist(y,\mathsf{X}_{0})\leq4\Delta\qquad\Longrightarrow\qquad|f(\sc_{t}y)|\leq\varepsilon/2.\label{eq:impl2}
\end{equation}

We claim that for all $x,y\in X$, $z\in\mathsf{X}$,
and $t>\max(N_{1},N_{2})$, the following inequality holds:
\begin{equation}
\left(\alpha_{x}\alpha_{y}\right)(\sc_{t}z)\left(f(\sc^{2}_{t}z)-\chi_{x}\chi_{y}f(\sc_{t}y)\right)\leq\varepsilon.\label{eq:diff1}
\end{equation}
To see this, first note that we can assume that
\begin{eqnarray}
\dist(\sc_{t}z,y) & \leq & \Delta,\label{eq:diff2}\\
\dist(x,y) & \leq & 2\Delta,\label{eq:diff3}
\end{eqnarray}
since otherwise the left hand side of~(\ref{eq:diff1}) is zero by
the definition of $\{\alpha_{x}\}$. Let us consider two cases.
\begin{itemize}
\item Case 1: 
\begin{eqnarray}
\dist(y,\mathsf{X}_{0}) & \leq & 3\Delta.\label{eq:pp1}
\end{eqnarray}
We have the following inequalities:
\begin{eqnarray}
\dist(\sc_{t}z,\mathsf{X}_{0}) & \leq & 4\Delta,\label{eq:pp2}\\
|f(\sc_{t}y)| & \leq & \varepsilon/2,\label{eq:pp3}\\
|f(\sc^{2}_{t}z)| & \leq & \varepsilon/2,\label{eq:pp4}
\end{eqnarray}
where (\ref{eq:pp2}) follows from~(\ref{eq:diff2}) and~(\ref{eq:pp1});
(\ref{eq:pp3}) follows from~(\ref{eq:impl2}) and~(\ref{eq:pp1});
(\ref{eq:pp4}) follows from~(\ref{eq:impl2}) and~(\ref{eq:pp2}).
Thus,~(\ref{eq:pp3}) and~(\ref{eq:pp4}) imply (\ref{eq:diff1}).
\item Case 2: 
\begin{equation}
\dist(y,\mathsf{X}_{0})>3\Delta.\label{eq:pp5}
\end{equation}
We deduce from~(\ref{eq:diff3}) and~(\ref{eq:pp5}) that $\dist(x,\mathsf{X}_{0})>\Delta$,
and therefore, $\chi_{x}=\chi_{y}=1$. It follows from~(\ref{eq:diff2})
and~(\ref{eq:impl1}) that $f(\sc^{2}_{t}z)-f(\sc_{t}y)\leq\varepsilon$,
which implies~(\ref{eq:diff1}). 
\end{itemize}
The claim is proved.

It follows from the claim that for all $b\in B$ and $f\in C_{0}(\mathsf{X},\mathsf{X}_{0})$,
all the entries of 
\[
\mat_{x,y}\{\left(\alpha_{x}\alpha_{y}\right)(\sc_{t}\cdot)\left(f(\sc^{2}_{t}\cdot)-\chi_{x}\chi_{y}f(\sc_{t}y)\right)\otimes b\}
\]
uniformly vanish as $t\to\infty$. By Lemma~\ref{lem:norm-fM-of-norm-its-elements},
the whole matrix tends to zero in norm, which implies the required
equality $\varphi_{1}=\varphi_{2}$.
\end{proof}

\begin{prop}\label{thm:rid}
The following diagram commutes in $\hGEFC$:
\[
\begin{tikzcd}[column sep=huge, ampersand replacement=\&]  
{\mathfrak{N}_{X,X_{0}}^{u}} \& {\mathfrak{N}_{X,X_{0}}^{u}\C_{\mathsf{X},\mathsf{X}_{0}}\mathfrak{N}_{X,X_{0}}^{u}} \\
 \& {\mathfrak{N}_{X,X_{0}}^{u}\mathfrak{A}\uK.}
\arrow["\eta\mathfrak{N}_{X,X_{0}}^{u}", Rightarrow, from=1-1, to=1-2]
\arrow["\mathfrak{N}_{X,X_{0}}^{u}\alpha\iota_{00}"', curve={height=12pt}, Rightarrow, from=1-1, to=2-2]
\arrow["\mathfrak{N}_{X,X_{0}}^{u}\varepsilon", Rightarrow, from=1-2, to=2-2]
\end{tikzcd}
\]
\end{prop}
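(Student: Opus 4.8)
The plan is to mirror the proof of Proposition~\ref{thm:lid}, exhibiting the required homotopy $\mathfrak{N}_{X,X_{0}}^{u}\varepsilon\circ\eta\mathfrak{N}_{X,X_{0}}^{u}\simeq\mathfrak{N}_{X,X_{0}}^{u}\alpha\iota_{00}$ through a chain of intermediate natural transformations. First I would unwind the definitions: using the formulas for $\eta$ and $\varepsilon$ from Theorem~\ref{thm:adjunction} one computes, for $B\in\in\Cstar$,
\[
\bigl(\mathfrak{N}_{X,X_{0}}^{u}\varepsilon\circ\eta\mathfrak{N}_{X,X_{0}}^{u}\bigr)B\colon\qmat_{p,q}n_{p,q}\mapsto\qmat_{x,y}\as_{t}\mat_{p,q}\bigl((\chi_{x}\chi_{y}\alpha_{x}\alpha_{y})(\sc_{t}p)\,n_{p,q}\bigr),
\]
whereas $\mathfrak{N}_{X,X_{0}}^{u}\alpha\iota_{00}$ sends $\qmat_{p,q}n_{p,q}$ to $\qmat_{p,q}\as_{t}\mat_{z,w}\bigl(\delta_{z,z_{0}}\delta_{w,z_{0}}\,n_{p,q}\bigr)$ for a fixed $z_{0}\in X$. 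Writing $\psi_{0}$ for the first composite, it suffices to produce a chain $\psi_{0}\simeq\psi_{1}\simeq\cdots$ of labeled natural transformations $\mathfrak{N}_{X,X_{0}}^{u}\Rightarrow\mathfrak{N}_{X,X_{0}}^{u}\mathfrak{A}\uK$ terminating at $\mathfrak{N}_{X,X_{0}}^{u}\alpha\iota_{00}$.

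The chain would follow the template $\varphi_{0}\simeq\dots\simeq\varphi_{4}$ of Proposition~\ref{thm:lid}, now carried out \emph{inside} the outer $\mathfrak{N}_{X,X_{0}}^{u}$. A first homotopy replaces the argument $\sc_{t}p$ by $\sc_{t}^{2}p$ (equivalently, pulls the weights $\alpha_{x}\alpha_{y}$ further back along $\sc$); it is built from $\overline{\eta}$ and $\overline{\varepsilon}$ of Lemmas~\ref{lem:extended-eta-well-defined} and~\ref{lem:extented-epsilon-well-defined}, suitably whiskered by $\mathfrak{N}_{X,X_{0}}^{u}$, with the cylinder functor $I$ supplying the homotopy parameter and the interpolation $\sc_{ts}$ exactly as in the construction of $\varphi_{01}$. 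The crucial middle step is an \emph{equality} of natural transformations, in the spirit of $\varphi_{1}=\varphi_{2}$: conditions~\ref{cond:S:leq}, \ref{cond:S:monot}, \ref{cond:S:rho} of Definition~\ref{def:my-scaleable}, Lemma~\ref{nbhd-X0-collapses}(i), and the finite propagation of $n_{p,q}$ force $\sc_{t}p$ to lie within $\Delta$ of both $x$ and $y$ and give $\alpha_{y}(\sc_{t}p)\to\alpha_{y}(\sc_{t}q)$ uniformly; after Lemma~\ref{lem:norm-fM-of-norm-its-elements} turns the resulting entrywise estimates into an estimate on the whole matrix, the cutoffs $\chi$ are absorbed and the entry becomes a genuine conjugation $\chi_{x}\alpha_{x}(\sc_{t}p)\,n_{p,q}\,\chi_{y}\alpha_{y}(\sc_{t}q)$ — the step that migrates the input $n$ from the inner ($\uK$) matrix layer to the outer ($\mathfrak{N}^{u}$) one. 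A further homotopy of $\Phi^{t}$-type, built from Lemma~\ref{lem:42} with the family $\{\fun_{s}\sc_{st}^{*}(\alpha_{x})\}$, then removes the remaining $\sc$-twist, and a final application of Lemma~\ref{lem:hh0-new} (or its evident variant, now collapsing the inner partition-of-unity isometry rather than an outer one) yields $\mathfrak{N}_{X,X_{0}}^{u}\alpha\iota_{00}$.

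At each stage the component maps must be seen to be $*$-homomorphisms and the transformations to be labeled. As in Lemmas~\ref{lem:extended-eta-well-defined} and~\ref{lem:hh0-new}, multiplicativity reduces to checking hypotheses~\ref{enu:m1}--\ref{enu:m3} of Lemma~\ref{lem:42} for the relevant families of diagonal multipliers $\diag_{p}(\chi_{x}\alpha_{x}(\sc_{t}p))$, together with the observation that the discrepancies from multiplicativity and the ``error'' matrices of the equality step are supported in a bounded neighborhood of $X_{0}\times X_{0}$ and hence lie in $\mathfrak{M}_{X\supset X_{0}}^{u}$ — which is precisely what makes every construction descend to the relative Roe functor, via Lemmas~\ref{lem:26} and~\ref{lem:abg}. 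Labeledness of the whole chain follows from Proposition~\ref{prop:wp-lbl} and Lemma~\ref{lem:abg}, using Lemma~\ref{lem:psmp-compos} to see that $\mathfrak{T}$, $I$, $\C_{\mathsf{X}}$ and $\mathfrak{M}_{X}^{u}$, hence the relevant composites, are well-pointed.

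I expect the middle step — the ``index swap'' — to be the main obstacle. In $\mathfrak{N}_{X,X_{0}}^{u}\varepsilon\circ\eta\mathfrak{N}_{X,X_{0}}^{u}$ the input $\qmat_{p,q}n_{p,q}$ occupies the \emph{inner} $\uK$-matrix while the \emph{outer} $\mathfrak{N}^{u}$-matrix carries only the partition-of-unity weights, whereas in $\mathfrak{N}_{X,X_{0}}^{u}\alpha\iota_{00}$ the situation is reversed. Engineering the norm-vanishing estimate that lets scalability interchange these two matrix layers, while keeping the deformation through honest $*$-homomorphisms that respect the relative Roe ideal $\mathfrak{M}_{X\supset X_{0}}^{u}$, is the heart of the argument; the remaining steps are routine adaptations of the corresponding ones in Proposition~\ref{thm:lid}.
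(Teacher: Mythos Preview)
Your overall plan—a chain of intermediate natural transformations connected by homotopies and equalities—is correct, and the key ingredients (asymptotic commutation via scalability, a $\Phi^t$-type homotopy built from Lemma~\ref{lem:42}) do appear. But you have transplanted the structure of Proposition~\ref{thm:lid} too literally, and two of your proposed steps would not work. The ``$\sc_t^2$'' step has no place here: in Proposition~\ref{thm:lid} the input is a function $f$ and the composite produces both $f(\sc_t\,\cdot)$ and $\sc_t$-pulled-back weights, which the $\sc^2$ synchronizes; here the input is a matrix and only one application of $\sc$ ever enters, so a homotopy assembled from $\overline{\eta}$ and $\overline{\varepsilon}$ does not even typecheck on this side of the adjunction. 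Likewise, Lemma~\ref{lem:hh0-new} cannot finish the argument: once the inner matrix carries a single nonzero entry at position $(x,y)$ depending on the \emph{outer} index, different outer entries require different inner moves, and no single inner-$\uK$ homotopy suffices. The paper instead conjugates by a continuous path $U_s=\diag_{x}u_x(s)$ of unitaries in $\mathfrak{M}_X^u\uK_X\mathbb{C}$, with $u_x(1)$ the transposition of the $x$th and $x_0$th basis vectors; this descends to $\mathfrak{N}_{X,X_0}^u\uK$ and does the job.

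More importantly, you have misdiagnosed the ``index swap''. It is not a delicate scalability-driven norm estimate but almost a tautology. Your observation $\alpha_y(\sc_tp)-\alpha_y(\sc_tq)\to0$ is exactly the asymptotic-commutation step, and in the paper it is the \emph{first} step (an equality $\varphi_0=\varphi_1$), turning the entry from $(\alpha_x\alpha_y)(\sc_tx')\,m_{x',y'}$ into the conjugation $\alpha_x(\sc_tx')\,m_{x',y'}\,\alpha_y(\sc_ty')$. After the $\sc_{ts}$-homotopy removes the scaling (this is the technical heart, and here Lemma~\ref{nbhd-X0-collapses} is used to verify that the homotopy respects the ideal $\mathfrak{M}_{X\supset X_0}^u$), the inner entry reads $\alpha_x(x')\,m_{x',y'}\,\alpha_y(y')$ with $x',y'$ ranging over the \emph{discrete} net $X$. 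Since $X$ is $\Delta$-separated and $\supp(\alpha_x)\subset\ball_\Delta(x)$, one has $\alpha_x(x')=\delta_{x,x'}$ on $X$, so the inner matrix collapses to the single entry $m_{x,y}$ at position $(x,y)$—that is the swap, and it is an equality $\varphi_2=\varphi_3$. You never invoke $\Delta$-separatedness, which is precisely the missing idea.
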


\begin{proof}
Fix some $x_{0}\in X$ and consider the following natural transformations:
\begin{align*}
\varphi_{j}\colon & \mathfrak{N}^{u}_{X,X_{0}}\Rightarrow\mathfrak{N}^{u}_{X,X_{0}}\mathfrak{A}\uK_{X},\quad j=0,1,2,3,4;\\
\varphi_{0}B\colon & \qmat_{x,y}m_{x,y}\mapsto\qmat_{x,y}\as_{t}\mat_{x',y'}\left\{ (\alpha_{x}\alpha_{y})(\sc_{t}x')m_{x',y'}\right\} ;\\
\varphi_{1}B\colon & \qmat_{x,y}m_{x,y}\mapsto\qmat_{x,y}\as_{t}\mat_{x',y'}\left\{ \alpha_{x}\left(\sc_{t}x'\right)m_{x',y'}\alpha_{y}\left(\sc_{t}y'\right)\right\} ;\mydisplaybreak\\
\varphi_{2}B\colon & \qmat_{x,y}m_{x,y}\mapsto\qmat_{x,y}\as_{t}\mat_{x',y'}\left\{ \alpha_{x}\left(x'\right)m_{x',y'}\alpha_{y}\left(y'\right)\right\} ;\mydisplaybreak\\
\varphi_{3}B\colon & \qmat_{x,y}m_{x,y}\mapsto\qmat_{x,y}\as_{t}\mat_{x',y'}\left\{ m_{x,y}\delta_{x',x}\delta_{y',y}\right\} ;\\
\varphi_{4}B\colon & \qmat_{x,y}m_{x,y}\mapsto\qmat_{x,y}\as_{t}\mat_{x',y'}\left\{ m_{x,y}\delta_{x',x_{0}}\delta_{y',x_{0}}\right\} .
\end{align*}
Note that $\varphi_{0}=\mathfrak{N}^{u}_{X,X_{0}}\varepsilon\circ\eta\mathfrak{N}^{u}_{X,X_{0}}$
and $\varphi_{4}=\mathfrak{N}^{u}_{X,X_{0}}\alpha\iota_{00}$; and
hence, it suffices to show that $\varphi_{0}\simeq\varphi_{4}$.

The equality $\varphi_{0}=\varphi_{1}$ holds since 
\[
[\diag_{z\in X}\alpha_{y}\left(\sc_{t}z\right),m]\xrightarrow[t\to\infty]{}0
\]
 for every $m\in\mathfrak{M}^{u}_{X}B$ and $y\in X$.

The equality $\varphi_{2}=\varphi_{3}$ follows from the definition
of $\{\alpha_{x}\}$ combined with the fact that $X$ is $\Delta$-separated.

Let us show that $\varphi_{3}\simeq\varphi_{4}$. Set $U_{s}=\diag_{x\in X}u_{x}(s)$,
where $\{s\mapsto u_{x}(s)\}_{x\in X}$ is an equicontinuous family
of unitary operators on $l_{2}(X)$ such that $u_{x}(0)=\id$ and
$u_{x}(1)$ is the matrix which swaps the $x$th and $x_{0}$th basis
vectors. It is clear that $s\mapsto U_{s}$ is a continuous path of
unitaries in $\mathcal{M}(\mathfrak{M}^{u}_{X}\uK_{X}\mathbb{C})$;
thus, we can define the natural transformation
\[
\omega\colon\mathfrak{M}^{u}_{X}\uK_{X}\Rightarrow I\mathfrak{M}^{u}_{X}\uK_{X},\quad\omega B\colon m\mapsto\fun_{s}(U_{s}mU^{*}_{s})
\]
which is labeled by Proposition~\ref{prop:wp-lbl}. It is clear that
the components of $\omega$ preserve support of $X\textrm{-by-}X$-matrices;
hence, by Lemma~\ref{lem:26}, $\omega$ gives rise to the labeled
natural transformation 
\[
\overline{\omega}\colon\mathfrak{N}^{u}_{X,X_{0}}\uK_{X}\xRightarrow{\overline{\omega}}I\mathfrak{N}^{u}_{X,X_{0}}\uK_{X}
\]
which in turn, by Lemma~\ref{lem:stronger-homot}, induces a homotopy
connecting $\varphi_{3}$ with $\varphi_{4}$. 

We now only need to check that $\varphi_{1}\simeq\varphi_{2}$. To
this end, for every $\tau\in\mathbb{R}_{+}$ and $x\in X$, set 
\[
A^{\tau}_{x}\coloneqq\diag_{x'}\alpha_{x}\left(\sc_{\tau}x'\right)\in\uK_{X}\mathbb{C}.
\]

\textbf{Claim 1.} The map $\tau\mapsto A^{\tau}_{x}$ is continuous.
Indeed, this fact follows from axiom~\ref{cond:S:proper} and continuity
of the scaling function $\sc$.

Let us fix $t\in\mathbb{R}_{+}$, and introduce, using Claim~1, the
following family of multipliers:
\begin{equation}
\left\{ \fun_{s}A^{ts}_{x}\right\} _{x\in X}\subset I\uK_{X}\mathbb{C}\subset C_{b,str}([0,1],\mathcal{M}(\uK_{X}B))\cong\mathcal{M}(I\uK_{X}B).\label{eq:familyA}
\end{equation}

\textbf{Claim~2.} Family~(\ref{eq:familyA}) satisfies the conditions
of the first part of Lemma~\ref{lem:42}. Indeed, condition~\ref{enu:m2} is obvious, and we only need to check condition~\ref{enu:m1}.
To this end, note that the linear span of elements of the form $f\otimes\epsilon_{x,y}\otimes b$
is dense in $I\uK_{X}B$. Hence, it suffices to show that for every
$x'\in X$, we can choose some finite subset $\mathcal{F}\subset X$
such that $\sum_{x\in\mathcal{F}}\alpha^{2}_{x}(\sc_{st}x')=1$ for
all $s\in[0,1]$. But this fact is obvious since $\{\sc_{st}x'\mid s\in[0,1]\}$
is compact, and the claim is proved. 

By Lemma~\ref{lem:42}, the multipliers~(\ref{eq:familyA}) give
rise to the second $*$-homomorphism in the following composite (where
$t\in\mathbb{R}_{+}$ is fixed):
\begin{align*}
\Psi^{t}_{B}\colon & \mathcal{M}(\uK_{X}B)\to\mathcal{M}(I\uK_{X}B)\to\mathcal{M}(\uK_{X}I\uK_{X}B)\colon\\
 & m\mapsto(s\mapsto m)\mapsto\mat_{x,y}\fun_{s}(A^{st}_{x}mA^{st}_{y})\\
 & \phantom{m\mapsto(s\mapsto m).}=\mat_{x,y}\fun_{s}\mat_{x',y'}(\alpha_{x}(\sc_{ts}(x'))\alpha_{y}(\sc_{ts}(y'))m_{x',y'}).
\end{align*}

\textbf{Claim~3.} If $m\in\mathcal{M}(\uK_{X}B)$ is such that $\prp(m)<\infty$,
then $\sup_{t\geq0}\prp(\Psi^{t}_{B}(m))<\infty$\footnote{Here we regard $m$ and $\Psi^{t}_{B}(m)$ as an $X\textrm{-by-}X$-matrices;
cf. Lemma~\ref{lem:mat-rep}.}. Indeed, if $(x,y)\in\supp(\Psi^{t}_{B}(m))$ for some $t\in\mathbb{R}_{+}$,
then there exist $x',y'\in X$ and $\tau\in[0,t]$ such that 
\begin{equation}
\dist(\sc_{\tau}x',x)\leq\Delta,\qquad\qquad\dist(\sc_{\tau}y',y)\leq\Delta,\qquad\qquad\dist(x',y')\leq\prp(m).\label{eq:claim2}
\end{equation}
Using~(\ref{eq:claim2}) and axioms~\ref{cond:S:leq},~\ref{cond:S:monot},
and~\ref{cond:S:rho} of Definition~\ref{def:my-scaleable}, we
obtain the inequality
\[
\dist(x,y)-2\Delta\leq\dist(\sc_{\tau}x',\sc_{\tau}y')\leq\rho_{\tau}(\dist(x',y'))\leq\rho_{\tau}(\prp(m))\leq\sup_{\tau\geq0}\rho_{\tau}(\prp(m))<\infty
\]
which proves the claim.

It is clear from the definition of $A^{\tau}_{x}$ that for all $m\in\mathfrak{M}^{u}_{X}B\subset\mathcal{M}(\uK_{X}B)$,
all $x,y\in X$, and all $\tau\geq0$, we have $A^{\tau}_{x}mA^{\tau}_{y}\in\uK_{X}B$.
 This fact together with Claims~1 and~3 implies that there is a
well-defined $*$-homomorphism
\begin{alignat*}{3}
\Psi_{B}\colon\mathfrak{M}^{u}_{X}B & \to\mathfrak{M}^{u}_{X}\mathfrak{T}I\uK_{X}B\colon m &  & \mapsto &  & \mat_{x,y}\fun_{t}\fun_{s}(A^{ts}_{x}mA^{ts}_{y})\\
 &  &  & = &  & \mat_{x,y}\fun_{t}\fun_{s}\mat_{x',y'}(\alpha_{x}(\sc_{ts}x')\alpha_{y}(\sc_{ts}y')m_{x',y'}).
\end{alignat*}
This construction is obviously natural in $B$, and we obtain the
natural transformation
\[
\Psi\colon\mathfrak{M}^{u}_{X}\Rightarrow\mathfrak{M}^{u}_{X}\mathfrak{T}I\uK_{X}
\]
which is labeled by Proposition~\ref{prop:wp-lbl}.

\textbf{Claim~4.} There is an inclusion $\Psi_{B}\left(\mathfrak{M}^{u}_{X\supset X_{0}}B\right)\subset\mathfrak{M}^{u}_{X\supset X_{0}}\mathfrak{T}I\uK_{X}B$.
Indeed, suppose that $\supp(m)\subset\nbhd_{r}(X_{0})^{\times2}$
for some $r>0$. If $\Psi_{B}(m)_{x,y}\neq0$, then there exist $x'\in X$
and $\tau\geq0$ such that 
\begin{alignat}{1}
\dist(x',\mathsf{X}_{0}) & \leq r,\label{eq:c1}\\
\dist(\sc_{\tau}x',x) & \leq\Delta.\label{eq:c2}
\end{alignat}
Inequality~(\ref{eq:c1}) and Lemma~\ref{nbhd-X0-collapses}(ii)
imply that 
\begin{equation}
\dist(\sc_{\tau}x',\mathsf{X}_{0})\leq R\label{eq:c4}
\end{equation}
for some $R>0$. Combining~(\ref{eq:c2}) and~(\ref{eq:c4}) gives
$\dist(x,\mathsf{X}_{0})\leq R+\Delta$. The same inequality holds
for $y$; and the claim is proved.

Claim~4 together with Lemma~\ref{lem:26} gives us the labeled natural
transformation
\[
\overline{\Psi}\colon\mathfrak{N}^{u}_{X,X_{0}}\Rightarrow\mathfrak{N}^{u}_{X,X_{0}}\mathfrak{T}I\uK_{X},\qquad\overline{\Psi}B\colon\qmat_{x,y}(m_{x,y})\mapsto\qmat_{x,y}\fun_{t}\fun_{s}(A^{ts}_{x}mA^{ts}_{y}).
\]
Finally, the composite
\[
\varphi_{12}\colon\mathfrak{N}^{u}_{X,X_{0}}\xRightarrow{\overline{\Psi}}\mathfrak{N}^{u}_{X,X_{0}}\mathfrak{T}I\uK_{X}\xRightarrow{\mathfrak{N}^{u}_{X,X_{0}}\as I\uK_{X}}\mathfrak{N}^{u}_{X,X_{0}}\mathfrak{A}I\uK_{X}
\]
determines a homotopy connecting $\varphi_{1}$ with $\varphi_{2}$.

\end{proof}

\subsection{\label{subsec:appl}The isomorphism theorem and applications}

Now we can establish the following important result.
\begin{thm}\label{thm:main}
Let $\mathsb X$ and $\mathbf{X}$ be as in Theorem~\ref{thm:adjunction},
and let $A$ and $B$ be $C^{*}$-algebras. Then there is an isomorphism
of commutative monoids
\[
\colim_{k}\left[\C_{\mathsb X}A,\mathfrak{A}^{k}\uK,B\right]\cong\colim_{k}\left[A,\mathfrak{N}^{u}_{\mathbf{X}}\mathfrak{A}^{k}\uK,B\right]
\]
that is natural in $A$ and $B$. If in addition $A$ is separable,
then
\[
\left[\C_{\mathsb X}A,\mathfrak{A}\uK,B\right]\cong\left[A,\mathfrak{N}^{u}_{\mathbf{X}}\mathfrak{A}\uK,B\right].
\]
\end{thm}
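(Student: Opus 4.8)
The plan is to deduce both statements directly from Theorem~\ref{thm:adjunction} together with the machinery of Section~\ref{sec:prelim}. First I would observe that Theorem~\ref{thm:adjunction} establishes an asymptotic adjunction $\C_{\mathsb X}\asadj\mathfrak{N}_{\mathbf{X}}^{u}$ in the sense of Definition~\ref{def:asadj}, witnessed by the explicit unit $\eta$ and counit $\varepsilon$. Then Theorem~\ref{thm:adj2brackets} applies verbatim with $S=\C_{\mathsb X}$ and $N=\mathfrak{N}_{\mathbf{X}}^{u}$, yielding mutually inverse isomorphisms of commutative monoids
\[
\left[\left[\C_{\mathsb X}A,\Id,B\right]\right]\;\cong\;\left[\left[A,\mathfrak{N}_{\mathbf{X}}^{u},B\right]\right],
\]
natural in $A$ and $B$. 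Unwinding the definitions, $[[\C_{\mathsb X}A,\Id,B]]=\colim_k[\C_{\mathsb X}A,\mathfrak{A}^k\uK,B]$ (since $\Id\mathfrak{A}^k\uK=\mathfrak{A}^k\uK$), and $[[A,\mathfrak{N}_{\mathbf{X}}^{u},B]]=\colim_k[A,\mathfrak{N}_{\mathbf{X}}^{u}\mathfrak{A}^k\uK,B]$, which is exactly the first displayed isomorphism. So the first statement requires essentially no new work beyond citing Theorems~\ref{thm:adjunction} and~\ref{thm:adj2brackets}.

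For the second statement, I would invoke the content of Section~\ref{sec:tech} (flagged in the introduction and the section description as establishing that ``the colimit construction is unnecessary for separable $C^*$-algebras''). Concretely, when $A$ is separable, the structure maps in the colimit defining $[[A,F,B]]$ are already isomorphisms at the first stage, so $[[A,F,B]]\cong[A,F\mathfrak{A}\uK,B]$ for every good endofunctor $F$; applying this with $F=\Id$ on the left (noting $\C_{\mathsb X}A$ is again separable since $\C_{\mathsb X}$ preserves separability) and with $F=\mathfrak{N}_{\mathbf{X}}^{u}$ on the right collapses both colimits, giving
\[
\left[\C_{\mathsb X}A,\mathfrak{A}\uK,B\right]\;\cong\;\left[A,\mathfrak{N}_{\mathbf{X}}^{u}\mathfrak{A}\uK,B\right].
\]
One should check that the natural isomorphism from the first part is compatible with the identifications $[[-,-,-]]\cong[-,-\mathfrak{A}\uK,-]$, which is routine since those identifications are themselves natural.

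The main obstacle is really a bookkeeping one rather than a conceptual one: verifying that Theorem~\ref{thm:adjunction} indeed produces an asymptotic adjunction in the precise sense of Definition~\ref{def:asadj} — that is, that the two triangle identities hold in $\hGEFC$. But this is exactly the content of Propositions~\ref{thm:lid} and~\ref{thm:rid}, which are proved in the preceding subsection, together with the remark verifying that $\eta$ and $\varepsilon$ are well-defined labeled natural transformations. Granting those, the present theorem is a formal corollary. The only other point to be careful about is the separable reduction: one must confirm that the relevant separability hypotheses in Section~\ref{sec:tech} are met — here $A$ separable suffices, since $\mathfrak{N}_{\mathbf{X}}^{u}$, $\mathfrak{A}$, and $\uK$ need not preserve separability but the reduction theorem only constrains the source algebra $A$.
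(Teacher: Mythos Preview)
Your argument is correct and matches the paper's own proof: the first statement is exactly Theorem~\ref{thm:adjunction} fed into Theorem~\ref{thm:adj2brackets}, and the second is the separable reduction applied to both sides of the resulting isomorphism. One small imprecision: you assert that Section~\ref{sec:tech} gives $[[A,F,B]]\cong[A,F\mathfrak{A}\uK,B]$ \emph{for every} good endofunctor $F$ when $A$ is separable, but Theorem~\ref{thm:colim-is-redundant} is stated and proved only for $F=\mathfrak{N}_{\mathbf{X}}^{u}$; the case $F=\Id$ (applied to the separable algebra $\C_{\mathsb X}A$) is the classical result \cite[Theorem~2.16]{GHT}, which the paper cites separately. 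Since these are precisely the two instances you actually use, the argument goes through unchanged once you cite them individually.
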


\begin{proof}
The first statement follows from Theorems~\ref{thm:adjunction} and~\ref{thm:adj2brackets}.
The second statement follows from~\cite[Theorem~2.16]{GHT} and Theorem~\ref{thm:colim-is-redundant}
proved in the next section.
\end{proof}

Let us briefly discuss some applications of this result.

\subsubsection*{Unsuspended picture of $E$-theory}

While suspensions are an intrinsic part of the definition $E_{0}(A,B)\coloneqq\bbrackets{SA,S\mathbb{K}B}$,
several attempts have been made to find an alternative model for $E$-theory
groups with elements represented by maps from the $C^{*}$-algebra
$A$ itself rather than its suspension. For example, in~\cite{manuilov_KK-like_picture},
V.~Manuilov described $E_{0}(A,B)$ in terms of pairs of non-multiplicative
maps from $A$ to $C_{b}(\mathbb{R}_{+},\mathbb{K}B)$. Another important
work by M.~Dadarlat and T.~Loring~\cite{dadarlat1994} establishes
that $E_{0}(A,B)\cong\bbrackets{A,\mathbb{K}B}$ whenever $\bbrackets{A,\mathbb{K}B}$
is a group (rather than merely a monoid), which leads to the following
isomorphisms:
\begin{equation}
E_{0}(A,B)\cong\bbrackets{S^{2}A,\mathbb{K}B},\qquad\qquad E_{1}(A,B)\cong\bbrackets{SA,\mathbb{K}B}.\label{eq:e0e1}
\end{equation}
Applying Theorem~\ref{thm:main} to the right-hand sides of~(\ref{eq:e0e1})
(with $\mathsb X\coloneqq(\mathbb{R}^{2},\varnothing)$ and $\mathsb X\coloneqq(\mathbb{R},\varnothing)$,
respectively), we obtain an unsuspended description of $E$-theory
in terms of Roe functors:
\[
E_{0}(A,B)\cong\bbrackets{A,\mathfrak{M}^{u}_{\mathbb{Z}^{2}},B},\qquad\qquad E_{1}(A,B)\cong\bbrackets{A,\mathfrak{M}^{u}_{\mathbb{Z}},B}.
\]

\subsubsection*{Extensions}

Recall that for $C^{*}$-algebras $A$ and $B$, an \emph{extension}
is a $*$-homomorphism $A\to Q(\mathbb{K}B)$, where $Q$ indicates
the corona algebra
\[
Q(\mathbb{K}B)\coloneqq\frac{\mathcal{M}(\mathbb{K}\otimes B)}{\mathbb{K}\otimes B}.
\]
Using the obvious inclusion $\mathfrak{N}^{u}_{\mathbb{Z}_{+},\{0\}}\mathfrak{A}B\subset Q(\mathbb{K}\mathfrak{A}B)$,
we can regard the monoid $\bbrackets{A,\mathfrak{N}^{u}_{\mathbb{Z}_{+},\{0\}},B}$
as ``homotopy classes of extensions with asymptotic coefficients''.
Theorem~\ref{thm:main} yields the isomorphism
\[
E_{1}(A,B)\cong\bbrackets{SA,\mathbb{K}B}\cong\bbrackets{\C_{\mathbb{R}_{+},\{0\}}A,\Id,\mathbb{K}B}\cong\bbrackets{A,\mathfrak{N}^{u}_{\mathbb{Z}_{+},\{0\}},B},
\]
which can be viewed as an $E$-theoretic analog of the famous isomorphism
$KK_{1}(A,B)\cong\Ext^{-1}(A,B)$ between the Kasparov $KK_{1}$-theory
and the group of invertible extensions~\cite{Jensen-Thomsen}. We
refer the reader to~\cite{makeev_cyc_ext} for further details on
this correspondence (that work uses a slightly simpler categorical
framework, but the proof carries over with minimal modifications).

\subsubsection*{$K$-homology}

Theorem~\ref{thm:main} also allows us to express $K$-homology in
terms of relative Roe algebras. For a compact nonempty subset $\mathsf{X}$
of the unit sphere of a Euclidean space, define its
metric cone
\[
\mathcal{O}\mathsf{X}\coloneqq\{tx\mid x\in\mathsf{X},t\geq0\},
\]
and denote by $(\mathcal{O}\mathsf{X})^{\discr}\subset\mathcal{O}\mathsf{X}$
a coarsely dense subspace of bounded geometry. Using that the pair
$(\mathcal{O}\mathsf{X},\{0\})$ is scalable, we obtain the isomorphism
\begin{equation}
K_{1}(\mathsf{X})\cong E_{1}(C_{0}(\mathsf{X}),\mathbb{C})\cong\bbrackets{SC_{0}(\mathsf{X}),\uK}\cong\bbrackets{C_{0}(\mathcal{O}\mathsf{X},\{0\}),\Id,\uK}\cong\bbrackets{\mathbb{C},\mathfrak{N}^{u}_{(\mathcal{O}\mathsf{X})^{\discr},\{0\}},\uK}.\label{eq:k1-1}
\end{equation}
In fact, following the ideas from~\cite{HigsonRoe1995CoarseBC},
we can extend this construction to arbitrary finite-dimensional compact metrizable spaces. To do this, observe that every compact
metrizable space $\mathsf{X}$ can be embedded into the unit sphere
of a Euclidean space. That the right-hand side of~(\ref{eq:k1-1})
is well defined follows now from the two key facts:
\begin{itemize}
\item The coarse homotopy class~\cite{mitchener-norouzizadeh-2020coarse}
of the corresponding metric cone $\mathcal{O}\mathsf{X}$ is independent
of the choice of embedding; 
\item Roe functors are coarse homotopy invariant, meaning that if $(X,X_{0})$
and $(Y,Y_{0})$ are coarsely homotopy equivalent spaces of bounded
geometry, then $\mathfrak{N}^{u}_{X,X_{0}}\mathbb{K}$ and $\mathfrak{N}^{u}_{Y,Y_{0}}\mathbb{K}$
are homotopy equivalent.
\end{itemize}
The first fact is straightforward. The proof of the second (within
a slightly simpler categorical framework, and only for the case $X_{0}=\varnothing$
and $Y_{0}=\varnothing$) is given in~\cite{makeev_coarse_homotopies}.

While, as we have just observed, the existence of the isomorphism~(\ref{eq:k1-1})
is a direct consequence of Theorem~\ref{thm:main}, its naturality
in~$\mathsf{X}$ is a more subtle question which we leave for future
study.
\section{\label{sec:tech}Technical theorem}

In this section, we show that one can skip the colimit construction
in the definition of generalized morphisms in the case of separable
$C^{*}$-algebras. 
\begin{thm}\label{thm:colim-is-redundant}
Let $A$ and $B$ be $C^{*}$-algebras,
with $A$ separable, and let $\mathbf{X}\coloneqq(X,X_{0})$ be a
pair of discrete metric spaces of bounded geometry. Then there is
an isomorphism of monoids
\begin{equation}
\left[A,\mathfrak{N}^{u}_{\mathbf{X}}\mathfrak{A}\uK,B\right]\cong\colim_{n}\left[A,\mathfrak{N}^{u}_{\mathbf{X}}\mathfrak{A}^{n}\uK,B\right].\label{eq:claim}
\end{equation}
\end{thm}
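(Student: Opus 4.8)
The plan is to adapt the reparametrization (``going to the diagonal'') argument of~\cite[Theorem~2.16]{GHT}. It suffices to prove that for every $n\geq1$ the transition map
\[
[A,\mathfrak{N}_{\mathbf{X}}^{u}\mathfrak{A}^{n}\uK,B]\longrightarrow[A,\mathfrak{N}_{\mathbf{X}}^{u}\mathfrak{A}^{n+1}\uK,B]
\]
is a bijection, for then the colimit in~\eqref{eq:claim} is attained at its first stage. Surjectivity and injectivity are established in parallel: injectivity follows by running the surjectivity construction on a homotopy witnessing the coincidence of two transported classes, with the residual interval produced by the reparametrization absorbed by Lemma~\ref{lem:stronger-homot}. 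I therefore only sketch surjectivity.

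Fix a countable dense $\mathbb{Q}[i]$-$*$-subalgebra $A_{0}\subset A$ and let $\varphi\colon A\to\mathfrak{N}_{\mathbf{X}}^{u}\mathfrak{A}^{n+1}\uK B$. The new feature compared with~\cite{GHT} is that one must lift through the Roe quotient as well as through the asymptotic quotients. Since $\mathfrak{M}_{X}^{u}$, $\mathfrak{T}$ and $\mathfrak{T}_{0}$ are exact — for $\mathfrak{M}_{X}^{u}$ by Proposition~\ref{prop:MX-properties}, for $\mathfrak{T}$ and $\mathfrak{T}_{0}$ by the continuous linear section of the quotient map supplied by Bartle--Graves — the quotient $\mathfrak{A}=\mathfrak{T}/\mathfrak{T}_{0}$ is exact by Lemma~\ref{lem:quot-exact}, and hence the canonical map $\mathfrak{M}_{X}^{u}\mathfrak{T}^{n+1}\uK B\to\mathfrak{N}_{\mathbf{X}}^{u}\mathfrak{A}^{n+1}\uK B$ — the quotient by the support ideal followed by the $n+1$ asymptotic projections — is surjective. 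By Bartle--Graves I choose a bounded lift $\widetilde\varphi\colon A_{0}\to\mathfrak{M}_{X}^{u}\mathfrak{T}^{n+1}\uK B$ of $\varphi|_{A_{0}}$: concretely, $\widetilde\varphi(a)$ is a finitely-propagated matrix with uniformly norm-bounded entries in $C_{b}([0,\infty)^{n+1},\uK B)$, and the discrepancies $\widetilde\varphi(ab)-\widetilde\varphi(a)\widetilde\varphi(b)$ and $\widetilde\varphi(a)^{*}-\widetilde\varphi(a^{*})$ lie in the kernel of the map above. By exactness of $\mathfrak{M}_{X}^{u}$ together with Lemma~\ref{lem:dense}, that kernel is the sum of $\mathfrak{M}_{X\supset X_{0}}^{u}\mathfrak{T}^{n+1}\uK B$ and of the matrices whose entries have vanishing iterated limit at infinity.

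Following~\cite{GHT}, one now reparametrizes so as to collapse one level of $\mathfrak{A}$: for a continuous increasing $r\colon[0,\infty)\to[0,\infty)$ with $r\to\infty$ one restricts $\widetilde\varphi$ along a substitution tying together the two innermost $[0,\infty)$-parameters, then applies the remaining asymptotic projections and the Roe quotient to obtain $\varphi'\colon A_{0}\to\mathfrak{N}_{\mathbf{X}}^{u}\mathfrak{A}^{n}\uK B$. Since $A_{0}$ is countable, a single function $r$ can be chosen by a diagonal procedure so that $\varphi'$ is multiplicative and $*$-preserving — hence extends to a $*$-homomorphism $A\to\mathfrak{N}_{\mathbf{X}}^{u}\mathfrak{A}^{n}\uK B$ — and so that the affine interpolation of the reparametrization descends to a homotopy realising the transported class of $\varphi'$ as $[\varphi]$; here one uses that the support ideal $\mathfrak{M}_{X\supset X_{0}}^{u}(\cdots)$ is stable under a substitution that does not move the $X$-support, so the part of each discrepancy lying in it is killed by the Roe quotient irrespective of $r$.

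I expect the heart of the proof — and the main obstacle — to be precisely this choice of $r$ in the presence of the matrix structure. Two checks go beyond~\cite{GHT}. First, the ``one $r$ annihilates all discrepancies'' argument must be done for infinitely many matrix entries at once, so one cannot argue entrywise but must instead control $\norm{\mat_{x,y}(\cdots)}$ by $\tau_{X}(\prp(\cdots))\cdot\sup_{x,y}\norm{\cdots}$ via the bounded-geometry estimate of Lemma~\ref{lem:norm-fM-of-norm-its-elements}, after using Lemma~\ref{lem:dense} to pass to entries from a fixed dense set so that the relevant suprema are manageable. Secondly, one must verify that the iterated-limit and support decompositions of the discrepancies are compatible with the order of the nested functors $\mathfrak{M}_{X}^{u}$, $\mathfrak{A}^{k}$ and $\uK$ — the bookkeeping that legitimises the clean conclusion ``$\varphi'$ is a homomorphism''. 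With these in hand the argument closes exactly as in the suspended case of~\cite{GHT}.
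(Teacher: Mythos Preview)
Your overall architecture matches the paper's Proposition~\ref{prop:colim-redund}: show each transition map is a bijection via a GHT-style reparametrization, and deduce injectivity by running surjectivity on a witnessing homotopy (the paper does exactly this, invoking Lemma~\ref{lem:some-technicality} for the bookkeeping). You also correctly isolate the new difficulty, namely that the reparametrization must be chosen uniformly over the infinitely many matrix positions in~$X\times X$.

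The gap is in the tool you propose for that uniformity. Bounding $\norm{\mat_{x,y}(\cdots)}$ by $\tau_{X}(\prp)\cdot\sup_{x,y}\norm{\cdots}$ via Lemma~\ref{lem:norm-fM-of-norm-its-elements} gives you control over a norm in $\mathfrak{M}_{X}^{u}\mathfrak{T}^{k}$, but what you need is vanishing in the \emph{quotient} $\mathfrak{N}_{\mathbf{X}}^{u}\mathfrak{A}^{k}$, and for that you must compute quotient norms, not bound lift norms. Concretely, for a discrepancy $d$ with entries $d_{x,y}\in\mathfrak{T}\mathfrak{T}_{0}(\cdots)$, each entry needs its own speed for $\rho_{r}d_{x,y}$ to land in $\mathfrak{T}_{0}$, and there is no reason the entrywise supremum becomes small for any fixed~$r$; ``passing to entries from a fixed dense set'' does not help, because the obstruction is the cardinality of~$X$, not density in~$B$. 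The paper handles this by building explicit \emph{quasiscalar approximate units} for the three nested quotients (Lemma~\ref{lem:nets}, Proposition~\ref{prop:quot-norms}): diagonal multipliers $\overline{D}^{0}_{\rho}$, $\overline{D}^{1}_{\mu}$, $\overline{D}^{2}_{\nu}$ indexed by $\rho\in\mathbb{N}$, $\mu\in\mathbb{N}^{X}$, $\nu\in\mathbb{N}^{X\times\mathbb{N}}$. The decisive trick (equation~\eqref{eq:mu-enum} in the proof of Lemma~\ref{lem:new-reparam}) is that the $t_{1}$-cutoff $\mu$ may depend on~$x$: taking $\mu_{n}(x)\geq\itenum(x)$ kills all but finitely many rows on each compact $t_{1}$-interval~$[0,n]$, so $r(n)$ is determined by a finite maximum. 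This is the missing idea in your sketch. The paper also packages the conclusion more cleanly than your element-by-element Bartle--Graves lifting: it lifts the separable image $E=\varphi(A)$ to a separable $C^{*}$-subalgebra $\widetilde{E}\subset\mathfrak{M}_{X}^{u}\mathfrak{T}^{2}D$, proves that $\rho_{r}$ preserves quotient norms on all of~$\widetilde{E}$ (Lemma~\ref{lem:new-reparam}), and then obtains the $*$-homomorphism $\psi$ in one stroke via Lemma~\ref{lem:param-zho}, avoiding any discrepancy bookkeeping.
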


The proof of this technical theorem is a quite tedious modification
of the argument from~\cite[Chapter 2]{GHT} which occupies the remainder
of this section.

\subsection{Computing norms in quotients}
\begin{lem}\label{lem:norm-monot}
Let $A$ be a $C^{*}$-algebra, and let $a,b,c\in A$,
with $a$ and $b$ self-adjoint and $a^{2}\leq b^{2}$. Then $\norm{ac}\leq\norm{bc}$.
\end{lem}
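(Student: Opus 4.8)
The plan is to reduce the norm inequality to a statement about positive functionals and then invoke the operator-monotonicity of the square root. First I would observe that
\[
\norm{ac}^{2} = \norm{(ac)^{*}ac} = \norm{c^{*}a^{2}c},
\]
and likewise $\norm{bc}^{2} = \norm{c^{*}b^{2}c}$, using that $a$ and $b$ are self-adjoint. Since $a^{2}\le b^{2}$ in $A$, compressing by $c$ preserves the order, i.e. $c^{*}a^{2}c \le c^{*}b^{2}c$; both sides are positive elements of $A$, so their norms satisfy $\norm{c^{*}a^{2}c}\le\norm{c^{*}b^{2}c}$. Taking square roots gives $\norm{ac}\le\norm{bc}$, as required.

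The one point that deserves care — and which I expect to be the only genuine step — is the claim that $0\le x\le y$ in a $C^{*}$-algebra implies $\norm x\le\norm y$. This is standard: passing to the unitization if necessary, $y\le\norm{y}\cdot 1$, hence $x\le\norm{y}\cdot 1$, and a positive element dominated by $\norm{y}\cdot 1$ has norm at most $\norm{y}$ (its spectrum lies in $[0,\norm y]$). Alternatively one can note $\norm x = \sup\{\phi(x) : \phi\text{ a state}\}$ and $\phi(x)\le\phi(y)\le\norm y$ for every state $\phi$. I would just cite this as a well-known fact about the order structure of $C^{*}$-algebras rather than belabor it.

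Thus the proof is essentially a two-line computation: rewrite the norms via the $C^{*}$-identity, use that conjugation $z\mapsto c^{*}zc$ is positive (hence order-preserving) on self-adjoint elements, and apply norm-monotonicity of the order on positive elements. There is no real obstacle here; the lemma is purely a bookkeeping statement extracted to be reused later in Section~\ref{sec:tech}.
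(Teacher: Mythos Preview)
Your argument is correct and matches the paper's proof almost verbatim: rewrite $\norm{ac}^2=\norm{c^*a^2c}$, use that $z\mapsto c^*zc$ is order-preserving to get $c^*a^2c\le c^*b^2c$, and then invoke $0\le x\le y\Rightarrow\norm{x}\le\norm{y}$. The paper simply cites Murphy~\cite[Theorem~2.2.5]{murphy1990} for the two standard facts you spelled out.
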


\begin{proof}
We have
\begin{equation}
0\leq(ac)^{*}(ac)=c^{*}a^{2}c\leq c^{*}b^{2}c,\label{eq:11}
\end{equation}
in which the second inequality holds by Lemma~\ref{lem:pos}\ref{enu:pos1}.
Combining Lemma~\ref{lem:pos}\ref{enu:pos2} with~(\ref{eq:11}),
we obtain 
\[
\norm{ac}^{2}=\norm{c^{*}a^{2}c}\leq\norm{c^{*}b^{2}c}=\norm{bc}^{2}.
\]
\end{proof}

Let $\iota\in\GEFC(F_{0},F)$ be a componentwise
ideal inclusion with monic-preserving $F_{0}$ and~$F$. Let $B$
be a $C^{*}$-algebra, and let $0\xrightarrow{}B\xrightarrow{}B^{+}\stackrel{}{\leftrightarrows}\mathbb{C}\xrightarrow{}0$
be the split exact sequence associated to the unitization of~$B$.
These data give rise to the diagram
\[
\begin{tikzcd}[ampersand replacement=\&]
{F_{0}B} \& {FB} \\
{F_{0}B^{+}} \& {FB^{+}} \\
{F_{0}\mathbb{C}}
\arrow["\iota B", hook', from=1-1, to=1-2]
\arrow[hook', from=1-1, to=2-1]
\arrow[hook', from=1-2, to=2-2]
\arrow["\iota B^{+}", hook', from=2-1, to=2-2]
\arrow[shift right, from=2-1, to=3-1]
\arrow[shift right, hook', from=3-1, to=2-1]
\end{tikzcd}
\]
which allows us to assume, abusing notation, that $F_{0}\mathbb{C}$,~$F_{0}B$,
and~$FB$ are $C^{*}$-subalgebras of~$FB^{+}$. This allows us
to make the following definition.
\begin{defn}
Let $\iota\in\GEFC(F_{0},F)$ be a componentwise
ideal inclusion with monic-preserving $F_{0}$ and~$F$. We define
a \emph{quasiscalar approximate unit} (abbreviated~q.s.a.u.) assosiated
to $\iota$ to be a net $\{u_{\lambda}\}\subset F_{0}\mathbb{C}$
satisfying the following properties:
\begin{itemize}
\item $0\leq u_{\lambda}\leq u_{\lambda'}\leq1$ for all $\lambda\leq\lambda'$;
\item $u_{\lambda}f\in F_{0}B$ for all $B\in\in\Cstar$ and $f\in FB$;
\item $u_{\lambda}f_{0}\xrightarrow{}f_{0}$ for all $B\in\in\Cstar$ and
$f_{0}\in F_{0}B$.
\end{itemize}
\end{defn}

\begin{lem}\label{lem:quot-norm}
Let $0\Rightarrow F_{0}\xRightarrow{\iota}F\xRightarrow{q}G\Rightarrow0$
be a componentwise short exact sequence with monic-preserving
$F_{0}$ and~$F$. If $u_{\lambda}$ is a q.s.a.u. associated to~$\iota$,
then for every $B\in\in\Cstar$ and every $f\in FB$, the following
equality holds:
\[
\norm{qB(f)}=\lim_{\lambda}\norm{(1-u_{\lambda})f}.
\]
\end{lem}

\begin{proof}
For every $\varepsilon>0$, we can find $f_{0}\in F_{0}B$ such that
$\norm{f-f_{0}}\leq\norm{qB(f)}+\varepsilon$. Then
\begin{multline*}
\qquad\qquad\norm{f-u_{\lambda}f}-\norm{f_{0}-u_{\lambda}f_{0}}\leq\norm{(f-u_{\lambda}f)-(f_{0}-u_{\lambda}f_{0})}\\
=\norm{(1-u_{\lambda})(f-f_{0})}\leq\norm{f-f_{0}}\leq\norm{qB(f)}+\varepsilon,\qquad\qquad
\end{multline*}
and hence,
\[
\norm{f-u_{\lambda}f}\leq\norm{qB(f)}+\varepsilon+\norm{f_{0}-u_{\lambda}f_{0}}\xrightarrow[\lambda]{}\norm{qB(f)}+\varepsilon
\]
which implies that
\[
\lim_{\lambda}\norm{(1-u_{\lambda})f}\leq\norm{qB(f)}.
\]
The reverse inequality is obvious. 
\end{proof}

We define the functions \begin{alignat*}{3}
 & \chi^{0}_{r}\in\{0,1\}^{X}, & ~~ & \textrm{ for }r\in\mathbb{N}, & \qquad & \chi^{0}_{r}(x)\coloneqq\begin{cases}
1, & x\in\nbhd_{r}(X_{0});\\
0, & \textrm{otherwise,}
\end{cases}\\
 & \chi^{1}_{n}\in\mathfrak{T}_{0}\mathbb{C}, &  & \textrm{ for }n\in\mathbb{N}, & \qquad & \chi^{1}_{n}(t)\coloneqq\begin{cases}
1, & t\in[0,n-1];\\
n-t, & t\in[n-1,n];\\
0, & t\in[n,\infty),
\end{cases}\\
 & \chi^{2}_{f}\in\mathfrak{T}\mathfrak{T}_{0}\mathbb{C}, &  & \textrm{ for }f\in\mathbb{N}^{\mathbb{N}}, &  & \chi^{2}_{f}(t_{1},t_{2})\coloneqq(1-s)\chi^{1}_{f(n)}(t_{2})+s\chi^{1}_{f(n+1)}(t_{2}),\\
 &  &  &  &  & \hspc{19.8}\textrm{with }n\coloneqq[t_{1}],~s\coloneqq\{t_{1}\},
\end{alignat*}
and for abbreviation, set 
\[
\overline{\chi}^{0}_{r}\coloneqq1-\chi^{0}_{r},\qquad\overline{\chi}^{1}_{n}\coloneqq1-\chi^{1}_{n},\qquad\overline{\chi}^{2}_{f}\coloneqq1-\chi^{2}_{f}.
\]

Recall that $\mathfrak{M}^{u}_{X}$ and $\mathfrak{T}$ are ideal-inclusion-preserving endofunctors (see~Proposition~\ref{prop:MX-properties}\ref{enu:x-iip}
and~\cite[proof of Lemma~2.4]{GHT}).
\begin{lem}\label{lem:nets}
The following statements hold:
\begin{itemize}
\item $\{D^{0}_{\rho}\coloneqq\diag_{x}(\chi^{0}_{\rho}(x))\mid\rho\in\mathbb{N}\}$
is a q.s.a.u. associated to $\mathfrak{M}^{u}_{X\supset X_{0}}\xRightarrow{}\mathfrak{M}^{u}_{X}$;
\item $\{D^{1}_{\mu}\coloneqq\diag_{x}(\chi^{1}_{\mu(x)})\mid\mu\in\mathbb{N}^{X}\}$
is a q.s.a.u. associated to $\mathfrak{M}^{u}_{X}\mathfrak{T}_{0}\xRightarrow{}\mathfrak{M}^{u}_{X}\mathfrak{T}$;
\item $\{D^{2}_{\nu}\coloneqq\diag_{x}(\chi^{2}_{\nu(x)})\mid\nu\in\mathbb{N}^{X\times\mathbb{N}}\}$
is a q.s.a.u. associated to $\mathfrak{M}^{u}_{X}\mathfrak{T}\mathfrak{T}_{0}\xRightarrow{}\mathfrak{M}^{u}_{X}\mathfrak{T}\mathfrak{T}$,
\end{itemize}
where $\mathbb{N}$, $\mathbb{N}^{X}$, and $\mathbb{N}^{X\times\mathbb{N}}$
are regarded as directed sets with the natural order. 
\end{lem}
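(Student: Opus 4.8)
The plan is to check, for each of the three nets, the three clauses in the definition of a quasiscalar approximate unit, putting essentially all the work into the convergence clause $u_{\lambda}f_{0}\to f_{0}$ for $f_{0}\in F_{0}B$. First I would dispose of the easy clauses. Membership $u_{\lambda}\in F_{0}\mathbb{C}$ holds because each net consists of diagonal matrices with uniformly norm-bounded entries in the relevant $C^{*}$-algebra ($\mathbb{C}$, $\mathfrak{T}_{0}\mathbb{C}$, $\mathfrak{T}\mathfrak{T}_{0}\mathbb{C}$), and the bounds $0\le u_{\lambda}\le u_{\lambda'}\le1$ follow from the pointwise inequalities $\chi_{\rho}^{0}\le\chi_{\rho'}^{0}$ (since $\nbhd_{\rho}(X_{0})\subset\nbhd_{\rho'}(X_{0})$), $\chi_{n}^{1}\le\chi_{n'}^{1}$, and $\chi_{f}^{2}\le\chi_{f'}^{2}$ whenever $f\le f'$ pointwise — each being non-decreasing in its parameter, with $\chi_{f}^{2}$ a convex combination of such functions (the interpolation in the first variable serving only to keep $\chi_{f}^{2}$ continuous there). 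The absorption clause $u_{\lambda}f\in F_{0}B$ is equally routine: multiplying a matrix entry by $\chi_{\rho}^{0}(x)$ shrinks its support towards $X_{0}$, while multiplying by $\chi_{\mu(x)}^{1}$ (resp.\ $\chi_{\nu(x)}^{2}$) lands the entry in the componentwise ideal $\mathfrak{T}_{0}B$ (resp.\ $\mathfrak{T}\mathfrak{T}_{0}B$); one then passes to norm limits using $\norm{u_{\lambda}}\le1$.

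For the convergence clause, the first net is immediate: if $\supp(m_{0})\subset\nbhd_{R}(X_{0})^{\times2}$ then $\chi_{\rho}^{0}(x)=1$ on every non-trivial row of $m_{0}$ once $\rho\ge R$, so $D_{\rho}^{0}m_{0}=m_{0}$, and since such $m_{0}$ are dense in $\mathfrak{M}_{X\supset X_{0}}^{u}B$ an $\varepsilon/3$-approximation (using $\norm{D_{\rho}^{0}}\le1$) finishes it. For the remaining two nets the engine is Lemma~\ref{lem:norm-fM-of-norm-its-elements}: since left multiplication by a diagonal matrix cannot increase propagation, for a finite-propagation $f_{0}$ with $R\coloneqq\prp(f_{0})$ one has $\norm{Df_{0}-f_{0}}\le\tau_{X}(R)\sup_{x,y\in X}\norm{\overline{\chi}_{x}\,(f_{0})_{x,y}}$, where $D$ is $D_{\mu}^{1}$ or $D_{\nu}^{2}$ and $\overline{\chi}_{x}$ its complementary $x$-th diagonal entry. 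Bounded geometry forces each row of $f_{0}$ to have only finitely many non-zero entries, so the cut-off index can be chosen separately in each row; for $D_{\mu}^{1}$ I would first reduce, again via the norm estimate, to $f_{0}$ whose entries are compactly supported in the $\mathfrak{T}$-variable, after which $\mu_{0}(x)\coloneqq 1+(\textrm{maximal support radius among the entries of row }x)$ gives $D_{\mu}^{1}f_{0}=f_{0}$ exactly for all $\mu\ge\mu_{0}$, with the general case following by density. This is already the reason the directed set must be $\mathbb{N}^{X}$ rather than $\mathbb{N}$: the thresholds $\mu_{0}(x)$ genuinely depend on $x$.

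The hard part will be the third net, where the reduction just used breaks down: an entry $h\coloneqq(f_{0})_{x,y}\in\mathfrak{T}\mathfrak{T}_{0}B$ is a bounded continuous map $t_{1}\mapsto h(t_{1},\cdot)\in C_{0}([0,\infty),B)$ whose norm need not decay in $t_{2}$ uniformly in $t_{1}$, so no single cut-off height works. The remedy — and this is precisely what the interpolated cut-off $\chi_{f}^{2}$ is built for — is that on each compact interval $[n,n+1]$ the set $\{h(t_{1},\cdot):t_{1}\in[n,n+1]\}$ is compact, hence uniformly vanishing at infinity. Concretely, given $\varepsilon>0$ and finite-propagation $f_{0}$ with $R=\prp(f_{0})$, I would choose for each $x\in X$ and $n\in\mathbb{N}$ a bound $M_{n}^{(x)}\in\mathbb{N}$ with $\sup\{\norm{(f_{0})_{x,y}(t_{1},t_{2})}:t_{1}\in[n,n+1],\,y\in X\}<\varepsilon/\tau_{X}(R)$ for all $t_{2}\ge M_{n}^{(x)}$, set $\nu_{0}(x)(n)\coloneqq 1+\max_{j\le n}M_{j}^{(x)}$ (non-decreasing in $n$), and observe that for $\nu\ge\nu_{0}$ the function $\overline{\chi}_{\nu(x)}^{2}\le\overline{\chi}_{\nu_{0}(x)}^{2}$ vanishes for $t_{1}\in[n,n+1]$, $t_{2}\le M_{n}^{(x)}$; hence $\sup_{x,y}\norm{\overline{\chi}_{\nu(x)}^{2}(f_{0})_{x,y}}\le\varepsilon/\tau_{X}(R)$ and the displayed estimate yields $\norm{D_{\nu}^{2}f_{0}-f_{0}}\le\varepsilon$, with a final $\varepsilon/3$-approximation extending this to all of $\mathfrak{M}_{X}^{u}\mathfrak{T}\mathfrak{T}_{0}B$. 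The only delicate points are the compatibility of $\chi_{f}^{2}$ with the pointwise order (used here and in the first clause) and the "compactness gives uniform decay" step; both are routine but worth spelling out.
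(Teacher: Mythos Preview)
Your proposal is correct and is precisely the kind of routine verification the paper has in mind: the paper's own proof consists of the single word ``Straightforward.'' Your argument unpacks exactly the points one would need to check, and in particular your treatment of the third net---using compactness of $\{h(t_{1},\cdot):t_{1}\in[n,n+1]\}$ to obtain uniform decay on each slab, then assembling the row-wise thresholds into a $\nu_{0}\in\mathbb{N}^{X\times\mathbb{N}}$---is the right way to see why the interpolated cut-offs $\chi_{f}^{2}$ and the directed set $\mathbb{N}^{X\times\mathbb{N}}$ are what is needed.
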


\begin{proof}
Straightforward.
\end{proof}

\begin{lem}\label{lem:net-1}
Let $\Lambda$ and $M$ be two directed sets, and
let $f\colon\Lambda\times M\to[0,\infty)$ be a function such that
$f(\singlecdot,\mu)$ and $f(\lambda,\singlecdot)$ are non-increasing
for all $\mu$ and $\lambda$, respectively. Then
\[
\lim_{\lambda\in\Lambda}\lim_{\mu\in M}f(\lambda,\mu)=\lim_{(\lambda,\mu)\in\Lambda\times M}f(\lambda,\mu).
\]
\end{lem}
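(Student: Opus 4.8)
The plan is to observe that, since $f$ takes values in $[0,\infty)$ and is non-increasing in each variable separately, every net occurring in the statement is a monotone bounded net over a directed set, hence converges to its infimum; once this is in place, the equality reduces to the trivial identity $\inf_{(\lambda,\mu)}f=\inf_{\lambda}\inf_{\mu}f$.

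First I would record the elementary fact that if $\{a_\delta\}_{\delta\in\Delta}$ is a net indexed by a directed set $\Delta$ which is \emph{non-increasing} (meaning $\delta\le\delta'$ implies $a_\delta\ge a_{\delta'}$) and bounded below, then $\lim_\delta a_\delta=\inf_\delta a_\delta$; this follows directly from the definition of the limit of a net. Next I would apply this, for each fixed $\lambda\in\Lambda$, to the non-increasing net $\mu\mapsto f(\lambda,\mu)$, obtaining $g(\lambda)\coloneqq\lim_{\mu\in M}f(\lambda,\mu)=\inf_{\mu\in M}f(\lambda,\mu)$. The function $g$ is itself non-increasing on $\Lambda$: if $\lambda\le\lambda'$ then $f(\lambda,\mu)\ge f(\lambda',\mu)$ for all $\mu$, and passing to the infimum over $\mu$ gives $g(\lambda)\ge g(\lambda')$. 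Since $g\ge0$, the same elementary fact yields $\lim_{\lambda\in\Lambda}g(\lambda)=\inf_{\lambda\in\Lambda}g(\lambda)$, so the left-hand side of the asserted equality equals $\inf_{\lambda}\inf_{\mu}f(\lambda,\mu)$.

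For the right-hand side I would equip $\Lambda\times M$ with the product order, under which it is a directed set. The map $(\lambda,\mu)\mapsto f(\lambda,\mu)$ is non-increasing for this order, because $(\lambda,\mu)\le(\lambda',\mu')$ gives $f(\lambda,\mu)\ge f(\lambda',\mu)\ge f(\lambda',\mu')$ by monotonicity in each variable. Hence, again by the elementary fact, $\lim_{(\lambda,\mu)\in\Lambda\times M}f(\lambda,\mu)=\inf_{(\lambda,\mu)}f(\lambda,\mu)$. Finally $\inf_{(\lambda,\mu)}f(\lambda,\mu)=\inf_{\lambda}\inf_{\mu}f(\lambda,\mu)$ — an iterated infimum over a product set — which matches the value already computed for the left-hand side, completing the proof.

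There is essentially no obstacle here: the only point requiring a modicum of care is keeping the distinction between the \emph{limit} of a monotone net and its \emph{infimum}, and checking that the product of two directed sets is directed so that the right-hand net is legitimate. Both are standard, so the argument is purely routine bookkeeping.
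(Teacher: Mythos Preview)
Your argument is correct: reducing both sides to iterated infima via the convergence of monotone bounded nets is exactly the routine computation one expects here. The paper itself omits the proof entirely, writing only ``The proof is left as a simple exercise,'' so there is no alternative approach to compare against; your write-up is a clean execution of that exercise.
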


\begin{proof}
The proof is left as a simple exercise.
\end{proof}

\begin{prop}\label{prop:quot-norms}
Let $B\in\in\Cstar$, $m\in\mathfrak{M}^{u}_{X}\mathfrak{T}\mathfrak{T}B$,
and $m'\in\mathfrak{M}^{u}_{X}\mathfrak{T}B$. Then the following
equalities hold:
\begin{alignat}{1}
\norm{q_{\mathbf{X}}\as\as B(m)} & =\lim_{(\rho,\mu,\nu)\in\mathbb{N}\times\mathbb{N}^{\mathbb{N}}\times\mathbb{N}^{X\times\mathbb{N}}}\norm{\overline{D}^{0}_{\rho}\cdot\overline{D}^{1}_{\mu}\cdot\overline{D}^{2}_{\nu}\cdot m},\label{eq:qn1}\\
\norm{q_{\mathbf{X}}\as B(m')} & =\hspc 4\lim_{(\rho,\mu)\in\mathbb{N}\times\mathbb{N}^{\mathbb{N}}}\norm{\overline{D}^{0}_{\rho}\cdot\overline{D}^{1}_{\mu}\cdot m'},\label{eq:qn2}
\end{alignat}
where $\overline{D}^{0}_{\rho}\coloneqq1-D^{0}_{\rho}$, $\overline{D}^{1}_{\mu}\coloneqq1-D^{1}_{\mu}$,
and $\overline{D}^{2}_{\nu}\coloneqq1-D^{2}_{\nu}$.
\end{prop}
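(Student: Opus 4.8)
The plan is to peel the three (resp.\ two) quotient maps off one at a time with the help of Lemma~\ref{lem:quot-norm}, and then to collapse the resulting iterated limit into a single limit over the product directed set via Lemma~\ref{lem:net-1}. I treat \eqref{eq:qn1} in detail; \eqref{eq:qn2} is the same argument with one factor fewer.

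\emph{Factoring the quotient.} First I would write $q_{\mathbf{X}}\as\as B$ as a composite of three componentwise quotient maps
\[
\mathfrak{M}_{X}^{u}\mathfrak{T}\mathfrak{T}B\xrightarrow{r_{2}}\mathfrak{M}_{X}^{u}\mathfrak{T}\mathfrak{A}B\xrightarrow{r_{1}}\mathfrak{M}_{X}^{u}\mathfrak{A}\mathfrak{A}B\xrightarrow{r_{0}}\mathfrak{N}_{\mathbf{X}}^{u}\mathfrak{A}\mathfrak{A}B,
\]
where $r_{2}=\mathfrak{M}_{X}^{u}\mathfrak{T}(\as B)$, $r_{1}=\mathfrak{M}_{X}^{u}(\as\mathfrak{A}B)$, and $r_{0}=q_{\mathbf{X}}\mathfrak{A}\mathfrak{A}B$. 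These are genuine componentwise quotient maps, with kernels $\mathfrak{M}_{X}^{u}\mathfrak{T}\mathfrak{T}_{0}B$, $\mathfrak{M}_{X}^{u}\mathfrak{T}_{0}\mathfrak{A}B$, and $\mathfrak{M}_{X\supset X_{0}}^{u}\mathfrak{A}\mathfrak{A}B$ respectively (a routine verification, using that $\mathfrak{M}_{X}^{u}$ is exact by Proposition~\ref{prop:MX-properties} and that $\mathfrak{T}$ preserves surjections), and by Lemma~\ref{lem:nets} the nets $\{D_{\nu}^{2}\}$, $\{D_{\mu}^{1}\}$, $\{D_{\rho}^{0}\}$ are q.s.a.u.'s associated to $r_{2}$, $r_{1}$, $r_{0}$ (the first two read off at the objects $\mathfrak{A}B$ and $\mathfrak{A}\mathfrak{A}B$).

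\emph{Peeling off the quotients.} The crucial observation is that each $\overline{D}_{\rho}^{0}$, $\overline{D}_{\mu}^{1}$, $\overline{D}_{\nu}^{2}$ is a diagonal matrix over $X$ whose entries are quasiscalars living, as multipliers, respectively at the $\mathfrak{M}_{X}^{u}$-level, the outer $\mathfrak{T}$-level, and the inner $\mathfrak{T}$-level; since each $r_{i}$ acts entrywise on matrices and only in the inner $\mathfrak{T}$-slot (for $r_{2}$) or not in a $\mathfrak{T}$-slot at all (for $r_{1}$, $r_{0}$), left multiplication by $\overline{D}_{\rho}^{0}$ commutes with both $r_{1}$ and $r_{2}$, and left multiplication by $\overline{D}_{\mu}^{1}$ commutes with $r_{2}$. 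Applying Lemma~\ref{lem:quot-norm} to $r_{0}$, then pushing $\overline{D}_{\rho}^{0}$ inside $r_{1}r_{2}$, then applying Lemma~\ref{lem:quot-norm} to $r_{1}$, then pushing $\overline{D}_{\mu}^{1}$ inside $r_{2}$, and finally applying Lemma~\ref{lem:quot-norm} to $r_{2}$, I obtain
\[
\norm{q_{\mathbf{X}}\as\as B(m)}=\lim_{\rho}\lim_{\mu}\lim_{\nu}\norm{\overline{D}_{\rho}^{0}\,\overline{D}_{\mu}^{1}\,\overline{D}_{\nu}^{2}\,m},
\]
the product being taken in the multiplier algebra of $\mathfrak{M}_{X}^{u}\mathfrak{T}\mathfrak{T}B$ after reordering the commuting factors.

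\emph{Collapsing the limit.} The function $(\rho,\mu,\nu)\mapsto\norm{\overline{D}_{\rho}^{0}\overline{D}_{\mu}^{1}\overline{D}_{\nu}^{2}m}$ is non-increasing in each variable separately: enlarging $\rho$ (or $\mu$, or $\nu$) enlarges $D_{\rho}^{0}$ and hence shrinks the commuting positive contraction $\overline{D}_{\rho}^{0}$, so $(\overline{D}_{\rho'}^{0})^{2}\le(\overline{D}_{\rho}^{0})^{2}$, whence Lemma~\ref{lem:norm-monot}, applied with $c$ the product of the remaining commuting factors with $m$, yields the monotonicity. Two applications of Lemma~\ref{lem:net-1} then turn $\lim_{\rho}\lim_{\mu}\lim_{\nu}$ into a single limit over the product directed set, which is precisely~\eqref{eq:qn1}. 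For \eqref{eq:qn2} one repeats the argument with the two-step factorization $\mathfrak{M}_{X}^{u}\mathfrak{T}B\xrightarrow{\mathfrak{M}_{X}^{u}(\as B)}\mathfrak{M}_{X}^{u}\mathfrak{A}B\xrightarrow{q_{\mathbf{X}}\mathfrak{A}B}\mathfrak{N}_{\mathbf{X}}^{u}\mathfrak{A}B$ and the q.s.a.u.'s $\{D_{\mu}^{1}\}$, $\{D_{\rho}^{0}\}$. I expect the main obstacle to be the bookkeeping in the peeling step: one must verify carefully that each quasiscalar multiplier genuinely commutes with the intermediate quotient maps it has to pass through, and that all the products are read consistently inside the unitizations/multiplier algebras set up in the paragraph preceding Lemma~\ref{lem:norm-monot}.
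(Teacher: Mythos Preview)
Your proposal is correct and follows essentially the same route as the paper: factor $q_{\mathbf{X}}\as\as B$ into three successive componentwise quotients, apply Lemma~\ref{lem:quot-norm} at each stage (using that the diagonal quasiscalars $\overline{D}_{\rho}^{0},\overline{D}_{\mu}^{1}$ commute past the inner quotient maps) to obtain the iterated limit $\lim_{\rho}\lim_{\mu}\lim_{\nu}$, and then collapse it to a single product limit via Lemma~\ref{lem:net-1}, with the required monotonicity supplied by Lemma~\ref{lem:norm-monot} and the mutual commutation of the $\overline{D}$'s. The paper's proof is terser but identical in substance; your remarks about where the bookkeeping lies are accurate.
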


\begin{proof}
Equality~(\ref{eq:qn1}) follows from 
\begin{multline*}
\norm{q_{\mathbf{X}}\as\as B(m)}=\lim_{\rho\in\mathbb{N}}\norm{\overline{D}^{0}_{\rho}\cdot\mathfrak{M}^{u}_{X}\as\as B(m)}=\lim_{\rho\in\mathbb{N}}\lim_{\mu\in\mathbb{N}^{X}}\norm{\overline{D}^{1}_{\mu}\cdot\overline{D}^{0}_{\rho}\cdot\mathfrak{M}^{u}_{X}\mathfrak{T}\as B(m)}\\
=\lim_{\rho\in\mathbb{N}}\lim_{\mu\in\mathbb{N}^{X}}\lim_{\nu\in\mathbb{N}^{X\times\mathbb{N}}}\norm{\overline{D}^{0}_{\rho}\cdot\overline{D}^{1}_{\mu}\cdot\overline{D}^{2}_{\nu}\cdot m}=\lim_{(\rho,\mu,\nu)}\norm{\overline{D}^{2}_{\nu}\cdot\overline{D}^{1}_{\mu}\cdot\overline{D}^{0}_{\rho}\cdot m},
\end{multline*}
where the first three equalities hold by Lemmas~\ref{lem:nets} and~\ref{lem:quot-norm},
and the last equality follows from Lemmas~\ref{lem:net-1} and~\ref{lem:norm-monot}
together with the fact that $\overline{D}^{0}_{\rho}$, $\overline{D}^{1}_{\mu}$,
and $\overline{D}^{2}_{\nu}$ commute. The proof of~(\ref{eq:qn2})
is similar.
\end{proof}

\subsection{Reparametrizations}
\begin{defn}
A \emph{reparametrization} $r$ is a strictly increasing continuous
bijection $r\colon\mathbb{R}_{+}\to\mathbb{R}_{+}$. Given two reparametrizations
$r$ and $r'$, we say that~$r$ is \emph{faster} than~$r'$ (and
write $r\geq r'$), if $r(t)\geq r'(t)$ for all $t\in\mathbb{R}_{+}$.
\end{defn}

Every reparametrization $r$ gives rise to the following natural transformations:
\begin{alignat*}{2}
\tau_{r}\colon & \mathfrak{T}^{2}\Rightarrow\mathfrak{T}, &  & \tau_{r}B\colon f\mapsto\fun_{t}f(t,r(t)),\\
\sigma_{r}\colon & \mathfrak{T}^{2}\Rightarrow\mathfrak{T}^{2}I, &  & \sigma_{r}B\colon f\mapsto\fun_{t_{1}}\fun_{t_{2}}\fun_{s}f(t_{1},st_{2}+(1-s)r(t_{1})),\\
r^{*}\colon & \mathfrak{A}\Rightarrow\mathfrak{A}, & \quad & r^{*}B\colon\as_{t}f(t)\mapsto\as_{t}f(r(t)).
\end{alignat*}

\begin{lem}\label{lem:repar-homot}
 $r^{*}\simeq\id_{\mathfrak{A}}$.
\end{lem}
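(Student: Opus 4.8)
The plan is to construct an explicit homotopy $r^* \simeq \id_{\mathfrak A}$ by interpolating between the reparametrization $r$ and the identity reparametrization. Concretely, for each $s \in [0,1]$ set $r_s(t) \coloneqq (1-s)\,t + s\,r(t)$; since $r$ is an increasing homeomorphism of $\mathbb R_+$ fixing $0$, each $r_s$ is again such a homeomorphism (the convex combination of two strictly increasing functions tending to $\infty$ is strictly increasing and tends to $\infty$), with $r_0 = \id$ and $r_1 = r$. The key point is that the assignment $(t,s) \mapsto f(r_s(t))$ is jointly continuous and bounded whenever $f \in \mathfrak T B$, so it defines an element of $\mathfrak T I B$; passing to the quotient, $\as_t\bigl(\fun_s f(r_s(t))\bigr)$ lies in $\mathfrak A I B \cong I\mathfrak A B$ (using that $I$ and $\mathfrak A$ are good and $I = \C_{[0,1]}$ is tensor-type, so they commute up to the canonical iso, cf.\ Remark~\ref{rem:tt-commute}).

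The steps, in order, would be: first, define $\Gamma B\colon \mathfrak A B \to \mathfrak A I B$ by $\as_t f(t) \mapsto \as_t\bigl(\fun_s f(r_s(t))\bigr)$, and check it is a well-defined $*$-homomorphism — this reduces to checking that $g \in \mathfrak T_0 B$ implies $\fun_t\fun_s g(r_s(t)) \in \mathfrak T I B$ vanishes at infinity in the first variable, which is immediate since $r_s(t) \geq (1-s)t$ is bounded below by something tending to $\infty$ uniformly... actually one must be slightly careful here since as $s \to 1$ we only have $r_s(t) \ge \min(t, r(t))$, but $\min(t,r(t)) \to \infty$, so $g(r_s(t)) \to 0$ uniformly in $s$. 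Second, verify naturality in $B$ and that $\Gamma$ is labeled — since $\mathfrak A$, $\mathfrak A I$ are well-pointed? No: $\mathfrak A$ is not well-pointed (noted in the remark before Proposition~\ref{prop:unique-labeling}). Instead one checks the labeling condition directly from the explicit formula for $\kappa^{A,\mathfrak A}$ in Lemma~\ref{lem:fA-kappa}: $\kappa^{A,\mathfrak A}B$ sends $a \otimes \as_t f(t)$ to $\as_t(a\otimes f(t))$, and the reparametrization commutes with this. Third, compose $\Gamma$ with the canonical isomorphism $\mathfrak A I \Rightarrow I\mathfrak A$ (which is labeled, being built from $\omega_I$ and $\kappa^{I\mathbb C,\mathfrak A}$) to land in $I\mathfrak A B$, and observe $\ev_0 \circ \Gamma = \id_{\mathfrak A}$ and $\ev_1 \circ \Gamma = r^*$ by construction, so $\Gamma$ witnesses the homotopy $r^* \simeq \id_{\mathfrak A}$ in the sense of the definition of homotopy of natural transformations (with $F = G = \mathfrak A$).

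I expect the main obstacle to be the bookkeeping around the identification $\mathfrak A I \cong I \mathfrak A$ and confirming that $\Gamma$, after this identification, really is a morphism $\mathfrak A \Rightarrow \mathfrak A I$ in $\GEFC$ — i.e.\ that it is labeled. Since $\mathfrak A$ fails to be well-pointed, Proposition~\ref{prop:wp-lbl} does not apply, so one cannot invoke a cheap criterion; the labeling must be extracted from the concrete description in Lemma~\ref{lem:fA-kappa}, checking that for every $C^*$-algebra $A$ the square relating $\mathfrak O_A \mathfrak A \Rightarrow \mathfrak A \mathfrak O_A$ to $\mathfrak O_A(\mathfrak A I) \Rightarrow (\mathfrak A I)\mathfrak O_A$ commutes after applying $\Gamma$. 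This is a routine but fiddly diagram chase, made manageable by the fact that both labelings act by the evident formula $a \otimes (\text{function}) \mapsto (\text{function with } a \text{ inserted})$, which visibly commutes with reparametrizing the time variable. A minor secondary subtlety, already flagged above, is ensuring the uniform (in $s \in [0,1]$) vanishing at infinity so that the homotopy $\Gamma B$ actually takes values in $\mathfrak A I B$ rather than some larger quotient; this follows from $r_s(t) \ge \min(t, r(t)) \xrightarrow{t\to\infty} \infty$.
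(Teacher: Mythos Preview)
Your proposal is correct and uses essentially the same homotopy as the paper: the paper simply writes down the formula $\as_t f(t)\mapsto\as_t\fun_s f((1-s)r(t)+st)$, which is your $\Gamma$ with the roles of $s$ and $1-s$ swapped. One small point: your third step, composing with the canonical isomorphism $\mathfrak A I\Rightarrow I\mathfrak A$, is unnecessary and in fact misdirected---the definition of homotopy requires $\Gamma\in\GEFC(\mathfrak A,\mathfrak A I)$, and your $\Gamma B\colon\mathfrak A B\to\mathfrak A I B$ already lands there, so you can (and should) stop before that step.
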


\begin{proof}
This is witnessed by the homotopy given by the formula $\as_{t}f(t)\mapsto\as_{t}\fun_{s}f((1-s)r(t)+st)$.
\end{proof}

\begin{lem}\label{lem:new-reparam}
Let $D$ be a $C^{*}$-algebra,
and let $E\subset\mathfrak{N}^{u}_{\mathbf{X}}\mathfrak{A}^{2}D$
and $\widetilde{E}\subset\mathfrak{M}^{u}_{X}\mathfrak{T}^{2}D$ be
two separable $C^{*}$-subalgebras, where $E$ is the image of $\widetilde{E}$
under $q_{\mathbf{X}}\as\as D$. Then there is a reparametrization
$r_{0}$ such that the following inequalities hold
for all $r\geq r_{0}$ and all $m\in\widetilde{E}$:
\begin{alignat}{1}
\norm{(q_{\mathbf{X}}\as\as D)(m)} & \geq\norm{(q_{\mathbf{X}}(\as\as I\circ\sigma_{r})D)(m)},\label{eq:est1}\\
\norm{(q_{\mathbf{X}}\as\as D)(m)} & \geq\norm{(q_{\mathbf{X}}(\as\circ\tau_{r})D)(m)}.\label{eq:est2}
\end{alignat}
\end{lem}

\begin{proof}
Let $\left\{ m^{k}\right\} ^{\infty}_{k=1}\subset\widetilde{E}$ be
a dense subset of~$\widetilde{E}$. Proposition~\ref{prop:quot-norms}
implies that for all $n\in\mathbb{N}$, there exist $\rho_{n}\in\mathbb{N}$,
$\mu_{n}\in\mathbb{N}^{X}$, and $\nu_{n}\in\mathbb{N}^{X\times\mathbb{N}}$
such that
\begin{alignat}{2}
 &  &  & k=1,\dots,n;\quad\rho\geq\rho_{n};\quad\mu\geq\mu_{n};\quad\nu\geq\nu_{n}\nonumber \\
\Longrightarrow &  & ~~ & \sup_{t_{1},t_{2}}\norm{\mat_{x,y}\overline{\chi}^{0}_{\rho}(x)\overline{\chi}^{1}_{\mu(x)}(t_{1})\overline{\chi}^{2}_{\nu(x)}(t_{1},t_{2})m^{k}_{x,y}(t_{1},t_{2})}\leq\norm{(q_{\mathbf{X}}\as\as D)(m^{k})}+\frac{1}{n}.\label{eq:41}
\end{alignat}
Furthermore, we may assume that $\rho_{n}$, $\mu_{n}$, and $\nu_{n}$
are increasing in~$n$.

Enumerate the elements of $X\eqqcolon\{x_{j}\}^{\infty}_{j=1}$, and
find a sequence of reparametrizations $\{R_{N}\}^{\infty}_{N=1}$,
increasing in $N$, such that
\begin{alignat}{2}
l,j=1,\dots,N;\quad t_{1}\geq0;\quad t_{2}\geq R_{N}(t_{1}) & \quad\Longrightarrow\quad & \overline{\chi}^{2}_{\nu_{l}(x_{j})}(t_{1},t_{2})=1.\label{eq:42}
\end{alignat}
Set $r_{0}(N)\coloneqq R_{N}(N)$ for $N\in\mathbb{Z}_{+}$, define
the reparametrization $t\mapsto r_{0}(t)$ by linear interpolation,
and fix some $r\geq r_{0}$.

For $k,m\in\mathbb{Z}_{+}$, set $n\coloneqq\max(k,m)+1$ and assume,
without loss of generality, that
\begin{equation}
\mu_{m}(x_{k})\geq\max(k,m)+1=n.\label{eq:h2}
\end{equation}
 Note that
\begin{alignat}{2}
t_{1}\geq n-1;\quad t_{2}\geq r(t_{1}+1) & \quad\Longrightarrow\quad & \overline{\chi}^{2}_{\nu_{m}(x_{k})}(t_{1},t_{2})=1.\label{eq:h1}
\end{alignat}
Indeed, if $t_{1}\geq n-1$ and $t_{2}\geq r(t_{1}+1)$, then $t_{2}\geq r(t_{1}+1)\geq r_{0}(t_{1}+1)\geq r_{0}([t_{1}]+1)=R_{[t_{1}]+1}([t_{1}]+1)\geq R_{n}(t_{1})$,
and hence, $\overline{\chi}^{2}_{\nu_{m}(x_{k})}(t_{1},t_{2})=1$
by~(\ref{eq:42}).

It follows from~(\ref{eq:h1}) and~(\ref{eq:h2}) that for all $n\in\mathbb{Z}_{+}$,
$x\in X$, and $t_{1}\geq0$, we have
\begin{alignat}{4}
 &  & t_{2}\geq r(t_{1}+1) &  & \quad\Longrightarrow\quad &  & \overline{\chi}^{1}_{\mu_{n}(x)}(t_{1})\overline{\chi}^{2}_{\nu_{n}(x)}(t_{1},t_{2})=\overline{\chi}^{1}_{\mu_{n}(x)}(t_{1}).\label{eq:chi12}
\end{alignat}
To prove~(\ref{eq:est1}), it suffices to show that the following
inequalities hold for all $k\in\mathbb{N}$:
\begin{alignat}{2}
 &  &  & \norm{(q_{\mathbf{X}}\as\as D)(m^{k})}\nonumber \\
= & \thinspace & ~ & \mymakebox{2em}{\lim_{n}}\mymakebox{4em}{\sup_{t_{1},t_{2}}}\norm{\mat_{x,y}\overline{\chi}^{0}_{\rho_{n}}(x)\overline{\chi}^{1}_{\mu_{n}(x)}(t_{1})\overline{\chi}^{2}_{\nu_{n}(x)}(t_{1},t_{2})m^{k}_{x,y}(t_{1},t_{2})}\label{eq:ch1}\\
= &  &  & \mymakebox{2em}{\lim_{n}}\mymakebox{4em}{\sup_{t_{1},t_{2},s}}\norm{\mat_{x,y}\overline{\chi}^{0}_{\rho_{n}}(x)\overline{\chi}^{1}_{\mu_{n}(x)}(t_{1})\overline{\chi}^{2}_{\nu_{n}(x)}(t_{1},st_{2}+(1-s)r(t_{1}))m^{k}_{x,y}(t_{1},st_{2}+(1-s)r(t_{1}))}\label{eq:ch2}\\
\geq &  &  & \mymakebox{2em}{\lim_{n}}\mymakebox{4em}{\sup_{\substack{s,~t_{1},\\
t_{2}\geq r(t_{1}+1)
}
}}\norm{\mat_{x,y}\overline{\chi}^{0}_{\rho_{n}}(x)\overline{\chi}^{1}_{\mu_{n}(x)}(t_{1})m^{k}_{x,y}(t_{1},st_{2}+(1-s)r(t_{1}))}\label{eq:ch3}\\
= &  &  & \mymakebox{2em}{\lim_{n}}\mymakebox{4em}{\sup_{\substack{s,~t_{1},\\
t_{2}\geq r(t_{1}+1)
}
}}\norm{\mat_{x,y}\overline{\chi}^{0}_{\rho_{n}}(x)\overline{\chi}^{1}_{\mu_{n}(x)}(t_{1})\overline{\chi}^{2}_{\nu_{n}(x)}(t_{1},t_{2})m^{k}_{x,y}(t_{1},st_{2}+(1-s)r(t_{1}))}\label{eq:ch4}\\
\geq &  &  & \mymakebox{2em}{\lim_{\rho,\mu,\nu}}\mymakebox{4em}{\sup_{\substack{s,~t_{1},\\
t_{2}\geq r(t_{1}+1)
}
}}\norm{\mat_{x,y}\overline{\chi}^{0}_{\rho}(x)\overline{\chi}^{1}_{\mu(x)}(t_{1})\overline{\chi}^{2}_{\nu(x)}(t_{1},t_{2})m^{k}_{x,y}(t_{1},st_{2}+(1-s)r(t_{1}))}\label{eq:ch5}\\
= &  &  & \mymakebox{2em}{\lim_{\substack{\rho,\mu,\\
\hspc{-2}\nu\geq r(\cdot+1)+1
}
}}\mymakebox{4em}{\sup_{t_{1},t_{2},s}}\norm{\mat_{x,y}\overline{\chi}^{0}_{\rho}(x)\overline{\chi}^{1}_{\mu(x)}(t_{1})\overline{\chi}^{2}_{\nu(x)}(t_{1},t_{2})m^{k}_{x,y}(t_{1},st_{2}+(1-s)r(t_{1}))}\label{eq:ch6}\\
= &  &  & \mymakebox{2em}{\lim_{\substack{\rho,\mu,\\
\hspc{-2}\nu\geq r(\cdot+1)+1
}
}}\mymakebox{4em}{\sup_{t_{1},t_{2}}}\norm{\mat_{x,y}\overline{\chi}^{0}_{\rho}(x)\overline{\chi}^{1}_{\mu(x)}(t_{1})\overline{\chi}^{2}_{\nu(x)}(t_{1},t_{2})\fun_{s}m^{k}_{x,y}(t_{1},st_{2}+(1-s)r(t_{1}))}\label{eq:ch8}\\
= &  &  & \norm{(q_{\mathbf{X}}(\as\as I\circ\sigma_{r})D)(m^{k})}.\label{eq:ch7}
\end{alignat}
To see this, note first that the limits considered above exist since
the corresponding nets are decreasing by Lemma~\ref{lem:norm-monot}
and bounded from below by zero. Equality (\ref{eq:ch1}) follows from~(\ref{eq:41});
(\ref{eq:ch2}) is obvious; (\ref{eq:ch3}) follows from the implication
\begin{equation}
\nu(x,\cdot)\geq r(\cdot+1)+1;\quad t_{2}\leq r(t_{1}+1)\implies~\overline{\chi}^{2}_{\nu(x)}(t_{1},t_{2})=0,\label{eq:impl}
\end{equation}
which holds for all $x\in X$ by the definition of $\nu$; (\ref{eq:ch8})
follows from Lemma~\ref{lem:norm-eval}; (\ref{eq:ch4}) follows
from~(\ref{eq:chi12}); (\ref{eq:ch5}) holds by monotonicity of
the net; (\ref{eq:ch6}) follows from~(\ref{eq:impl}), and finally,
(\ref{eq:ch7}) follows from Proposition~\ref{prop:quot-norms}. 

The proof of~(\ref{eq:est2}) is similar: consider inequalities~(\ref{eq:ch1})--(\ref{eq:ch3})
with $s=0$; discard $t_{2}$; pass from the subnet $(\rho_{n},\mu_{n})$
to the whole net $(\rho,\mu)$; and apply Proposition~\ref{prop:quot-norms}
to obtain the desired estimate.
\end{proof}

\subsection{\label{subsec:proof-colim-is-redundant}Proof of the technical theorem}
\begin{lem}\label{lem:param-zho}
Let $\psi\colon A\to B$ and
$\psi'\colon A\to B'$ be $*$-homomorphisms such that $\psi$ is
surjective and $\norm{\psi'(a)}\leq\norm{\psi(a)}$ for all $a\in A$.
Then the map $\xi\colon B\to B'\colon\psi(a)\mapsto\psi'(a)$ is a
well-defined $*$-homomorphism such that $\xi\circ\psi=\psi'$.
\end{lem}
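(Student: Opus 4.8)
The plan is a routine diagram chase; the only substantive point is well-definedness, and everything else is purely formal. First I would establish that $\xi$ is well-defined. Since $\psi$ is surjective, every element of $B$ equals $\psi(a)$ for some $a \in A$, so it suffices to check that $\psi(a) = \psi(a')$ forces $\psi'(a) = \psi'(a')$. By linearity this amounts to the inclusion $\ker\psi \subseteq \ker\psi'$, which is immediate from the hypothesis: if $\psi(a) = 0$ then $\norm{\psi'(a)} = \norm{\psi(a)} = 0$, hence $\psi'(a) = 0$. Therefore $\xi\colon B \to B'$, $\psi(a) \mapsto \psi'(a)$, is a well-defined map, and the identity $\xi \circ \psi = \psi'$ holds by construction.

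Next I would check that $\xi$ is a $*$-homomorphism. Given $b_1, b_2 \in B$, pick $a_1, a_2 \in A$ with $b_i = \psi(a_i)$, which is possible precisely because $\psi$ is surjective. Using that $\psi$ and $\psi'$ are $*$-homomorphisms and that $\xi\psi=\psi'$, one reads off $\xi(b_1 + \lambda b_2) = \xi(\psi(a_1+\lambda a_2)) = \psi'(a_1 + \lambda a_2) = \xi(b_1) + \lambda \xi(b_2)$, $\xi(b_1 b_2) = \psi'(a_1 a_2) = \xi(b_1)\xi(b_2)$, and $\xi(b_1^*) = \psi'(a_1^*) = \xi(b_1)^*$. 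No norm estimate enters this step; continuity of $\xi$ then comes for free, since a $*$-homomorphism between $C^{*}$-algebras is automatically contractive.

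I do not anticipate any real obstacle: the statement is soft, and the norm equality is used here only to guarantee $\ker\psi \subseteq \ker\psi'$. (The stronger conclusion that $\xi$ is isometric would invoke the full equality $\norm{\psi(a)} = \norm{\psi'(a)}$, but that is not part of the claim.)
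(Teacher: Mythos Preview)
Your proof is correct and is precisely the standard argument; the paper itself simply writes ``Straightforward'' and gives no details, so you have faithfully supplied what the author left implicit.
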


\begin{proof}
Straightforward.
\end{proof}

\begin{lem}\label{lem:sep-decr}
Let $D$ be a $C^{*}$-algebra, and let $E\subset\mathfrak{N}^{u}_{\mathbf{X}}\mathfrak{A}^{2}D$
be a separable $C^{*}$-subalgebra. Then there is a reparametrization~$r_{0}$
such that: 
\begin{enumerate}[label=\textup{(\roman*)}]
\item\label{enu:sep-descr-wd}the following $*$-homomorphism is well-defined
for all $r\geq r_{0}$:
\begin{alignat*}{1}
\psi\colon & E\to\mathfrak{N}^{u}_{\mathbf{X}}\mathfrak{A}D\colon\qmat_{x,y}\as_{t_{1}}\as_{t_{2}}m_{x,y}(t_{1},t_{2})\mapsto\qmat_{x,y}\as_{t}m_{x,y}(r^{-1}(t),t).
\end{alignat*}
\item\label{enu:sep-descr-homot}the $*$-homomorphism $\mathfrak{N}^{u}_{\mathbf{X}}\alpha\mathfrak{A}D\circ\psi$
is $\mathfrak{N}^{u}_{\mathbf{X}}\mathfrak{A}^{2}$-homotopic to the
inclusion $E\xrightarrow{\incl}\mathfrak{N}^{u}_{\mathbf{X}}\mathfrak{A}^{2}D$.
\end{enumerate}
\end{lem}

\begin{proof}
Let $r_{0}$ be the reparametrization given by Lemma~\ref{lem:new-reparam}.
Using inequalities~(\ref{eq:est1}) and~(\ref{eq:est2}) and Lemma~\ref{lem:param-zho},
we conclude that the following maps are well-defined $*$-homomorphisms
for every $r\geq r_{0}$:
\begin{alignat*}{4}
 &  & \psi'\colon & E\to & \mathfrak{N}^{u}_{\mathbf{X}}\mathfrak{A}D\colon &  &  & \qmat_{x,y}\as_{t_{1}}\as_{t_{2}}m_{x,y}(t_{1},t_{2})\mapsto\qmat_{x,y}\as_{t}m_{x,y}(t,r(t)),\\
 &  & \Phi\colon & E\to & \mathfrak{N}^{u}_{\mathbf{X}}\mathfrak{A}^{2}ID\colon &  &  & \qmat_{x,y}\as_{t_{1}}\as_{t_{2}}m_{x,y}(t_{1},t_{2})\\
 &  &  &  & \mapsto~ &  &  & \qmat_{x,y}\as_{t_{1}}\as_{t_{2}}\fun_{s\in[0,1]}m_{x,y}(t_{1},st_{2}+(1-s)r(t_{1})).
\end{alignat*}
By construction, $\Phi$ is a $\mathfrak{N}^{u}_{\mathbf{X}}\mathfrak{A}^{2}$-homotopy
connecting $\mathfrak{N}^{u}_{\mathbf{X}}\mathfrak{A}\alpha D\circ\psi'$
with the inclusion $E\xrightarrow{\incl}\mathfrak{N}^{u}_{\mathbf{X}}\mathfrak{A}^{2}D$.
The equality $\psi=\mathfrak{N}^{u}_{\mathbf{X}}(r^{-1})^{*}D\circ\psi'$
implies that $\psi$ is also well-defined, which proves statement~\ref{enu:sep-descr-wd}.
Furthermore, Lemma~\ref{lem:repar-homot} gives 
\begin{equation}
\psi\simeq_{\mathfrak{N}^{u}_{\mathbf{X}}\mathfrak{A}}\psi'.\label{eq:psipsi}
\end{equation}
Statement~\ref{enu:sep-descr-homot} follows from
\[
\mathfrak{N}^{u}_{\mathbf{X}}\alpha\mathfrak{A}D\circ\psi\simeq_{\mathfrak{N}^{u}_{\mathbf{X}}\mathfrak{A}^{2}}\mathfrak{N}^{u}_{\mathbf{X}}\alpha\mathfrak{A}D\circ\psi'\simeq_{\mathfrak{N}^{u}_{\mathbf{X}}\mathfrak{A}^{2}}\mathfrak{N}^{u}_{\mathbf{X}}\mathfrak{A}\alpha D\circ\psi'\simeq_{\mathfrak{N}^{u}_{\mathbf{X}}\mathfrak{A}^{2}}\incl,
\]
where the first relation holds by~(\ref{eq:psipsi}) and Lemma~\ref{lem:weak-rangle},
the second follows from Lemmas~\ref{lem:alpha-A} and~\ref{lem:weak-rangle},
and the last is witnessed by~$\Phi$.
\end{proof}

\begin{lem}\label{lem:some-technicality}
Let $D_{1}$ and $D_{2}$ be $C^{*}$-algebras,
and let $\varepsilon\colon D_{1}\to D_{2}$ be a $*$-homomorphism.
Let $E_{1}\subset\mathfrak{N}^{u}_{\mathbf{X}}\mathfrak{A}^{2}D_{1}$,
$E_{2}\subset\mathfrak{N}^{u}_{\mathbf{X}}\mathfrak{A}^{2}D_{2}$,
$C_{2}\subset\mathfrak{N}^{u}_{\mathbf{X}}\mathfrak{A}D_{2}$ be separable
$C^{*}$-subalgebras, and let $\xi\colon C_{2}\to E_{2}$ be a $*$-homomorphism
such that the left-hand diagram below commutes:
\begin{alignat*}{2}
\begin{tikzcd}[ampersand replacement=\&]
{E_{1}} \& {\mathfrak{N}_{\mathbf{X}}^{u}\mathfrak{A}^{2}D_{1}} \&  \& {\mathfrak{N}_{\mathbf{X}}^{u}\mathfrak{A}D_{1}} \\
{E_{2}} \& {\mathfrak{N}_{\mathbf{X}}^{u}\mathfrak{A}^{2}D_{2}} \&  \& {\mathfrak{N}_{\mathbf{X}}^{u}\mathfrak{A}D_{2}} \\
{C_{2},}
\arrow["\incl", from=1-1, to=1-2]
\arrow["\mathfrak{N}_{\mathbf{X}}^{u}\mathfrak{A}^{2}\varepsilon|_{E_{1}}"', from=1-1, to=2-1]
\arrow["\mathfrak{N}_{\mathbf{X}}^{u}\mathfrak{A}^{2}\varepsilon", from=1-2, to=2-2]
\arrow["\mathfrak{N}_{\mathbf{X}}^{u}\alpha\mathfrak{A}D_{1}"', from=1-4, to=1-2]
\arrow["\mathfrak{N}_{\mathbf{X}}^{u}\mathfrak{A}\varepsilon", from=1-4, to=2-4]
\arrow["\incl", from=2-1, to=2-2]
\arrow["\mathfrak{N}_{\mathbf{X}}^{u}\alpha\mathfrak{A}D_{2}"', from=2-4, to=2-2]
\arrow["\xi", from=3-1, to=2-1]
\arrow["\incl"', curve={height=6pt}, from=3-1, to=2-4]
\end{tikzcd} & \qquad & \begin{tikzcd}[ampersand replacement=\&]
{E_{1}} \&  \& {\mathfrak{N}_{\mathbf{X}}^{u}\mathfrak{A}D_{1}} \\
{E_{2}} \&  \& {\mathfrak{N}_{\mathbf{X}}^{u}\mathfrak{A}D_{2}} \\
{C_{2},}
\arrow["\psi_{1}", from=1-1, to=1-3]
\arrow["\mathfrak{N}_{\mathbf{X}}^{u}\mathfrak{A}^{2}\varepsilon|_{E_{1}}"', from=1-1, to=2-1]
\arrow["\mathfrak{N}_{\mathbf{X}}^{u}\mathfrak{A}\varepsilon", from=1-3, to=2-3]
\arrow["\psi_{2}", from=2-1, to=2-3]
\arrow["\xi", from=3-1, to=2-1]
\arrow["\incl"', curve={height=6pt}, from=3-1, to=2-3]
\end{tikzcd}
\end{alignat*}
where $\incl$ denotes inclusions. Then there exist $*$-homomorphisms
$\psi_{1}$ and $\psi_{2}$ making the right-hand diagram commutative.
\end{lem}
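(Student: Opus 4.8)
My plan is to imitate the proof of Lemma~\ref{lem:sep-decr}, carrying it out for $E_1$ and $E_2$ \emph{simultaneously}, with one common reparametrization and carefully chosen separable liftings. First I would fix separable liftings $\widetilde E_1\subset\mathfrak{M}_{X}^{u}\mathfrak{T}^{2}D_1$ of $E_1$ and $\widetilde E_2'\subset\mathfrak{M}_{X}^{u}\mathfrak{T}^{2}D_2$ of $E_2$ along $q_{\mathbf{X}}\as\as$, and a separable lifting $\widetilde C_2\subset\mathfrak{M}_{X}^{u}\mathfrak{T}D_2$ of $C_2$ along $q_{\mathbf{X}}\as$ (these exist by separability of $E_1,E_2,C_2$), and then replace $\widetilde E_2'$ by
\[
\widetilde E_2\coloneqq C^{*}\!\bigl(\mathfrak{M}_{X}^{u}\mathfrak{T}^{2}\varepsilon(\widetilde E_1)\cup\mathfrak{M}_{X}^{u}\const\mathfrak{T}D_2(\widetilde C_2)\cup\widetilde E_2'\bigr)\subset\mathfrak{M}_{X}^{u}\mathfrak{T}^{2}D_2 .
\]
Using naturality of $q_{\mathbf{X}}\as\as$ in the coefficient algebra, the identity $q_{\mathbf{X}}\as\as D_2\circ\mathfrak{M}_{X}^{u}\const\mathfrak{T}D_2=\mathfrak{N}_{\mathbf{X}}^{u}\alpha\mathfrak{A}D_2\circ q_{\mathbf{X}}\as D_2$, and the commutativity of the left-hand diagram, one checks that $\widetilde E_2$ is again a separable lifting of $E_2$: it surjects onto $E_2$ because it contains $\widetilde E_2'$, and its image lies in $E_2$ because the images of all three generating sets do.

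Next I would apply Lemma~\ref{lem:new-reparam} to the pairs $(\widetilde E_1,E_1)$ and $(\widetilde E_2,E_2)$, obtaining reparametrizations $r_1,r_2$, and fix a reparametrization $r$ with $r\ge r_1$ and $r\ge r_2$ (e.g.\ $r\coloneqq r_1+r_2$). Since $q_{\mathbf{X}}\as\as D_i|_{\widetilde E_i}$ surjects onto $E_i$ and has the same norm as $q_{\mathbf{X}}(\as\circ\rho_{r})D_i|_{\widetilde E_i}$ by Lemma~\ref{lem:new-reparam}, Lemma~\ref{lem:param-zho} produces $*$-homomorphisms $\psi_i'\colon E_i\to\mathfrak{N}_{\mathbf{X}}^{u}\mathfrak{A}D_i$ satisfying $\psi_i'\circ q_{\mathbf{X}}\as\as D_i=q_{\mathbf{X}}(\as\circ\rho_{r})D_i$ on $\widetilde E_i$; as in the proof of Lemma~\ref{lem:sep-decr} I would then set $\psi_i\coloneqq\mathfrak{N}_{\mathbf{X}}^{u}(r^{-1})^{*}D_i\circ\psi_i'$, which is a $*$-homomorphism by Lemma~\ref{lem:repar-homot} and which on $\mat_{x,y}m_{x,y}\in\widetilde E_i$ is given by $\psi_i\bigl(\qmat_{x,y}\as_{t_1}\as_{t_2}m_{x,y}(t_1,t_2)\bigr)=\qmat_{x,y}\as_{t}m_{x,y}(r^{-1}(t),t)$. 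Commutativity of the right-hand square is then a one-line verification: $\mathfrak{M}_{X}^{u}\mathfrak{T}^{2}\varepsilon$ carries a representative in $\widetilde E_1$ of $e\in E_1$ to a representative in $\widetilde E_2$ of $\mathfrak{N}_{\mathbf{X}}^{u}\mathfrak{A}^{2}\varepsilon(e)$, and the displayed formulas for $\psi_1$ and $\psi_2$ show that both composites send $e$ to $\qmat_{x,y}\as_{t}\varepsilon\bigl(m_{x,y}(r^{-1}(t),t)\bigr)$. Commutativity of the triangle is equally short: a lift in $\widetilde C_2$ of $c\in C_2$ is carried by $\mathfrak{M}_{X}^{u}\const\mathfrak{T}D_2$ to a lift in $\widetilde E_2$ of $\xi(c)=\mathfrak{N}_{\mathbf{X}}^{u}\alpha\mathfrak{A}D_2(c)$ that is constant in the outer $\mathfrak{T}$-coordinate, whence $\psi_2(\xi(c))=c$, which is precisely the value at $c$ of the inclusion $C_2\hookrightarrow\mathfrak{N}_{\mathbf{X}}^{u}\mathfrak{A}D_2$.

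The substance of the argument is in two bookkeeping choices. First, one must dominate the Lemma~\ref{lem:new-reparam}-thresholds of \emph{both} $E_1$ and $E_2$ by a \emph{single} reparametrization $r$, so that $\psi_1$ and $\psi_2$ arise from the same reparametrization and are therefore intertwined by $\varepsilon$; this is precisely what a naive, separate application of Lemma~\ref{lem:sep-decr} to each $E_i$ would not give. Second, one must enlarge the lifting of $E_2$ so that it already contains $\mathfrak{M}_{X}^{u}\mathfrak{T}^{2}\varepsilon(\widetilde E_1)$ and $\mathfrak{M}_{X}^{u}\const\mathfrak{T}D_2(\widetilde C_2)$, so that $\psi_2$ can be evaluated on $\mathfrak{N}_{\mathbf{X}}^{u}\mathfrak{A}^{2}\varepsilon(E_1)$ and on $\xi(C_2)$ via the convenient representatives above. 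A last point to watch is that the normalization $\psi_i=\mathfrak{N}_{\mathbf{X}}^{u}(r^{-1})^{*}D_i\circ\psi_i'$ is essential: with the unnormalized $\psi_i'$ the triangle would commute only up to the homotopy $(r^{-1})^{*}\simeq\id_{\mathfrak{A}}$ of Lemma~\ref{lem:repar-homot}, whereas Lemma~\ref{lem:some-technicality} demands strict commutativity. Everything else is the routine verification indicated above.
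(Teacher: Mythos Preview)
Your proposal is correct and follows essentially the same approach as the paper's proof: the paper simply states that for a sufficiently fast reparametrization $r$ the maps $\psi_j(\qmat_{x,y}\as_{t_1}\as_{t_2}m_{x,y}(t_1,t_2))=\qmat_{x,y}\as_t m_{x,y}(r^{-1}(t),t)$ are $*$-homomorphisms (via Lemma~\ref{lem:sep-decr}) and declares the commutativity check ``straightforward''. You have unpacked exactly the bookkeeping that is hidden behind that word --- the need for a single common $r$ dominating both thresholds, and the enlargement of the lifting $\widetilde E_2$ to contain $\mathfrak{M}_X^u\mathfrak{T}^2\varepsilon(\widetilde E_1)$ and $\mathfrak{M}_X^u\const\mathfrak{T}D_2(\widetilde C_2)$ so that the explicit formula for $\psi_2$ applies to the relevant elements --- and your observation that the triangle demands the normalized $\psi_i$ rather than $\psi_i'$ is a genuine point the paper leaves implicit.
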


\begin{proof}
It follows from Lemma~\ref{lem:sep-decr} that for a sufficiently
fast reparametrization $r$, the maps
\begin{equation}
\psi_{j}\colon E_{j}\to\mathfrak{N}^{u}_{\mathbf{X}}\mathfrak{A}D_{j}\colon\qmat_{x,y}\as_{t_{1}}\as_{t_{2}}m_{x,y}(t_{1},t_{2})\mapsto\qmat_{x,y}\as_{t}m_{x,y}(r^{-1}(t),t)\label{eq:psi-1}
\end{equation}
for $j=1,2$, are $*$-homomorphisms. It is straightforward to check
that $\psi_{1}$ and $\psi_{2}$ make the right-hand diagram commutative.
\end{proof}

\begin{prop}\label{prop:colim-redund}
The following map is an isomorphism of monoids
for all $n\in\mathbb{N}$ whenever $A$ is separable:
\begin{equation}
\left[A,\mathfrak{N}^{u}_{\mathbf{X}}\mathfrak{A}^{n}\uK,B\right]\xrightarrow{\left\langle \mathfrak{N}^{u}_{\mathbf{X}}\alpha\mathfrak{A}^{n}\right\rangle }\left[A,\mathfrak{N}^{u}_{\mathbf{X}}\mathfrak{A}^{n+1}\uK,B\right].\label{eq:colim-in-general}
\end{equation}
\end{prop}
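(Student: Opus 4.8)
Since the map~\eqref{eq:colim-in-general} is $\langle\mathfrak{N}_{\mathbf{X}}^{u}\alpha\mathfrak{A}^{n}\rangle$, it is a morphism of commutative monoids by Theorem~\ref{thm:58}, so it remains to prove bijectivity. The plan is to absorb $\mathfrak{A}^{n}$ into a single coefficient algebra: put $D\coloneqq\mathfrak{A}^{n-1}\uK B$ (with $\mathfrak{A}^{0}\coloneqq\Id$), so that $\mathfrak{A}^{n}\uK B=\mathfrak{A}D$, $\mathfrak{A}^{n+1}\uK B=\mathfrak{A}^{2}D$, and the component of $\mathfrak{N}_{\mathbf{X}}^{u}\alpha\mathfrak{A}^{n}$ at $\uK B$ equals the component of $\mathfrak{N}_{\mathbf{X}}^{u}\alpha\mathfrak{A}D$ at $D$. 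This places us in the setting of Lemmas~\ref{lem:sep-decr} and~\ref{lem:some-technicality}, which are stated for an arbitrary coefficient $C^{*}$-algebra. One preliminary observation is needed: a homotopy landing in $\mathfrak{N}_{\mathbf{X}}^{u}\mathfrak{A}^{2}I(\mathfrak{A}^{n-1}\uK B)$ pushes forward to one in $\mathfrak{N}_{\mathbf{X}}^{u}\mathfrak{A}^{n+1}\uK IB$, compatibly with the evaluations $\ev_{0},\ev_{1}$, via the natural transformation $I\mathfrak{A}\Rightarrow\mathfrak{A}I$ and the commutation $\uK I\cong I\uK$; conversely the cylinder in $\simeq_{\mathfrak{N}_{\mathbf{X}}^{u}\mathfrak{A}^{n+1}\uK}$ (and in $\simeq_{\mathfrak{N}_{\mathbf{X}}^{u}\mathfrak{A}^{n}\uK}$) may be regrouped so that Lemma~\ref{lem:some-technicality} applies. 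For $n=1$ this translation is vacuous, and the argument is the verbatim analogue of \cite[Theorem~2.16]{GHT}.

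\emph{Surjectivity.} Let $\varphi\colon A\to\mathfrak{N}_{\mathbf{X}}^{u}\mathfrak{A}^{2}D$ represent an arbitrary class on the right. As $A$ is separable, $E\coloneqq\overline{\varphi(A)}$ is a separable $C^{*}$-subalgebra of $\mathfrak{N}_{\mathbf{X}}^{u}\mathfrak{A}^{2}D$, so Lemma~\ref{lem:sep-decr} supplies a $*$-homomorphism $\psi\colon E\to\mathfrak{N}_{\mathbf{X}}^{u}\mathfrak{A}D$ together with an $\mathfrak{N}_{\mathbf{X}}^{u}\mathfrak{A}^{2}$-homotopy from $\mathfrak{N}_{\mathbf{X}}^{u}\alpha\mathfrak{A}D\circ\psi$ to the inclusion $E\hookrightarrow\mathfrak{N}_{\mathbf{X}}^{u}\mathfrak{A}^{2}D$. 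Then $\psi\circ\varphi\colon A\to\mathfrak{N}_{\mathbf{X}}^{u}\mathfrak{A}D=\mathfrak{N}_{\mathbf{X}}^{u}\mathfrak{A}^{n}\uK B$ is a $*$-homomorphism, and composing the homotopy with $\varphi$ and translating cylinders as in the first paragraph yields $\mathfrak{N}_{\mathbf{X}}^{u}\alpha\mathfrak{A}^{n}\uK B\circ(\psi\circ\varphi)\simeq_{\mathfrak{N}_{\mathbf{X}}^{u}\mathfrak{A}^{n+1}\uK}\varphi$; hence $\langle\mathfrak{N}_{\mathbf{X}}^{u}\alpha\mathfrak{A}^{n}\rangle[\psi\circ\varphi]=[\varphi]$.

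\emph{Injectivity.} Suppose $\psi_{0},\psi_{1}\colon A\to\mathfrak{N}_{\mathbf{X}}^{u}\mathfrak{A}D$ satisfy $\mathfrak{N}_{\mathbf{X}}^{u}\alpha\mathfrak{A}D\circ\psi_{0}\simeq_{\mathfrak{N}_{\mathbf{X}}^{u}\mathfrak{A}^{n+1}\uK}\mathfrak{N}_{\mathbf{X}}^{u}\alpha\mathfrak{A}D\circ\psi_{1}$, witnessed by a $*$-homomorphism $\Phi\colon A\to\mathfrak{N}_{\mathbf{X}}^{u}\mathfrak{A}^{2}(\mathfrak{A}^{n-1}\uK IB)$ (${}=\mathfrak{N}_{\mathbf{X}}^{u}\mathfrak{A}^{n+1}\uK IB$) with $\mathfrak{N}_{\mathbf{X}}^{u}\mathfrak{A}^{2}(\mathfrak{A}^{n-1}\uK\ev_{j}B)\circ\Phi=\mathfrak{N}_{\mathbf{X}}^{u}\alpha\mathfrak{A}D\circ\psi_{j}$. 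Apply Lemma~\ref{lem:some-technicality} with $D_{1}\coloneqq\mathfrak{A}^{n-1}\uK IB$, $D_{2}\coloneqq D$, $\varepsilon\coloneqq\mathfrak{A}^{n-1}\uK\ev_{j}B$ (for $j=0$ and $j=1$, with a single reparametrization fast enough for both), $E_{1}\coloneqq\overline{\Phi(A)}$, $C_{2}$ the separable $C^{*}$-algebra generated by $\psi_{0}(A)\cup\psi_{1}(A)$, $E_{2}$ the separable $C^{*}$-algebra generated by the images $\mathfrak{N}_{\mathbf{X}}^{u}\mathfrak{A}^{2}\varepsilon(E_{1})$, and $\xi\coloneqq\mathfrak{N}_{\mathbf{X}}^{u}\alpha\mathfrak{A}D|_{C_{2}}$; the hypotheses of that lemma reduce to naturality of $\alpha$ and the displayed identity. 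The output $\psi_{1}\colon E_{1}\to\mathfrak{N}_{\mathbf{X}}^{u}\mathfrak{A}D_{1}=\mathfrak{N}_{\mathbf{X}}^{u}\mathfrak{A}^{n}\uK IB$, precomposed with $\Phi$, is a $*$-homomorphism $A\to\mathfrak{N}_{\mathbf{X}}^{u}\mathfrak{A}^{n}\uK IB$ whose evaluations at $\ev_{0}$ and $\ev_{1}$ equal $\psi_{0}$ and $\psi_{1}$ by the right-hand diagram of Lemma~\ref{lem:some-technicality} together with the triangle $\psi_{2}\circ\xi=\incl$; hence $[\psi_{0}]=[\psi_{1}]$ in $[A,\mathfrak{N}_{\mathbf{X}}^{u}\mathfrak{A}^{n}\uK,B]$.

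The remaining verifications — that the maps above are genuine $*$-homomorphisms (the only place separability is spent, through the reparametrization Lemmas~\ref{lem:new-reparam}--\ref{lem:sep-decr}) and that the diagrams feeding Lemma~\ref{lem:some-technicality} commute — are routine. I expect the principal obstacle to be organizational rather than conceptual: one must keep the three nested homotopy relations ($\simeq_{\mathfrak{N}_{\mathbf{X}}^{u}\mathfrak{A}^{n+1}\uK}$, the $\mathfrak{N}_{\mathbf{X}}^{u}\mathfrak{A}^{2}$-homotopies output by the lemmas, and $\simeq_{\mathfrak{N}_{\mathbf{X}}^{u}\mathfrak{A}^{n}\uK}$) carefully apart and invoke the cylinder translation $I\mathfrak{A}\Rightarrow\mathfrak{A}I$ (and $\uK I\cong I\uK$), together with its compatibility with $\ev_{0},\ev_{1}$, consistently at each step, since for $n\geq2$ the cylinder does not occupy the innermost slot in which the reparametrization lemmas naturally operate.
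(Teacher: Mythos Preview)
Your proposal is correct and follows essentially the same route as the paper: reduce to $D=\mathfrak{A}^{n-1}\uK B$, use Lemma~\ref{lem:sep-decr} for surjectivity and Lemma~\ref{lem:some-technicality} for injectivity, with the cylinder translation handled via Lemma~\ref{lem:stronger-homot} (which the paper invokes with a bare ``cf.''). The only packaging difference is in the injectivity step: instead of applying Lemma~\ref{lem:some-technicality} separately for $\ev_{0}$ and $\ev_{1}$ with a common sufficiently fast reparametrization, the paper bundles both evaluations into a single application via $e_{01}\colon IB\to B\oplus B$ and $\varphi_{01}\colon A\to\mathfrak{N}_{\mathbf{X}}^{u}\mathfrak{A}^{n}\uK(B\oplus B)$, taking $D_{2}=\mathfrak{A}^{n-1}\uK(B\oplus B)$, $C_{2}=\varphi_{01}(A)$, and $\varepsilon=\mathfrak{A}^{n-1}\uK e_{01}$. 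This is cleaner because it makes it automatic that the \emph{same} $\psi_{1}\colon E_{1}\to\mathfrak{N}_{\mathbf{X}}^{u}\mathfrak{A}D_{1}$ intertwines both evaluations, whereas in your version you must observe (as you do) that $\psi_{1}$ in the proof of Lemma~\ref{lem:some-technicality} depends only on the reparametrization~$r$ and not on $\varepsilon$, $E_{2}$, $C_{2}$, or $\xi$, so a single fast enough $r$ serves both. Either way the argument goes through.
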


\begin{proof}
That $\left\langle \mathfrak{N}^{u}_{\mathbf{X}}\alpha\mathfrak{A}^{n}\right\rangle $
is a morphism of monoids follows from Theorem~\ref{thm:58}.

Let $\varphi\colon A\to\mathfrak{N}^{u}_{\mathbf{X}}\mathfrak{A}^{n+1}\uK B$
be a $*$-homomorphism, and set $D\coloneqq\mathfrak{A}^{n-1}\uK B$.
According to Lemma~\ref{lem:sep-decr}, there exists a $*$-homomorphism
$\psi\colon\varphi(A)\to\mathfrak{N}^{u}_{\mathbf{X}}\mathfrak{A}D$
making the left-hand diagram below commutative up to $\mathfrak{N}^{u}_{\mathbf{X}}\mathfrak{A}^{2}$-homotopy:
\begin{alignat*}{2}
\begin{tikzcd}[column sep=tiny, ampersand replacement=\&]  
{A} \&  \& {\varphi(A)} \&  \& {\mathfrak{N}_{\mathbf{X}}^{u}\mathfrak{A}^{2}D} \\
 \& {\hspc 1} \&  \& {\mathfrak{N}_{\mathbf{X}}^{u}\mathfrak{A}D}
\arrow["\varphi", from=1-1, to=1-3]
\arrow["\incl", from=1-3, to=1-5]
\arrow["\psi"', from=1-3, to=2-4]
\arrow["\mathfrak{N}_{\mathbf{X}}^{u}\alpha\mathfrak{A}D"', from=2-4, to=1-5]
\end{tikzcd} & \qquad & \begin{tikzcd}[column sep=tiny, ampersand replacement=\&]  
{A} \&  \& {\varphi(A)} \&  \& {\mathfrak{N}_{\mathbf{X}}^{u}\mathfrak{A}^{n+1}\uK B} \\
 \& {\hspc 1} \&  \& {\mathfrak{N}_{\mathbf{X}}^{u}\mathfrak{A}^{n}\uK B}
\arrow["\varphi", from=1-1, to=1-3]
\arrow["\incl", from=1-3, to=1-5]
\arrow["\psi"', from=1-3, to=2-4]
\arrow["\mathfrak{N}_{\mathbf{X}}^{u}\alpha\mathfrak{A}^{n}\uK B"', from=2-4, to=1-5]
\end{tikzcd}
\end{alignat*}
Then the right-hand diagram commutes up to $\mathfrak{N}^{u}_{\mathbf{X}}\mathfrak{A}^{n+1}$-homotopy
(cf.~Lemma~\ref{lem:stronger-homot}), which proves that the map~(\ref{eq:colim-in-general})
is surjective.

Let now $\varphi_{0},\varphi_{1}\colon A\to\mathfrak{N}^{u}_{\mathbf{X}}\mathfrak{A}^{n}\uK B$
be two $*$-homomorphisms such that there is a homotopy $\Phi\colon A\to\mathfrak{N}^{u}_{\mathbf{X}}\mathfrak{A}^{n+1}\uK IB$
connecting $\mathfrak{N}^{u}_{\mathbf{X}}\alpha\mathfrak{A}^{n}\uK\circ\varphi_{0}$
with $\mathfrak{N}^{u}_{\mathbf{X}}\alpha\mathfrak{A}^{n}\uK\circ\varphi_{1}$.
Define the $*$-homomorphism
\begin{alignat*}{1}
e_{01}\colon & IB\to B\oplus B\colon f\mapsto(\ev_{0}B(f),\ev_{1}B(f)),\\
\varphi_{01}\colon & A\to\mathfrak{N}^{u}_{\mathbf{X}}\mathfrak{A}^{n}\uK B\oplus\mathfrak{N}^{u}_{\mathbf{X}}\mathfrak{A}^{n}\uK B\cong\mathfrak{N}^{u}_{\mathbf{X}}\mathfrak{A}^{n}\uK(B\oplus B)\colon a\mapsto(\varphi_{0}(a),\varphi_{1}(a)),
\end{alignat*}
and consider the diagram \[
\begin{tikzcd}[ampersand replacement=\&]
 \& {{\Phi(A)}} \&  \& {{\mathfrak{N}_{\mathbf{X}}^{u}\mathfrak{A}^{n+1}\uK IB}} \&  \& {{\mathfrak{N}_{\mathbf{X}}^{u}\mathfrak{A}^{n}\uK IB}} \\
{{A}} \& {{E}} \&  \& {{\mathfrak{N}_{\mathbf{X}}^{u}\mathfrak{A}^{n+1}\uK(B\oplus B)}} \& {{\hspc 3}} \& {{\mathfrak{N}_{\mathbf{X}}^{u}\mathfrak{A}^{n}\uK(B\oplus B)}} \\
 \& {{\varphi_{01}(A)}}
\arrow["\incl", from=1-2, to=1-4]
\arrow["\psi_{1}", curve={height=-30pt}, dashed, from=1-2, to=1-6]
\arrow["\mathfrak{N}_{\mathbf{X}}^{u}\mathfrak{A}^{n+1}\uK e_{01}|_{\Phi(A)}", from=1-2, to=2-2]
\arrow["\mathfrak{N}_{\mathbf{X}}^{u}\mathfrak{A}^{n+1}\uK e_{01}", from=1-4, to=2-4]
\arrow["\mathfrak{N}_{\mathbf{X}}^{u}\alpha\mathfrak{A}^{n}\uK IB"', from=1-6, to=1-4]
\arrow["\mathfrak{N}_{\mathbf{X}}^{u}\mathfrak{A}^{n}\uK e_{01}", from=1-6, to=2-6]
\arrow["\Phi", from=2-1, to=1-2]
\arrow["\varphi_{01}"', from=2-1, to=3-2]
\arrow["\incl", from=2-2, to=2-4]
\arrow["\psi_{2}"', curve={height=12pt}, dashed, from=2-2, to=2-6]
\arrow["\mathfrak{N}_{\mathbf{X}}^{u}\alpha\mathfrak{A}^{n}\uK(B\oplus B)"', from=2-6, to=2-4]
\arrow["\mathfrak{N}_{\mathbf{X}}^{u}\alpha\mathfrak{A}^{n}\uK(B\oplus B)|_{\varphi_{01}(A)}"'{pos=0.4}, from=3-2, to=2-2]
\arrow["\incl"', curve={height=18pt}, from=3-2, to=2-6]
\end{tikzcd}
\]
in which $E$ is the separable $C^{*}$-subalgebra in $\mathfrak{N}^{u}_{\mathbf{X}}\mathfrak{A}^{n+1}\uK(B\oplus B)$
generated by the subset 
\[
(\mathfrak{N}^{u}_{\mathbf{X}}\mathfrak{A}^{n}\uK e_{01})(\Phi(a))\cup(\mathfrak{N}^{u}_{\mathbf{X}}\alpha\mathfrak{A}^{n}\uK(B\oplus B))(\varphi_{01}(A)).
\]
One can easily verify that the solid part of this diagram commutes.
Applying Lemma~\ref{lem:some-technicality}, we obtain $*$-homomorphisms
$\psi_{1}$ and $\psi_{2}$ such that $\psi_{1}\circ\Phi$ is a homotopy
connecting $\varphi_{0}$ and $\varphi_{1}$. This implies that the
map~(\ref{eq:colim-in-general}) is injective.
\end{proof}

Theorem~\ref{thm:colim-is-redundant} follows directly from Proposition~\ref{prop:colim-redund}.

\end{document}